\DeclareMathAlphabet{\mathpzc}{OT1}{pzc}{m}{it}
\newtheorem{theorem}{Theorem}[section]
\newtheorem{lemma}[theorem]{Lemma}
\newtheorem{proposition}[theorem]{Proposition}
\newtheorem{corollary}[theorem]{Corollary}
\theoremstyle{definition}
\newtheorem{definition}[theorem]{Definition}
\newtheorem{assumption}[theorem]{Assumption}
\newtheorem{remark}[theorem]{Remark}
\numberwithin{equation}{section}
\acrodef{KPZ}{Kardar--Parisi--Zhang}
\acrodef{SHE}{Stochastic Heat Equation}
\acrodef{LDP}{Large Deviation Principle}
\newcounter{dummy}
\renewcommand{\Pr}{\mathbb{P}}
\newcommand{\Ex}{\mathbb{E}}
\newcommand{\ind}{\mathbf{1}}	
\newcommand{\fa}{\mathsf{f}_{\theta}}
\newcommand{\ffa}{\mathsf{f}}
\newcommand{\ga}{\mathsf{g}_{\alpha}}
\newcommand{\bcd}{$[\m{BCD}]$}
\newcommand{\se}[2]{S_{#1}(#2)}
\newcommand{\iise}{S_2(1)}
\newcommand{\iks}{S_1(k)}
\newcommand{\SSS}{Q}
\newcommand{\iiks}{S_2(k)}
\newcommand\myitem[1][]{\item[#1]\refstepcounter{dummy}\def\@currentlabel{#1}}
\renewcommand{\ll}{\llbracket}
\newcommand{\rr}{\rrbracket}
\newcommand{\blue}{\textcolor{blue}{\textbf{blue}}}
\newcommand{\black}{\textcolor{black}{\textbf{black}}}
\newcommand{\purple}{\textcolor{gray}{\textbf{gray}}}
\newcommand{\yellow}{\textcolor{yellow!70!black}{\textbf{yellow}}}
\definecolor{rvwvcq}{rgb}{0.08235294117647059,0.396078431372549,0.7529411764705882}
\definecolor{dtsfsf}{rgb}{0.8274509803921568,0.1843137254901961,0.1843137254901961}
\newcommand{\hslg}{\mathpzc{HSLG}}
\newcommand{\Con}{C} 
\newcommand{\R}{\mathbb{R}} 
\newcommand{\Z}{\mathbb{Z}} 
\newcommand{\wsc}{W_{n}}
\newcommand{\e}{\varepsilon}
\renewcommand{\L}{\mathcal{L}}
\newcommand{\m}{\mathsf}
\renewcommand{\hat}{\widehat}
\newcommand{\til}{\widetilde}
\renewcommand{\bar}{\overline}
\renewcommand{\ni}{\mathsf{NI}}
\title[Convergence to stationary measures for the HSLG polymer]{Convergence to stationary measures for the half-space log-gamma polymer}
\author[S.\ Das]{Sayan Das}
\address{S.\ Das,
	Department of Mathematics, University of Chicago,
	\newline\hphantom{\quad \ \ S. Das}
	5734 S.~University Avenue, Chicago, IL 60637, USA
}
\email{sayan.das@columbia.edu}
\author[C.\ Serio]{Christian Serio}
\address{C.\ Serio,
	Department of Mathematics, Stanford University,
	\newline\hphantom{\quad \ \ C. \ Serio}
	450 Jane Stanford Way,
	Stanford, CA 94305, USA
}
\email{cdserio@stanford.edu}
\begin{document}
	\begin{abstract}
We consider the point-to-point half-space log-gamma polymer model in the unbound phase. We prove that the free energy increment process on the anti-diagonal path converges to the top marginal of a two-layer Markov chain with an explicit description, which can be interpreted as two random walks conditioned softly never to intersect. This limiting law is a stationary measure for the polymer on the anti-diagonal path. 

The starting point of our analysis is an embedding of the free energy into the half-space log-gamma line ensemble recently constructed in \cite{half1}. Given the Gibbsian line ensemble structure, the main  contribution of our work lies in developing a route to access and prove convergence to stationary measures via line ensemble techniques. Our argument relies on a description of the limiting behavior of two softly non-intersecting random walk bridges around their starting point, a result established in this paper that may be of independent interest.
	\end{abstract}
	
	
	\maketitle
	{
			\hypersetup{linkcolor=black}
			\setcounter{tocdepth}{1}
			\tableofcontents
		}

\section{Introduction}

\subsection{The model and main results}
Fix $\theta>0$, $\alpha>-\theta$, and consider a family of independent variables $(\mathsf W_{i,j})_{(i,j)\in \mathcal{I}}$ with $\mathcal{I}:=\{(i,j) \in \Z^2 :  j\le i\}$ such that
\begin{align}\label{eq:wt}
\mathsf W_{i,j}\sim \operatorname{Gamma}^{-1}(\alpha+\theta)  \textrm{ for } i=j \qquad \textrm{and } \quad \mathsf W_{i,j}\sim \operatorname{Gamma}^{-1}(2\theta) \textrm{ for } j<i.
	\end{align}
Here $X\sim \operatorname{Gamma}^{-1}(\beta)$ means $X$ is a random variable with density $\Gamma^{-1}(\beta)x^{-\beta-1}e^{-1/x}\ind_{x>0}$. A directed lattice path $\pi=\big((x_i,y_i)\big)_{i=1}^k$ confined to the half-space index set $\mathcal{I}$ is an up-right path with all $(x_i,y_i)\in \mathcal{I}$ which only makes unit steps in the coordinate directions, that is, $(x_{i+1},y_{i+1})=(x_i,y_i)+(0,1)$ or $(x_{i+1},y_{i+1})=(x_i,y_i)+(1,0)$; see Figure \ref{fig000}.  Given $(m,n)\in\mathcal{I}$, we let $\Pi_{m,n}$ denote the set of all directed paths from $(1,1)$ to $(m,n)$ confined to $\mathcal{I}$.  Given the random variables from \eqref{eq:wt}, we define the weight of a path $\pi$ and the point-to-point partition function of the \textit{half-space log-gamma} ($\hslg$) \textit{polymer} by
	\begin{align*}
		w(\pi):=\prod_{(i,j)\in \pi} \mathsf W_{i,j},  \qquad Z_{(\alpha,\theta)}(m,n):=\sum_{\pi\in \Pi_{m,n}} w(\pi).
	\end{align*}
	
	\begin{figure}[h!]
		\centering
		\begin{tikzpicture}[line cap=round,line join=round,>=triangle 45,x=0.8cm,y=0.8cm]
			\foreach \x in {0,1,2,...,7}
			{        
				\coordinate (A\x) at ($(9,0)+(\x,0)$) {};
				\coordinate (B\x) at ($(9,0)+(0,\x)$) {};
				\draw[dotted] ($(A\x)+(0,\x)$) -- ($(A\x)$);
				\draw[dotted] ($(B\x)+(7,0)$) -- ($(B\x)+(\x,0)$);
				\coordinate (C\x) at ($(16,0)+(\x,0)$) {};
				\coordinate (D\x) at ($(16,0)+(0,\x)$) {};
				\draw[dotted] ($(C\x)+(0,7-\x)$) -- ($(C\x)$);
				\draw[dotted] ($(D\x)+(7-\x,0)$) -- ($(D\x)+(0,0)$);
			}
			\draw [line width=1.5pt,color=blue] (9,0)-- (11,0)--(11,2)--(14,2)--(14,4)--(15,4)--(15,6)--(16,6)--(16,7);
			\draw [line width=1.5pt] (9,0)-- (10,0)--(10,1)--(13,1)--(13,3)--(15,3)--(15,4)--(16,4)--(16,7);
            \draw[line width=1.5pt,color=red,dashed] (16,7)--(23,0);
			\node[] (A) at (12.5,5.5) {$\operatorname{Gamma}^{-1}(\alpha+\theta)$};
			\node[] (B) at (16.2,2.5) {$\operatorname{Gamma}^{-1}(2\theta)$};
			\draw[-stealth] (12,3) -- (A);
			\draw[-stealth] (13,4) -- (A);	
			\draw[-stealth] (15,1) -- (B);
			\draw[-stealth] (16,1) -- (B);	
			\draw[-stealth] (14,1) -- (B);
		\end{tikzpicture}

		\caption{Vertex weights for the half-space log-gamma polymer and two possible paths (one marked in blue and the other in black) in $\Pi_{8,8}$. The anti-diagonal path $\{(8+k-1,8-k+1):1\leq k\leq 8\}$ is shown in red.}
		\label{fig000}
	\end{figure}
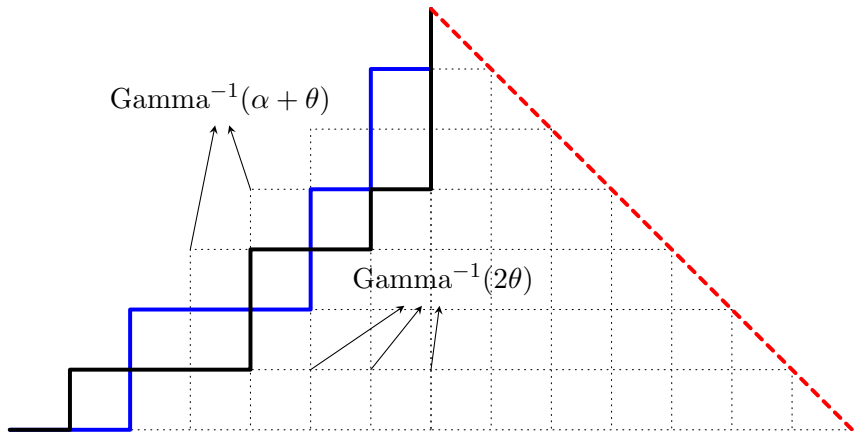

The parameter $\alpha$ controls the strength of the boundary weights, and there is a phase transition in the behavior of this model at $\alpha=0$. In this paper, we work in the \textit{unbound phase} $\alpha>0$, where the polymer delocalizes from the diagonal. We assume $\theta,\alpha>0$ are fixed parameters throughout, and we write $Z$ in place of $Z_{(\alpha,\theta)}$. We study the increments of the free energy $\log Z(m,n)$ on the anti-diagonal path $\{(N+k-1,N-k+1):1\leq k\leq N\}$ in this setting. Our main result is as follows:
	
	\begin{theorem} \label{t.main1} Fix $\theta, \alpha> 0$ and $r\in \Z_{\ge 1}$. As $N\to \infty$, we have the following multipoint convergence:
		\begin{align*}
			\Big(\log Z(N+k-1,N-k+1)-\log Z(N,N)\Big)_{k=1}^r \stackrel{d}{\longrightarrow} \big(S_1^{\uparrow}(k)\big)_{k=1}^{r},
		\end{align*}
  where the sequence of random variables $\big(S_1^{\uparrow}(k)\big)_{k\ge 1}$ is defined in Section \ref{sec:1.1}. 
	\end{theorem}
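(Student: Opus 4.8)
The plan is to prove Theorem~\ref{t.main1} through the half-space log-gamma line ensemble $\mathcal{L}^{(N)}=(\mathcal{L}^{(N)}_i)_{i\ge 1}$ of \cite{half1}. In that construction, after centering by $\log Z(N,N)$, the anti-diagonal free energy increment process is realized as the top curve restricted to its first $r$ points: $\mathcal{L}^{(N)}_1(k)-\mathcal{L}^{(N)}_1(1)=\log Z(N+k-1,N-k+1)-\log Z(N,N)$. The crucial structural input is that $\mathcal{L}^{(N)}$ is a Gibbsian line ensemble: conditioned on the curves outside a window and on the curves below a given one, each curve is a random walk bridge (with the log-gamma increment law) reweighted by a Boltzmann factor enforcing a soft ordering with the curve immediately below, together with a boundary interaction at the wall. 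Because we are in the unbound phase $\alpha>0$ and localize near the corner $(N,N)$, which lies at macroscopic distance from the diagonal along the anti-diagonal path, the wall interaction is asymptotically irrelevant, and the local picture is governed by the bulk Gibbs property of finitely many softly non-intersecting random walk bridges.

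First I would establish the a priori control needed to exploit this Gibbs property in the limit. This consists of tightness of $(\mathcal{L}^{(N)})$ restricted to a fixed window $\{1,\dots,M\}$ with $M\ge r$, together with two-sided one-point bounds on $\mathcal{L}^{(N)}_i(k)$ for $i,k$ in this window. For the top curve these follow from upper- and lower-tail fluctuation estimates for $\log Z(N+k-1,N-k+1)-\log Z(N,N)$ in the unbound phase, available from the one-point asymptotics and tail bounds for the half-space log-gamma polymer (in particular from \cite{half1} and related fluctuation results); for the lower curves they follow by combining these with the ordering and stochastic-monotonicity consequences of the Gibbs property. Passing to a subsequential limit, one obtains a line ensemble $\mathcal{L}^{\infty}$ on $\Z_{\ge 1}$ satisfying the semi-infinite analog of the softly non-intersecting random walk Gibbs property.

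The remaining step is to identify $\mathcal{L}^{\infty}$, and here I would invoke the bridge-asymptotics result flagged in the abstract: two softly non-intersecting random walk bridges of length $N$, observed in a fixed window around one endpoint, converge as $N\to\infty$ to the two-layer Markov chain of two random walks softly conditioned never to intersect, whose top component is $\big(S_1^{\uparrow}(k)\big)_{k\ge 1}$. Feeding this into the Gibbs property for the top two curves of $\mathcal{L}^{(N)}$ — conditioning on their values at distance $M$ from the corner, which the tightness step shows to be $O(1)$, and using that the curves below drift away from the top curve near the corner so that their influence vanishes — pins the joint law of $\big(\mathcal{L}^{(N)}_1(k),\mathcal{L}^{(N)}_2(k)\big)_{k=1}^{r}$ to the corresponding marginal of the two-layer chain; projecting onto the first coordinate gives $\big(\mathcal{L}^{(N)}_1(k)\big)_{k=1}^{r}\to\big(S_1^{\uparrow}(k)\big)_{k=1}^{r}$. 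Since the limit is identified without reference to the subsequence, the convergence holds along the full sequence, and as we only ever work on the fixed finite window $\{1,\dots,r\}$ this is precisely the asserted multipoint convergence.

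The main obstacle is twofold. On the analytic side, the bridge-asymptotics statement is delicate: one must show that the conditioned bridges neither pinch together nor spread apart near the observed endpoint as their length diverges, and match the limiting forward transition mechanism exactly with that of the two-layer chain $S^{\uparrow}$, with enough uniformity in the boundary heights to be usable inside the Gibbs property. On the structural side — and this is, as the authors emphasize, the main conceptual contribution — the difficulty is to organize the argument so that the Gibbs property together with the bridge asymptotics determine the stationary limit \emph{without} presupposing a description of it; in particular, obtaining the two-sided estimates that make Gibbs resampling applicable, and controlling the effect of the infinitely many lower curves on the top two near the corner, is where the bulk of the work lies.
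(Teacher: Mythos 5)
Your overall architecture matches the paper's: embed the free energy into the $\hslg$ line ensemble of \cite{half1}, use tightness and the Gibbs property to reduce to a two-curve softly non-intersecting bridge in a window near the corner, and invoke a local convergence theorem for such bridges. However, the step where you dispose of the influence of the lower curves — ``the curves below drift away from the top curve near the corner so that their influence vanishes'' — has a genuine gap. You attribute this to one-point asymptotics, tail bounds, ordering, and stochastic monotonicity. Those tools give you tightness of $\L_i^N(1)/N^{1/3}$ for each fixed $i$, i.e.\ $O(N^{1/3})$ two-sided bounds, but they do \emph{not} rule out $\L_2^N(1)$ and $\L_3^N(2)$ being within $o(N^{1/3})$ of each other with non-vanishing probability. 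Tightness alone is compatible with the second and third curves clumping. What is actually needed is the strict lower bound $\L_2^N(1) - \L_3^N(2) \geq \delta N^{1/3}$ with probability $1-\e$, uniformly in $N$, which is the content of Theorem \ref{t.lsep23}. The paper's route to this is a non-atomicity statement (Proposition \ref{p.nonatomic}): the diffusive-scale limit points of $L_2(1)$ under the interacting random walk law put no mass on any single point, uniformly in the location of that point. This requires a fairly delicate direct computation under the paired random walk law (Berry--Esseen together with local convolution bounds), and it is the key new idea that makes the reduction work — it does not follow from the soft non-intersection or monotonicity considerations you list.

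A second omission, milder but still material: the local convergence theorem for softly non-intersecting bridges (Theorem \ref{thm:conv}) requires the right-hand boundary data $(b_1,b_2)$ to satisfy $b_1-b_2 \geq M^{-1}\sqrt{n}$. The top two curves of the line ensemble are pinned to within $O(1)$ of each other at the left boundary, so one must establish that at a distance $\sim N^{2\gamma/3}$ from the corner they have separated by $\Theta(N^{\gamma/3})$ — that is, a diffusive separation at every mesoscopic scale (Proposition \ref{p.sep12}). Without it, the bridge endpoints that feed into your limit theorem could be too close, and the convergence breaks down. This separation is again not a consequence of one-point tightness; in the paper it is proved by combining the Gibbs property with Brownian-meander estimates for non-intersecting walks. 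Finally, the detour through a subsequential limiting ensemble $\mathcal{L}^\infty$ is unnecessary and potentially misleading: the paper works directly with finite-$N$ conditional laws, and your own identification step also does this, so the limiting line ensemble plays no role.
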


After discussing the significance of this result and its connections to previous literature below, in Section \ref{sec12} we will describe our method of proof, which involves a novel application of half-space Gibbsian line ensembles. In Section \ref{sec:ext} we will discuss how our techniques may be applied to related solvable models.

\subsubsection{Description of the limiting distribution} \label{sec:1.1}

Let $\m{h}_1, \m{h}_2$ be two measures on $\mathbb{R}$. Let $\fa$ denote the density of $\log A-\log B$, where $A,B$ are i.i.d.~$\operatorname{Gamma}(\theta)$ random variables. It has an explicit expression given by
\begin{align*}
    \fa(x)=\frac{\Gamma(2\theta)}{(\Gamma(\theta))^2}\Big(e^{-x/2}+e^{x/2}\Big)^{-2\theta}.
\end{align*}
Consider two independent random walks $(S_1(k),S_2(k))_{k=1}^n$ of length $n$  started from the initial data $S_1(1) \sim \m{h}_1,S_2(1) \sim \m{h}_2$, and with joint transition density $p\big((x_1,y_1),(x_2,y_2)\big):= \fa(x_2-x_1)\fa(y_2-y_1)$. In other words the increments of the random walks are distributed according to the density $\fa$. We denote the law of $(S_1(k),S_2(k))_{k=1}^n$ by $\Pr^{n;(\m{h}_1,\m{h}_2);\m{free}}$. If $\m{h}_i$ equals the Dirac delta measure $\delta_{x}$ at some $x\in\mathbb{R}$, we simply write $\m{h}_i=x$. If $\m{h}_i$ is absolutely continuous with respect to Lebesgue measure, we identify it with its density, which we denote by $\m{h}_i(\cdot)$.

\medskip

Consider the density {of a log-$\operatorname{Gamma}(\alpha)$ random variable}, $\ga(x):=\Gamma(\alpha)^{-1}e^{\alpha x-e^x}$, and define
\begin{align}\label{Vdef}
   V(z)  := \lim_{n\to\infty} \frac{\Ex^{n;(0,z);\m{free}}[W_n]}{\Ex^{n;(0,\ga);\m{free}}[W_n]},
\end{align}
where  
\begin{align}
    \label{wscintro}
    W_n:=\exp\bigg(-e^{\iise-\se{1}{2}}-\sum_{k=2}^{n-1} \left(e^{\iiks-\se{1}{k+1}}+e^{\iiks-\iks}\right)\bigg).
\end{align}
The weight $W_n$ can be viewed as a ``soft'' version of the indicator function for the non-intersection event $\{S_1(k) > S_2(k) \mbox{ for } 1\leq k\leq n\}$, as it associates a penalty of $e^{-e^{\delta}}$ whenever $S_2(k)-S_1(k)>\delta$ for some $k$. We shall show in Lemma \ref{V(x,y)} (see also Remark \ref{vxyw}) that the limit in \eqref{Vdef} exists and $V(z)>0$ for all $z\in\mathbb{R}$. 
Using this $V$, we consider a Markov chain $\big(S_1^{\uparrow}(k), S_2^{\uparrow}(k)\big)_{k\ge 1}$ with initial data $S_1^{\uparrow}(1) = 0$ and $S_2^{\uparrow}(1) \sim p_0^V(y):= V(y)\ga(y)$, and transition density given by
\begin{align}
\label{def:pv}
  p^V((x_1,y_1),(x_2,y_2))=\frac{V(y_2-x_2)}{V(y_1-x_1)}\, p((x_1,y_1),(x_2,y_2))\exp(-e^{y_1-x_2}-e^{y_2-x_2}).
\end{align}
  We will show in Lemma \ref{idenf} that $p_0^V(\cdot), p^V((x_1,y_1),\cdot)$ are indeed valid density functions on $\mathbb{R}$ and $\mathbb{R}^2$ respectively. The limiting law in Theorem \ref{t.main1} is the marginal distribution of $\big(S_1^{\uparrow}(k)\big)_{k\ge 1}$.

\medskip

Note that the dependence on $\alpha$ appears only in $p_0^V(y)$; the transition density only depends on $\theta$. The transition density \eqref{def:pv} is a Doob $h$-transform of a sub-Markov random walk process with $h=V$. It may be instructive to compare with the transition density for Dyson Brownian motion, i.e., Brownian motions conditioned never to intersect, cf. \cite[Section 4.2]{konig}. There, the analogue of $V$ is the Vandermonde determinant, which gives the long-time dependence of the non-intersection probability on the starting position, and the analogue of the exponential factor on the right of \eqref{def:pv} is the indicator for non-intersection. Thus, in light of the soft non-intersection interpretation of $W_n$, the Markov process $\big(S_1^{\uparrow}(k),S_2^{\uparrow}(k)\big)_{k\ge 1}$ may be viewed as two random walks  conditioned ``softly'' never to intersect. We refer to Section \ref{sec12} for an explanation of why this softly non-intersecting Markov process appears as the limiting distribution for the free energy. 

As we will explain in Section \ref{sec12}, the starting point of our proof is an integrable input coming from \cite{half1}. Other than that, our arguments are entirely probabilistic and do not yield an explicit expression for $V$. However, we remark that the work \cite{bk} describes stationary measures of the half-space KPZ equation (the intermediate disorder limit of $\hslg$ polymer) via an explicit Doob transform of a certain killed Brownian motion.

\medskip

Our main result, Theorem \ref{t.main1}, fits into a recent body of work attempting to construct and study \textit{stationary measures} for half-space models. The partition function $Z$ satisfies the recurrence relation
\begin{equation}
    \label{eq:hslgeqn}
    \begin{aligned}
     Z(m,n) & = \mathsf W_{m,n}\big(Z(m-1,n)+Z(m,n-1)\big), \quad m>n, \\
     Z(n,n) & = \mathsf{W}_{n,n}Z(n,n-1), \quad n\ge 1,
\end{aligned}
\end{equation}
with delta initial data $Z(k, 0)=\ind_{k=1}$. This may be viewed as a discrete version of the stochastic heat equation, and thus the free energy $\log Z$ is a discrete analogue of the half-space KPZ equation; see \cite{bc22}. In general, we say a process $(h(k))_{k\geq 1}$
is \textit{stationary} for the $\hslg$ polymer on the anti-diagonal path if the solution to \eqref{eq:hslgeqn} with
initial data $Z(k-1,-k+1) := e^{h(k)}$ for $k\ge 1$ has the property that the distribution of
$$\big(\log Z(N+k-1, N-k+1)-\log Z(N,N)\big)_{k\geq 1}$$
in the same for all $N\geq 0$. Assuming Theorem \ref{t.main1}, it is not hard to see that $\big(S_1^{\uparrow}(k)\big)_{k\ge 1}$ is indeed stationary for the $\hslg$ polymer on the anti-diagonal path. 

\smallskip

The study of stationary measures for the log-gamma polymer goes back to \cite{timo}, where the full space version of the model was first introduced (see also \cite{cosz14}). In \cite{timo}, the author found that the stationary measures for the full-space log-gamma polymer are given by a one-parameter family of random walks with log-gamma increments. Subsequently, it was proven in \cite{grasy15} (see also \cite{jra20}) that these are the only stationary measures which are ergodic with respect to translations in both directions of the lattice.

{Half-space polymer models, introduced in Kardar's work \cite{kar2}, have been extensively explored in physics literature due to their connection to wetting phenomena \cite{phy2,phy1,phy3}. They are of great interest because of the presence of a phase transition, as mentioned earlier. The half-space version of the log-gamma polymer model we study here was first introduced in \cite{osz14} (see also subsequent works \cite{bz19,bbc20,boz,bw,ims22, half1,dz23,victor}).}
For the half-space case, employing an approach similar to that of \cite{timo}, \cite{bkld2} constructed  a log-gamma random walk stationary measure for the $\hslg$ polymer. Recently, in \cite{dz23} it was demonstrated that in the bound phase ($\alpha<0$), the free energy increment process converges to the log-gamma random walk measure constructed in \cite{bkld2}. In the unbound phase $(\alpha > 0$), as illustrated by our main theorem, the stationary measures become notably more intricate. We remark that at $\alpha=0$, we expect the same limiting measure as in the bound phase; indeed, as $\alpha\downarrow 0$ the transition density in \eqref{def:pv} should converge to that of a single log-gamma random walk, as the density $\m{g}_\alpha$ pushes $S_2^\uparrow(1)$ to $-\infty$.

{In the work \cite{bc22}, the authors constructed a  family  of stationary measures $\{z_{v,P}(\cdot)\}_{v\in (-\theta,\min\{\alpha,0\}]}$ for the $\hslg$ polymer along each down-right path $P$ and conjectured that these constitute all extremal stationary measures for the  model. Our case of delta initial data for the partition function $Z$ corresponds to $v=0$, and we expect our measure to match with $z_{0,P_*}(\cdot)$ obtained in \cite{bc22} where $P_*$ denotes the anti-diagonal path, although we do not prove this here.} {Our techniques are notably distinct from those of \cite{bc22}, in that we exploit a certain Gibbsian line ensemble structure of the polymer, which we describe in the next section.} While our main result is stated only for the anti-diagonal path, we believe that it should be possible to extend the line ensemble structure and use our methods to access limits along any down-right path. The approach used here could also be modified by including one horizontal inhomogeneity (creating a non-trivial initial slope for the free energy) so as to obtain convergence to a one-parameter family of measures. We leave these two directions for future consideration. 



Finally, we remark that more recently in \cite{bcy}, the authors constructed a unique ergodic stationary measure for the log-gamma polymer on a strip, i.e., with two-sided boundaries, for any down-right path. It should be possible to obtain the half-space stationary measure described in Section \ref{sec:1.1} by taking an appropriate limit of their measures.


\subsection{Proof ideas} \label{sec12} In this section, we sketch the key ideas behind the proof of our main result. {The primary technical contribution of this paper is a novel method of obtaining convergence to stationary measures for models that can be embedded into Gibbsian line ensembles, of which the $\hslg$ polymer is just one example. A secondary contribution is a local convergence result for softly non-intersecting random walks, which we describe in Section \ref{sec:1.2.3}.} 

The starting point of our analysis here is the $\hslg$ Gibbsian line ensemble constructed in \cite{half1}. Namely, we may view the free energy process $(\log Z(N+k-1,N-k+1))_{k\ge 1}$ of the $\hslg$ polymer  as the  top curve of a Gibbsian line ensemble $(\L_i^N(\cdot))_{i\in \ll1,N\rr}$, consisting of log-gamma increment random walks interacting through a soft non-intersection condition and subject to a pairwise pinning at the left boundary. (See Figure \ref{figO1} and its caption.)  This remarkable fact comes from the geometric RSK correspondence \cite{cosz14, osz14, nz17,bz19} and the half-space Whittaker process \cite{bbc20}.

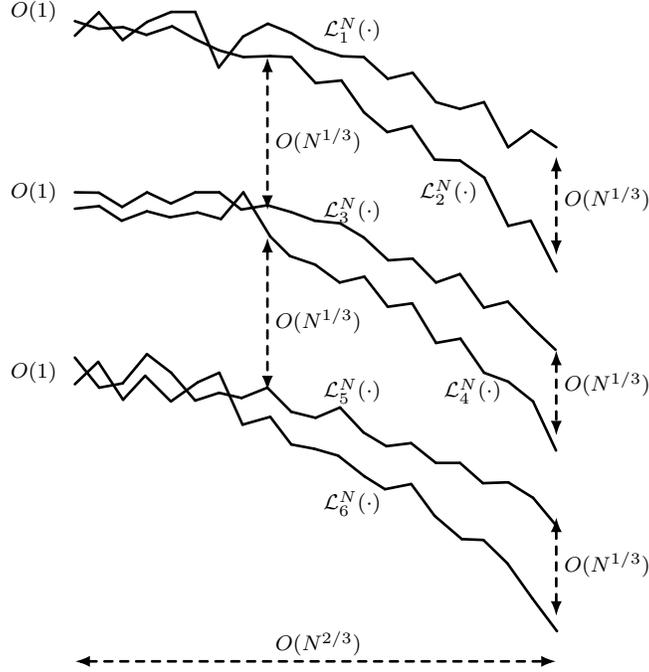
\begin{figure}[h!]
			\centering
		\begin{tikzpicture}[scale=.8,line cap=round,line join=round,>=triangle 45,x=4cm,y=1.5cm]
			\draw [line width=1pt] (0,2.7361969564619426)-- (0.09968303324610706,2.9960253171531424);
			\draw [line width=1pt] (0.09968303324610706,2.9960253171531424)-- (0.19830773702771506,2.6884965795913667);
			\draw [line width=1pt] (0.19830773702771506,2.6884965795913667)-- (0.30006302893410286,2.889974785777928);
			\draw [line width=1pt] (0.30006302893410286,2.889974785777928)-- (0.4,3);
			\draw [line width=1pt] (0.4,3)-- (0.5,3);
			\draw [line width=1pt] (0.5,3)-- (0.5973644533669638,2.3832141676196814);
			\draw [line width=1pt] (0.5973644533669638,2.3832141676196814)-- (0.698588596048139,2.7266568810878273);
			\draw [line width=1pt] (0.698588596048139,2.7266568810878273)-- (0.8017593568577984,2.869758011699555);
			\draw [line width=1pt] (0.8017593568577984,2.869758011699555)-- (0.8990902632820055,2.7648171825842884);
			\draw [line width=1pt] (0.8990902632820055,2.7648171825842884)-- (0.9990053878836391,2.600795293991784);
			\draw [line width=1pt] (0.9990053878836391,2.600795293991784)-- (1.1079547104749525,2.5089745438145457);
			\draw [line width=1pt] (1.1079547104749525,2.5089745438145457)-- (1.2,2.5);
			\draw [line width=1pt] (1.2,2.5)-- (1.300093597749738,2.259193187756184);
			\draw [line width=1pt] (1.300093597749738,2.259193187756184)-- (1.4032643585593976,2.32597371537499);
			\draw [line width=1pt] (1.4032643585593976,2.32597371537499)-- (1.5,2);
			\draw [line width=1pt] (1.5,2)-- (1.5998727895362959,1.925290549662153);
			\draw [line width=1pt] (1.5998727895362959,1.925290549662153)-- (1.7,2);
			\draw [line width=1pt] (1.7,2)-- (1.8,1.5);
			\draw [line width=1pt] (1.8,1.5)-- (1.895758745065885,1.6867886653092736);
			\draw [line width=1pt] (1.895758745065885,1.6867886653092736)-- (2,1.5);
			\draw [line width=1pt] (0,2.8983782378219005)-- (0.09903021247502392,2.812517559454864);
			\draw [line width=1pt] (0.09903021247502392,2.812517559454864)-- (0.20025435515619922,2.8315977102030945);
			\draw [line width=1pt] (0.20025435515619922,2.8315977102030945)-- (0.2975852615804062,2.745737031836058);
			\draw [line width=1pt] (0.2975852615804062,2.745737031836058)-- (0.40398128310420617,2.841199415267653);
			\draw [line width=1pt] (0.40398128310420617,2.841199415267653)-- (0.5000335469427568,2.698036654965482);
			\draw [line width=1pt] (0.5000335469427568,2.698036654965482)-- (0.5993110714954479,2.5740156751019847);
			\draw [line width=1pt] (0.5993110714954479,2.5740156751019847)-- (0.7,2.5);
			\draw [line width=1pt] (0.7,2.5)-- (0.8116756285443952,2.510788449351374);
			\draw [line width=1pt] (0.8116756285443952,2.510788449351374)-- (0.9,2.5);
			\draw [line width=1pt] (0.9,2.5)-- (1.0003144059631806,2.2114928108856082);
			\draw [line width=1pt] (1.0003144059631806,2.2114928108856082)-- (1.1073784030298084,2.2401130370079536);
			\draw [line width=1pt] (1.1073784030298084,2.2401130370079536)-- (1.200816073197047,1.8871302481656922);
			\draw [line width=1pt] (1.200816073197047,1.8871302481656922)-- (1.2981469796212541,1.6677085145610433);
			\draw [line width=1pt] (1.2981469796212541,1.6677085145610433)-- (1.3993711223024294,1.7344890421798496);
			\draw [line width=1pt] (1.3993711223024294,1.7344890421798496)-- (1.4947554105981522,1.3624261025893578);
			\draw [line width=1pt] (1.4947554105981522,1.3624261025893578)-- (1.5998727895362959,1.3528860272152425);
			\draw [line width=1pt] (1.5998727895362959,1.3528860272152425)-- (1.701096932217471,1.162084519732939);
			\draw [line width=1pt] (1.701096932217471,1.162084519732939)-- (1.8003744567701623,0.6278402987824895);
			\draw [line width=1pt] (1.8003744567701623,0.6278402987824895)-- (1.8938121269374009,0.6850807510271805);
			\draw [line width=1pt] (1.8938121269374009,0.6850807510271805)-- (2.0008761240040287,0.12221630395438526);
			\draw [line width=1pt] (0,1)-- (0.09968303324610683,0.9960253171531422);
			\draw [line width=1pt] (0.09968303324610683,0.9960253171531422)-- (0.19636111889923094,0.8377219570130232);
			\draw [line width=1pt] (0.19636111889923094,0.8377219570130232)-- (0.3,1);
			\draw [line width=1pt] (0.3,1)-- (0.3988094042615815,0.875882258509484);
			\draw [line width=1pt] (0.3988094042615815,0.875882258509484)-- (0.5,1);
			\draw [line width=1pt] (0.5,1)-- (0.6,1);
			\draw [line width=1pt] (0.6,1)-- (0.6912407305290592,0.808874500983061);
			\draw [line width=1pt] (0.6912407305290592,0.808874500983061)-- (0.7998127387293144,0.8568021077612535);
			\draw [line width=1pt] (0.7998127387293144,0.8568021077612535)-- (0.8990902632820055,0.7804815047683322);
			\draw [line width=1pt] (0.8990902632820055,0.7804815047683322)-- (0.9964211697062124,0.6850807510271805);
			\draw [line width=1pt] (0.9964211697062124,0.6850807510271805)-- (1.1015385486443559,0.656460524904835);
			\draw [line width=1pt] (1.1015385486443559,0.656460524904835)-- (1.2,0.5);
			\draw [line width=1pt] (1.2,0.5)-- (1.3020402158782223,0.2462372838178825);
			\draw [line width=1pt] (1.3020402158782223,0.2462372838178825)-- (1.4032643585593976,0.26531743456611284);
			\draw [line width=1pt] (1.4032643585593976,0.26531743456611284)-- (1.5,0);
			\draw [line width=1pt] (1.5,0)-- (1.5998727895362959,0.09359607783203974);
			\draw [line width=1pt] (1.5998727895362959,0.09359607783203974)-- (1.699150314088987,-0.27846686175845203);
			\draw [line width=1pt] (1.699150314088987,-0.27846686175845203)-- (1.798427838641678,-0.2116863341396458);
			\draw [line width=1pt] (1.798427838641678,-0.2116863341396458)-- (1.9,-0.5);
			\draw [line width=1pt] (1.9,-0.5)-- (1.9969828877470606,-0.7459305550900955);
			\draw [line width=1pt] (0,0.8186418062647929)-- (0.10097683060350805,0.8472620323871384);
			\draw [line width=1pt] (0.10097683060350805,0.8472620323871384)-- (0.1944145007707468,0.6850807510271805);
			\draw [line width=1pt] (0.1944145007707468,0.6850807510271805)-- (0.29953187970889034,0.7900215801424474);
			\draw [line width=1pt] (0.29953187970889034,0.7900215801424474)-- (0.3949161680046132,0.7232410525236411);
			\draw [line width=1pt] (0.3949161680046132,0.7232410525236411)-- (0.50890466407512,0.7773440285883928);
			\draw [line width=1pt] (0.50890466407512,0.7773440285883928)-- (0.6071213261663665,0.7042691302093282);
			\draw [line width=1pt] (0.6071213261663665,0.7042691302093282)-- (0.7,1);
			\draw [line width=1pt] (0.7,1)-- (0.8116756285443949,0.5107884493513744);
			\draw [line width=1pt] (0.8116756285443949,0.5107884493513744)-- (0.893250408896553,0.2939376606884584);
			\draw [line width=1pt] (0.893250408896553,0.2939376606884584)-- (0.9983677878346965,0.19853690694730664);
			\draw [line width=1pt] (0.9983677878346965,0.19853690694730664)-- (1.1,0);
			\draw [line width=1pt] (1.1,0)-- (1.2027626913255312,0.06497585170969422);
			\draw [line width=1pt] (1.2027626913255312,0.06497585170969422)-- (1.300093597749738,-0.2689267863843368);
			\draw [line width=1pt] (1.300093597749738,-0.2689267863843368)-- (1.4013177404309134,-0.23076648488787616);
			\draw [line width=1pt] (1.4013177404309134,-0.23076648488787616)-- (1.5005952649836045,-0.6696099520971741);
			\draw [line width=1pt] (1.5005952649836045,-0.6696099520971741)-- (1.6018194076647798,-0.6219095752265983);
			\draw [line width=1pt] (1.6018194076647798,-0.6219095752265983)-- (1.7,-1);
			\draw [line width=1pt] (1.7,-1)-- (1.8003744567701623,-1.0989133439323568);
			\draw [line width=1pt] (1.8003744567701623,-1.0989133439323568)-- (1.9035452175798215,-1.318335077537006);
			\draw [line width=1pt] (1.9035452175798215,-1.318335077537006)-- (1.9989295058755447,-1.8621193738615707);
			\draw [line width=1pt] (2.002822742132513,-3.865535202425757)-- (1.9,-3.5);
			\draw [line width=1pt] (1.9,-3.5)-- (1.796481220513194,-3.1118692478706587);
			\draw [line width=1pt] (1.796481220513194,-3.1118692478706587)-- (1.6972036959605028,-2.854287212769549);
			\draw [line width=1pt] (1.6972036959605028,-2.854287212769549)-- (1.6076592620502324,-2.8447471373954336);
			\draw [line width=1pt] (1.6076592620502324,-2.8447471373954336)-- (1.4947554105981522,-2.587165102294324);
			\draw [line width=1pt] (1.4947554105981522,-2.587165102294324)-- (1.3974245041739453,-2.2341823134520626);
			\draw [line width=1pt] (1.3974245041739453,-2.2341823134520626)-- (1.2884138889788335,-2.2914227656967534);
			\draw [line width=1pt] (1.2884138889788335,-2.2914227656967534)-- (1.2027626913255312,-2.148321635085026);
			\draw [line width=1pt] (1.2027626913255312,-2.148321635085026)-- (1.0937520761304194,-1.9193598261062619);
			\draw [line width=1pt] (1.0937520761304194,-1.9193598261062619)-- (0.9944745515777283,-1.8430392231133403);
			\draw [line width=1pt] (0.9944745515777283,-1.8430392231133403)-- (0.8990902632820055,-1.7953388462427646);
			\draw [line width=1pt] (0.8990902632820055,-1.7953388462427646)-- (0.8116756285443951,-1.4892115506486259);
			\draw [line width=1pt] (0.8116756285443951,-1.4892115506486259)-- (0.6966419779196549,-1.5759171126381155);
			\draw [line width=1pt] (0.6966419779196549,-1.5759171126381155)-- (0.6,-1);
			\draw [line width=1pt] (0.6,-1)-- (0.5113633117794375,-1.1045969746536137);
			\draw [line width=1pt] (0.5113633117794375,-1.1045969746536137)-- (0.3988094042615815,-1.318335077537006);
			\draw [line width=1pt] (0.3988094042615815,-1.318335077537006)-- (0.29203043996444555,-1.034112711146331);
			\draw [line width=1pt] (0.29203043996444555,-1.034112711146331)-- (0.20025435515619922,-1.2992549267887756);
			\draw [line width=1pt] (0.20025435515619922,-1.2992549267887756)-- (0.09708359434653978,-0.879491610327708);
			\draw [line width=1pt] (0.09708359434653978,-0.879491610327708)-- (0,-1.1275335700547024);
			\draw [line width=1pt] (0,-0.831791233457132)-- (0.10097683060350805,-1.165693871551163);
			\draw [line width=1pt] (0.10097683060350805,-1.165693871551163)-- (0.19830773702771506,-1.1179934946805872);
			\draw [line width=1pt] (0.19830773702771506,-1.1179934946805872)-- (0.29953187970889034,-0.7936309319606714);
			\draw [line width=1pt] (0.29953187970889034,-0.7936309319606714)-- (0.4,-1);
			\draw [line width=1pt] (0.4,-1)-- (0.5000335469427568,-1.3087950021628907);
			\draw [line width=1pt] (0.5000335469427568,-1.3087950021628907)-- (0.5993110714954479,-1.222934323795854);
			\draw [line width=1pt] (0.5993110714954479,-1.222934323795854)-- (0.6927487416626866,-1.2801747760405453);
			\draw [line width=1pt] (0.6927487416626866,-1.2801747760405453)-- (0.7998127387293144,-1.165693871551163);
			\draw [line width=1pt] (0.7998127387293144,-1.165693871551163)-- (0.8990902632820055,-1.432815982026388);
			\draw [line width=1pt] (0.8990902632820055,-1.432815982026388)-- (1,-1.5);
			\draw [line width=1pt] (1,-1.5)-- (1.1015385486443559,-1.385115605155812);
			\draw [line width=1pt] (1.1015385486443559,-1.385115605155812)-- (1.198869455068563,-1.661777791005152);
			\draw [line width=1pt] (1.198869455068563,-1.661777791005152)-- (1.2942537433642858,-1.814418996990995);
			\draw [line width=1pt] (1.2942537433642858,-1.814418996990995)-- (1.3967412169145335,-1.7803544108111566);
			\draw [line width=1pt] (1.3967412169145335,-1.7803544108111566)-- (1.5,-2);
			\draw [line width=1pt] (1.5,-2)-- (1.6,-2);
			\draw [line width=1pt] (1.6,-2)-- (1.6952570778320186,-2.2246422380779474);
			\draw [line width=1pt] (1.6952570778320186,-2.2246422380779474)-- (1.8003744567701623,-2.215102162703832);
			\draw [line width=1pt] (1.8003744567701623,-2.215102162703832)-- (1.9035452175798215,-2.3868235194379053);
			\draw [line width=1pt] (1.9035452175798215,-2.3868235194379053)-- (1.9989295058755447,-2.692105931409591);
			\draw [line width=1pt,dashed,{Latex[length=2mm]}-{Latex[length=2mm]}] (0,-4.2)-- (2,-4.2);
			\draw [line width=1pt,dashed,{Latex[length=2mm]}-{Latex[length=2mm]}] (0.8,0.5)-- (0.8,-1.2);
			\draw [line width=1pt,dashed,{Latex[length=2mm]}-{Latex[length=2mm]}] (0.8,2.5)-- (0.8,0.8);
			\draw [line width=1pt,dashed,{Latex[length=2mm]}-{Latex[length=2mm]}] (2,1.4)-- (2,0.3);
			\draw [line width=1pt,dashed,{Latex[length=2mm]}-{Latex[length=2mm]}] (2,-0.75)-- (2,-1.7);
			\draw [line width=1pt,dashed,{Latex[length=2mm]}-{Latex[length=2mm]}] (2,-2.6)-- (2,-3.7);
			\begin{scriptsize}
				\draw (0.8,-4) node[anchor=west] {$O(N^{2/3})$};
				\draw (0.8,-0.2) node[anchor=north west] {$O(N^{1/3})$};
				\draw (0.8,1.8) node[anchor=north west] {$O(N^{1/3})$};
				\draw (2,1.15) node[anchor=north west] {$O(N^{1/3})$};
				\draw (2,-0.9) node[anchor=north west] {$O(N^{1/3})$};
				\draw (2,-2.9) node[anchor=north west] {$O(N^{1/3})$};
				\draw (-0.3,3) node[anchor=west] {$O(1)$};
				\draw (-0.3,1) node[anchor=west] {$O(1)$};
				\draw (-0.3,-1) node[anchor=west] {$O(1)$};
				\draw (1,2.8) node[anchor=west] {$\L_1^N(\cdot)$};
				\draw (1.4,1) node[anchor=west] {$\L_2^N(\cdot)$};
				\draw (1,0.8) node[anchor=west] {$\L_3^N(\cdot)$};
				\draw (1.5,-1.2) node[anchor=west] {$\L_4^N(\cdot)$};
				\draw (1,-1.2) node[anchor=west] {$\L_5^N(\cdot)$};
				\draw (1,-2.45) node[anchor=west] {$\L_6^N(\cdot)$};
			\end{scriptsize}
		\end{tikzpicture}
		\caption{The half-space log-gamma line ensemble for large $N$ along with the typical scaling. The curves are softly non-intersecting, i.e., there is a super-exponential penalty incurred when they intersect. The pairwise pinning at the left boundary forces $\L_{2i-1}^N(1)-\L_{2i}^N(1)=O(1)$.}			
\label{figO1}
	\end{figure}

In light of the above line ensemble structure, proving our main theorem is equivalent to studying the increments of the top curve around the left boundary, i.e., $\big(\L_1^N(j)-\L_1^N(1)\big)_{j=1}^r$, as $N\to \infty$. The rest of the argument hinges on the following two (informally stated) results:

\smallskip

\begin{enumerate}[label=(\alph*)]
\setlength\itemsep{0.15cm}
    \item \label{itema} \textit{Separation of second and third curves}: With high probability, there is a separation (uniform in a window of size $\rho N^{2/3}$ for small enough $\rho>0$) of order $N^{1/3}$ between the second and third curves at the left boundary. (Proposition \ref{t.lsep23} and Corollary \ref{t.usep23})
    \item \label{itemb} \textit{Local convergence of softly non-intersecting pinned random walk bridges}: Consider two softly non-intersecting random walk bridges $(S_1(k),S_2(k))_{k\in \ll1,n\rr}$ of length $n$ started from (possibly random but fixed) initial data. For any fixed $r$, the law of $(S_1(k),S_2(k))_{k\in \ll1,r\rr}$ as $n\to \infty$ converges weakly to a Markov process with an explicit description. (Theorem \ref{thm:conv})
\end{enumerate}

\smallskip

\noindent We  will elaborate on the details of proving the two aforementioned items in Sections \ref{sec:1.2.2} and \ref{sec:1.2.3} respectively. Before doing so, note that the separation described in \ref{itema} allows us to show that, for small enough $\rho$, the law of $$\big(\L_1^N(k)-\L_1^N(1), \L_2^N(k)-\L_1^N(1)\big)_{k\in \ll1,\rho N^{2/3}\rr}$$  is close to the law of two log-gamma increment random walk bridges started from certain random initial data conditioned on soft non-intersection. Appealing to \ref{itemb}, the first $r$ pair of coordinates under the latter law can be shown to converge weakly to a Markov process $(S_1^{\uparrow}(k), S_2^{\uparrow}(k)\big)_{k=1}^r$ which matches with the one described in Section \ref{sec:1.1}. Thus we conclude Theorem \ref{t.main1}.

It is worth noting that the endpoints of the bridges are randomly distributed as well. Beyond establishing tightness of the endpoints at $N^{1/3}$ scale, little can be said about the exact nature of their distribution. Therefore, to ensure the weak convergence to the Markov process $(S_1^{\uparrow}(k), S_2^{\uparrow}(k)\big)_{k=1}^r$, we require our local convergence in \ref{itemb} to hold \textit{uniformly over all endpoints} of the random walk bridges. Much of the technical work in Sections \ref{sec31} and \ref{sec32} is dedicated to establishing uniform estimates for these random walk bridges.

We remark that in our proof we need to consider the odd points of $\L_1^N$ and even points of $\L_2^N$ due to the nature of Gibbs interaction. This is a technical detail that we will ignore in the introduction; the ideas and strategies illustrated above and in the rest of this subsection remain unchanged in the full proof.






\subsubsection{Uniform separation between second and third curves at the left boundary} \label{sec:1.2.2} 

To establish uniform separation between the second and third curves at the left boundary, we first claim pointwise separation precisely at the left boundary, i.e,
\begin{align}
    \label{deal2}
    \Pr\big( \L_2^N(1)-\L_3^N(1) \ge \delta N^{1/3}\big)
\end{align}
can be made arbitrarily close to $1$ by taking $\delta$ small enough. 
From here the uniform separation can be established by appealing to the process level tightness of the second and third curves. Although tightness was established only for the top curve in \cite{half1}, it is not hard to extend their arguments and techniques to prove a process-level tightness result for the top $k$ curves, for any $k$. We provide a complete proof of this result in Section \ref{sec6}. 

We now give a brief sketch of how we control \eqref{deal2}. Let us write $T=\lfloor N^{2/3} \rfloor$. The Gibbs property of the $\hslg$ line ensemble allows us to view the law of
\begin{align*}
    \big(\L_1^N(j), \L_2^N(j)\big)_{j=1}^{T}\mbox{ conditioned on } \L_1^N(T)=b_1,\,\L_2^N(T)=b_2, \mbox{ and }\L_3^N(j)=c_j, j\in \ll1, T\rr
\end{align*}
as two softly non-intersecting random walks started from $(b_1,b_2)$ conditioned to be pinned at the left boundary and with a soft non-intersection penalty for hitting the points $c_j$. 

We then make use of \textit{stochastic monotonicity} of the $\hslg$ line ensemble. Stochastic monotonicity is one of the standard tools in Gibbsian line ensemble arguments. It implies that if we decrease the boundary conditions, then the resulting measure is stochastically dominated by the original measure. Since we are interested in lower bounding the probability in \eqref{deal2}, we may decrease all $c_j$ with $j\ge 2$ down to $-\infty$ to decrease the probability (we need to keep $c_1=\L_3^N(1)$ fixed because of the nature of the event in \eqref{deal2}). Let us denote the random variables under this new conditional law by $(L_1(j), L_2(j))_{j=1}^{T}$. We then prove the following:

\begin{itemize}[leftmargin=15pt]
    \item Due to the soft non-intersection, the event $L_2(1)-c_1 \le -\delta N^{1/3}$ has small probability under the  conditional measure.
    
    \item Since there is no pinning between the second and third curves, any high probability event  remains a high probability event under the conditional law with boundary conditions $c_j=-\infty$ for $j\ge 2$. {We refer to this conditional law with boundary conditions $c_j=-\infty$ for $j\ge 2$ as \textit{interacting random walks} ($\m{IRW}$) with boundary conditions $(b_1,b_2)$ in the text (Definition \ref{def:IRW}). Again, the true definition is slightly different due to the parity structure of the Gibbs property.}  Using the explicit description of the $\m{IRW}$ law,  we show that under this law all limit points of $L_2(1)/N^{1/3}$ are non-atomic. This implies that under the conditional law the limit points of $L_2(1)/N^{1/3}$ are non-atomic as well; in other words, the  event $|L_2(1)-c_1| \le \delta N^{1/3}$ has small probability under the conditional measure. 
\end{itemize}
The above two bullet points imply the lower bound on \eqref{deal2}. The full details of this argument appear in Section \ref{sec4}.

\subsubsection{Convergence of softly non-intersecting random walks. } \label{sec:1.2.3} 

Our goal is to study the limiting law of
 \begin{align*}
     \big(L_1(j)-L_1(1), L_2(j)-L_1(1)\big)_{j=1}^{r},
 \end{align*}
 where $(L_1(j),L_2(j))_{r=1}^{n}$ are distributed as interacting random walks with some suitable boundary conditions. Here $n$ can be taken to be $\rho N^{2/3}$; it suffices that $n\to \infty$ as $N\to \infty$. Upon conditioning on $L_1(1)$, the shifted $\m{IRW}$ $(L_1(j)-L_1(1), L_2(j)-L_1(1))_{j=1}^{n}$, can be viewed as two softly non-intersecting random walk bridges started from certain random initial data which we describe now. 

\subsubsection*{Softly non-intersecting random walk bridges} Recall the random walk law $\Pr^{n;(0,\ga);\m{free}}$ introduced in Section \ref{sec:1.1}. Consider $(S_1(k),S_2(k))_{k=1}^n$ distributed as $\Pr^{n;(0,\ga);\m{free}}$. The softly non-intersecting random walk bridges are then {defined via
 \begin{equation} \label{wnlaw}
 \begin{split}
     \Pr_{W_n}^{n;(0,\ga);\m{free}}(\m{A}) &:= \frac{\Ex^{n;(0,\ga);\m{free}}[\ind_{\m{A}}W_n] }{\Ex^{n;(0,\ga);\m{free}}[W_n ]},\\
     \Pr_{W_n}^{n;(0,\ga);(b_1,b_2)}(\m{A}) &:= \frac{\Ex^{n;(0,\ga);\m{free}}[\ind_{\m{A}}W_n \mid S_1(n)=b_1, S_2(n)=b_2]}{\Ex^{n;(0,\ga);\m{free}}[W_n \mid S_1(n)=b_1, S_2(n)=b_2]},
\end{split}
 \end{equation}
 }
	where $W_n$ appears in \eqref{wscintro}. The soft non-intersection conditioning is expressed via the Radon--Nikodym derivative proportional to $W_n$. Indeed, note that there is a penalty of order $e^{-e^\delta}$ arising from $W_n$ whenever $S_1(k)-S_2(k) \le -\delta$. The right endpoints $(b_1,b_2)$ of the bridges are in principle random, but by a compactness argument it suffices to consider deterministic $(b_1,b_2)$ (varying with $n$) satisfying $b_i\in [-M\sqrt{n},M\sqrt{n}]$, $b_1-b_2\ge \frac{1}{M\sqrt{n}}$, and further $b_i/\sqrt{n}\to \beta_i\in\mathbb{R}$.  

 \medskip

 Let us write $\Ex^{n;(\m{h}_1,\m{h}_2);(b_1,b_2)}[\cdot]:=\Ex^{n;(\m{h}_1,\m{h}_2);\m{free}}[\cdot \mid S_1(n)=b_1,S_2(n)=b_2]$. We will give an outline of how to study the limit of the law in \eqref{wnlaw} with $W_n$ replaced by $\til{W}_n=\exp(-\sum_{k=2}^{n-1} e^{S_2(k)-S_1(k)})$, as the latter is easier to work with. 
 Take any event $\m{A}_r \in \sigma \{(S_1(k),S_2(k))_{k=1}^r\}$. Using the tower property of conditional expectation and the Markov property of random walk bridges, one has 
\begin{align}
\label{introiden}
    \Pr_{\til{W}_n}^{n;(0,\ga);(b_1,b_2)}(\m{A}_r)=\Ex^{n;(0,\ga);(b_1,b_2)}\left[\ind_{\m{A}_r}\cdot\til{W}_r\cdot V_n^{n-r+1;(b_1,b_2)}(S_1(r),S_2(r))\right],
\end{align}
	where
 \begin{align}\label{ratioimp}
     V_n^{m;(b_1,b_2)}(x,y):=\frac{A_n}{B_n}:=\frac{\Ex^{m;(x,y);(b_1,b_2)}[\til{W}_m]}{\Ex^{n;(0,\ga);(b_1,b_2)}[\til{W}_n]}.
 \end{align}
We then make the following deductions to compute the limit of \eqref{introiden}.

\begin{figure}[h!]
    \centering
    \includegraphics[scale=0.4]{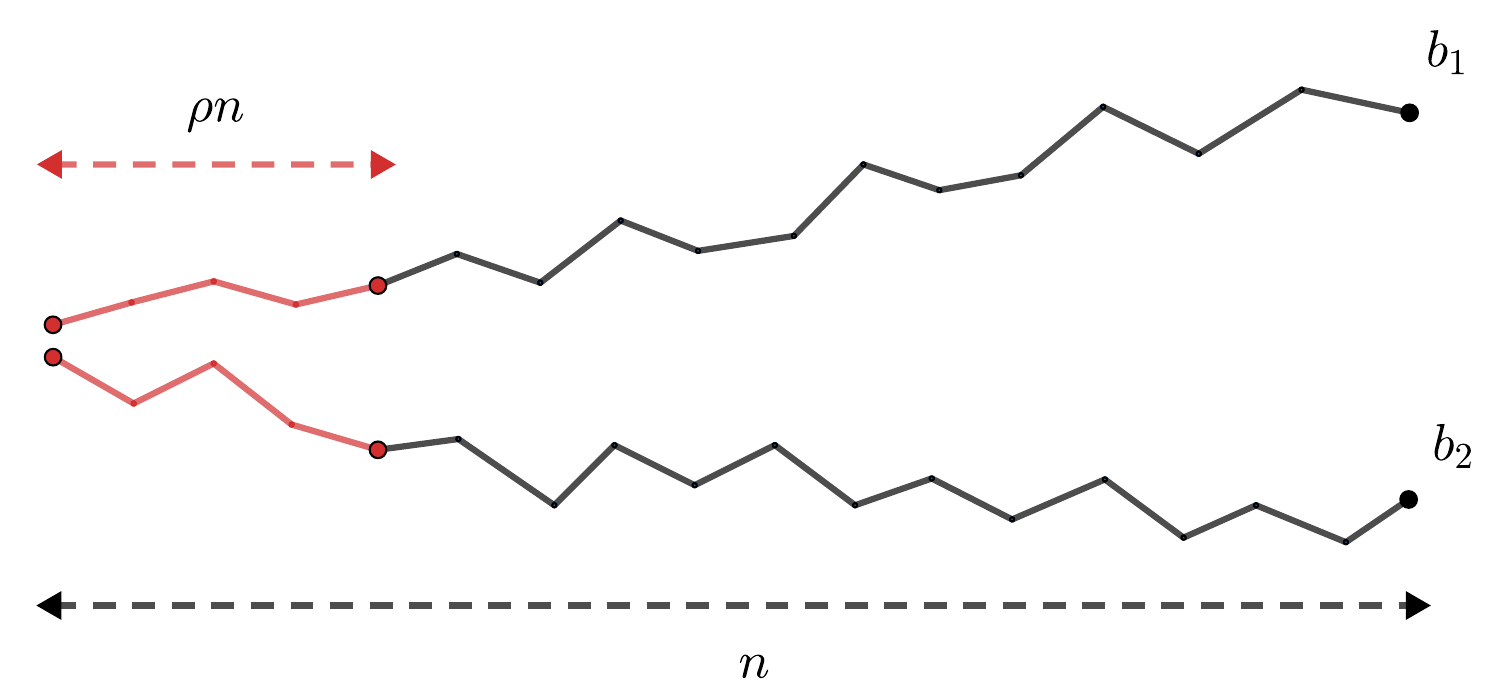}

    \caption{Two random walk bridges started at $O(1)$ distance apart and ending at $(b_1,b_2)$ are shown above. The distribution of the first $\rho n$ steps is close to that of free of random walks as we take $\rho$ small enough.}
    \label{fig:div}
\end{figure}

\begin{itemize}[leftmargin=15pt]
    \item As $n\to \infty$, the distribution of the initial part of the random walk bridge should not feel the effect of the endpoints $(b_1,b_2)$. In Lemma \ref{bridgewalk} we formalize this idea and show that $$\Ex^{n;(0,\ga);(b_1,b_2)}\big[g\big((S_1(k),S_2(k))_{k=1}^r\big)\big]=(1+o(1)) \cdot \Ex^{n;(0,\ga);\m{free}}\big[g\big((S_1(k),S_2(k))_{k=1}^r\big)\big]$$ for any integrable functional $g$. This allow us to replace $\Ex^{n;(0,\ga);(b_1,b_2)}$ appearing in \eqref{introiden} by $\Ex^{n;(0,\ga);\m{free}}$ with a $(1+o(1))$ multiplicative error.

    \item We next claim that as $m/n\to 1$ with $n\to\infty$, $V_n^{m;(b_1,b_2)}(x,y)$ converges to a limit which is independent of the endpoints $(b_1,b_2)$. Let us work with $m=n$ and write $U(k):=S_1(k)-S_2(k)$. It is known from \cite{spit} that the non-intersection probability of random walks $$\Pr^{n;(0,0);\m{free}}\big(U(k)\ge 0 \mbox{ for all }k\in \ll2,n\rr\big)$$ is asymptotically equal to $\mathfrak{c}\cdot n^{-1/2}$ for some explicit constant $\mathfrak{c}$. Comparing the soft and true non-intersection and using the above estimate, it is possible to show 
$cn^{-1/2}\le A_n , B_n \le Cn^{-1/2}.$
However the comparison technique is not powerful enough to produce the limit of $A_n/\sqrt{n}, B_n/\sqrt{n}$. 
Additionally, the limit $A_n/\sqrt{n}$ should depend on the initial data $(x,y)$ and it is not clear how to compute the limit for general initial data even in the true non-intersection case. We instead take the following route to argue the existence of the limit of the ratio.

\medskip

\noindent\textit{Our approach}. We consider the first $\rho n$ steps of the bridges (see Figure \ref{fig:div}). As in the first bullet point, this portion of the bridges does not feel the effect of the endpoints for $\rho$ small enough. We show that under the softly non-intersecting free law in \eqref{wnlaw}, $U(\rho n)/\sqrt{\rho  n}$ converges to the endpoint $R$ of a Brownian meander (Proposition \ref{meander}). It then follows that the remaining part of $U$ scaled diffusively, $(U(\rho n+tn)/\sqrt{n})_{t\in [0,1-\rho]}$, converges to a Brownian bridge $(B_t)_{t\in[0,1-\rho]}$ with $B_0=\sqrt{\rho} R$ and $B_{1-\rho}=\beta_1-\beta_2$ conditioned to stay positive (Proposition \ref{invarprin1}). Since $R$ is strictly positive a.s., so is $\mathsf Z^{(\beta_1,\beta_2)}:=\Pr(B_t >0 \mbox{ for all }t\in [0,1-\rho])$. {Factoring the weight $\widetilde{W}_n$ into two pieces corresponding to before and after time $\rho n$ and conditioning on the latter portion of the walk, the latter conditional expectation converges to $\m{Z}^{(\beta_1,\beta_2)}$, and we arrive at
\begin{align*}
    \Ex^{n;(x,y);(b_1,b_2)}[\til{W}_n] =(1+o(1)) \cdot \Ex^{\rho n;(x,y);\m{free}}[\til{W}_{\rho n}] \cdot \mathsf Z^{(\beta_1,\beta_2)}.
\end{align*}
}
This leads to
$V_n^{n;(b_1,b_2)}(x,y) = (1+o(1))V_{\rho n}^{\rho n; \m{free}}(x,y)$. From here a real analysis argument concludes the existence of the limit of the ratio in \eqref{ratioimp}. Let us call this limit $\til{V}(x,y)$; it is the analogue of $V$ defined in \eqref{Vdef}, with the Radon--Nikodym derivative $\til{W}_n$ instead of $W_n$.
\end{itemize}
Combining the above two bullet points, we eventually show that
\begin{align*}
     \lim_{n\to\infty}\Pr_{\til{W}_n}^{n;(0,\ga);(b_1,b_2)}(\m{A}_r)=\Ex^{r;(0,\ga);\m{free}}\left[\ind_{\m{A}_r}\cdot\til{W}_r\cdot \til{V}(S_1(r),S_2(r))\right].
\end{align*}
With a bit more technical work, one arrives at a Markov chain description of the above law. The details of the above argument are presented in Sections \ref{sec31} and \ref{sec32}. We mention that a version of the above problem was studied in \cite[Section 3.1]{cn16} for a single random walk conditioned to stay nonnegative, and a Doob $h$-transform formula analogous to \eqref{def:pv} was obtained. There the authors were able to express the corresponding function $V$ as a renewal function associated to a strict ascending ladder process of the random walk, which significantly simplified their argument. However, when dealing with soft non-intersection it is not clear how to obtain a similar description of $V$ and implement the same approach, so we perform the above analysis instead.

\subsection{Extension to other models}\label{sec:ext} 

The arguments described above can be used to extract convergence to stationary measures for other discrete half-space solvable models, such as the half-space geometric and exponential last passage percolation (LPP) models. These two models are solvable zero-temperature counterparts of the $\hslg$ polymer model, and they have been studied extensively from the early works \cite{rai00,br1,br01,br3,sis} to the more recent \cite{bbcs,bete,ale1,bz22}. Although the corresponding line ensemble is yet to be explicitly formulated, an analysis similar to ours should lead to a limiting measure of the form constructed in \cite{cn16} (the softly non-intersecting random walks should become truly non-intersecting at zero temperature). Since exponential LPP is a limit of the log-gamma polymer (see \cite[Section 3]{bc22} for example), a description of the stationary measure along the anti-diagonal path may be obtained directly by taking an appropriate limit of the law described in Section \ref{sec:1.1}.

There has been a large amount of recent work devoted to the construction of stationary measures for the half-space and open KPZ equations \cite{kpz86,it}, see \cite{bkld2,ck,bk,bkld,bd22,bc22,bcy} and the review \cite{cor22} for instance. Taking an intermediate disorder limit of the $\hslg$ polymer, one should be able obtain a line ensemble structure for the half-space KPZ equation. Once this is achieved, our approach can also be applied to study convergence of the increments of the half-space KPZ equation. We leave this for future consideration.

\subsection*{Organization} The remainder of the paper is organized as follows. In Section \ref{prelim} we collect some preliminary results related to the $\hslg$ line ensemble and various random walk models. In Section \ref{sec31}, we introduce the softly non-intersecting random walk bridges and establish various probability estimates. We then use these estimates to prove the convergence of softly non-intersecting random walk bridges around the left boundary in Section \ref{sec32}. In Section \ref{sec4}, we carry out the uniform separation argument illustrated in Section \ref{sec:1.2.2}. Finally, the proof of Theorem \ref{t.main1} appears in Section \ref{sec5}. One of the ingredients of our proof is the tightness of the full line ensemble, which we obtain by extending the arguments and techniques in \cite{half1}. Its proof is given in Section \ref{sec6}.

\subsection*{Acknowledgments} {We thank Ivan Corwin for conveying to us the idea regarding the separation of the first two curves from the rest of the line ensemble.} We thank Guillaume Barraquand and Ivan Corwin for their feedback on a draft of this paper, and we thank Amol Aggarwal and Amir Dembo for early discussions related to this project. {We thank the anonymous referee for their careful reading and useful comments
that helped improve the manuscript.} Part of this work was undertaken during the Columbia Probability Workshop at Columbia University in May 2023. We thank the organizers for their hospitality, and we acknowledge support from the Simons Foundation through Ivan Corwin's Investigator in Mathematics grant 929852.

\section{Preliminary results} \label{prelim}

In this section, we summarize a few results from \cite{half1}, referred to as \bcd\ in the
sequel, that form the toolbox of the proof of our main results. We also introduce various random walk models that arise in our analysis and explore their interconnections.

\subsection{$\hslg$ line ensemble and its Gibbs property} 
We start by defining the Gibbs property whose state space and associated weight function is  given by the following directed and colored (and labeled) graph. Define the graph $G$ with vertices $V(G):=\{(m,n): m\in \Z_{\ge 1}, n\in \Z_{<0}+\frac12\ind_{m\in 2\Z} \}$ and with the following directed colored (and labeled) edges:
	\begin{itemize}[leftmargin=15pt]
        \item For each $(m,n)\in \Z_{\ge 1}^2$, we put two {\color{blue}{\textbf{blue}}} edges from $$(2m+1,-n)\to(2m+2,-n+0.5)\mbox{ and }(2m+1,-n)\to(2m,-n+0.5).$$
         \item For each $(m,n)\in \Z_{\ge 1}^2$, we put two \black\ edges from $$(2m+2,-n-0.5) \to (2m+1,-n) \to\mbox{ and }(2m,-n+0.5) \to (2m+1,-n).$$
		\item For each $n\in \Z_{\ge 1}$, we put 
  \begin{itemize}[leftmargin=10pt]
      \item  a \textbf{black} edge: $(2,-2n-0.5)\to (1,-2n)$;  \ \ 
      a \purple\ (dashed) edge: $(2,-2n+0.5)\to (1,-2n+1).$
      \item a \blue\ edge $(1,-2n+1)\to (2,-2n+1.5)$; \ \ a \yellow\ edge $(1,-2n)\to (2,-2n+0.5).$
  \end{itemize}
	\end{itemize}
A portion of the corresponding graph is shown in Figure \ref{fig12}.	We write $E(G)$ for the set of edges of graph $G$ and $e=\{v_1\to v_2\}$ for a generic directed edge from $v_1$ to $v_2$ in $E(G)$ (the color of the edge is suppressed from the notation).

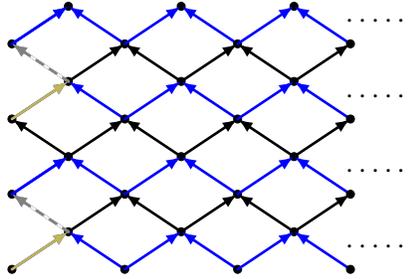
\begin{figure}[h!]
		\centering
			\begin{tikzpicture}[line cap=round,line join=round,>=triangle 45,x=1.5cm,y=1cm]
				\foreach \x in {0,1,2}
				{
					\draw [fill=black] (\x,0) circle (1.5pt);
					\draw [fill=black] (\x,-1) circle (1.5pt);
					\draw [fill=black] (\x,-2) circle (1.5pt);
					\draw [fill=black] (\x-0.5,-0.5) circle (1.5pt);
					\draw [fill=black] (\x-0.5,-1.5) circle (1.5pt);
					\draw [fill=black] (\x-0.5,-2.5) circle (1.5pt);
					\draw [fill=black] (\x-0.5,-3.5) circle (1.5pt);
					\draw [fill=black] (\x,-3) circle (1.5pt);
					\draw[line width=1pt,blue,{Latex[length=2mm]}-]  (\x,0) -- (\x-0.5,-0.5);
					\draw[line width=1pt,blue,{Latex[length=2mm]}-] (\x,0) -- (\x+0.5,-0.5);
					\draw[line width=1pt,black,{Latex[length=2mm]}-] (\x-0.5,-0.5) -- (\x,-1);
					\draw[line width=1pt,black,{Latex[length=2mm]}-] (\x+0.5,-0.5) -- (\x,-1);
					\draw[line width=1pt,blue,{Latex[length=2mm]}-]  (\x,-1) -- (\x-0.5,-1.5);
					\draw[line width=1pt,blue,{Latex[length=2mm]}-] (\x,-1) -- (\x+0.5,-1.5);
					\draw[line width=1pt,black,{Latex[length=2mm]}-] (\x-0.5,-1.5) -- (\x,-2);
					\draw[line width=1pt,black,{Latex[length=2mm]}-] (\x+0.5,-1.5) -- (\x,-2);
					\draw[line width=1pt,blue,{Latex[length=2mm]}-]  (\x,-2) -- (\x-0.5,-2.5);
					\draw[line width=1pt,blue,{Latex[length=2mm]}-] (\x,-2) -- (\x+0.5,-2.5);
					\draw[line width=1pt,black,{Latex[length=2mm]}-] (\x-0.5,-2.5) -- (\x,-3);
					\draw[line width=1pt,black,{Latex[length=2mm]}-] (\x+0.5,-2.5) -- (\x,-3);
					\draw[line width=1pt,blue,{Latex[length=2mm]}-]  (\x,-3) -- (\x-0.5,-3.5);
					\draw[line width=1pt,blue,{Latex[length=2mm]}-] (\x,-3) -- (\x+0.5,-3.5);
				}
			\draw[line width=1pt,white,{Latex[length=2mm]}-] (-0.5,-2.5) -- (0,-3);
				\draw[line width=1pt,white,{Latex[length=2mm]}-] (-0.5,-0.5) -- (0,-1);
				\draw[line width=1pt,gray,{Latex[length=2mm]}-,dashed] (-0.5,-2.5) -- (0,-3);
				\draw[line width=1pt,blue,{Latex[length=2mm]}-] (0,-2) -- (-0.5,-2.5);
				\draw[line width=1pt,yellow!70!black,{Latex[length=2mm]}-] (0,-3) -- (-0.5,-3.5);
				\draw[line width=1pt,gray,{Latex[length=2mm]}-,dashed] (-0.5,-0.5) -- (0,-1);
				\draw[line width=1pt,blue,{Latex[length=2mm]}-] (0,0) -- (-0.5,-0.5);
				\draw[line width=1pt,yellow!70!black,{Latex[length=2mm]}-] (0,-1) -- (-0.5,-1.5);
				\draw [fill=black] (2.5,-0.5) circle (1.5pt);
				\draw [fill=black] (2.5,-1.5) circle (1.5pt);
				\draw [fill=black] (2.5,-2.5) circle (1.5pt);
				\draw [fill=black] (2.5,-3.5) circle (1.5pt);
				\node at (2.8,-0.2) {$\cdots\cdots$};
				\node at (2.8,-1.2) {$\cdots\cdots$};
				\node at (2.8,-2.2) {$\cdots\cdots$};
				\node at (2.8,-3.2) {$\cdots\cdots$};
			\end{tikzpicture}
	\caption{The graph $G$ and its directed colored edges}
		\label{fig12}
	\end{figure}	
	
	We next define a bijection $\phi:V(G)\to \Z_{\ge 1}^2$ by $\phi((m,n))=(-\lfloor n \rfloor, m)$. This pushes the directed/colored edges in $G$ onto directed/colored edges on $\Z_{\ge 1}^2$ which we denote  by $E(\Z_{\ge 1}^2)$. We will always view $G$ as in Figure \ref{fig12} and will use the $\phi$-induced indexing when describing this graph. 
 
We associate to each $e\in E(\Z_{\ge 1}^2)$ a weight function based on its color defined as follows: 
	\begin{align}\label{def:wfn}
		W_{e}(x):=\begin{cases}
			\exp(\theta x-e^x) & \mbox{ if $e$ is \blue \color{blue}}, \\
			\exp(-e^x) & \mbox{ if $e$ is \black}, \\
			\exp(\alpha x-e^x) & \mbox{ if $e$ is \purple\,(dashed),} \\
			\exp((\theta+\alpha) x-e^x) & \mbox{ if $e$ is \yellow}.
		\end{cases}
	\end{align}

	\begin{theorem}[Half-space log-gamma line ensemble]\label{thm:conn} Fix $\alpha,\theta>0$, and $N\in \Z_{\ge 1}$.  Set $\mathcal{K}_N:=\{(i,j)\in \Z_{\ge 1}^2 : i\in [1,N], j\in [1,2N-2i+2]\}$. There exist random variables $\big(\L_i^N(j) : (i,j)\in \mathcal{K}_N\big)$, called here the $\hslg$ line ensemble, on a common probability space such that:
		\begin{enumerate}[label=(\roman*), leftmargin=20pt]
			\item \label{i01} We have the following equality in distribution:
			\begin{align}\label{impi}
				\big(\L_1^N(2j+1)\big)_{j\in \ll0,N-1\rr} \stackrel{(d)}{=} \big(\log Z(N+j,N-j)+2N\Psi(\theta)\big)_{j\in \ll0,N-1\rr},
			\end{align}
            where $\Psi(\theta) =  (\log\Gamma)'(\theta)$ is the digamma function.
			\item \label{i02} 
Let $\Lambda$  be any connected subset of $\{(i,j)\in \Z_{\ge 1}^2 : i\in [1,N-1], j\in [1,2N-2i+1]\}$. Set
		$$\partial \Lambda:=\big\{v\in \Z_{\ge 1}^2\cap \Lambda^c :\{v'\to v\}\in E(\Z_{\ge 1}^2) \mbox{ or }\{v\to v'\}\in E(\Z_{\ge 1}^2),\mbox{ for some } v'\in \Lambda\big\}.$$
  The law of $\big(\L_i^N(j) : (i,j)\in \Lambda\big)$ conditioned on $\big(\L_i^N(j): (i,j)\in \Lambda^c\big)$ is a measure on $\R^{|\Lambda|}$ with density  at $(u_{i,j})_{(i,j)\in \Lambda}$ proportional to
		\begin{align}\label{e:hsgibb}
			\prod_{e=\{v_1\to v_2\}\in E(\Lambda\cup \partial\Lambda)} W_{e}(u_{v_1}-u_{v_2}),
		\end{align}
		\end{enumerate}
where  $u_{i,j}=\L_i^N(j)$ for $(i,j)\in \partial \Lambda.$
	\end{theorem}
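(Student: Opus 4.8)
The plan is to obtain Theorem~\ref{thm:conn} from the construction of the $\hslg$ line ensemble carried out in \cite{half1} (abbreviated \bcd), of which this section is a summary; so I will describe the steps of that construction and indicate where the real content lies, rather than reproving it from scratch.

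\textbf{Step 1 (geometric RSK and the half-space Whittaker process).} The starting point is the symmetric geometric RSK correspondence (see \cite{cosz14,osz14,nz17,bz19}), applied to the triangular array $(\mathsf W_{i,j})$ of inverse-gamma weights restricted to an $N\times N$ block. Its output is a triangular pattern of positive reals, and two facts are needed, both from \bcd\ (building on the half-space Whittaker measure of \cite{bbc20}): first, a distinguished subfamily of the output coordinates equals the point-to-point partition functions $Z(N+j,N-j)$ along the anti-diagonal, via the sum-over-paths interpretation of gRSK; second, since the input entries are independent inverse-gammas of parameter $\alpha+\theta$ on the diagonal and $2\theta$ off the diagonal, and the gRSK bijection is volume-preserving in logarithmic coordinates, the pushforward of the product input density has an explicit product form indexed by the nearest-neighbour structure of the pattern.

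\textbf{Step 2 (defining the ensemble and verifying \ref{i01}--\ref{i02}).} I would then set $\big(\L_i^N(j):(i,j)\in\mathcal K_N\big)$ to be appropriate affine functions of the logarithms of these output coordinates, relabelled through the bijection $\phi$ of the statement; the additive constant $2N\Psi(\theta)$ in \eqref{impi} is exactly the centering of $\log Z(N+j,N-j)$, so \eqref{impi} is the identification of Step~1. For \ref{i02}, the key observation is that the pushforward density of Step~1, written in the log variables, factorizes as a product over nearest-neighbour pairs of the pattern, each factor being one of the functions $W_e$ in \eqref{def:wfn}: the density of the negative of a $\log\Gamma^{-1}(\beta)$ variable is proportional to $e^{\beta x-e^x}$, which produces the blue ($\beta=\theta$), black ($\beta=0$), gray/dashed ($\beta=\alpha$, from the diagonal weights) and yellow ($\beta=\theta+\alpha$, from the reflection at the diagonal) edges, while the coloring and the half-integer offsets in the vertex set of $G$ record the even/odd column parity forced by the half-space geometry.

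\textbf{Step 3 (Gibbs property) and the main obstacle.} Given that the global density is a product over $E(\Z_{\ge1}^2)$, part~\ref{i02} is then formal: conditioning on $(\L_i^N(j))_{(i,j)\in\Lambda^c}$ retains only the factors of edges incident to $\Lambda$, i.e.\ those in $E(\Lambda\cup\partial\Lambda)$, and the conditional density is their product normalized to mass one; finiteness of the normalizer uses only that $G$ is locally finite (so $\partial\Lambda$ is finite) and that each $W_e$ is positive and integrable, which is clear from \eqref{def:wfn}. The genuine work --- and the reason the theorem is cited here rather than reproved --- is the combinatorial bookkeeping in Steps~1--2: checking that the half-space gRSK output, after taking logarithms, reproduces \emph{precisely} the colored graph $G$ with the weights \eqref{def:wfn}, tracking the parity splitting of the lattice, the reflection at the diagonal that creates the four boundary colors, and the $\phi$-relabelling. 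I would carry this out by quoting the relevant statements of \bcd\ and verifying that, under $\phi$, their indexing and weight conventions match \eqref{impi} and \eqref{e:hsgibb} term by term; everything else is routine.
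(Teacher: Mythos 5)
Your proposal is correct and takes essentially the same route as the paper: the paper does not prove Theorem~\ref{thm:conn} at all but cites it directly from \cite{half1}, treating the line ensemble as a black box thereafter. Your sketch of the gRSK/half-space Whittaker construction underlying that result, and your observation that part~\ref{i02} is formal once the global density factorizes over $E(\Z_{\ge 1}^2)$ with the weights \eqref{def:wfn}, accurately summarize the content of the cited construction.
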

{The term $2N\Psi(\theta)$ appearing in \eqref{impi} encodes the law of large numbers of the log-partition function of the $\hslg$ polymer in the unbound phase (see Theorem 1.1 in \bcd).}
 We refer to the above measure on $\R^{|\Lambda|}$ with density  at $(u_{i,j})_{(i,j)\in \Lambda}$ proportional to \eqref{e:hsgibb} as the $\hslg$ \textit{Gibbs measure} with boundary conditions $(u_{i,j})_{(i,j)\in \partial \Lambda}$. 
 The precise description of the $\hslg$ line ensemble is given in \bcd. For the rest of the paper, we will take it as a black box, as we shall need only a few large-scale macroscopic properties of the object.
 
 We now state a few important properties of the $\hslg$ line ensemble and $\hslg$ Gibbs measures that were proven in \bcd.

\begin{lemma}[Translation invariance; Lemma 2.1 in \bcd]
    \label{traninv}
    Let $\big(L(v) : v\in \Lambda\big)$ be a collection of random variables distributed as the $\hslg$ Gibbs measure on the domain $\Lambda$ with some boundary conditions $\big(u_{i,j}:(i,j)\in \partial\Lambda\big)$.
 Then for any $c\in\mathbb{R}$, the law of $\big(L(v)+ c : v\in \Lambda\big)$ is the $\hslg$ Gibbs measure on the domain $\Lambda$ with boundary conditions $\big(u_{i,j}+c : (i,j)\in \partial\Lambda\big)$.	
\end{lemma}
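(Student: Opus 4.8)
The plan is to exploit the structure of the $\hslg$ Gibbs density \eqref{e:hsgibb}: it is a product over edges $e=\{v_1\to v_2\}$ of factors $W_e(u_{v_1}-u_{v_2})$, each of which depends on the configuration only through the \emph{increment} across $e$. A simultaneous shift by $c$ of all bulk variables and all boundary values leaves every such increment unchanged, and hence leaves the density unchanged except for the relabeling $u_{i,j}\mapsto u_{i,j}+c$ of the boundary data. Combining this with the fact that a translation pushes an absolutely continuous measure to one with the translated density, and that both measures in question are probability measures, will force them to coincide.

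In detail, I would first fix $c\in\R$ and write the density of the $\hslg$ Gibbs measure $\mu_{\mathbf u}$ on $\Lambda$ with boundary data $\mathbf u=(u_{i,j})_{(i,j)\in\partial\Lambda}$ as
\[
f_{\mathbf u}\big((x_v)_{v\in\Lambda}\big)=\mathsf Z(\mathbf u)^{-1}\prod_{e=\{v_1\to v_2\}\in E(\Lambda\cup\partial\Lambda)}W_e(x_{v_1}-x_{v_2}),
\]
with the convention $x_v:=u_v$ for $v\in\partial\Lambda$, where $\mathsf Z(\mathbf u)\in(0,\infty)$ is the normalizing constant (finite and positive because $\mu_{\mathbf u}$ is a probability measure, which is part of Theorem~\ref{thm:conn} / \bcd). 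The translation $T_c\colon(x_v)_{v\in\Lambda}\mapsto(x_v+c)_{v\in\Lambda}$ is a diffeomorphism of $\R^{|\Lambda|}$ with unit Jacobian, so $(T_c)_*\mu_{\mathbf u}$ has density $y\mapsto f_{\mathbf u}\big((y_v-c)_{v\in\Lambda}\big)$. I would then inspect each factor of this last expression according to how the edge $e=\{v_1\to v_2\}$ meets $\Lambda$: if $v_1,v_2\in\Lambda$ the factor is $W_e\big((y_{v_1}-c)-(y_{v_2}-c)\big)=W_e(y_{v_1}-y_{v_2})$; if exactly one endpoint, say $v_1$, lies in $\Lambda$ and $v_2\in\partial\Lambda$ the factor is $W_e\big((y_{v_1}-c)-u_{v_2}\big)=W_e\big(y_{v_1}-(u_{v_2}+c)\big)$ (and symmetrically when $v_2\in\Lambda$, $v_1\in\partial\Lambda$); and if $v_1,v_2\in\partial\Lambda$ the factor is the constant $W_e(u_{v_1}-u_{v_2})=W_e\big((u_{v_1}+c)-(u_{v_2}+c)\big)$. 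In all three cases this is exactly the factor appearing in the $\hslg$ Gibbs density on $\Lambda$ with boundary data $\mathbf u+c:=(u_{i,j}+c)_{(i,j)\in\partial\Lambda}$, evaluated at the bulk configuration $(y_v)_{v\in\Lambda}$. Hence $f_{\mathbf u}\big((y_v-c)_{v\in\Lambda}\big)=\big(\mathsf Z(\mathbf u+c)/\mathsf Z(\mathbf u)\big)\,f_{\mathbf u+c}(y)$; since $f_{\mathbf u}(T_{-c}\,\cdot\,)$ and $f_{\mathbf u+c}$ are both probability densities, the prefactor is $1$, and therefore $(T_c)_*\mu_{\mathbf u}=\mu_{\mathbf u+c}$, which is the assertion.

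I do not expect any genuine obstacle here. The whole argument reduces to the observation — immediate from the definition \eqref{def:wfn} of the edge weights — that $W_e$ enters \eqref{e:hsgibb} only through the difference of the two adjacent line-ensemble values, together with the routine bookkeeping of the three edge types relative to $\Lambda$ and $\partial\Lambda$. The one ingredient that is not internal to the computation is the finiteness and positivity of the partition function $\mathsf Z(\mathbf u)$, which we take as given from Theorem~\ref{thm:conn} (equivalently, from \bcd).
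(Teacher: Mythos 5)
Your proof is correct, and it is precisely the natural argument: the density \eqref{e:hsgibb} depends only on increments across edges, so a simultaneous translation of bulk and boundary variables leaves each factor unchanged (after relabeling boundary data), and the normalizing constants match because both sides are probability densities. The paper itself cites this fact from Lemma 2.1 of \bcd\ without proof; your computation is what that reference carries out.
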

\begin{proposition}[Stochastic monotonicity; Proposition 2.6 in \bcd]\label{p:gmc} Fix $k_1\le k_2$, $a_i\le b_i$ for $k_1\le i\le k_2$. Let
		\begin{align*}
			\Lambda:=\{(i,j): k_1\le i\le k_2, a_i\le j\le b_i\}.
		\end{align*}
		There exists a probability space that supports a collection of random variables
		\begin{align*}
			\big(L(v;(u_w)_{w\in \partial \Lambda}) : v\in \Lambda, (u_w)_{w\in\partial\Lambda} \in \R^{|\partial\Lambda|}\big)
		\end{align*}
		such that
		\begin{enumerate}[leftmargin=18pt]
			\item For each $(u_w)_{w\in\partial\Lambda}\in \R^{|\partial\Lambda|}$,  the marginal law of $\big(L(v;(u_w)_{w\in \partial \Lambda}): v\in \Lambda\big)$ is a measure on $\R^{|\Lambda|}$ with density  at $(u_{i,j})_{(i,j)\in \Lambda}$ proportional to
		\eqref{e:hsgibb}.
			\item With probability $1$, for all $v\in \Lambda$ we have
$$L\big(v;(u_w)_{w\in \partial \Lambda}\big) \le L\big(v;(u_w')_{w\in \partial \Lambda}\big) \mbox{ whenever }u_w\le u_w' \mbox{ for all }w\in \partial\Lambda.$$
Consequently, the probability of an increasing event under the $\hslg$ Gibbs measure increases if the boundary conditions are increased, and decreases if the boundary conditions are decreased.
		\end{enumerate}
	\end{proposition}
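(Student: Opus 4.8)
The plan is to realize the whole family $\bigl(L(v;(u_w)_{w\in\partial\Lambda})\bigr)_{v\in\Lambda,\,(u_w)\in\R^{|\partial\Lambda|}}$ on one probability space, simultaneously for all boundary conditions, as the $t\to\infty$ limit of a monotone coupling of heat-bath (Glauber) dynamics --- one chain per boundary condition, all driven by the same randomness. For a fixed boundary condition $(u_w)_{w\in\partial\Lambda}$, consider the continuous-state Markov chain on $\R^{|\Lambda|}$ that at each step selects a vertex $v\in\Lambda$ (say cyclically through $\Lambda$) and replaces the current value at $v$ by a fresh sample from its conditional density given all other coordinates and the boundary values, i.e.\ the density on $\R$ proportional to $\prod_{e\ni v} W_e(u_{v_1}-u_{v_2})$ with the remaining coordinates frozen. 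This chain is reversible for the $\hslg$ Gibbs measure and irreducible, since each single-site conditional has full support ($W_e>0$ on all of $\R$). Moreover each such conditional is a proper density with super-exponentially light tails on both sides --- which is implicit in the well-definedness of the $\hslg$ Gibbs measure, and reflects that every vertex has both an incoming and an outgoing incident edge, each contributing a factor $\exp(a_e t-e^{t})$ with $a_e\ge 0$. These tail bounds are uniform over the frozen environment, so they yield a Foster--Lyapunov drift condition (e.g.\ for $\sum_{v\in\Lambda}\cosh L(v)$), hence positive Harris recurrence, a unique invariant law --- necessarily the $\hslg$ Gibbs measure --- and convergence in total variation to it from any starting configuration.

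The structural input that makes the coupling monotone is that $(x,y)\mapsto \log W_e(x-y)=a_e(x-y)-e^{x-y}$ is concave and \emph{supermodular}, its mixed partial derivative being $e^{x-y}>0$. It follows that the single-site conditional density of $L(v)$ has monotone likelihood ratio in each neighbouring value (in $\Lambda$ or on $\partial\Lambda$): raising any neighbour shifts the conditional law of $L(v)$ stochastically upward (equivalently, $t\mapsto\psi(t-s)-\psi(t)$ is nondecreasing for concave $\psi$ and $s\ge 0$). We therefore couple the updates monotonically by the quantile (inverse-CDF) coupling: drive the resampling at a given step by a single $U\sim\mathrm{Unif}[0,1]$ shared across all boundary conditions, and set the new value to $F^{-1}_{\mathrm{env}}(U)$, where $F_{\mathrm{env}}$ is the (continuous, strictly increasing) conditional CDF for the current environment; since a larger environment gives a pointwise smaller CDF, this is a coordinatewise-monotone update.

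Now run all the chains (over every $(u_w)\in\R^{|\partial\Lambda|}$) with the same vertex schedule and the same variables $U_t$, started from one common configuration. An induction on $t$, using the monotone update and the fact that the boundary values are held fixed, shows that $(u_w)\le(u_w')$ forces the two configurations to stay ordered coordinatewise for all time. To pass to the limit, fix a countable dense $D\subset\R^{|\partial\Lambda|}$; the time-$t$ marginals along $D$ converge to the corresponding Gibbs measures and are hence tight, so along a subsequence we obtain a joint limit $(\xi_d)_{d\in D}$ with each $\xi_d$ distributed as the $\hslg$ Gibbs measure with boundary conditions $d$ and with $\xi_d\le\xi_{d'}$ almost surely whenever $d\le d'$. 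Finally extend to all boundary conditions by the monotone limit $L(v;(u_w)):=\lim_{D\ni d\downarrow(u_w)}\xi_d(v)$, well defined a.s.\ by monotonicity, which has the correct marginal law because $(u_w)\mapsto(\hslg\ \text{Gibbs measure})$ is continuous in distribution (immediate from continuity of the densities \eqref{e:hsgibb} together with the uniform tail bounds and dominated convergence). This delivers all the assertions; the concluding ``Consequently'' statement is just the almost sure pointwise ordering restated as stochastic domination.

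The clean part of the argument is the supermodularity/monotone-coupling mechanism; the genuine work is the ``soft'' analysis --- the Foster--Lyapunov/ergodicity estimate for the non-compact chain and the simultaneous-in-$(u_w)$ assembly with the requisite joint regularity --- which is where I expect the main obstacle to lie. If one wants only the weaker ``increasing events'' consequence, it follows in one step from the continuous Holley inequality: the $\hslg$ density is log-supermodular by the computation above, and for $u\le u'$ Holley's condition $f_{u'}(a\vee b)\,f_u(a\wedge b)\ge f_u(a)\,f_{u'}(b)$ reduces edge by edge to supermodularity of $(x,y)\mapsto\log W_e(x-y)$ for edges inside $\Lambda$ and, for edges touching $\partial\Lambda$, to the monotonicity of $t\mapsto\psi(t-s)-\psi(t)$ for $\psi=\log W_e$ concave and $s\ge 0$.
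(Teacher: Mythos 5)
The paper cites this proposition from \bcd\ without reproducing a proof, so there is no in-paper argument to compare against. That said, your proposal is the standard method for proving monotonicity of Gibbsian line ensembles (the approach originating in Corwin--Hammond and used throughout the HSLG/log-gamma literature): realize all boundary-condition-indexed Gibbs measures simultaneously via a monotone heat-bath (quantile-coupled) Markov chain, with log-supermodularity of the interaction as the structural engine. Your supermodularity calculation is correct in both edge orientations (the mixed partial is $e^{x-y}>0$), the observation that every interior vertex has both an incoming and an outgoing edge — hence super-exponential tails on both sides of each single-site conditional — is exactly the right reason the resamplings are well posed, and the Holley-inequality paragraph at the end is a clean, correct, and self-contained derivation of the weaker ``increasing events'' consequence that the paper actually invokes.

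Two steps are asserted rather than carried out, and you are candid that this is where the real work lies. First, the Foster--Lyapunov/Harris-ergodicity claim for the continuous-state Gibbs sampler: the tail decay of each single-site conditional is uniform in the environment, but the \emph{location} of the conditional is not, so one must track how the fixed boundary data propagates through a full sweep to pull the Lyapunov function back; ``$\sum_v\cosh L(v)$ is a drift function'' needs an actual estimate, not just the tail bound. (Many treatments avoid this entirely by discretizing the state space or by a sandwiching/coupling-from-the-past argument between a very high and a very low start; that is closer to how such results are usually written up.) Second, your limit step over a countable dense $D$ followed by $L(v;(u_w)):=\lim_{d\downarrow(u_w)}\xi_d(v)$ only produces a right-continuous monotone version; to conclude the marginal at the endpoint equals the Gibbs measure you need weak continuity of the Gibbs measure in the boundary data \emph{plus} almost-sure left/right continuity of the constructed monotone family, and it is cleaner to simply observe that the quantile-coupled chain already runs simultaneously for every $(u_w)\in\R^{|\partial\Lambda|}$, take the weak limit as $t\to\infty$ of the whole monotone family, and invoke a Skorohod representation on the uncountable product if you want an a.s.\ ordered version. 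Neither gap is conceptual — the supermodularity/quantile-coupling mechanism is exactly right — but both require filling in before this becomes a proof rather than a proof sketch.
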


 The $\hslg$ line ensemble enjoys a soft non-intersection property which is captured in the following proposition.
 
\begin{proposition}[Theorem 3.1 in \bcd]
    \label{pp:order}  Fix any $k\in \Z_{\ge 1}$ and $\rho\in (0,1)$. There exists $N_0=N_0(\rho,k)>0$ such that for all $N\ge N_0$ we have $\Pr(\m{Ord}_{k,N}) \ge 1-\rho^N$, where
\begin{equation}
\label{t41}
    \begin{aligned}
        \m{Ord}_{k,N}& :=\bigcap_{i=1}^k \bigcap_{p=1}^{N-k-2} \left\{\max \big(\L_i^N(2p+1),\L_i^N(2p-1)\big) \le \L_i^N(2p)+(\log N)^{7/6}\right\} \\ & \hspace{3.5cm}\cap\left\{\L_{i+1}^N(2p) \le \min \big(\L_i^N(2p-1),\L_i^N(2p+1)\big)+(\log N)^{7/6}\right\}.
    \end{aligned}
    \end{equation}
\end{proposition}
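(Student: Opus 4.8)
The plan is to bound $\m{Ord}_{k,N}^c$ by a union of $O(kN)$ single-site events and to control each by combining the Gibbs property with the super-exponential decay of the weight functions in \eqref{def:wfn}. Write $t_N:=(\log N)^{7/6}$. Then $\m{Ord}_{k,N}^c$ is contained in the union, over $i\in\ll1,k\rr$ and $p\in\ll1,N-k-2\rr$, of the four events $\{\L_i^N(2p+1)>\L_i^N(2p)+t_N\}$, $\{\L_i^N(2p-1)>\L_i^N(2p)+t_N\}$ and $\{\L_{i+1}^N(2p)>\L_i^N(2p\pm1)+t_N\}$. There are at most $4kN$ of these, so it suffices to bound each by $\tfrac14 k^{-1}N^{-1}\rho^N$; a bound of the form $C(\theta,\alpha)\exp(-\tfrac12 e^{t_N})$ will do, since $e^{t_N}=\exp((\log N)^{7/6})$ grows faster than any power of $N$ and hence dominates $\log(kN)+N|\log\rho|$ once $N\ge N_0(\rho,k)$.

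For a single such event I would apply the Gibbs property (Theorem \ref{thm:conn}\ref{i02}) with $\Lambda$ equal to the single vertex carrying the violation, which gives a one-dimensional conditional density proportional to the product of the (at most four) weight functions on the edges incident to that vertex. Each such factor has the form $\exp(cx-e^x)$ with $c\in\{0,\theta,\alpha,\theta+\alpha\}$ (the last two only at the left boundary $p=1$), so it decays like $\exp(-e^x)$ as $x\to+\infty$. For instance, the conditional density of $X=\L_i^N(2p)$ given the rest is proportional to $\exp(-2\theta X)\exp(-e^{a-X}-e^{a'-X}-e^{X-c}-e^{X-c'})$, where $a,a'$ are the adjacent odd points of curve $i$ and $c,c'$ the adjacent odd points of curve $i-1$. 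To bound the \emph{decreasing} event $\{X<a-t_N\}$ I would first invoke stochastic monotonicity (Proposition \ref{p:gmc}) to send $c,c'\to-\infty$, which only raises the probability; under the resulting density $\propto\exp(-2\theta X-e^{a-X}-e^{a'-X})$, the substitution $u=e^{M-X}$ with $M:=\max(a,a')$ gives
\[
\Pr\big(X<M-t_N\big)\;\le\;\frac{\int_{-\infty}^{M-t_N}e^{-2\theta X-e^{M-X}}\,dX}{\int_{\R}e^{-2\theta X-2e^{M-X}}\,dX}\;=\;\frac{\Gamma(2\theta,e^{t_N})}{2^{-2\theta}\Gamma(2\theta)}\;\le\;C(\theta)\,e^{-\tfrac12 e^{t_N}},
\]
uniformly in $a,a'$ (translation invariance, Lemma \ref{traninv}, removes any common shift). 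Since $\{X<a-t_N\}\subseteq\{X<M-t_N\}$ this disposes of the two ``odd below even'' violations, and the left-boundary variants are identical with $2\theta$ replaced by the appropriate sum of edge coefficients.

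The hard part will be the ``curve $i+1$ below curve $i$'' violation $\{\L_{i+1}^N(2p)>\L_i^N(2p+1)+t_N\}$, which is \emph{increasing} in $X=\L_{i+1}^N(2p)$. Running the same scheme — now pushing the spare curve-$i$ neighbour $\L_i^N(2p-1)$ up to $+\infty$ and substituting $u=e^{X-c}$ with $c:=\L_i^N(2p+1)$ — produces
\[
\Pr\big(\L_{i+1}^N(2p)>\L_i^N(2p+1)+t_N\,\big|\,\text{rest}\big)\;\le\;C\,e^{-2\theta(c-M)}\,e^{-e^{t_N}},\qquad M:=\max\!\big(\L_{i+1}^N(2p-1),\L_{i+1}^N(2p+1)\big),
\]
which is the required estimate \emph{only when} $M\le c$. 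The obstruction is intrinsic: the conditional law of $\L_{i+1}^N(2p)$ depends on its curve-$(i+1)$ odd neighbours (indispensable for integrability as $X\to-\infty$) \emph{and} on its curve-$i$ odd neighbours (the super-exponential cutoff), and no single monotonicity step decouples them. So the main remaining task is to rule out the pathological event $\{M>c\}$, i.e.\ an odd point of curve $i+1$ poking above $\L_i^N(2p+1)$. I see two ways to close this: (i) a downward induction on the curve index, bounding $\{M>c\}$ by a violation one curve lower until the recursion terminates at the bottom curve $\L_N^N$, which has no curve beneath it; or, more cleanly, (ii) feed in a priori one-point tail bounds for each $\L_i^N(j)$ with $i\le k+1$ — available from the construction of the $\hslg$ line ensemble in \bcd\ via the half-space Whittaker measure and geometric RSK, together with the process-level tightness of the top $k$ curves — which pin every such curve within $O(t_N)$ of a deterministic, strictly ordered profile off an event of probability $o(\rho^N)$. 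On that event $M\le c$, the Gamma estimate applies, and a union bound over the $4kN$ events gives $\Pr(\m{Ord}_{k,N}^c)\le\rho^N$ for $N\ge N_0(\rho,k)$. I expect establishing and threading through this a priori envelope (and handling the even/odd parity of the Gibbs graph and the left-boundary edges carefully along the way) to be the bulk of the work.
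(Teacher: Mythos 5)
This statement is cited from \bcd\ (Theorem 3.1) without proof in the present paper, so I can only assess your argument on its own merits, and there is a gap that infects even the case you regard as routine. Your bound on $\Pr(X<a-t_N)$ for $X=\L_i^N(2p)$ hinges on sending the curve-$(i-1)$ odd neighbours $c,c'$ to $-\infty$ and then computing under the density $\propto\exp(-2\theta X-e^{a-X}-e^{a'-X})$. But that density is the $c,c'\to+\infty$ limit, not the $c,c'\to-\infty$ limit. As $c,c'\to-\infty$ the factor $\exp(-e^{X-c}-e^{X-c'})$ concentrates $X$ near $\min(c,c')$, the conditional law degenerates, and $\Pr(X<a-t_N)\to 1$, which is useless; whereas $c,c'\to+\infty$ does give your clean density but \emph{lowers} $\Pr(X<a-t_N)$ by stochastic monotonicity, so your Gamma-tail estimate produces a lower bound rather than an upper bound on the conditional probability. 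Equivalently, your displayed denominator $\int_{\mathbb R} e^{-2\theta X-2e^{M-X}}\,dX$ is an \emph{over}-estimate of the true normaliser $\int_{\mathbb R} e^{-2\theta X-e^{a-X}-e^{a'-X}-e^{X-c}-e^{X-c'}}\,dX$, since dropping the nonnegative terms $e^{X-c},e^{X-c'}$ from the exponent only enlarges the integrand --- the wrong direction. The structural obstruction you correctly diagnose for the curve-$(i+1)$-above-curve-$i$ violations (the curve-$i$ neighbours supply the super-exponential cutoff, the adjacent-curve neighbours supply integrability, and no single monotonicity step decouples them) is in fact present in all four single-site violations; the symmetric picture appears if you instead condition on the odd point $Y=\L_i^N(2p+1)$ with density $\propto\exp(2\theta Y-e^{Y-b}-e^{Y-b'}-e^{d-Y}-e^{d'-Y})$, where bounding $\{Y>b+t_N\}$ would require $d,d'\to+\infty$, which again degenerates. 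None of the cases is ``easy''.

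Of your two remedies, (i) does not obviously terminate: the problematic event $\{M>c\}$ is a cross-curve odd-vs-odd comparison, which is not itself one of the $\m{Ord}_{k,N}$ violations but must be decomposed into several of them, and the recursion then runs through all $\sim N$ curves with compounding errors that would need control. Remedy (ii) is circular in the present paper: the top-$k$-curve tightness (Theorem~\ref{p.tight3}) and the one-point estimates behind it (Proposition~\ref{p.nottoo}, Theorem~\ref{thm:eptight}, Proposition~\ref{p:high2k}) are all proved in Section~\ref{sec6} \emph{using} Proposition~\ref{pp:order}; and in any case tightness only supplies probability bounds of size $o(1)$, far short of the $\rho^N$ you need after union-bounding over $O(kN)$ sites. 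Closing the gap therefore requires an input independent of the Gibbs-plus-monotonicity calculus, presumably extracted from the explicit half-space Whittaker / geometric RSK construction of the ensemble, which is likely the route taken in \bcd.
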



The main result of \bcd\ demonstrates tightness of the top curve of the line ensemble. Here we extend their result to tightness of an arbitrary finite number of curves.

\begin{theorem}\label{p.tight3} For each $k\in \mathbb{Z}_{\geq 1}$ and $A>0$, the process $(\L_k^N(xN^{2/3})/N^{1/3})_{x\in [0,A]}$ is tight in the space $C([0,A])$ under the uniform topology.   
\end{theorem}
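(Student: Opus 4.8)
The plan is to prove, by induction on $k$, process-level tightness of $\big(\L_k^N(xN^{2/3})/N^{1/3}\big)_{x\in[0,A]}$ in $C([0,A])$ by verifying the two standard ingredients of the usual tightness criterion: \textbf{(i)} one-point tightness, i.e. for each fixed $x\in[0,A]$ the family $\{\L_k^N(\lfloor xN^{2/3}\rfloor)/N^{1/3}\}_{N\ge 1}$ is tight in $\R$, with upper and lower tail bounds uniform over $x$ in compacts; and \textbf{(ii)} a uniform modulus of continuity estimate, namely $\lim_{\delta\downarrow 0}\limsup_{N\to\infty}\Pr\big(\sup_{|x-y|\le\delta,\,x,y\in[0,A]}|\L_k^N(xN^{2/3})-\L_k^N(yN^{2/3})|\ge\e N^{1/3}\big)=0$ for every $\e>0$. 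The base case $k=1$ is the main result of \bcd, transported to $\log Z$ through \eqref{impi}. In the inductive step we assume (i) and (ii), in the form of uniform one-point tail bounds, for the curves $\L_1^N,\dots,\L_{k-1}^N$.

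The upper tail of $\L_k^N$ is immediate from the ordering estimate: on the event $\m{Ord}_{k,N}$ of Proposition \ref{pp:order}, whose probability is at least $1-\rho^N$, iterating the inequalities in \eqref{t41} gives $\L_k^N(j)\le\L_1^N(j')+Ck(\log N)^{7/6}$ for an index $j'$ with $|j'-j|\le 2k$; together with the one-point upper tail and the modulus of continuity of $\L_1^N$ this yields $\Pr\big(\L_k^N(xN^{2/3})\ge MN^{1/3}\big)\to 0$ as $M\to\infty$, uniformly in $N$ and $x\in[0,A]$. The same reasoning applied to $\L_{k+1}^N$ records a uniform upper tail bound for that curve, to be used in step (ii).

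The lower tail of $\L_k^N$ is the heart of the matter and I expect it to be the main obstacle. The idea is to propagate the one-point lower bound downward from the top curve using the Gibbs property (part \ref{i02} of Theorem \ref{thm:conn}) together with stochastic monotonicity (Proposition \ref{p:gmc}). Fix $x_0\in[0,A]$, set $T=\lceil(A+1)N^{2/3}\rceil$, and apply the Gibbs property on the domain $\Lambda=\{(i,j):1\le i\le k,\ 1\le j\le T\}$, enlarged (to include $\L_{k+1}^N$ when $k$ is odd) so that the left-boundary pairwise pinning edges remain internal to $\Lambda$. By Proposition \ref{p:gmc} we may lower the boundary data lying below $\Lambda$ (on curve $k+1$, resp.\ $k+2$) to $-\infty$ without increasing any curve; under the resulting measure, $\L_1^N,\dots,\L_k^N$ on $[1,T]$ form a system of log-gamma random walk bridges pinned in pairs at the left, softly non-intersecting among themselves, lying softly below the fixed curve $\L_{k-1}^N$, with prescribed endpoints at $j=T$. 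The induction hypothesis and the pinning control the left-edge and top boundary data at scale $N^{1/3}$; the remaining difficulty is to control the right endpoints $\L_i^N(T)$, $2\le i\le k$, at the same scale. Granting this, the invariance principle and barrier estimates for softly non-intersecting random walk bridges developed in Section \ref{sec31} --- to the effect that such a bridge of length $\asymp N^{2/3}$ with endpoints of order $N^{1/3}$ stays of order $N^{1/3}$ throughout, with a uniform lower tail --- give $\Pr\big(\L_k^N(x_0N^{2/3})\le -MN^{1/3}\big)\to 0$ as $M\to\infty$, uniformly in $N$ and $x_0$, completing (i) for curve $k$. Controlling $\L_i^N(T)$ is precisely where more than a routine extension of \bcd\ is required; I expect it to be handled by first establishing a crude lower bound (for instance from the multi-path partition function identities underlying Theorem \ref{thm:conn}, or by a separate Gibbs-property argument) and then bootstrapping up to the $N^{1/3}$ scale.

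For the modulus of continuity (ii) we again use the Gibbs property, resampling $\L_k^N$ on an interval $[a,b]$ with $b-a=\lceil\delta N^{2/3}\rceil$ --- and, since the interaction within a pinned pair $(\L_{2i-1}^N,\L_{2i}^N)$ is strong, resampling the pinned pair containing index $k$ jointly. Conditionally on the remaining curves, this is a pair of log-gamma random walk bridges with fixed endpoints, reweighted by soft non-intersection weights with the curves immediately above and below; using the one-point height bounds just obtained for $\L_{k-1}^N,\L_k^N,\L_{k+1}^N$ near $[a,b]$ to control those weights, the two-curve invariance and barrier estimates of Section \ref{sec31} bound the oscillation of this pair over $[a,b]$ by $O(\sqrt{\delta}\,N^{1/3})$, which is below $\e N^{1/3}$ once $\delta$ is small; stochastic monotonicity (Proposition \ref{p:gmc}) is used to pass between the true conditional law and the more tractable ones with far-away neighbors removed. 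A chaining argument over the $O(\delta^{-1})$ intervals of this length covering $[0,A]$ then gives (ii). Combining (i) and (ii) closes the induction and proves the theorem.
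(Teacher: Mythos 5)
Your proposal correctly identifies the structure of the problem (one-point tightness plus modulus of continuity, upper tail easy via $\m{Ord}_{k,N}$ and the top curve, lower tail as the main obstacle, and reduction to IRW-type estimates via the Gibbs property), but it leaves the lower tail of $\L_k^N$ as an acknowledged gap, and the route you sketch for closing it would not work.

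The circularity is this: you set up a Gibbs box $\Lambda = \ll 1,k\rr\times\ll1,T\rr$ and need the right boundary data $\L_k^N(T)$ (plus the adjacent parity-shifted points of curve $k$) to be $O(N^{1/3})$ from below. But the lower tail of $\L_k^N$ at a fixed column $T$ is exactly what you are trying to prove, and the induction hypothesis only covers curves $1,\dots,k-1$. Your suggestion to "first establish a crude lower bound from the multi-path partition function identities or a separate Gibbs-property argument and then bootstrap" is not developed and is not how the obstruction is actually removed. The paper's resolution is Proposition \ref{p:high2k}: rather than control curve $m$ at a \emph{deterministic} column, one shows that \emph{somewhere} in a macroscopic window $\ll kN^{2/3}, RN^{2/3}\rr$ curve $m$ attains a height $\geq -CN^{1/3}$. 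This is proven by exploiting the parabolic decay of the line ensemble (via Proposition 3.4 in \bcd), and it crucially does \emph{not} require any a priori lower-tail control of curve $m$. One then takes the rightmost such high point $p$, conditions on $\mathcal{F}_p$, and runs the Gibbs/$m$-$\m{IRW}$ argument to the left of $p$, using $\m{Ord}_{2m,N}$ to transfer the lower bound at $p$ up to curves $1,\dots,m-1$ at the same column (Proposition \ref{p.nottoo}). Your proposal has no analogue of this high-point existence argument, and this is the genuine missing idea, not a routine bootstrap.

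Two smaller remarks. First, the paper is not organized as an explicit induction on $k$ for the one-point tightness; Proposition \ref{p:high2k} goes directly from the top curve to curve $m$ via the parabola, and only Lemma \ref{highmt} uses induction. Second, your modulus-of-continuity step (local resampling of a pinned pair on an interval of length $\delta N^{2/3}$, chaining) is plausible and close in spirit to Proposition \ref{pimc}, though the paper instead conditions on a full box $\ll1,2m\rr\times\ll1,T\rr$ and uses the $m$-$\m{IRW}$ estimate together with a size-biasing argument (eq.~\eqref{sizebias}) to make the conditional law tractable; either organization should work once the endpoint tightness is in place.
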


The proof of the above theorem is deferred to Section \ref{sec6}. It roughly mimics and generalizes the arguments present in  \bcd.

\subsection{Different random walk models and their properties}  In this section, we introduce various random walk laws that arise in the study of $\hslg$ Gibbs measures.

\begin{definition}\label{def:IRW} We define the \textit{interacting random walk} ($\m{IRW}$) law of length $T$ with boundary conditions $(a,b)$ to be the $\hslg$ Gibbs measure on the domain \begin{align}\label{k2t}
    \Phi=\big\{(i,j)\in\Z_{\ge 1}^2 : i\in \{1,2\}, j\in \ll 1,2T-1-\ind_{i=\operatorname{odd}}\rr\big\}
\end{align} with boundary conditions $u_{1,2T-1}=a$, $u_{2,2T}=b$, and $u_{3,2j}=-\infty$ for $j\in \ll1,T\rr$. We denote this measure by $\Pr_{\m{IRW}}^{T;(a,b)}$.
\end{definition}

\begin{figure}[h!]
    \centering
    \begin{tikzpicture}[line cap=round,line join=round,>=triangle 45,x=1.5cm,y=1cm]
				\foreach \x in {0,1}
    {
					\draw[line width=1pt,blue,{Latex[length=2mm]}-]  (\x,0) -- (\x-0.5,-0.5);
					\draw[line width=1pt,blue,{Latex[length=2mm]}-] (\x,0) -- (\x+0.5,-0.5);
					\draw[line width=1pt,black,{Latex[length=2mm]}-] (\x-0.5,-0.5) -- (\x,-1);
					\draw[line width=1pt,black,{Latex[length=2mm]}-] (\x+0.5,-0.5) -- (\x,-1);
					\draw[line width=1pt,blue,{Latex[length=2mm]}-]  (\x,-1) -- (\x-0.5,-1.5);
					\draw[line width=1pt,blue,{Latex[length=2mm]}-] (\x,-1) -- (\x+0.5,-1.5);
					\draw[line width=1pt,black,{Latex[length=2mm]}-] (\x-0.5,-1.5) -- (\x,-2);
					\draw[line width=1pt,black,{Latex[length=2mm]}-] (\x+0.5,-1.5) -- (\x,-2);
			}

    \draw[line width=1pt,blue,{Latex[length=2mm]}-]  (2,4-5) -- (1.5,4-5.5);
    \draw[line width=1pt,black,{Latex[length=2mm]}-]  (1.5,3-4.5) -- (2,3-5);
    \draw[line width=1pt,black,{Latex[length=2mm]}-]  (1.5,4-4.5) -- (2,4-5);

				\draw[line width=1pt,white,{Latex[length=2mm]}-] (-0.5,-0.5) -- (0,-1);

				\draw[line width=1pt,gray,{Latex[length=2mm]}-,dashed] (-0.5,-0.5) -- (0,-1);
				\draw[line width=1pt,blue,{Latex[length=2mm]}-] (0,0) -- (-0.5,-0.5);
				\draw[line width=1pt,yellow!70!black,{Latex[length=2mm]}-] (0,-1) -- (-0.5,-1.5);
    \foreach \x in {4,5}
				{
				
					\draw[line width=1pt,blue,{Latex[length=2mm]}-]  (\x,0) -- (\x-0.5,-0.5);
					\draw[line width=1pt,blue,{Latex[length=2mm]}-] (\x,0) -- (\x+0.5,-0.5);
					\draw[line width=1pt,black,{Latex[length=2mm]}-] (\x-0.5,-0.5) -- (\x,-1);
					\draw[line width=1pt,black,{Latex[length=2mm]}-] (\x+0.5,-0.5) -- (\x,-1);
					\draw[line width=1pt,blue,{Latex[length=2mm]}-]  (\x,-1) -- (\x-0.5,-1.5);
					\draw[line width=1pt,blue,{Latex[length=2mm]}-] (\x,-1) -- (\x+0.5,-1.5);

				}  
    \foreach \x in {2}
				{
    \foreach \y in {2,3,4}
    {
    \draw[line width=1pt,black,{Latex[length=2mm]}-]  (\y+5.5,\x-3.5) -- (\y+6,\x-4);
    }
    \foreach \y in {2,3}
    {
    \draw[line width=1pt,black,{Latex[length=2mm]}-]  (\y+6.5,\x-3.5) -- (\y+6,\x-4);
    }
    }
    \foreach \x in {4}
				{
    \draw[line width=1pt,blue,{Latex[length=2mm]}-]  (6,\x-5) -- (5.5,\x-5.5);
    \draw[line width=1pt,black,{Latex[length=2mm]}-]  (5.5,\x-4.5) -- (6,\x-5);
    }

				\draw[line width=1pt,white,{Latex[length=2mm]}-] (3.5,-0.5) -- (4,-1);

				\draw[line width=1pt,gray,{Latex[length=2mm]}-,dashed] (3.5,-0.5) -- (4,-1);
				\draw[line width=1pt,blue,{Latex[length=2mm]}-] (4,0) -- (3.5,-0.5);
				\draw[line width=1pt,yellow!70!black,{Latex[length=2mm]}-] (4,-1) -- (3.5,-1.5);
    \begin{scriptsize}
        \draw (1.55,-0.1) node[anchor=north] {$a$};
        \draw (0,-2.0) node[anchor=north] {$c_1$};
        \draw (1.0,-2.0) node[anchor=north] {$c_2$};
        \draw (2.05,-2.0) node[anchor=north] {$c_3$};
        \draw (2.05,-0.6) node[anchor=north] {$b$};
        \draw (5.55,-0.1) node[anchor=north] {$a$};
        \draw (6.05,-0.6) node[anchor=north] {$b$};
        \draw (8.0,-2.0) node[anchor=north] {$c_1$};
        \draw (9.0,-2.0) node[anchor=north] {$c_2$};
        \draw (10.05,-2.0) node[anchor=north] {$c_3$};
    \end{scriptsize}
    \draw (2.8,-0.8) node[anchor=north] {$=$};
    \draw (6.8,-0.8) node[anchor=north] {$\times$};
			\end{tikzpicture}
   \caption{Graphical structure for the $\hslg$ Gibbs measure $\Pr_\Phi^{T;(a,b,\vec{c})}$ (left) and $\Pr_{\m{IRW}}^{T;(a,b)}$ (middle) with $T=3$. The $\hslg$ Gibbs measure can be decomposed into the $\m{IRW}$ law along with a Radon--Nikodym derivative $\mathcal{H}(\vec{c})$ defined in \eqref{def:Gibbslaw} coming from the black edges shown on the right.}
    \label{fig:IRWgraph}
\end{figure}
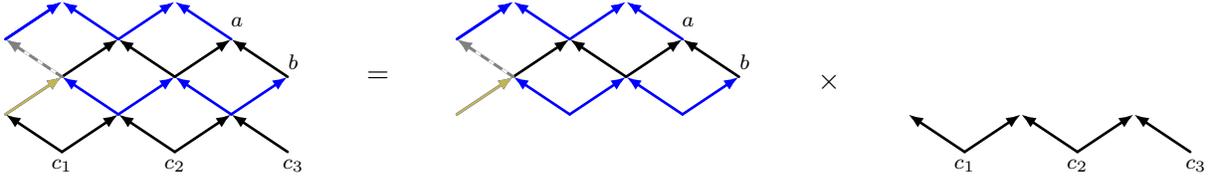

In the language of \bcd, $\m{IRW}$
precisely corresponds to the bottom-free measure on the domain $\mathcal{K}_{2,T}$ with boundary conditions $(a, b)$
(see Definition 2.4 in \bcd). The $\m{IRW}$ measures are useful in studying the $\hslg$ Gibbs measure on the same domain as $\Phi$ in \eqref{k2t} with general boundary conditions $(u_{i,j})_{(i,j)\in \partial\Phi}$. Indeed, if we denote the law of the latter as $\Pr_{\Phi}^{T;(a,b,\vec{c})}$, then $\Pr_{\Phi}^{T;(a,b,\vec{c})}$ is absolutely continuous w.r.t.~$\Pr_{\m{IRW}}^{T;(a,b)}$ with an explicit Radon--Nikodym derivative. More precisely, the probability of any event $\m{E}$ under $\Pr_{\Phi}^{(a,b,\vec{c})}$ can be written as
\begin{align}\label{def:Gibbslaw}
    \Pr_{\Phi}^{T;(a,b,\vec{c})}(\m{E})=\frac{\Ex_{\m{IRW}}^{T;(a,b)}[e^{-\mathcal{H}(\vec{c})}\ind_{\m{E}}]}{\Ex_{\m{IRW}}^{T;(a,b)}[e^{-\mathcal{H}(\vec{c})}]}, \quad \mathcal{H}(\vec{c}):=\sum_{j=1}^{T-1}\big[e^{c_j-L_2(2j-1)}+e^{c_j-L_2(2j+1)}\big]+e^{c_T-L_2(2T-1)},
\end{align}
where $a=u_{1,2T-1}$, $b=u_{2,2T}$, $c_j=u_{3,2j}$. The above formula will be very useful in transferring estimates from $\Pr_{\m{IRW}}^{T;(a,b)}$ to $\Pr_{\Phi}^{T;(a,b,\vec{c})}$. Note in particular that $\Pr_{\Phi}^{T;(a,b,(-\infty)^T)}$ is equal to $\Pr_{\m{IRW}}^{T;(a,b)}$.

\medskip

We next record a tightness result for $\m{IRW}$ on the diffusive scale.

\begin{lemma}\label{lem:IRWupperbd}For $i=1,2$, $M>0$, {and $a_1, a_2 \in \mathbb{R}$,} define the events
\begin{align}\label{def:diff}
\mathsf{Diff}_T^i(M) = \left\{\sup_{k\in\llbracket 1,2T-\mathbf{1}_{i=1}\rrbracket} \left|L_i(k) - a_i \right| \leq M\sqrt{T} \right\}.
\end{align}
For any $\e,K>0$, we can choose $M(\e,K)>0$ so that
\begin{align}\label{eq:diff}
\limsup_{T\to\infty} \sup_{|a_1-a_2| \le K\sqrt{T}} \Pr_{\mathsf{IRW}}^{T;(a_1,a_2)} \left(\neg\mathsf{Diff}_T^i(M)\right) < \e.
\end{align}
\end{lemma}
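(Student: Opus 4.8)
The goal is a diffusive-scale tightness bound for the two curves under the $\m{IRW}$ law, uniform over initial data $(a_1,a_2)$ with $|a_1-a_2|\le K\sqrt T$. By translation invariance (Lemma~\ref{traninv}) I may center and assume, say, $a_1=0$ and $|a_2|\le K\sqrt T$. The first step is to recall from Definition~\ref{def:IRW} and the surrounding discussion that $\Pr_{\m{IRW}}^{T;(a_1,a_2)}$ is exactly the $\hslg$ Gibbs measure on the two-row domain $\Phi$ with the third row pushed to $-\infty$; writing out the density \eqref{e:hsgibb} on this domain, the blue, black, yellow, and dashed edge weights factor so that the law of $(L_1,L_2)$ is absolutely continuous with respect to a pair of \emph{free} random walk bridges with increment density $\ffa$ (the $\log$-gamma density of Section~\ref{sec:1.1}), with Radon--Nikodym derivative proportional to the soft-non-intersection weight $W_n$ of \eqref{wscintro} together with the boundary pinning factor at the left edge. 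Concretely, $(L_1(k),L_2(k))$ has the law $\Pr_{W_n}^{n;(\ic);(a_1,a_2)}$-type of \eqref{wnlaw}, run in the reverse direction (the ``starting'' data $(a_1,a_2)$ playing the role of the right endpoint), and with the left-boundary pinning built in.

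The second and main step is to upgrade this representation into the quantitative estimate \eqref{eq:diff}. Since $W_n\le 1$ pointwise, for any event $\m A$ we have
\begin{align*}
\Pr_{\m{IRW}}^{T;(a_1,a_2)}(\m A) \le \frac{\Pr_{\operatorname{free}}^{T;(a_1,a_2)}(\m A)}{\Ex_{\operatorname{free}}^{T;(a_1,a_2)}[W_n\cdot(\text{pinning})]}.
\end{align*}
So it suffices to (i) lower-bound the normalizing denominator by $c/\sqrt T$ uniformly over $|a_1-a_2|\le K\sqrt T$, and (ii) show that the free-bridge probability of $\neg\mathsf{Diff}_T^i(M)$ decays faster than $1/\sqrt T$, in fact like $e^{-cM^2}/\sqrt T$, once $M$ is large. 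For (ii) I would use the standard maximal inequality for a random walk bridge of length $\sim 2T$ with increments of finite exponential moments (the $\ffa$ density has all exponential moments in a neighborhood of $0$, since $\log A-\log B$ for $A,B\sim\operatorname{Gamma}(\theta)$ is sub-exponential): the probability that the bridge deviates by more than $M\sqrt T$ is at most $C e^{-cM^2}$, which beats $1/\sqrt T$ for $M\gtrsim\sqrt{\log T}$, hence certainly for $M$ a large constant. For (i), the denominator is, up to the pinning factor, the soft-non-intersection partition function, and by the comparison between soft and genuine non-intersection sketched in Section~\ref{sec:1.2.3} (using the Spitzer-type estimate of \cite{spit} that the genuine non-intersection probability of two such walks over $n$ steps is $\asymp n^{-1/2}$, together with monotonicity of the soft weight), one gets $\Ex_{\operatorname{free}}^{T;(a_1,a_2)}[W_n]\ge c/\sqrt T$ for $|a_1-a_2|$ in the allowed range; the left-boundary pinning factor contributes only an $O(1)$ constant because it depends on the increments near the boundary, which are $O(1)$ with overwhelming probability. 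This gives $\Pr_{\m{IRW}}^{T;(a_1,a_2)}(\neg\mathsf{Diff}_T^i(M)) \le C\sqrt T\cdot e^{-cM^2}$, which is less than $\e$ for $M=M(\e)$ large, uniformly in $T$ and in the allowed $(a_1,a_2)$.

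The step I expect to be the main obstacle is the \emph{uniform} lower bound (i) on the normalizing constant, because the soft non-intersection weight $W_n$ involves not just the gap $L_1-L_2$ but an exponential functional of it, and one must rule out the normalizer collapsing when $a_2$ is as large as $K\sqrt T$ (i.e.\ when the right endpoints are already $O(\sqrt T)$ apart, in the ``favorable'' direction) or, more delicately, handle the parity/coupling bookkeeping between the odd sites of $L_1$ and even sites of $L_2$ that the honest Gibbs property forces. I would handle the endpoint dependence by restricting the expectation to the event that both bridges stay within $O(\sqrt T)$ of a straight interpolation and that $L_1(k)-L_2(k)$ stays in a favorable window of width $\asymp\sqrt T$ on the bulk of $\ll 1,2T\rr$ — on that event $W_n$ is bounded below by a constant, and its free-bridge probability is $\ge c/\sqrt T$ by a Brownian-bridge/invariance-principle computation (cf. Proposition~\ref{invarprin1}) combined with the $n^{-1/2}$ barrier estimate. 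This also matches how analogous normalizer bounds are obtained elsewhere in the paper, so the argument should fit cleanly with the rest of the toolbox in Section~\ref{sec31}.
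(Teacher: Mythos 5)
Your argument has a concrete gap that makes the conclusion fail. In step (ii) you bound
\[
\Pr_{\m{IRW}}^{T;(a_1,a_2)}(\neg\mathsf{Diff}_T^i(M)) \le \frac{\Pr^{T;(a_1,a_2);\mathrm{free}}(\neg\mathsf{Diff}_T^i(M))}{\Ex^{T;(a_1,a_2);\mathrm{free}}[W_T]} \le \frac{Ce^{-cM^2}}{c/\sqrt T} = C'\sqrt{T}\,e^{-cM^2},
\]
using $W_T\le 1$ in the numerator and your (correct) lower bound $\Ex[W_T]\gtrsim T^{-1/2}$ in the denominator. This quantity is \emph{not} bounded below $\e$ uniformly in $T$ for any fixed $M$: it forces $M\gtrsim\sqrt{\log T}$, which is exactly what the lemma is not allowed to do. Your closing sentence claiming this gives the result ``uniformly in $T$'' is simply wrong. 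To repair it, you would need the numerator $\Ex[W_T\,\ind_{\neg\mathsf{Diff}}]$ to also carry a $T^{-1/2}$ factor in addition to the $e^{-cM^2}$; heuristically this is true because the soft non-intersection penalty is still active on the deviation event, but proving it is nontrivial and is essentially the content of the estimate you would need. The crude pointwise bound $W_T\le 1$ throws away exactly that factor. (Also, the main obstacle you flag, the uniform lower bound on the normalizer, is not the real difficulty --- that bound of $c/\sqrt T$ is indeed available; the bottleneck is the numerator.)

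The paper avoids this entirely: it cites a pre-established tightness estimate from [BCD] (Lemma 5.4), which asserts directly that under $\Pr_{\m{IRW}}^{T;(0,-\sqrt T)}$ the diffusively-rescaled suprema of $|L_1|$ and $|L_2|$ are tight, and then reduces the general $|a_1-a_2|\le K\sqrt T$ case to this one by combining translation invariance (Lemma~\ref{traninv}) with stochastic monotonicity (Proposition~\ref{p:gmc}): shift the boundary data up or down to a comparable configuration of the form $(0,-\sqrt T)$ at the cost of inflating $M$ by $O(K)$. That reduction step is elementary and your proposal never attempts it because you aim for uniformity directly; but without the refined input (a genuine $T^{-1/2}$ prefactor in the soft-non-intersection-weighted deviation probability), the direct route does not close.
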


\begin{proof}  In Lemma 5.4 in \bcd\ it was shown that
for any $\e>0$, there exists $\mathfrak{M}=\mathfrak{M}(\e)>0$ such that
    \begin{align}\label{e:IRWupperbd}
\limsup_{T\to\infty}\Pr_{\m{IRW}}^{T;(0,-\sqrt{T})}\bigg(\sup_{i\in \ll1,2T-1\rr} |L_1(i)|+\sup_{j\in\ll1,2T\rr} |L_2(j)| \ge \mathfrak{M}\sqrt{T} \bigg) \le \e.
    \end{align}
From here, the proof of \eqref{eq:diff} follows from a straightforward application of stochastic monotonicity and translation invariance.  We illustrate this for $i=1$; the proof for $i=2$ is analogous.  Let us write $y = a_2-a_1$. We have $y \in [-K\sqrt{T},  K\sqrt{T}]$. By translation invariance, Lemma \ref{traninv}, we can write
    \begin{align*}
        \Pr_{\mathsf{IRW}}^{T;(a_1,a_2)} \left(\neg\mathsf{Diff}_T^1(M)\right) &= \Pr_{\mathsf{IRW}}^{T;(0,y)} \left(\sup_{k\in\llbracket 1,2T-1\rrbracket} L_1(k) \geq M\sqrt{T} \right) + \Pr_{\mathsf{IRW}}^{T;(0,y)} \left(\inf_{k\in\llbracket 1,2T-1\rrbracket} L_1(k) \leq -M\sqrt{T} \right). 
    \end{align*}
    To deal with the first term, we apply stochatisc monotonicity, Proposition \ref{p:gmc}, to shift the boundary data up from $(0,y)$ to $((K+1)\sqrt{T}, K\sqrt{T})$. This gives an upper bound of
    \[
    \Pr_{\mathsf{IRW}}^{T;(0,y)} \left(\sup_{k\in\llbracket 1,2T-1\rrbracket} L_1(k) \geq M\sqrt{T} \right) \leq \Pr_{\mathsf{IRW}}^{T;((K+1)\sqrt{T}, K\sqrt{T})} \left(\sup_{k\in\llbracket 1,2T-1\rrbracket} L_1(k) \geq M\sqrt{T} \right).
    \]
    Now applying translation invariance, Lemma \ref{traninv}, and shifting vertically by $-(K+1)\sqrt{T}$, the right-hand side is equal to
    \begin{align*}
    &\Pr_{\mathsf{IRW}}^{T;(0,-\sqrt{T})} \left(\sup_{k\in\llbracket 1,2T-1\rrbracket} \left(L_1(1)+(K+1)\sqrt{T}\right)\geq M\sqrt{T} \right) \\
    &\qquad\qquad\qquad\qquad\qquad \leq \Pr_{\mathsf{IRW}}^{T;(0,-\sqrt{T})} \left(\sup_{k\in\llbracket 1,2T-1\rrbracket} L_1(1) \geq (M-K-1)\sqrt{T} \right).
    \end{align*}
    Now \eqref{e:IRWupperbd} implies that the  probability on the right can be made less than $\e/2$ for large $T$ by choosing $M$ large enough depending on $K$ and $\e$.

    Similarly, we have
    \begin{align*}
        \Pr_{\mathsf{IRW}}^{T;(0,y)} \left(\inf_{k\in\llbracket 1,2T-1\rrbracket} L_1(k) \leq -M\sqrt{T} \right) &\leq \Pr_{\mathsf{IRW}}^{T;((K+1)\sqrt{T},K\sqrt{T})} \left(\inf_{k\in\llbracket 1,2T-1\rrbracket} L_1(k) \leq -M\sqrt{T} \right)\\
        &= \Pr_{\mathsf{IRW}}^{T;(0,-\sqrt{T})} \left(\inf_{k\in\llbracket 1,2T-1\rrbracket} L_1(k)  \leq -(M+K+1)\sqrt{T} \right).
    \end{align*}
    Choosing $M$ large enough depending on $\e$, \eqref{e:IRWupperbd} again implies that the last probability can be made less than $\e/2$ for large $T$.
\end{proof}

\medskip

We next recall the definition of paired random walks and weighted paired random walks from \bcd. Set $\Omega_n:=\R^n\times \R^n$ and let $\mathcal{F}_n$ be the Borel $\sigma$-algebra on $\Omega_n$. Write $\omega\in \Omega_n$ as $\omega=(\omega_1(1),\ldots,\omega_1(N),\omega_2(1),\ldots,\omega_2(N))$. As a slight abuse of notation, we will write 
	$$S_i(k)(\omega):=\omega_i(k), \quad k\in \ll1,n\rr, i\in \{1,2\}$$
	to denote the coordinate functions (i.e., random variables) in this space. 

 \begin{definition}[Paired random walks and weighted paired random walks] \label{prb}

Let $\fa(x)$ denote the density at $x\in \R$ of $\log Y_1-\log Y_2$ where $Y_1,Y_2$ are independent $\operatorname{Gamma}(\theta)$ random variables, and let \begin{equation}
    \label{defga}
    \ga(x)=\Gamma(\alpha)^{-1}e^{\alpha x-e^x}.
\end{equation} For $(x,y)\in \R^2$ and $n\in \mathbb{Z}_{\geq 2}$
the \textit{paired random walk} ($\m{PRW}$) law on $(\Omega_n,\mathcal{F}_n)$ is the probability measure $\Pr_{\m{PRW}}^{n;(x,y)}$ proportional to the product of two Dirac delta measures $\delta_{\omega_1(n)=x}\delta_{\omega_2(n)=y}$ and a density (against Lebesgue on $\R^{2(n-1)}$) given by
\begin{equation}
	\label{den}
	\begin{aligned} \ga\big(\omega_2(1)-\omega_1(1)\big)\prod_{k=2}^{n}\fa\big(\omega_1(k)-\omega_1(k-1)\big)\fa\big(\omega_2(k)-\omega_2(k-1)\big)\,d\omega_1(k)\, d\omega_2(k).
	\end{aligned}
\end{equation}
The \textit{weighted paired random walk} ($\m{WPRW}$) law $\Pr_{\m{WPRW}}^{n;(x,y)}$ on $(\Omega_n,\mathcal{F}_n)$ is absolutely continuous with respect to $\Pr_{\m{PRW}}^{n;(x,y)}$ and defined through a Radon--Nikodym derivative so that for all  $\m{A}\in \mathcal{F}_n$,
\begin{align}
	\label{wscint0} \Pr_{\m{WPRW}}^{n;(x,y)}(\m{A})=\frac{\Ex_{\m{PRW}}^{n;(x,y)}[\wsc\ind_{\m{A}}]}{\Ex_{\m{PRW}}^{n;(x,y)}[\wsc]},
\end{align}
where $\wsc=\wsc(\omega)$ is given by
\begin{align}\label{defw}
	\wsc:=\exp\bigg(-e^{\iise-\se{1}{2}}-\sum_{k=2}^{n-1} \left(e^{\iiks-\se{1}{k+1}}+e^{\iiks-\iks}\right)\bigg).
\end{align}
\end{definition}

Weighted paired random walks are connected to interacting random walks by the following lemma.

\begin{lemma}[Lemma 4.4 in \bcd] \label{wprwconn}
    Suppose $(L_1,L_2)$ are distributed as $\Pr_{\m{IRW}}^{T;(a,b)}$. Then the law of $(L_1(2i-1),L_2(2i))_{i=1}^T$ is $\Pr_{\m{WPRW}}^{T;(a,b)}$. 
\end{lemma}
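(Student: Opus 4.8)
The statement to prove is Lemma \ref{wprwconn}: if $(L_1,L_2)$ is distributed as $\Pr_{\m{IRW}}^{T;(a,b)}$, then the law of the ``odd/even slice'' $(L_1(2i-1),L_2(2i))_{i=1}^T$ equals $\Pr_{\m{WPRW}}^{T;(a,b)}$. Since the excerpt only gives us the definition of $\m{IRW}$ as the $\hslg$ Gibbs measure on the domain $\Phi$ in \eqref{k2t} with boundary conditions $u_{1,2T-1}=a$, $u_{2,2T}=b$, $u_{3,2j}=-\infty$, the plan is to write out the explicit density of this Gibbs measure from \eqref{e:hsgibb}--\eqref{def:wfn}, integrate out the ``auxiliary'' coordinates (the even points of $L_1$ and the odd points of $L_2$), and recognize the resulting marginal density on $(L_1(2i-1),L_2(2i))_{i=1}^T$ as the $\m{WPRW}$ density of \eqref{den}--\eqref{defw}. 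The key point is that integrating out each intermediate vertex is a one-dimensional Gamma-type integral that reproduces exactly one transition kernel $\fa$.

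First I would enumerate the edges of $\Phi\cup\partial\Phi$ using the colored-graph description in Section~2.1 and Figure~\ref{fig12}, restricted to the two-row domain. Writing $x_i:=L_1(2i-1)$ (odd points of row $1$), $x_i':=L_1(2i)$ (even points of row $1$), $y_i:=L_2(2i)$ (even points of row $2$), $y_i':=L_2(2i-1)$ (odd points of row $2$), the product in \eqref{e:hsgibb} factors over edges: the blue edges contribute factors $\exp(\theta z - e^{z})$ with $z$ a difference of a row-$1$ odd point and an adjacent point; the black edges contribute $\exp(-e^{z})$; the yellow edge at the left boundary of row $1$ contributes $\exp((\theta+\alpha)z-e^{z})$; and the dashed (gray) $\alpha$-edge and the $c_j=-\infty$ boundary terms either drop out (since $e^{c_j-\cdot}\to 0$) or produce the single $\ga$ factor $\Gamma(\alpha)^{-1}e^{\alpha(y_1'-\cdots)-e^{\cdots}}$ sitting at the boundary. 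The second step is to integrate over each auxiliary coordinate $x_i'$ and $y_i'$ in turn. For a generic interior vertex $x_i'$ the relevant factors are of the form $\exp(\theta(x_i - x_i') - e^{x_i - x_i'})\exp(-e^{x_i' - x_{i+1}})$ — two edges into/out of that vertex; substituting $u=e^{-x_i'}$ turns $\int \exp(\theta(x_i-x_i'))\exp(-e^{x_i-x_i'}-e^{x_i'-x_{i+1}})\,dx_i'$ into a Gamma integral of the form $\int_0^\infty u^{\theta-1}e^{-e^{x_i}u - e^{-x_{i+1}}/u}\,du$, which up to the normalization $\Gamma(\theta)^{-1}$ (present in $\fa$) is exactly $\fa(x_{i+1}-x_i)$ — recall $\fa$ is the density of $\log Y_1 - \log Y_2$ for independent $\operatorname{Gamma}(\theta)$'s, whose explicit form is $B(\theta,\theta)^{-1}(e^{x/2}+e^{-x/2})^{-2\theta}$ or equivalently the stated Gamma-ratio convolution. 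The same computation, mutatis mutandis, handles the $y_i'$ integrals, producing the $\fa(y_{i+1}-y_i)$ factors, and the left-boundary yellow edge combined with the $\alpha$-edge produces the $\ga(y_1-x_1)$ initial factor after its auxiliary integration.

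The third step is bookkeeping: after integrating out all auxiliary coordinates I would check that the leftover factors — the ones that do not get integrated because they couple two retained coordinates $x_i,y_i$, or an $x_i$ with $x_{i+1}$, etc. — assemble into precisely the $\m{WPRW}$ Radon--Nikodym weight $\wsc$ of \eqref{defw}, times the $\m{PRW}$ product density of \eqref{den}. Concretely, the black edges of the form $(2,-2n-0.5)\to(1,-2n)$ and their analogues couple $y_i$ to $x_{i}$ or $x_{i+1}$ and contribute factors $e^{-e^{y_i - x_{i+1}}}$ and $e^{-e^{y_i - x_i}}$ (no polynomial prefactor, since black edges have weight $\exp(-e^z)$), which is exactly the content of the three exponential terms in the exponent of $\wsc$ once one matches indices ($\iise - \se 12$, $\iiks - \se 1{k+1}$, $\iiks - \iks$). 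Matching the boundary conditions $L_1(2T-1)=a$, $L_2(2T)=b$ to the Dirac deltas $\delta_{\omega_1(T)=x}\delta_{\omega_2(T)=y}$ with $(x,y)=(a,b)$ completes the identification, and the two normalizing constants agree automatically because both sides are probability measures (or one checks it directly from the $\Gamma(\theta)^{-1}$, $\Gamma(\alpha)^{-1}$ factors that were carried along).

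\textbf{Main obstacle.} I expect the genuinely delicate part to be the careful matching of the graph structure — getting the parity conventions, the orientation of edges, and the indexing in \eqref{k2t} exactly right so that each auxiliary integral has exactly the two adjacent edge-factors claimed above, with the correct colors, and so that the surviving factors land on the correct pairs of retained coordinates. In particular one must verify that the domain $\Phi$ has the right shape (the $i=\operatorname{odd}$ versus $i=\operatorname{even}$ row lengths, $2T-1$ versus $2T$) for the integrations to ``telescope'' cleanly down the two rows, and that the $c_j=-\infty$ limit is taken correctly — i.e. that $e^{c_j - L_2(\cdot)}\to 0$ really does kill the $\mathcal H(\vec c)$ contribution from \eqref{def:Gibbslaw}, leaving the pure $\m{IRW}$ kernel. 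The analytic content (each intermediate integral is a Gamma integral reproducing $\fa$) is routine once the combinatorial skeleton is pinned down; since Lemma \ref{wprwconn} is quoted from \bcd\ (Lemma 4.4 there), I would in practice cite that reference and only sketch this density-pushforward computation for the reader's orientation.
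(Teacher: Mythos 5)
The paper gives no in-text proof of this lemma; it cites Lemma 4.4 of \bcd. Your plan --- write out the $\hslg$ Gibbs density on the two-row domain $\Phi$, integrate out the even sites of $L_1$ and the odd sites of $L_2$, and match the resulting marginal against the $\m{WPRW}$ density --- is the correct plan and is what the cited proof does. But the one explicit integral you exhibit has an error that would derail the argument if carried out. You assign to the auxiliary vertex $x_i'=L_1(2i)$ one blue edge and one black edge, producing the integrand $\exp(\theta(x_i-x_i')-e^{x_i-x_i'})\exp(-e^{x_i'-x_{i+1}})$, and then assert that $u=e^{-x_i'}$ turns the integral into a Gamma integral equal to $\fa(x_{i+1}-x_i)$. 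It does not: $\int_0^\infty u^{\theta-1}e^{-au-b/u}\,du$ is a modified Bessel $K$-function, not a Gamma function, and, for example, at $\theta=1$ and $x_i=x_{i+1}$ it evaluates (with the outer $e^{\theta x_i}$ prefactor) to $2K_1(2)\approx 0.28$, whereas $\fa(0)=\Gamma(2\theta)\Gamma(\theta)^{-2}2^{-2\theta}=1/4$ at $\theta=1$.

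The misstep is in the edge colouring of $\Phi$. The blue ($\theta$-polynomial) edges run \emph{within} a row, from $L_n(\mathrm{odd})$ to $L_n(\mathrm{even})$, whereas the black ($e^{-e^{\cdot}}$) edges run \emph{between} rows, from $L_2(\mathrm{even})$ to $L_1(\mathrm{odd})$; in particular both endpoints of every black edge are retained in the odd/even slice, and it is exactly these black edges that reassemble into the soft-non-intersection weight $\wsc$ of \eqref{defw}, not into the $\fa$ kernel. The auxiliary vertex $L_1(2i)$ therefore carries \emph{two} incoming blue edges, from $L_1(2i-1)$ and from $L_1(2i+1)$, and no black one. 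The correct integrand is $\exp(\theta(x_i-x_i')-e^{x_i-x_i'})\exp(\theta(x_{i+1}-x_i')-e^{x_{i+1}-x_i'})$, and the substitution $w=e^{-x_i'}$ now gives a genuine Gamma integral
\begin{equation*}
e^{\theta(x_i+x_{i+1})}\int_0^\infty w^{2\theta-1}e^{-(e^{x_i}+e^{x_{i+1}})w}\,dw
=\Gamma(2\theta)\bigl(e^{(x_i-x_{i+1})/2}+e^{(x_{i+1}-x_i)/2}\bigr)^{-2\theta}
=\Gamma(\theta)^2\,\fa(x_{i+1}-x_i),
\end{equation*}
as wanted; the same two-blue-edges structure governs the odd sites of $L_2$. (There is a parallel slip at the boundary: the $\ga$ factor in \eqref{den} is simply the weight of the purple edge $L_2(2)\to L_1(1)$, both of whose endpoints are retained, while the yellow edge at $L_2(1)$ is the only nontrivial edge adjacent to that auxiliary site and integrates out to the constant $\Gamma(\theta+\alpha)$; the two do not ``combine'' as you describe.)
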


Besides the above measures, we will make use of a variety of other $\hslg$ Gibbs measures and random walk type measures throughout the text. A summary of notation for many of the measures we will use is contained in the following table. Unless otherwise stated, the measures consist of two random walks.

{\renewcommand{\arraystretch}{1.2}
	\begin{longtable}[t]{l|p{0.5\textwidth}|l}
		\toprule	
                \multicolumn{3}{l}{Different $\hslg$ Gibbs measures used in the text} \\

            \midrule
            
			$\Pr_{\m{IRW}}^{T;(a,b)}$ & Interacting random walks of length $T$ with right boundary conditions $(a,b)$ & Def.~\ref{def:IRW} \\
   $\Pr_{\Phi}^{T;(a,b,\vec{c})}$ & $\hslg$ Gibbs measure on domain $\Phi$ in \eqref{k2t} with right/bottom boundary conditions $(a,b,\vec{c})$ & Eq.~\eqref{def:Gibbslaw} \\
   
   $\Pr_{\Upsilon}^{T;(a,b,\vec{c})}$ & $\hslg$ Gibbs measure on domain $\Upsilon$ in \eqref{fre} with right/top boundary conditions $(a,b,\vec{c})$& Eq.~\eqref{def:Glaw2} \\

   $\Pr_{m}^{T;\vec{x}}$ & $2m$ interacting random walks of length $T$ with right boundary conditions $\vec{x}$& Def.~\ref{mirw}\\

   \midrule

    \multicolumn{3}{l}{Different random walk measures on $(\Omega_n,\mathcal{F}_n)$ used in the text} \\

    \midrule
    
   $\Pr_{\m{PRW}}^{n;(a,b)}$ & Paired random walks of length $n$ with right boundary conditions $(a,b)$& Def.~\ref{prb}\\
$\Pr_{\m{WPRW}}^{n;(a,b)}$ &Weighted paired random walks of length $n$ with right boundary conditions $(a,b)$ & Def.~\ref{prb}\\
   $\Pr^{n;(\m{h}_1,\m{h}_2);\m{free}}$ & Independent random walks of length $n$ with left boundary conditions $(\m{h}_1,\m{h}_2)$& Def.~\ref{def:rwrb}\\
   $\Pr^{n;(\m{h}_1,\m{h}_2);(b_1,b_2)}$ &Independent random walk bridges of length $n$ with left/right boundary conditions $(\m{h}_1,\m{h}_2)$ and $(b_1,b_2)$ & Def.~\ref{def:rwrb}\\
   $\Pr_{\wsc}^{n;(\m{h}_1,\m{h}_2);(b_1,b_2)}$ or $\Pr_{\wsc}^{n;(\m{h}_1,\m{h}_2);\m{free}}$ & Softly non-intersecting random walks/bridges of length $n$ with corresponding boundary conditions& Def.~\ref{def:sni}\\
		\bottomrule
	\end{longtable}
}

\section{Softly non-intersecting random walks and bridges}    \label{sec31} 
 In this section, we introduce the general setup of softly non-intersecting random walks and bridges and prove various related estimates. We first introduce random walks and bridges with possibly random initial conditions. 
	\begin{definition}[Random walks and bridges with initial conditions] \label{def:rwrb} Suppose $\mathsf{h}_1,\mathsf{h}_2, \mathsf{f}$ are three densities on $\R$. We define the probability measure $\Pr^{n;(\mathsf{h}_1,\mathsf{h}_2);\m{free}}$ on $(\Omega_n,\mathcal{F}_n)$ with density (against Lebesgue on $\R^{2n}$) given by
	\begin{align}\label{freedensity}
		\prod_{i=1}^2 \left[ \mathsf{h}_i(\omega_i(1))\prod_{k=2}^n \mathsf{f}(\omega_i(k)-\omega_i(k-1))\prod_{k=1}^n d\omega_i(k)\right].
	\end{align}
	For $b_1,b_2\in \R$, we define $\Pr^{n;(\mathsf{h}_1,\mathsf{h}_2);(b_1,b_2)}$ on $(\Omega_n,\mathcal{F}_n)$ to be the probability measure proportional to the product of two Dirac delta functions $\delta_{\omega_1(n)=b_1}\delta_{\omega_2(n)=b_2}$ and a density (against Lebesgue on $\R^{2n-2}$) given by
	\begin{align}\label{bridgedensity}
		\prod_{i=1}^2 \left[ \mathsf{h}_i(\omega_i(1))\prod_{k=2}^n \mathsf{f}(\omega_i(k)-\omega_i(k-1))\prod_{k=1}^{n-1} d\omega_i(k)\right].
	\end{align}
	We may extend the definition of the above two measures to include Dirac delta functions: $\mathsf{h}_i(\omega_i(1))=\delta_{\omega_i(1)=a}$. In that case, we shall write $\Pr^{n;(a_1,a_2);\m{free}}$, $\Pr^{n;(a_1,a_2);(b_1,b_2)}$ for the above two measures. Note that the above laws depends on $\m{f}$ as well which we have suppressed from the notation.
\end{definition}	
We will be interested in a particular class of initial conditions defined below. 

\begin{definition}\label{def:ic} Fix  $M> 0$. We shall say 
		 $(\m{h}_1, \m{h}_2)\in \m{IC}(M)$ if for each $i\in \{1,2\}$, either $\m{h}_i(x)=\delta_{x=a_i}$ where $|a_i|\le M$ or $\m{h}_i(x)\le Me^{-|x|/M}$ for all $x\in \R$.
\end{definition}

Throughout this section, we shall also assume $\m{f}$ satisfies the following conditions. 

\begin{assumption}[Assumptions on the increments] \label{asp:in} The density $\m{f}$ satisfies the following properties:
	\begin{enumerate}
		\item $\ffa$ is symmetric and $\log \ffa$ is concave.
		
		\item Let $\psi$ denote the characteristic function corresponding to $\ffa$. Then $|\psi|$ is integrable. Given any $\delta>0$, there exists $\eta$ such that $\sup_{t\ge \delta} |\psi(t)|=\eta <1$.
		
		\item There exists a constant $\Con>0$ such that $\ffa(x)\le \Con e^{-|x|/\Con}$. In particular, this implies that if $X \sim \ffa$ then there exists $v>0$ such that
		\begin{align*}		\sup_{|t|\le v}\Ex[e^{tX}] <\infty.
		\end{align*}
		In other words, $X$ is a subexponential random variable.
	\end{enumerate}
\end{assumption}

The above assumptions originate from \bcd\ where the authors utilized these conditions to provide several estimates on non-intersection probabilities of random walks and bridges with increments from $\m{f}$. We shall use many of these estimates in our analysis. 

Random walks are generally much simpler to analyze than random bridges; the following lemma helps us transfer probability estimates of certain events under the free law to the same events under a bridge law. 

\begin{lemma}\label{lem:mrb} Fix $M>0$. Suppose $\m{A} \in \sigma \big\{(S_1(k),S_2(k))_{k=1}^{\lfloor n/4 \rfloor}\big\}$ or $\m{A} \in \sigma \big\{(S_1(k),S_2(k))_{k=\lfloor 3n/4\rfloor}^{n}\big\}$. Then there is a constant $\Con>0$ such that for all $a_1,a_2,b_1,b_2$ satisfying $|a_i-b_i|\le M\sqrt{n}$ we have
$$\Pr^{n;(a_1,a_2);(b_1,b_2)}(\m{A}) \le \Con \cdot\Pr^{n;(a_1,a_2);\m{free}}(\m{A}).$$
\end{lemma}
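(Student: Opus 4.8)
The plan is to condition the free random walk on its endpoints and compare the resulting conditional density to the unconditional one, using a local CLT for the increments guaranteed by Assumption \ref{asp:in}. Write $\m{A}\in\sigma\{(S_1(k),S_2(k))_{k=1}^{m}\}$ with $m=\lfloor n/4\rfloor$ (the other case is symmetric, reversing time). By the Markov property, under $\Pr^{n;(a_1,a_2);(b_1,b_2)}$ the conditional density of $(S_1(k),S_2(k))_{k=1}^{m}$ is the free density multiplied by the factor
\[
\frac{\prod_{i=1}^2 \ffa^{*(n-m)}(b_i - S_i(m))}{\prod_{i=1}^2 \ffa^{*(n-1)}(b_i - a_i)},
\]
where $\ffa^{*\ell}$ denotes the $\ell$-fold convolution (the normalization in the bridge law is exactly the denominator). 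So it suffices to bound this Radon--Nikodym factor by a universal constant $\Con$ on the relevant event — in fact, for \emph{every} value of $S_i(m)$, since we want a bound on $\Pr^{n;(a_1,a_2);(b_1,b_2)}(\m{A})$ uniform over $\m{A}$.

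The key step is therefore a two-sided local CLT estimate: since $\ffa$ is symmetric with $|\psi|$ integrable and $\sup_{|t|\ge\delta}|\psi(t)|<1$ (Assumption \ref{asp:in}(2)), and $\ffa$ has exponential tails (Assumption \ref{asp:in}(3)) hence finite variance $\sigma^2$, the standard Fourier-inversion argument gives, uniformly in $z$,
\[
\ffa^{*\ell}(z) = \frac{1}{\sqrt{2\pi\ell\sigma^2}}\,e^{-z^2/(2\ell\sigma^2)} + o(\ell^{-1/2}),
\]
and more usefully a two-sided bound $c\,\ell^{-1/2}e^{-z^2/(C\ell)} \le \ffa^{*\ell}(z) \le C\,\ell^{-1/2}e^{-z^2/(C\ell)}$ valid for all $z$ in the diffusive range $|z|\le M'\sqrt{\ell}$ (for the upper bound on $\ffa^{*\ell}$ one can even use a crude global bound $\ffa^{*\ell}(z)\le C\ell^{-1/2}$ from integrability of $\psi$). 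Applying this with $\ell = n-m \asymp n$ in the numerator and $\ell = n-1 \asymp n$ in the denominator: the denominator is $\asymp n^{-1/2}e^{-(b_i-a_i)^2/(Cn)} \ge c\,n^{-1/2}e^{-M^2/c}$ using $|a_i-b_i|\le M\sqrt n$, so it is bounded below by a constant times $n^{-1/2}$; the numerator $\ffa^{*(n-m)}(b_i-S_i(m))$ is bounded above by $C n^{-1/2}$ globally. Hence the ratio is bounded by a constant $\Con = \Con(M)$, uniformly in $S_i(m)$ and in $a_i,b_i$ with $|a_i-b_i|\le M\sqrt n$. Integrating over the event $\m{A}$ then gives $\Pr^{n;(a_1,a_2);(b_1,b_2)}(\m{A}) \le \Con\cdot\Pr^{n;(a_1,a_2);\m{free}}(\m{A})$.

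A subtlety is the case where $\m{h}_i$ is a Dirac mass, but that plays no role here: the hypothesis fixes $S_i(n)=b_i$ in the bridge and the initial law is irrelevant to the comparison, which only concerns the bridge-conditioning factor $\prod_i \ffa^{*(n-m)}(b_i-S_i(m))/\ffa^{*(n-1)}(b_i-a_i)$; the initial densities $\m{h}_i$ cancel between the two laws. I expect the main obstacle to be purely technical: establishing the uniform-in-$z$ upper bound $\ffa^{*\ell}(z)\le C\ell^{-1/2}$ and the diffusive-range lower bound $\ffa^{*\ell}(z)\ge c\ell^{-1/2}$ (for $|z|$ up to $M\sqrt\ell$) with constants depending only on $\ffa$. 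The upper bound follows immediately from $\|\ffa^{*\ell}\|_\infty \le (2\pi)^{-1}\int|\psi|^{\ell}\,dt \le (2\pi)^{-1}\eta^{\ell-\ell_0}\int|\psi|^{\ell_0}\,dt$; the lower bound in the diffusive window is the classical local CLT with exponential-tail control of the error term, and indeed such estimates were already developed in \bcd\ for exactly this increment class, so one can cite them rather than reprove from scratch.
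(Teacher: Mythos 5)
Your density-comparison argument --- computing the Radon--Nikodym derivative
\[
D=\frac{\prod_{i=1}^2 \ffa^{*(n-m)}(b_i-S_i(m))}{\prod_{i=1}^2\ffa^{*(n-1)}(b_i-a_i)}
\]
of the bridge marginal of $(S_1(k),S_2(k))_{k\le m}$ (with $m=\lfloor n/4\rfloor$) against the free marginal, then bounding it by a constant uniformly in $(S_1(m),S_2(m))$ via a global $O(n^{-1/2})$ upper bound on the numerator and a diffusive-window local CLT lower bound on the denominator --- is exactly the comparison the paper has in mind when it cites Lemma 4.10 of \bcd. One intermediate display is wrong, however: the inequality $\int|\psi|^\ell\,dt\le\eta^{\ell-\ell_0}\int|\psi|^{\ell_0}\,dt$ would require $|\psi(t)|\le\eta$ for all $t$, which fails since $|\psi(0)|=1>\eta$; as written it would give an exponentially small $\|\ffa^{*\ell}\|_\infty$, which is false. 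The correct $O(\ell^{-1/2})$ bound is obtained by splitting the Fourier integral at $|t|=\delta$: the tail $\int_{|t|>\delta}|\psi|^\ell\,dt\le\eta^{\ell-\ell_0}\int|\psi|^{\ell_0}\,dt$ is exponentially small, while the bulk satisfies $\int_{|t|\le\delta}|\psi|^\ell\,dt\le\int e^{-c\ell t^2}\,dt=O(\ell^{-1/2})$ from the quadratic Taylor expansion of $\psi$ near $0$ (finite variance); alternatively, this is immediate from \eqref{convolution}, which the paper already quotes. Finally, ``the other case is symmetric, reversing time'' needs care: time-reversing the bridge does yield a bridge from $(b_1,b_2)$ to $(a_1,a_2)$, but time-reversing the free walk started at $(a_1,a_2)$ does not yield a free walk started at $(b_1,b_2)$, so the two sides of the inequality do not transform compatibly under reversal, and the $\lfloor 3n/4\rfloor$-to-$n$ case requires its own argument (working with the marginal of $(S_i(k))_{k=\lfloor 3n/4\rfloor}^{n-1}$ and keeping track of the delta mass at the fixed bridge endpoint). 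This is a subtlety worth flagging, though in fact every application of the lemma in this paper invokes only the first case.
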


The proof follows easily by comparing the two densities, cf. Lemma 4.10 in \bcd.

The next lemma allows us to compare the first $r$ steps of a random walk bridge to those of a random walk, when $r/n$ is sufficiently small. In this lemma and in most of the paper, we will write $S_i(\rho n)$ for $S_i(\lfloor \rho n\rfloor)$ for brevity when $\rho n$ is not an integer.

\begin{lemma}\label{bridgewalk}
Fix any $M,K,\rho >0$, and let $g$ be any integrable functional of $(S_1(k),S_2(k))_{k=1}^{\rho n}$. Define the event $\m{C}_K = \{\max(|S_1(\rho n)|,|S_2(\rho n)|) \leq K\sqrt{\rho n} \}$. {Let $$B_M = \{(b_1,b_2)\in\mathbb{R}^2 : |b_1|+|b_2| \leq M\sqrt{n}, b_1-b_2 \geq \tfrac{1}{M}\sqrt{n}\}.$$} Then for any $\e>0$, we can find $n_0$ and $\rho_0$ depending on $M,K,\e$ so that for all $n\geq n_0$ and $\rho\leq\rho_0$,
\[
\sup_{(b_1,b_2)\in B_M} \left| \frac{\Ex^{n;(\m{h}_1,\m{h}_2);(b_1,b_2)}[g(S_1,S_2) \mathbf{1}_{\m{C}_K}]}{\Ex^{n;(\m{h}_1,\m{h}_2);\m{free}}[g(S_1,S_2)\mathbf{1}_{\m{C}_K}]} - 1 \right| < \e.
\]
\end{lemma}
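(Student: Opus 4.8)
\textbf{Proof strategy for Lemma \ref{bridgewalk}.}

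The plan is to write both the numerator and denominator as integrals over the value of the walk at time $\rho n$, and to show that conditionally on the event $\m{C}_K$ the bridge density and the free density at that intermediate point differ only by a $(1+o(1))$ multiplicative factor, uniformly. Concretely, by the Markov property of the free walk and of the bridge, for any bounded measurable $g$ depending only on $(S_1(k),S_2(k))_{k=1}^{\rho n}$ one has
\begin{align*}
\Ex^{n;(\m{h}_1,\m{h}_2);\m{free}}[g\,\ind_{\m{C}_K}] &= \int_{\R^2} \Ex^{n;(\m{h}_1,\m{h}_2);\m{free}}\big[g\,\ind_{\m{C}_K}\,\big|\,S_1(\rho n)=u_1, S_2(\rho n)=u_2\big]\,q_{\rho n}(u_1,u_2)\,du_1\,du_2,\\
\Ex^{n;(\m{h}_1,\m{h}_2);(b_1,b_2)}[g\,\ind_{\m{C}_K}] &= \int_{\R^2} \Ex^{n;(\m{h}_1,\m{h}_2);\m{free}}\big[g\,\ind_{\m{C}_K}\,\big|\,S_1(\rho n)=u_1, S_2(\rho n)=u_2\big]\,q_{\rho n}(u_1,u_2)\,\frac{\ffa^{*(n-\rho n)}(b_1-u_1)\ffa^{*(n-\rho n)}(b_2-u_2)}{\ffa^{*(n-1)}(b_1-x_1)\ffa^{*(n-1)}(b_2-x_2)}\,du_1\,du_2,
\end{align*}
where $q_{\rho n}$ is the joint law of $(S_1(\rho n),S_2(\rho n))$ under the free measure and $\ffa^{*m}$ is the $m$-fold convolution; here I have used that conditioned on its value at time $\rho n$, the first $\rho n$ steps of the bridge have the same law as those of the free walk, so the only discrepancy is the ratio of ``completion'' probabilities from $u$ to $b$ versus from the start to $b$. (When $\m{h}_i = \delta_{a_i}$ replace $x_i$ by $a_i$; when $\m{h}_i$ has a density one integrates the displayed ratio against $\m{h}_i$ as well, which only helps.) Thus it suffices to show that on the integration region $\{|u_i|\le K\sqrt{\rho n}\}$, the reweighting factor $\ffa^{*(n-\rho n)}(b_i-u_i)/\ffa^{*(n-1)}(b_i-x_i)$ is $1+O(\e)$ uniformly in $u_i$ in that region, $|b_i|\le M\sqrt n$, $\rho\le\rho_0$, and $n\ge n_0$; then the ratio of the two integrals is squeezed between $1-\e$ and $1+\e$.

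The key input is a local central limit theorem for the $(n-\rho n)$-step (resp.\ $(n-1)$-step) sums of i.i.d.\ increments with density $\ffa$. Assumption \ref{asp:in}(2) (integrability of the characteristic function, and the uniform bound $\sup_{|t|\ge\delta}|\psi(t)|<1$) gives a uniform local CLT: $\ffa^{*m}(w) = \frac{1}{\sqrt{2\pi m\s^2}}e^{-w^2/(2m\s^2)}\big(1+o(1)\big)$ as $m\to\infty$, uniformly for $w$ in compact multiples of $\sqrt m$, where $\s^2$ is the (finite, by Assumption \ref{asp:in}(3)) variance of $\ffa$. Applying this with $m=n-\rho n$ and argument $b_i-u_i$ (which satisfies $|b_i-u_i|\le (M+K)\sqrt n \le (M+K)(1-\rho)^{-1/2}\sqrt m$), and with $m=n-1$ and argument $b_i - x_i$ (with $|x_i|\le M$ so $|b_i-x_i|\le 2M\sqrt n$), one gets
\[
\frac{\ffa^{*(n-\rho n)}(b_i-u_i)}{\ffa^{*(n-1)}(b_i-x_i)} = \sqrt{\frac{n-1}{n-\rho n}}\,\exp\!\left(-\frac{(b_i-u_i)^2}{2(n-\rho n)\s^2} + \frac{(b_i-x_i)^2}{2(n-1)\s^2}\right)(1+o(1)).
\]
The prefactor $\sqrt{(n-1)/(n-\rho n)} = (1-\rho)^{-1/2}+o(1)$ is within $\e$ of $1$ once $\rho_0$ is small, and the two Gaussian exponents are each $O\!\big((M+K)^2/(1-\rho)\big)$ times a bounded quantity... here is the subtlety I flag below: the exponents are $O(1)$, not $o(1)$, so they do \emph{not} automatically collapse to $1$. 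The resolution is that $b_i$ is \emph{fixed} (not tending to infinity at rate $\sqrt n$ in an essential way) only through $b_i/\sqrt n\to\beta_i$ — but actually we do not need $b_i$ to converge: what we need is that the \emph{difference} of exponents is small when $\rho$ is small and $|u_i|\le K\sqrt{\rho n}$, i.e.\ $(b_i-u_i)^2/(n-\rho n) - (b_i-x_i)^2/(n-1) \to 0$. Since $|u_i|\le K\sqrt{\rho n}= K\sqrt\rho\sqrt n$ and $|x_i|\le M$, we have $(b_i-u_i)^2 = b_i^2 + O(\sqrt\rho\, n) + O(\sqrt n\,\sqrt{\rho n})$ and $(b_i-x_i)^2 = b_i^2 + O(\sqrt n)$, so the difference of the two ratios is $O(\sqrt\rho)+o(1)$ uniformly over $|b_i|\le M\sqrt n$. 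Choosing $\rho_0$ small then $n_0$ large makes this $<\e$, and exponentiating preserves the $1+O(\e)$ bound.

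\textbf{The main obstacle} is precisely the uniform local CLT with the right error control: one needs the local limit theorem to hold \emph{uniformly in the spatial argument over the whole range up to $O(\sqrt n)$}, which is a ``moderate/large deviation local CLT'' rather than the textbook pointwise statement. This is where Assumption \ref{asp:in}(3) (subexponential tails, hence a finite exponential moment $\Ex[e^{tX}]<\infty$ for small $|t|$) is essential: it lets one use an exponential tilt (Cramér transform) to move the argument $b_i-u_i$, of size up to $c\sqrt n = c\sqrt m$, into the bulk, apply the ordinary uniform local CLT after tilting, and untilt, picking up a controlled Gaussian-type correction — exactly the form written above. This machinery is available in \bcd\ (it underlies their bridge-vs-free comparisons such as Lemma 4.10, i.e.\ our Lemma \ref{lem:mrb}, and their non-intersection estimates), so in the write-up I would cite the relevant local CLT from \bcd\ rather than reprove it, and then the rest of the argument is the elementary squeezing of the ratio of integrals described above. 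A minor additional point: when $\m{h}_i$ is a genuine density with $\m{h}_i(x)\le Me^{-|x|/M}$ rather than a delta, the normalization of the bridge measure involves $\int \m{h}_i(x)\ffa^{*(n-1)}(b_i-x)\,dx$; the same local CLT plus the exponential decay of $\m{h}_i$ shows this integral is $(1+o(1))\m{h}_i$-weighted Gaussian mass concentrated near $x=O(1)$, so the argument goes through verbatim with $x_i$ replaced by an $O(1)$ effective starting point, and one should note all the error terms only shrink in this case, consistent with the ``$\le$'' philosophy of Lemma \ref{lem:mrb}.
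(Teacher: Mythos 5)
Your overall strategy matches the paper's: both write the bridge expectation as the free expectation reweighted by a Radon--Nikodym derivative built from heat kernel ratios, apply a local CLT to the convolution densities, and then squeeze the resulting Gaussian exponents to $o(1)$ using $|u_i| \le K\sqrt{\rho n}$, $|b_i|\le M\sqrt n$, and $\rho$ small. Your exponent bookkeeping (differences of size $O(K^2\rho)$, $O(M^2\rho)$, $O(MK\sqrt\rho)$, plus an $o_n(1)$) agrees with the paper's equations \eqref{RNexp}--\eqref{RNint}, and your treatment of the non-delta case of $\m{h}_i$ is in the same spirit as the paper's estimate of the normalizing integral.

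The one place you make the argument look harder than it is, and where you would get stuck if you tried to fill in the ``main obstacle'' you flag, is the claim that one needs a moderate/large-deviation local CLT with exponential tilting. That machinery is not needed. The paper invokes Feller (Chapter XV.5, Theorem 2), which gives the \emph{additive} uniform local CLT
\[
\sup_{z\in\R} \Bigl|\sqrt{k}\,\ffa^{\ast k}(z)-(2\pi\sigma^2)^{-1/2}e^{-z^2/(2k\sigma^2)}\Bigr|\longrightarrow 0 ,
\]
valid under precisely Assumption~\ref{asp:in}(2) and with no restriction on the range of $z$. The reason this additive statement suffices to give a \emph{multiplicative} $1+o(1)$ control on the ratio of convolution densities is that on the regions you care about the Gaussian term is itself of order $1/\sqrt n$: with $|b_i-u_i|\le (M+K\sqrt\rho)\sqrt n$ the exponent is $O((M+K)^2)$, hence bounded, so the Gaussian density is bounded below by $c(M,K)/\sqrt n$. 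Dividing an $o(1/\sqrt n)$ additive error by a quantity of order $1/\sqrt n$ yields relative error $o_n(1)$. So the Cram\'er tilt is a detour; Feller's theorem already does the job, and Assumption \ref{asp:in}(3) is used elsewhere (for the tails of $\m h_i$ and of the increments in the normalization integral), not to extend the local CLT into a moderate-deviation regime.
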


\begin{proof}[Proof of Lemma \ref{bridgewalk}]
    The proof proceeds by estimating the Radon--Nikodym derivative between the two measures. {In the definition of the conditional density of the random walk bridges (via convolution of the log-gamma density $\m{f}$), we integrate separately over the trajectory before and after time $\rho n$. This leads to}
    \begin{align*}
        \Ex^{n;(\m{h}_1,\m{h}_2);(b_1,b_2)}\left[g(S_1,S_2)\mathbf{1}_{\m{C}_K}\right] &= \Ex^{n;(\m{h}_1,\m{h}_2);\m{free}}\left[g(S_1,S_2)\mathbf{1}_{\m{C}_K} \cdot D^{n;(b_1,b_2)}\right],
    \end{align*}
    where
    \begin{equation}\label{RNderiv}
        D^{n;(b_1,b_2)}  = \frac{\m{f}^{*(n-\rho n)}(S_1(\rho n)-b_1) \m{f}^{*(n-\rho n)}(S_2(\rho n)-b_2)}{\int_{\mathbb{R}^2} \m{h}_1(x_1)\m{h}_2(x_2) \m{f}^{*(n-1)}(x_1-b_1)\m{f}^{*(n-1)}(x_2-b_2)\,dx_1\,dx_2},
    \end{equation}
     where $\m{f}^{\ast k}$ denotes the $k$-fold convolution of $\m{f}$. It therefore suffices to show that for large $n$ and small $\rho$, on $\m{C}_K$ we have $|D^{n;(b_1,b_2)} - 1| < \e$ uniformly over $b_1,b_2$. We now seek estimates for the convolution. From \cite[Chapter XV.5, Theorem 2]{feller}, we have \begin{equation}\label{convolution}
 \sup_{z\in\R} \left|\sqrt{k}\,\m{f}^{\ast k}(z)-(2\pi \sigma^2)^{-1/2}e^{-z^2/2k\sigma^2}\right| \longrightarrow 0
 \end{equation} as $k\to \infty$, where $\sigma^2:=\int x^2\m{f}(x)dx$.  This implies, uniformly over $x_i$ and $S_i(r)$,
    \begin{align}
        \m{f}^{*(n-\rho n)}(S_i(\rho n) - b_i) &= \frac{1}{\sqrt{2\pi\sigma^2(n-\rho n)}}\exp\left(-\frac{(S_i(\rho n)-b_i)^2}{2(n-\rho n)\sigma^2}\right) + o_n\left(\frac{1}{\sqrt{n-\rho n}}\right),\label{RNnum}\\
        \m{f}^{*(n-1)}(x_i-b_i) &= \frac{1}{\sqrt{2\pi\sigma^2(n-1)}} \exp\left(-\frac{(x_i-b_i)^2}{2(n-1)\sigma^2}\right) + o_n\left(\frac{1}{\sqrt{n}}\right).\label{RNdenom}
    \end{align}
   Here $o_a(b)$ denotes any term that goes to zero 
 upon dividing by $b$ as $a\to \infty$. Integrating \eqref{RNdenom}, we can write the denominator in \eqref{RNderiv} as
    \begin{align*}
    \frac{1}{2\pi \sigma^2(n-1)} \int_{\mathbb{R}^2} \m{h}_1(x_1)\m{h}_2(x_2)\exp\left(-\frac{(x_1-b_1)^2}{2(n-1)\sigma^2} - \frac{(x_2-b_2)^2}{2(n-1)\sigma^2}\right)dx_1\,dx_2 + o_n\left(\frac{1}{n}\right).
    \end{align*}
    Combining with \eqref{RNnum}, we can write \eqref{RNderiv} as
    \begin{align}
        \frac{1}{D^{n;(b_1,b_2)}} &= \frac{n-\rho n}{n-1} \prod_{i=1}^2 \int_{\mathbb{R}} \m{h}_i(x_i)\exp\left(\frac{(S_i(\rho n)-b_i)^2}{2(n-\rho n)\sigma^2} - \frac{(x_i-b_i)^2}{2(n-1)\sigma^2}\right)dx_i + o_n(1)\\
        &= \frac{n-\rho n}{n-1} \prod_{i=1}^2 \bigg[  \exp\left(\frac{S_i(\rho n)^2}{2(n-\rho n)\sigma^2} - \frac{b_i^2}{2\sigma^2}\left(\frac{1}{n-\rho n} - \frac{1}{n-1}\right) - \frac{b_i S_i(\rho n)}{(n-\rho n)\sigma^2}\right)\label{RNexp}\\
        & \qquad \qquad\qquad \qquad \cdot \int_{\mathbb{R}} \m{h}_i(x_i) \exp\left(\frac{b_i x_i}{(n-1)\sigma^2} \right)dx_i \bigg] + o_n(1)\label{RNint}.
    \end{align}
    The prefractor $\frac{n-\rho n}{n-1}$ can of course be made arbitrarily close to 1 for small $\rho$. On the event  $\m{C}_K$, the first term in the exponential is of order $K^2\rho$. Since $b_i\in B_M$, the second term in the exponent is of order $M\rho$, and on $\m{C}_K$ the third term is of order $MK\sqrt{\rho}$. Thus all three of these terms are $o_\rho(1)$, and the exponential in \eqref{RNexp} is $1+o_\rho(1)$. Lastly, we claim that the integral in \eqref{RNint} is $1+o_{n}(1)$. Indeed, the tail bounds $\m{h}_i(x_i) \leq Me^{-|x_i|/M}$ from Definition \ref{def:ic} along with the fact that $|b_i| \leq M\sqrt{n}$ provide a lower bound of
    \begin{align*}
        \int_{-n^{1/4}}^{n^{1/4}} \m{h}_i(x_i) \exp\left(-\frac{M\sqrt{n}\cdot n^{1/4}}{(n-1)\sigma^2}\right)dx_i 
        &\geq e^{-Mn^{-1/4}/\sigma^2} \left( 1- 2\int_{n^{1/4}}^\infty Me^{-x_i/M}\,dx_i\right)\\
        &= e^{-Mn^{-1/4}/\sigma^2} \left( 1- 2M^2 e^{-n^{1/4}/M}\right) = 1-o_n(1).
    \end{align*}
    On the other hand we have an upper bound of
    \begin{align*}
        &\int_{-n^{1/4}}^{n^{1/4}} \m{h}_i(x_i)\exp\left(\frac{M\sqrt{n} \cdot n^{1/4}}{(n-1)\sigma^2}\right)dx_i + \int_{|x|>n^{1/4}} M\exp\left(-\frac{|x_i|}{M} + \frac{M\sqrt{n}\,|x_i|}{(n-1)\sigma^2}\right)dx_i \\
        &\qquad\qquad \leq 1+o_n(1) + \int_{|x|>n^{1/4}} M\exp\left(-|x_i|\left(\frac{1}{M}-o_n(1)\right)\right)dx_i\\
        &\qquad\qquad = 1+o_n(1) + (M^2+o_n(1))e^{-n^{1/4}/M + o_n(1)} = 1+o_n(1).
    \end{align*}
    Inserting the above estimates into \eqref{RNexp} and \eqref{RNint}, we obtain $D^{n;(b_1,b_2)} = 1+o_{n,\rho}(1)$ uniformly over $(b_1,b_2)\in B_M$, completing the proof.   
\end{proof}
\medskip

It is obvious that for any fixed $(\m{h}_1,\m{h}_2)\in \m{IC}(M)$ and fixed sequences $b_1(n),b_2(n) \in [-M\sqrt{n},M\sqrt{n}]$, we have tightness of the initial points $(S_1(1),S_2(1))$ under $\Pr^{n;(\m{h}_1,\m{h}_2),(b_1,b_2)}$ as we vary $n$. The following lemma shows that this tightness holds for any initial conditions in $\m{IC}(M)$, uniformly over all endpoints $b_1,b_2$ in the diffusive window.

\begin{lemma}[Uniform tightness of initial points]\label{l:tip} Fix $M>0$. Suppose  $(\m{h}_1, \m{h}_2)\in \m{IC}(M)$. There exist constants $\Con=\Con(M)>0$ and $n_0(M)>0$ such that for all $R>0$, $i\in \{1,2\}$, and $n\ge n_0$,
     \begin{align*}
          \sup_{|b_1|+|b_2|\le M\sqrt{n}} \Pr^{n;(\m{h}_1,\m{h}_2),(b_1,b_2)}\big(|\se{i}{1}| \ge R\big) \le \Con e^{-R/\Con}.
     \end{align*}
\end{lemma}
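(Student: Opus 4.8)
The strategy is to estimate the Radon--Nikodym derivative of the bridge law with respect to the free law and then combine it with tightness of the initial point under the free law. Concretely, write $r=1$ (we only care about the first coordinate) and condition on the whole path up to its two endpoints; for an event $\m{A}\in\sigma\{(S_1(1),S_2(1))\}$ we have
\begin{align*}
\Pr^{n;(\m{h}_1,\m{h}_2);(b_1,b_2)}\big(\m{A}\big)=\Ex^{n;(\m{h}_1,\m{h}_2);\m{free}}\big[\ind_{\m{A}}\cdot D\big],
\end{align*}
where, analogously to \eqref{RNderiv},
\begin{align*}
D=\frac{\m{f}^{*(n-1)}(S_1(1)-b_1)\,\m{f}^{*(n-1)}(S_2(1)-b_2)}{\int_{\R^2}\m{h}_1(x_1)\m{h}_2(x_2)\,\m{f}^{*(n-1)}(x_1-b_1)\m{f}^{*(n-1)}(x_2-b_2)\,dx_1\,dx_2}.
\end{align*}
The first step is to bound $D$ from above: using the local CLT estimate \eqref{convolution}, the numerator is at most $\Con\, n^{-1}\exp\big(-\tfrac{(S_1(1)-b_1)^2+(S_2(1)-b_2)^2}{2(n-1)\sigma^2}\big)(1+o_n(1))\le \Con n^{-1}(1+o_n(1))$ uniformly over $|S_i(1)|\le \sqrt n$, say (for $|S_i(1)|$ larger we will not need the bound, see below), while the denominator is bounded below by a constant times $n^{-1}$ uniformly over $|b_i|\le M\sqrt n$, using that the $\m{h}_i$ have mass bounded away from $0$ on a fixed compact set and that $\m{f}^{*(n-1)}(x_i-b_i)\ge \Con n^{-1/2}$ for $|x_i|,|b_i|\le M\sqrt n$. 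Hence $D\le \Con$ on the event $\{\max_i|S_i(1)|\le \sqrt n\}$, uniformly in $b_1,b_2$.

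The second step splits the probability according to whether $|S_i(1)|$ is large. On the event $\{|S_i(1)|\le \sqrt n\}$ we invoke the uniform bound $D\le \Con$ to get
\begin{align*}
\Pr^{n;(\m{h}_1,\m{h}_2);(b_1,b_2)}\big(R\le |S_i(1)|\le \sqrt n\big)\le \Con\,\Pr^{n;(\m{h}_1,\m{h}_2);\m{free}}\big(|S_i(1)|\ge R\big),
\end{align*}
and under the free law $S_i(1)\sim\m{h}_i$, so by Definition \ref{def:ic} this is at most $\Con e^{-R/\Con}$ (either $\m{h}_i$ is a point mass at $a_i$ with $|a_i|\le M$, in which case the probability is $0$ for $R>M$, or $\m{h}_i(x)\le Me^{-|x|/M}$ and the tail integral is $\le \Con e^{-R/\Con}$). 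For the complementary event $\{|S_i(1)|>\sqrt n\}$, we need a crude direct bound under the bridge law: conditionally on the endpoints, $S_i(1)$ has density proportional to $\m{h}_i(x)\m{f}^{*(n-1)}(x-b_i)$, and since $\m{f}^{*(n-1)}(x-b_i)\le \Con n^{-1/2}$ uniformly in $x$ (again by \eqref{convolution}, or by the subexponential tail bound in Assumption \ref{asp:in}(3)) while the normalizing constant is $\ge \Con n^{-1/2}$ as above, we get that this density is $\le \Con\,\m{h}_i(x)$, hence $\Pr^{n;(\m{h}_1,\m{h}_2);(b_1,b_2)}(|S_i(1)|>\sqrt n)\le \Con\int_{|x|>\sqrt n}\m{h}_i(x)\,dx\le \Con e^{-\sqrt n/\Con}\le \Con e^{-R/\Con}$ for $n$ large (absorbing the point-mass case trivially). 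Adding the two pieces and adjusting constants gives the claim for $n\ge n_0(M)$.

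\textbf{Main obstacle.} The delicate point is the uniformity over $b_1,b_2$ in the diffusive window and the handling of the region where $S_i(1)$ leaves the diffusive scale: the local CLT \eqref{convolution} gives good control of $\m{f}^{*(n-1)}$ only after dividing by $n^{-1/2}$, and one must be careful that the error term $o_n(n^{-1/2})$ does not swamp the Gaussian when the argument is of order $\sqrt n$ --- but since we only need an \emph{upper} bound on the density (we are bounding a probability from above), it suffices that $\m{f}^{*(n-1)}(z)\le \Con n^{-1/2}$ uniformly in $z$, which follows cleanly from \eqref{convolution}, and a matching \emph{lower} bound on the normalizing constant, which only requires $\m{f}^{*(n-1)}(x-b_i)\ge \Con n^{-1/2}$ for $x,b_i$ both $O(\sqrt n)$ together with $\int_{-1}^1 \m{h}_i>0$. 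So no genuine asymptotic precision is needed, only two-sided Gaussian bounds on a macroscopic window, and the proof reduces to bookkeeping. The only place one must be slightly careful is ensuring $\int_{-1}^1 \m{h}_i(x)\,dx$ (or the analogous statement for a point mass) is bounded below uniformly over $(\m{h}_1,\m{h}_2)\in\m{IC}(M)$; this follows since $\m{h}_i$ is a probability density with $\m{h}_i(x)\le Me^{-|x|/M}$ forces $\int_{-R_0}^{R_0}\m{h}_i\ge 1/2$ for $R_0=R_0(M)$, and in the point-mass case $|a_i|\le M$ handles it directly.
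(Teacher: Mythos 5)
Your proof is correct and is essentially the same argument as the paper's: in both, the marginal density of $S_i(1)$ under the bridge law is recognized as proportional to $\m{h}_i(x)\,\m{f}^{*(n-1)}(x-b_i)$; the uniform upper bound $\m{f}^{*(n-1)}(z)\le C n^{-1/2}$ (from \eqref{convolution}) controls the numerator, a two-sided Gaussian bound on $\m{f}^{*(n-1)}$ over a fixed compact window where $\m{h}_i$ has mass bounded away from $0$ controls the normalizing constant, and the exponential tail of $\m{h}_i$ then gives the conclusion. The only difference is that your split into $\{|S_i(1)|\le\sqrt n\}$ and $\{|S_i(1)|>\sqrt n\}$ is redundant --- as you yourself observe in your closing remarks, the uniform bound $\m{f}^{*(n-1)}(z)\le Cn^{-1/2}$ holds for all $z$, so the Radon--Nikodym factor is bounded by a constant everywhere and no case distinction is needed; the paper's proof is exactly this one-step version.
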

  \begin{proof} 
  The density of $\se{i}{1}$ is proportional to $\m{h}_i(x)\cdot \m{f}^{\ast (n-1)}(x-b_i),$
  where, as before, $\m{f}^{\ast k}$ denotes the $k$-fold convolution of $\m{f}$. Let us choose a rectangle $I=[-M_1,M_1]$ (depending on $M$) such that $\int_{I} \m{h}_i(x)dx \ge 1/2$. Note that for any Borel $A\subset \R$ we have 
  \begin{equation}\label{2.1}
       \begin{aligned}
     &  \Pr^{n;(\m{h}_1,\m{h}_2),(b_1,b_2)}\left(\se{i}{1}\in A\right) \le \frac{\int_{ A} \m{h}_i(x)\cdot  \m{f}^{\ast (n-1)}(x-b_i)dx}{\int_{ I} \m{h}_i(x)\cdot  \m{f}^{\ast (n-1)}(x-b_i)dx}.
  \end{aligned}
  \end{equation}
Using \eqref{convolution}, for large enough $n$, one can ensure $\sqrt{n-1}\, \m{f}^{\ast (n-1)}(x_i-b_i) \le 1+(2\pi \sigma^2)^{-1/2}$. Since $|b_i|\le M\sqrt{n}$, for all large enough $n$ we have $\sqrt{n-1}\,\m{f}^{\ast (n-1)}(x_i-b_i)  \ge (4\pi \sigma^2)^{-1/2}e^{-M^2/\sigma^2}$  for all $(x_1,x_2)\in I$. Plugging these bounds back in the right-hand side of \eqref{2.1} {and recalling that $\int_{I} \m{h}_i(x)dx \ge 1/2$} we get
  \begin{align*}
  \mbox{r.h.s.~of \eqref{2.1}} \le 2\sqrt{4\pi \sigma^2} \, e^{M^2/\sigma^2}(1+(2\pi \sigma)^{-1/2})\int_{A} \m{h}_i(x)dx.
  \end{align*}
for all large enough $n$. Taking  $R$ large enough, setting $A:=(-R,R)^c$ and utilizing the fact that $\m{h}_i$ have exponential tails, we get the desired result.
  \end{proof}

We now define the softly non-intersecting version of the above random walks and bridges. Some of our arguments will involve splitting the walks into two pieces, and towards this end we introduce the notation
\begin{equation}\label{wsplit}
{W}_{r\to n} := \exp\left(-e^{S_2(r)-S_1(r+1)} - \sum_{k=r+1}^{n-1} \left(e^{S_2(k)-S_1(k+1)}+e^{S_2(k)-S_1(k)}\right)\right),
\end{equation}
for any $r<n$. We write $W_n:=W_{1\to n}$, and note that this agrees with the definition of $\wsc$ in \eqref{defw}. We will also require a variant of $W_n$:
\begin{equation}\label{wrdef}
    \hat{W}_{r} := \exp\left(-\sum_{k=1}^{r-1} \left(e^{S_2(k)-S_1(k+1)} + e^{S_2(k+1)-S_1(k+1)}\right)\right) \mbox{ for }r\ge 2, {\quad\mbox{and } \hat{W}_1 := 1.}
\end{equation}
Note that $\hat{W}_r=W_r \cdot \exp\left(-e^{S_2(r)-S_1(r)}\right)$ and 
\begin{align}
    \label{decomp}
    W_{a+b}=\hat{W}_a \cdot W_{a\to a+b}
\end{align}
for any $a,b \in \Z_{\ge 1}$. Let $\mathcal{F}_a$ denote the $\sigma$-algebra generated by $(S_1(k),S_2(k))_{k=1}^a$. Then we observe that $\widehat{W}_a$ is $\mathcal{F}_a$-measurable, and the Gibbs property for random walks and bridges implies that
\begin{align}
\label{transfin}
    \Ex^{a+b;(\m{h}_1,\m{h}_2);\bullet}\left[{W}_{a\to a+b}\mid \mathcal{F}_a \right]=\Ex^{b+1;(S_1(a),S_2(a));\bullet}\left[{W}_{b+1}\right].
\end{align}
where $\bullet\in \{(b_1,b_2), \m{free}\}$.

\begin{definition}[Softly non-intersecting random walks and bridges] \label{def:sni} For $(\m{h}_1,\m{h}_2) \in \m{IC}(M)$ and $b_1,b_2\in \mathbb{R}$, we define a weighted probability measure $\Pr_{\wsc}^{n;(\m{h}_1,\m{h}_2),\bullet}$ on $(\Omega_n,\mathcal{F}_n)$ that is absolutely continuous with respect to $\Pr^{n;(\m{h}_1,\m{h}_2);\bullet}$ with Radon--Nikodym derivative $\wsc$ defined in \eqref{defw}. That is, for all $\m{A}\in \mathcal{F}_n$,
	\begin{align}
		\label{wscint}
		\Pr_{\wsc}^{n;(\m{h}_1,\m{h}_2);\bullet}(\m{A})=\frac{\Ex_{}^{n;(\m{h}_1,\m{h}_2);\bullet}[\wsc\ind_{\m{A}}]}{\Ex_{}^{n;(\m{h}_1,\m{h}_2);\bullet}[\wsc]}.
	\end{align}
 Here $\bullet \in \{\m{free}, (b_1,b_2)\}$.
\end{definition}

Note that there is a penalty of order $e^{-e^\delta}$ in the above Radon--Nikodym derivative whenever $S_1(k)-S_2(k) \le -\delta$, which justifies the ``soft non-intersection'' terminology.

 We now give a lemma which relates these softly non-intersecting bridge measures to the weighted paired random walk measures from Definition \ref{prb}.

 \begin{lemma}\label{lem:IRWshift}
 Fix any $n\in\mathbb{Z}_{\geq 1}$ and $a,b\in\mathbb{R}$. Recall the $\m{WPRW}$ law and the densities $\fa$ and $\ga$ from Definition \ref{prb}. Suppose $(S_1(k),S_2(k))_{k=1}^n$ has law $\Pr_{\m{WPRW}}^{n;(a,b)}$. Then the conditional law of $\big((S_1(k)-S_1(1),S_2(k)-S_1(1))\big)_{k=1}^n$ given $S_1(1)$ is $\Pr_{\wsc}^{n;(0,\m{h});(a',b')}$, where $\m{f}(x)=\fa(x)$,  $\m{h}(x) = \ga$ and $a':=a-S_1(1)$, $b':=b-S_1(1)$.
 \end{lemma}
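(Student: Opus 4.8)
The plan is to unwind both sides to explicit densities on $(\Omega_n,\mathcal F_n)$ and check they agree after conditioning on $S_1(1)$. First I would recall, via Lemma \ref{wprwconn}, that $\Pr_{\m{WPRW}}^{n;(a,b)}$ is the law of $(L_1(2i-1),L_2(2i))_{i=1}^n$ where $(L_1,L_2)\sim \Pr_{\m{IRW}}^{T;(a,b)}$ with $T=n$; equivalently, by Definition \ref{prb}, $\Pr_{\m{WPRW}}^{n;(a,b)}$ is $\Pr_{\m{PRW}}^{n;(a,b)}$ reweighted by $\wsc$ from \eqref{defw}. So $\Pr_{\m{WPRW}}^{n;(a,b)}$ has density (against Lebesgue on the $2n-2$ free coordinates, times $\delta_{\omega_1(n)=a}\delta_{\omega_2(n)=b}$) proportional to
\begin{align*}
\ga(\omega_2(1)-\omega_1(1))\,\wsc(\omega)\,\prod_{k=2}^n \fa(\omega_1(k)-\omega_1(k-1))\fa(\omega_2(k)-\omega_2(k-1)).
\end{align*}

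Next I would perform the change of variables $\omega_i(k)\mapsto \omega_i(k)-\omega_1(1)$, i.e. subtract the (now fixed) value $S_1(1)$ from every coordinate. Two observations make this clean: (i) all increments $\omega_i(k)-\omega_i(k-1)$ are invariant under a global shift, so the product of $\fa$'s is unchanged; (ii) the weight $\wsc$ in \eqref{defw} depends only on the differences $S_2(k)-S_1(k)$ and $S_2(k)-S_1(k+1)$, hence is also shift-invariant. Thus the only term that transforms is $\ga(\omega_2(1)-\omega_1(1))$, which becomes $\ga(\omega_2(1)-0)=\ga(\omega_2(1))$ in the shifted coordinates, i.e. it becomes exactly the initial density $\m h(\cdot)=\ga(\cdot)$ for the second walk started at height $\omega_2(1)$ while the first walk starts at $0$; and the endpoint constraints become $\delta_{\omega_1(n)=a-S_1(1)}\delta_{\omega_2(n)=b-S_1(1)}$, i.e. $a'=a-S_1(1)$, $b'=b-S_1(1)$. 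Comparing with Definitions \ref{def:rwrb} and \ref{def:sni}: the shifted density is precisely (the Lebesgue density of) $\Pr^{n;(0,\m h);(a',b')}$ reweighted by $\wsc$, which is by definition $\Pr_{\wsc}^{n;(0,\m h);(a',b')}$ with $\m f=\fa$, $\m h=\ga$.

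To make the conditioning rigorous I would argue as follows: for a bounded test functional $\Psi$ of $(S_1(k),S_2(k))_{k=1}^n$ and a bounded $\varphi$, write $\Ex_{\m{WPRW}}^{n;(a,b)}[\varphi(S_1(1))\Psi]$ as an integral, substitute $u=\omega_1(1)$, and pull the $\varphi(u)$ outside; the remaining inner integral over the shifted coordinates is exactly $Z(u)\cdot \Ex_{\wsc}^{n;(0,\ga);(a-u,b-u)}[\Psi(\,\cdot+u)]$ for the appropriate normalizing constant $Z(u)$, where $\Psi(\,\cdot+u)$ denotes $\Psi$ evaluated on the un-shifted walk; since the stated conditional law is about the centered walk $(S_i(k)-S_1(1))$, the $+u$ cancels and we read off the claimed regular conditional distribution. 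The main (and only real) obstacle is bookkeeping: keeping the normalization constants, the Dirac-delta endpoint factors, and the even/odd indexing from Lemma \ref{wprwconn} straight, and confirming carefully that $\wsc$ as written in \eqref{defw} truly involves only the differences $S_2(k)-S_1(k)$ and $S_2(k)-S_1(k+1)$ (so that shift-invariance genuinely applies) — there is no analytic difficulty once the shift-invariance of both $\prod\fa$ and $\wsc$ is observed.
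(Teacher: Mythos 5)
Your proposal is correct and follows essentially the same route as the paper's own proof: write out the explicit $\Pr_{\m{WPRW}}^{n;(a,b)}$ density from Definition~\ref{prb}, shift all coordinates by $S_1(1)$, observe that the increment factors and $\wsc$ are shift-invariant while $\ga(\omega_2(1)-\omega_1(1))$ becomes the initial density and the Dirac deltas pick up $a',b'$, and conclude by matching with Definitions~\ref{def:rwrb} and~\ref{def:sni}. Your test-function wrap-up is a slightly more explicit version of the paper's observation that $\omega_1(1)$ appears only in the delta factors after the change of variables, so the two arguments are the same in substance.
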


 \begin{proof}
     This is a straightforward computation from the definitions. {Suppose $(S_1(k),S_2(k))_{k=1}^n$ has law $\Pr_{\m{WPRW}}^{n;(a,b)}$. The joint density of $(S_1(k),S_2(k))_{k=1}^n$ at $(\omega_1(k),\omega_2(k))_{k=1}^n$} is proportional to the product of two Dirac delta functions $\delta_{\omega_1(n)=x}\delta_{\omega_2(n)=y}$ and a density given by 
     \begin{align*}
         \ga\big(\omega_2(1)-\omega_1(1)\big)\wsc(\omega)\prod_{k=2}^{n}\fa\big(\omega_1(k)-\omega_1(k-1)\big)\fa\big(\omega_2(k)-\omega_2(k-1)\big)\,d\omega_1(k)\, d\omega_2(k).
     \end{align*}
    {By a change of variables, the joint density of $S_1(1),((S_1(k)-S_1(1))_{k=2}^n,(S_2(k)-S_1(1)))_{k=1}^n$ at $\omega_1(1),(\omega_1(k))_{k=2}^n, (\omega_2(k))_{k=1}^n$} is proportional to the product of two Dirac delta functions $\delta_{\omega_1(n)-\omega_1(1)=x}$ and $\delta_{\omega_2(n)-\omega_1(1)=y}$ times a density given by 
    \begin{equation}\label{denab}
         \begin{aligned}
& \ga\big(\omega_2(1)\big)\wsc(\omega) \fa\big(\omega_1(2)\big)\fa\big(\omega_2(2)-\omega_2(1)\big) \\ & \hspace{2cm} \cdot \prod_{k=3}^{n}\fa\big(\omega_1(k)-\omega_1(k-1)\big)\fa\big(\omega_2(k)-\omega_2(k-1)\big)\prod_{k=2}^n\,d\omega_1(k)\, d\omega_2(k).
     \end{aligned}
    \end{equation}
Note that in the above $\omega_1(1)$ only appears in the Dirac delta functions. The conditional law of $\big((S_1(k)-S_1(1),S_2(k)-S_1(1))\big)_{k=1}^n$ given $S_1(1)$ is thus given by $\delta_{\omega_1(1)=0}\delta_{\omega_1(n)=x+S_1(1)}\delta_{\omega_2(n)=y+S_1(1)}$ times the density in \eqref{denab}. Since we are multiplying by $\delta_{\omega_1(1)=0}$, we can change $\fa\big(\omega_1(2)\big)$ to $\fa\big(\omega_1(2)-\omega_1(1)\big)$ in \eqref{denab}. Then this precisely matches with the measure $\Pr_{\wsc}^{T;(0,\m{h});(a',b')}$ described in Definition \ref{def:rwrb}, as was to be shown.
 \end{proof}

\begin{remark}
While $\wsc$ is the relevant Radon--Nikodym derivative that arises in the analysis of $\hslg$ Gibbs measures, we believe all of the results below in this section are true and can be proven in a similar manner for a general Radon--Nikodym of the form
\begin{align*}
    \exp\bigg(-R_1\big(S_2(1)-S_1(2)\big)-\sum_{k=2}^{n-1} \left[R_1\big(S_2(k)-S_1(k+1)\big)+R_2\big(S_2(k)-S_1(k)\big)\right]\bigg),
\end{align*}
where $R_1$ and $R_2$ are functions satisfying  $R_i(x)/|x| \to \infty$, and $|x|R_i(-x) \to 0$ as $x\to \infty$.
\end{remark}

The goal of the rest of this section is to obtain some preliminary probability estimates for events involving softly non-intersecting random walks and bridges.  It is well known from \cite{igl} that if the starting points of two random walks are within an $O(1)$ window, then the probability of the two walks not intersecting up to time $n$ is of order $n^{-1/2}$. Since the Radon--Nikodym derivative $\wsc$ is an approximation of the indicator for non-intersection, we expect both the numerator and denominator of the fraction in \eqref{wscint} to be of order $n^{-1/2}$, in particular vanishing as $n\to\infty$. Thus, it is not straightforward to transfer probability estimates for random walks and bridges to their softly non-intersecting versions. To get around this difficulty, we instead provide estimates for $\sqrt{n} \cdot \Ex^{n;(\m{h}_1,\m{h}_2);\bullet}[\wsc\ind_{\m{A}}]$ for different events $\m{A}$ of interest in this subsection. Towards this end, we define the (weak) non-intersection event
	\begin{align}\label{def:nip}
		\ni_p\ll a,b\rr:=\{\se{1}{k}-\se{2}{k}\ge -p, \mbox{ for all } k\in \ll a,b\rr\}.
	\end{align}
	When $a=2, b=n-1$, we write $\ni_p:=\ni_p\ll 2,n-1\rr$. We write $\ni:=\ni_0$ for the true non-intersection event. We first recall a few estimates about non-intersecting probabilities of random walks and bridges from Appendix C of \bcd.
\begin{lemma}[Lemmas C.3, C.8, and C.9 in \bcd] \label{l:ni} There exist a constant $\Con>0$ such that
\begin{enumerate}[label=(\alph*),leftmargin=15pt]
    \item For all $(a_1,a_2)\in \R^2$, $\Pr^{n;(a_1,a_2);\m{free}}(\m{NI}) \le \frac{\Con\max\{a_1-a_2,1\}}{\sqrt{n}}$. If $a_1\ge a_2$, $\Pr^{n;(a_1,a_2);\m{free}}(\m{NI}) \ge \frac{1}{C\sqrt{n}}$.
    \item For all $(a_1,a_2), (b_1,b_2)\in \R^2$, $\Pr^{n;(a_1,a_2);(b_1,b_2)}(\ni_p) \le  e^{\Con p} \cdot   \Pr^{n;(a_1,a_2);(b_1,b_2)}(\ni)$.
    \item Fix $M>0$. There exists a constant $\Con'=\Con'(M)>0$ such that for all $(a_1,a_2), (b_1,b_2)\in \R^2$ satisfying $|a_i|\le {\sqrt{n}}(\log n)^{3/2}$ with $|a_1-a_2|\le (\log n)^{3/2}$ and $|b_i|\le M\sqrt{n}$ with $b_1\ge b_2$, we have
     \begin{align*}
        \Pr^{n;(a_1,a_2);(b_1,b_2)}(\ni) \le \tfrac{\Con'}{\sqrt{n}} \max\{a_1-a_2,1\} \cdot \max\left\{ \tfrac1{\sqrt{n}}|a_1-b_1|,2\right\}^{3/2}.
        \end{align*}
\end{enumerate}
\end{lemma}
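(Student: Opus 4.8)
\textbf{Proof proposal for Lemma~\ref{l:ni}.}
The plan is to reduce all three parts to standard random walk estimates via the local central limit theorem \eqref{convolution} and the reflection-type bounds for non-intersection probabilities available in \cite{igl} (and the companion estimates compiled in Appendix C of \bcd).

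For part~(a), the key observation is that the non-intersection event $\m{NI}$ depends only on the difference walk $U(k):=S_1(k)-S_2(k)$, which under $\Pr^{n;(a_1,a_2);\m{free}}$ is a mean-zero random walk started from $a_1-a_2$ with increment density $\m{f}^{\ast 2}$ (a symmetric, log-concave, subexponential density by Assumption~\ref{asp:in}). Thus $\Pr^{n;(a_1,a_2);\m{free}}(\m{NI}) = \Pr(U(k)\ge -p_0 \text{ for }k\in\ll2,n-1\rr)$ with $p_0 = 0$; by the classical estimate for one-dimensional walks staying positive (e.g.\ via the Sparre Andersen / Kemperman formula, or \cite{igl}), this probability is comparable to $\max\{a_1-a_2,1\}/\sqrt{n}$ from above and bounded below by $c/\sqrt{n}$ when $a_1\ge a_2$. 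The upper bound with the $\max\{a_1-a_2,1\}$ factor follows since pushing the starting point of a mean-zero walk up by $h$ can only increase the survival probability by a factor $O(1+h)$; the lower bound when $a_1\ge a_2$ follows by monotonicity from the $a_1=a_2$ case together with the renewal estimate for the strict ascending ladder epoch.

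For part~(b), I would again work with the difference bridge. Under $\Pr^{n;(a_1,a_2);(b_1,b_2)}$, the pair $(U(k))_k$ is a random walk bridge from $a_1-a_2$ to $b_1-b_2$. The event $\ni_p$ is $\{U(k)\ge -p\}$ and $\ni=\ni_0$. The comparison $\Pr(\ni_p)\le e^{\Con p}\Pr(\ni)$ should follow from a change-of-measure / translation argument: shifting the difference bridge down by $p$ turns $\ni_p$ into $\ni$, and the Radon--Nikodym derivative between the shifted and unshifted bridge densities, both for increments $\m{f}^{\ast 2}$, is controlled using the log-concavity of $\m{f}$ (hence of $\m{f}^{\ast 2}$) and the Gaussian-type decay from \eqref{convolution}; this produces a multiplicative factor bounded by $e^{\Con p}$ uniformly in the endpoints. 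Alternatively one can cite the corresponding statement directly from Lemma~C.8 of \bcd.

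For part~(c), the strategy is to combine (a)-type estimates with the local CLT to handle the bridge conditioning: write the bridge non-intersection probability as an integral/expectation over the free-walk non-intersection probability reweighted by the ratio $\m{f}^{\ast(n-k)}(\cdot - b)/\m{f}^{\ast(n-1)}(a-b)$ of convolution densities, use \eqref{convolution} to replace these by Gaussians, and estimate the resulting expression. The factor $\max\{\tfrac1{\sqrt n}|a_1-b_1|,2\}^{3/2}$ reflects the probability that a Brownian bridge from a point at distance $|a_1-b_1|$ from its endpoint stays positive; the exponent $3/2$ is the standard ``pinned'' exponent (survival probability of a Brownian bridge conditioned to return). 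I expect the main obstacle to be part~(c): one must carefully track the interplay of the $a$-scale ($|a_i|\le n(\log n)^{3/2}$, much larger than diffusive) with the diffusive $b$-scale, ensuring the local CLT error terms are genuinely lower order across this whole regime, and correctly extracting the $3/2$ power. The cleanest route is to cite Lemma~C.9 of \bcd\ verbatim, since this is exactly the estimate stated there; otherwise one reproves it by the Gaussian-reduction argument sketched above, splitting the bridge at time $n/2$ to decouple the initial segment (which feels the large starting value and behaves like a free walk, giving the $\max\{a_1-a_2,1\}/\sqrt n$ factor) from the terminal segment (which feels the endpoint constraint, giving the $\max\{\tfrac1{\sqrt n}|a_1-b_1|,2\}^{3/2}$ factor).
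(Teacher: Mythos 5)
Note first that the paper does not actually prove Lemma~\ref{l:ni}: it is stated and cited verbatim as Lemmas C.3, C.8, and C.9 of \bcd\ and used as a black box, so there is no internal proof against which your sketch can be checked. Your outline of part~(a) matches the standard argument: reduce $\ni$ to the one-dimensional difference walk $U=S_1-S_2$ with symmetric, log-concave, subexponential increments, then invoke the classical $c/\sqrt{n}$ survival bounds with a $\max\{h,1\}$ correction for the starting height $h$. Your discussion of part~(c) is informal but aimed in a plausible direction, and you are right that the clean route is simply to cite Lemma~C.9 of \bcd.

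Your argument for part~(b), however, has a real gap. Shifting the whole difference bridge by $p$ is the deterministic bijection $U\mapsto U+p$; it is measure-preserving in the trajectory coordinate and yields the exact identity
\[
\Pr^{n;(a_1,a_2);(b_1,b_2)}(\ni_p)=\Pr^{n;(a_1+p,a_2);(b_1+p,b_2)}(\ni),
\]
with no nontrivial Radon--Nikodym factor at all, while the monotone coupling between the two endpoint configurations only reproduces the trivial inclusion $\Pr(\ni_p)\ge\Pr(\ni)$. The estimate you need is a multiplicative $e^{\Con p}$ \emph{upper} bound on $\Pr(\ni_p)/\Pr(\ni)$ under the \emph{same} bridge law, which does not come out of a translation. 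Moreover, if one instead compares the two bridge laws directly, the resulting density ratio
$\m{f}(S_1(2)-a_1-p)\,\m{f}(b_1+p-S_1(n-1))\big/\bigl(\m{f}(S_1(2)-a_1)\,\m{f}(b_1-S_1(n-1))\bigr)$
is \emph{not} uniformly bounded by $e^{\Con p}$ under Assumption~\ref{asp:in} alone: log-concavity together with an exponentially dominated tail still allows $(\log\m{f})'$ to be unbounded (e.g.\ $\m{f}(x)\asymp e^{-|x|^{3/2}}$), so the ratio can blow up on the far tails. Thus log-concavity plus the local CLT do not, by themselves, deliver the comparison in the way you describe; a genuinely different mechanism is needed for this step, and as sketched it would fail.
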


We now begin with a lemma providing a generic upper bound on $\sqrt{n} \cdot \Ex^{n;(\m{h}_1,\m{h}_2);\bullet}[\wsc\ind_{\m{A}}]$.


\begin{lemma}\label{rlem} Fix any $\e, M>0$. Recall the collection $\m{IC}(M)$ from Definition \ref{def:ic}. Let $(b_1(n),b_2(n))$ be two sequences of terminal points satisfying $|b_1(n)|+|b_2(n)| \le M\sqrt{n}$. There exists $R=R(\e,M)>0$ such that for all events $\m{A}$, we have 
\begin{equation}
    \label{eq:l37}
    \begin{aligned}
		& \limsup_{n\to\infty} \sqrt{n} \cdot \Ex^{n;(\m{h}_1,\m{h}_2);\bullet}[\wsc \ind_{\m{A}}]  \le  \e \\ & \hspace{2cm}+  \limsup_{n\to\infty} \left[\sqrt{n} \cdot \sum_{p=0}^{\lfloor 2\log\log n\rfloor+1}e^{-e^{p-1}}\cdot \sup_{|a_i|\le R}\Pr^{n;(a_1,a_2);\bullet}\left({\m{A}}\cap \ni_p\right)\right]
	\end{aligned}
\end{equation}
where $\bullet \in \{(b_1(n),b_2(n)),\m{free}\}$.
\end{lemma}

\begin{proof} Observe that given any $q\in \Z_{\ge 1}$ we have 
	\begin{align*}
		\ind_{\ni_0}+\sum_{p=1}^{q}\ind_{\ni_{p}\cap \ni_{p-1}^c}+\ind_{\ni_q^c}=1.
	\end{align*}
	We have $\wsc\le 1$ and on $\ni_{p-1}^c$, we have $\wsc\le e^{-e^{p-1}}$. Thus taking $q=\lfloor 2\log\log n \rfloor +1$ we have    \begin{align}\label{wineq}
		\wsc \le \sum_{p=0}^{\lfloor 2\log\log n\rfloor+1} e^{-e^{p-1}}\ind_{\ni_p}+e^{-(\log n)^2}.
	\end{align}
	In the computation of the limit, the $e^{-(\log n)^2}$ term vanishes, leading to
	\begin{align}\label{eq:limsupineq}
		\limsup_{n\to\infty} \sqrt{n} \cdot \Ex^{n;(\m{h}_1,\m{h}_2);\bullet}[\wsc\ind_{\m{A}}] \le  \limsup_{n\to\infty} \left[\sqrt{n} \cdot \sum_{p=0}^{\lfloor 2\log\log n\rfloor+1}e^{-e^{p-1}}\cdot \Pr^{n;(\m{h}_1,\m{h}_2);\bullet}\left({\m{A}}\cap \ni_p\right)\right], 
	\end{align}
Let us write $\m{B}_R:=\big\{|S_1(1)|,|S_2(1)|\le R\big\}$. By a union bound we have
\begin{align*}
    \Pr^{n;(\m{h}_1,\m{h}_2);\bullet}\left({\m{A}}\cap \ni_p\right) 
 & \le \Pr^{n;(\m{h}_1,\m{h}_2);\bullet}\left({\m{A}}\cap \m{B}_R\cap \ni_p\right)+\Pr^{n;(\m{h}_1,\m{h}_2);\bullet}\left(\neg\m{B}_R \cap \ni_p\right) \\ & \le \sup_{|a_i|\leq R}\Pr^{n;(a_1,a_2);\bullet}\left({\m{A}}\cap \ni_p\right)+\Pr^{n;(\m{h}_1,\m{h}_2);\bullet}\left(\neg\m{B}_R \cap \ni_p\right)
\end{align*}
Plugging this estimate back into \eqref{eq:limsupineq}, we see that to arrive at \eqref{eq:l37} it suffices to show 
{\begin{align}
    \label{bd23}
    \limsup_{n\to\infty} \sqrt{n}\sum_{p=0}^{\lfloor 2\log\log n\rfloor+1}e^{-e^{p-1}}\cdot \Pr^{n;(\m{h}_1,\m{h}_2);\bullet}\left(\neg\m{B}_R \cap \ni_p\right)
\end{align}}
can be made arbitrarily small by taking $R$ large enough. Towards this end, note that from Lemma \ref{l:tip} we have for $i\in\{1,2\}$ that
\begin{equation}
    \label{onetg}
    \begin{aligned}
     \Pr^{n;(\m{h}_1,\m{h}_2);\bullet}\big(\big\{|S_i(1)|\ge (\log n)^{3/2}\big\} \cap \ni_p\big) 
  & \le \Pr^{n;(\m{h}_1,\m{h}_2);\bullet}\big(|S_i(1)|\ge (\log n)^{3/2}\big)  \\ & \le \Con e^{-(\log n)^{3/2}/\Con}. 
 \end{aligned}
\end{equation}
On the other hand for deterministic $a_i$ satisfying $|a_i|\le (\log n)^{3/2}$, from the non-intersection probability estimates in Lemma \ref{l:ni} we have
 \begin{align*}
   &  \Pr^{n;(a_1,a_2);(b_1,b_2)}(\ni_p) \le \tfrac{1}{\sqrt{n}}\cdot \Con e^{\Con p}(M+1)^{3/2}\max\{a_1-a_2,1\}, \\ &
     \Pr^{n;(a_1,a_2);\m{free}}(\ni_p) \le \tfrac{1}{\sqrt{n}}\cdot \Con \max\{a_1-a_2+p,1\},
 \end{align*}
This implies
\begin{align*}
    & \Pr^{n;(\m{h}_1,\m{h}_2);\bullet}\left(\big\{|S_1(1)|,|S_2(1)|\le (\log n)^{3/2}, |S_i(1)|\ge R\big\} \cap \ni_p\right) \\ & \le \tfrac{1}{\sqrt{n}}\cdot \Con e^{\Con p}(M+1)^{3/2} {Q_i}, 
\end{align*}
where
{\begin{align*}
    Q_i:=\Ex^{n;(\m{h}_1,\m{h}_2);\bullet}\left[\ind_{|S_i(1)|>R}\max\{S_1(1)-S_2(2),1\}\right].
\end{align*}}
{Combining the above bound with \eqref{onetg} leads to
\begin{align*}
    & \sqrt{n}\sum_{p=0}^{\lfloor 2\log\log n\rfloor+1}e^{-e^{p-1}}\cdot \Pr^{n;(\m{h}_1,\m{h}_2);\bullet}\left(\neg\m{B}_R \cap \ni_p\right) \\ & \hspace{0cm}\le \Con\left[ \sqrt{n}e^{-(\log n)^{3/2}/\Con}+(M+1)^{3/2}(Q_1+Q_2) \right]\sum_{p=0}^{\infty} e^{-e^{p-1}+\Con p} .
\end{align*}}
Thanks to Lemma \ref{l:tip}, $Q_1, Q_2$ can be made arbitrary small by taking $R$ large enough. This implies that \eqref{bd23} can be made arbitrarily small by taking $R$ large enough, completing the proof.
\end{proof}

Using the above lemma, we shall now produce refined versions of \eqref{eq:l37} with additional assumptions on the events involved. For the rest of this subsection we fix $M>0$ and assume $\m{h}_1,\m{h}_2 \in \m{IC}(M)$. We assume $(b_1(n),b_2(n))$ are two sequences of terminal points satisfying $b_1(n)-b_2(n) \ge \frac{1}{M}\sqrt{n}$ and $|b_i(n)|\le M\sqrt{n}$ for $i=1,2$. We shall write $b_i=b_i(n)$, suppressing the dependency on $n$.

	\begin{lemma}\label{smallupper} Fix any $r>0$. Suppose $\m{A}\in\sigma\{\se{1}{k},\se{2}{k} : k\in \ll1,r\rr\}$. For every $\e>0$ there exists a constant $\Con=\Con (r,\e,M)>0$ such that
		\begin{align}\label{eq:l310}
			\limsup_{n\to\infty} \sqrt{n} \cdot \Ex^{n;(\m{h}_1,\m{h}_2);\bullet}[\wsc\ind_{\m{A}}] \le  \e+ \Con\cdot \sqrt{\limsup_{n\to\infty} \Pr^{n;(\m{h}_1,\m{h}_2);\m{free}}(\m{A})}
		\end{align}
	where  $\bullet\in\{(b_1,b_2),\operatorname{free}\}$.
	\end{lemma}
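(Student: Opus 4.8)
My plan is to revisit the dyadic bound on $\wsc$ underlying \eqref{eq:limsupineq}, but to keep the genuine initial data $(\m h_1,\m h_2)$ rather than passing to the $\sup_{|a_i|\le R}$ form used in the preceding lemma --- the latter would destroy the dependence of the estimate on $\m A$. I would start from \eqref{eq:limsupineq},
\[
\limsup_{n\to\infty}\sqrt n\,\Ex^{n;(\m h_1,\m h_2);\bullet}[\wsc\ind_{\m A}]\ \le\ \limsup_{n\to\infty}\Big[\sqrt n\sum_{p=0}^{\lfloor 2\log\log n\rfloor+1}e^{-e^{p-1}}\,\Pr^{n;(\m h_1,\m h_2);\bullet}(\m A\cap\ni_p)\Big],
\]
and reduce the lemma to the following per-$p$ bound: there exist $\Con=\Con(r,M)>0$, $n_0$ and a sequence $\gamma_n\to0$ independent of $p$ (with $\gamma_n\equiv0$ when $\bullet=\m{free}$) so that for all $n\ge n_0$, $p\ge0$ and $\bullet\in\{(b_1,b_2),\m{free}\}$,
\[
\sqrt n\,\Pr^{n;(\m h_1,\m h_2);\bullet}(\m A\cap\ni_p)\ \le\ \Con\,(p+1)\,\sqrt{\Pr^{n;(\m h_1,\m h_2);\m{free}}(\m A)}\ +\ \gamma_n.
\]
Granting this, plugging it into the display, using $\sum_{p\ge0}e^{-e^{p-1}}(p+1)<\infty$ and $\sum_{p\ge0}e^{-e^{p-1}}<\infty$, and noting that $\Pr^{n;(\m h_1,\m h_2);\m{free}}(\m A)$ does not depend on $n$ for $n\ge r$ (the free law restricted to the first $r$ coordinates is $n$-independent) would yield \eqref{eq:l310}, in fact without needing the $\e$ at all.

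For the free case $\bullet=\m{free}$ I would use that $\m A$ is $\mathcal{F}_r$-measurable: bound $\m A\cap\ni_p\subseteq\m A\cap\ni_p\ll r,n-1\rr$, condition on $\mathcal{F}_r$, and apply the Markov property of the free walk together with a downward shift of the second walk by $p$ and Lemma \ref{l:ni}(a) to get $\Pr^{n;(\m h_1,\m h_2);\m{free}}(\ni_p\ll r,n-1\rr\mid\mathcal{F}_r)\le\Con\,\max\{S_1(r)-S_2(r)+p,1\}/\sqrt{n-r+1}$. Taking expectations and using Cauchy--Schwarz gives $\Pr^{n;(\m h_1,\m h_2);\m{free}}(\m A\cap\ni_p)\le\Con(n-r+1)^{-1/2}\sqrt{\Pr^{n;(\m h_1,\m h_2);\m{free}}(\m A)}\,\big(\Ex^{n;(\m h_1,\m h_2);\m{free}}[\max\{S_1(r)-S_2(r)+p,1\}^2]\big)^{1/2}$, and since $\m h_1,\m h_2\in\m{IC}(M)$ have finite second moment and $\m f$ has finite variance $\sigma^2$ (Assumption \ref{asp:in}), the last factor is $\le\Con_{r,M}(p+1)$. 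This is the desired bound with $\gamma_n=0$.

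For the bridge case $\bullet=(b_1,b_2)$ I would first peel off only the first-quarter part of the non-intersection event: $\m A\cap\ni_p\subseteq\m A':=\m A\cap\ni_p\ll 2,\lfloor n/4\rfloor\rr$, which lies in $\sigma\{S_1(k),S_2(k):k\le\lfloor n/4\rfloor\}$. Conditioning the bridge on $(S_1(1),S_2(1))$ makes it a bridge with deterministic left endpoint, so on $\{|S_i(1)|\le M\sqrt n\}$ Lemma \ref{lem:mrb} (with its parameter taken to be $2M$) gives $\Pr^{n;(S_1(1),S_2(1));(b_1,b_2)}(\m A')\le\Con\,\Pr^{n;(S_1(1),S_2(1));\m{free}}(\m A')$, while the complementary event has probability $O(e^{-\sqrt n/\Con})$ by Lemma \ref{l:tip}. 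Integrating, and using that the law of $(S_1(1),S_2(1))$ under $\Pr^{n;(\m h_1,\m h_2);(b_1,b_2)}$ has density at most $\Con\,\m h_1(a_1)\m h_2(a_2)$ on $\{|a_i|\le M\sqrt n\}$ uniformly over $|b_i|\le M\sqrt n$ --- precisely the estimate carried out in the proof of Lemma \ref{l:tip} via the local CLT \eqref{convolution} and the independence of the two walks under the bridge --- I would obtain $\Pr^{n;(\m h_1,\m h_2);(b_1,b_2)}(\m A')\le\Con\,\Pr^{n;(\m h_1,\m h_2);\m{free}}(\m A')+O(e^{-\sqrt n/\Con})$. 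Finally $\Pr^{n;(\m h_1,\m h_2);\m{free}}(\m A')\le\Pr^{n;(\m h_1,\m h_2);\m{free}}(\m A\cap\ni_p\ll r,\lfloor n/4\rfloor\rr)$, and the same Markov/Cauchy--Schwarz argument as in the free case, now with $\asymp n/4$ free steps after time $r$, bounds this by $\Con_{r,M}(p+1)\sqrt{\Pr^{n;(\m h_1,\m h_2);\m{free}}(\m A)}/\sqrt n$. Multiplying through by $\sqrt n$ and absorbing the error into $\gamma_n=O(\sqrt n\,e^{-\sqrt n/\Con})\to0$ gives the claim.

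The hardest part will be the bridge case: one must upgrade the bridge-to-free comparison of Lemma \ref{lem:mrb} to (random) initial data and interleave it with $\ni_p$, which constrains all $n$ steps and not just the first quarter --- hence the device of comparing bridge to free only on $\m A'$ and recovering the remaining $n/4$-step non-intersection decay afterward under the free law. It is also worth stressing that the factor $\max\{S_1(r)-S_2(r)+p,1\}$ is genuinely correlated with $\m A$, which is what forces the Cauchy--Schwarz step and hence the square root $\sqrt{\Pr^{n;(\m h_1,\m h_2);\m{free}}(\m A)}$ on the right; a linear-in-$\Pr^{\m{free}}(\m A)$ bound is false, as one sees by taking $\m A=\{S_1(r)-S_2(r)>t\}$ with $t$ large.
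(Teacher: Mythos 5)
Your argument is correct, and it deviates from the paper's in one meaningful respect. The paper reaches \eqref{eq:l310} by first invoking the generic reduction \eqref{eq:l37}, which trades the random initial data $(\m h_1,\m h_2)$ for a $\sup_{|a_i|\le R}$ over deterministic starting points at the cost of the additive $\e$; Lemma \ref{lem:mrb} then applies directly to $\Pr^{n;(a_1,a_2);\bullet}$, after which the $\mathcal F_r$-conditioning, $p$-shift, Lemma \ref{l:ni}, and Cauchy--Schwarz steps are identical to yours. You instead bypass \eqref{eq:l37} entirely: you keep the genuine initial data, and for the bridge case you first condition on $(S_1(1),S_2(1))$, control its conditional density by $\Con\,\m h_1(a_1)\m h_2(a_2)$ on the bulk window using the same local-CLT estimate \eqref{convolution} that underlies Lemma \ref{l:tip}, and only then invoke Lemma \ref{lem:mrb} fiberwise. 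This avoids the $\e$ and actually yields a slightly sharper statement (with $\Con$ depending only on $r,M$); it also has the side benefit of sidestepping the need to relate $\sup_{|a_i|\le R}\Pr^{n;(a_1,a_2);\m{free}}(\m A)$ back to $\Pr^{n;(\m h_1,\m h_2);\m{free}}(\m A)$, a comparison the paper's route implicitly relies on. The trade-off is that you must carry out the density comparison for the random starting point --- which you do correctly --- whereas the paper amortizes this work once and for all in \eqref{eq:l37}, making the reduction reusable across Lemmas \ref{spread} and \ref{edgetight}. Your closing observation that the $\max\{S_1(r)-S_2(r)+p,1\}$ factor is genuinely correlated with $\m A$, so that a linear bound in $\Pr^{\m{free}}(\m A)$ fails and Cauchy--Schwarz is needed, is accurate and worth stating.
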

\begin{proof}
{Recall $R$ from Lemma \ref{rlem}.} We shall provide a bound on $ \Pr^{n;(a_1,a_2);\bullet}(\m{A}\cap \ni_p)$ that is uniform over all $|a_i|\le R+M$. Note that applying Lemma \ref{lem:mrb}, we may find a constant $\Con$ depending on $R, M$ such that 
\begin{align} \label{eqty}
		\Pr^{n;(a_1,a_2);\bullet}\left({\m{A}}\cap \ni_p\right)  \le \Con \cdot \Pr^{n;(a_1,a_2),\m{free}}\left({\m{A}}\cap \ni_p\ll r+1, n/4 \rr\right).
	\end{align}
 By the tower property of conditional expectation and the estimates on non-intersection probabilities from Lemma \ref{l:ni} we have
	\begin{align*}
		\mbox{r.h.s.~of \eqref{eqty}} &  = \Con \cdot \Ex^{n;(a_1,a_2);\m{free}}\left[\ind_{{\m{A}}}\cdot \Ex^{n;(a_1,a_2);\m{free}}\left[\ind_{ \ni_p\ll r+1, n/4 \rr} \mid \sigma\{(\se{1}{k},\se{2}{k})_{k=1}^r\}\right]\right]  \\ & \le \tfrac{\Con}{\sqrt{n}}\cdot \Ex^{n;(a_1,a_2);\m{free}}\left[\ind_{\m{A}}\cdot (|\se{1}{r}-\se{2}{r}|+p)\right]  \\ & \le \tfrac{\Con}{\sqrt{n}}\cdot \sqrt{\Pr^{n;(a_1,a_2);\m{free}}(\m{A})}\cdot \sqrt{\Ex^{n;(a_1,a_2);\m{free}}\left[(|\se{1}{r}-\se{2}{r}|+p)^2\right]} \\ & \le \tfrac{\Con p}{\sqrt{n}}\cdot \sqrt{\Pr^{n;(a_1,a_2);\m{free}}(\m{A})}.
	\end{align*}
	In the second line we used translation invariance to shift $S_1$ vertically by $p$. In the third line we applied the Cauchy-Schwarz inequality, and the last inequality follows by noting that $|\se{1}{r}-\se{2}{r}|$ is exponentially tight for fixed $r$. Inserting this bound back in \eqref{eq:l37} we arrive at \eqref{eq:l310}. This completes the proof.
\end{proof}
\begin{lemma}\label{spread} Fix any $\rho>0$. Given any $\e>0$ there exists $\delta(\e,M)>0$ such that
	\begin{align}\label{eq:l311}
		\limsup_{n\to\infty} \sqrt{n} \cdot \Ex^{n;(\m{h}_1,\m{h}_2);\m{free}}\big[\wsc\big[\ind_{\m{A}_{0}(\delta)}+\ind_{\m{A}_{1}(\delta)}+\ind_{\m{A}_{2}(\delta)}\big]\big] \le  \e,
	\end{align}
 where $\m{A}_0(\delta):=\{ \se{1}{ \rho n}-\se{2}{ \rho n}\le \delta\sqrt{\rho n}\}$ and $\m{A}_i(\delta):=\{|\se{1}{ \rho n}|\ge \delta^{-1}\sqrt{\rho n}\}$ for $i=\{1,2\}$.
\end{lemma}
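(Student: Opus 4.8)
Throughout write $U(k):=\se{1}{k}-\se{2}{k}$, and recall $\wsc$ from \eqref{defw} and the $\sigma$-algebra $\mathcal{F}_a$ of the first $a$ coordinates. The plan is to route each of the three events through the reduction \eqref{eq:l37}. Fix $\e>0$ and apply \eqref{eq:l37} with $\e/2$ in place of $\e$: there is $R=R(\e,M)>0$ so that, for any event $\m{A}$,
\[
\limsup_{n\to\infty}\sqrt{n}\cdot\Ex^{n;(\m{h}_1,\m{h}_2);\m{free}}[\wsc\ind_{\m{A}}]\;\le\;\tfrac{\e}{2}+\limsup_{n\to\infty}\sqrt{n}\sum_{p=0}^{\lfloor 2\log\log n\rfloor+1}e^{-e^{p-1}}\sup_{|a_i|\le R}\Pr^{n;(a_1,a_2);\m{free}}(\m{A}\cap\ni_p).
\]
Since $\sum_{p\ge 0}(1+p)e^{-e^{p-1}}<\infty$, it suffices to prove, for each relevant $\m{A}$,
\[
\limsup_{n\to\infty}\sqrt{n}\cdot\Pr^{n;(a_1,a_2);\m{free}}(\m{A}\cap\ni_p)\;\le\;C(\rho,R)\,(1+p)\,g(\delta)\qquad\text{uniformly over }|a_i|\le R,
\]
for some $g$ with $g(\delta)\to 0$ as $\delta\downarrow 0$; then the $p$-sum is at most $\e/2$ once $\delta$ is small. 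Here the event $\m{A}_0(\delta)$ is handled through the two tails $\m{A}_0^-(\delta):=\{U(\rho n)\le\delta\sqrt{\rho n}\}$ and $\m{A}_0^+(\delta):=\{U(\rho n)\ge\delta^{-1}\sqrt{\rho n}\}$, while $\m{A}_i(\delta)=\{|\se{i}{\rho n}|\ge\delta^{-1}\sqrt{\rho n}\}$ for $i\in\{1,2\}$; each of $\m{A}_0^{\pm}(\delta),\m{A}_1(\delta),\m{A}_2(\delta)$ is $\mathcal{F}_{\lfloor\rho n\rfloor}$-measurable.

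The move common to all cases is to condition the free walk at time $\lfloor\rho n\rfloor$. Writing $\ni_p=\ni_p\ll 2,\rho n\rr\cap\ni_p\ll\rho n,n-1\rr$ and using the Markov property, then applying Lemma \ref{l:ni}(a) to the walk $U$ on $[\rho n,n-1]$ (equivalently, shifting the first coordinate up by $p$ so that $\ni_p$ becomes $\ni$, over a walk of length $\asymp(1-\rho)n$ started from $U(\rho n)+p$), one obtains
\[
\Pr^{n;(a_1,a_2);\m{free}}(\m{A}\cap\ni_p)\;\le\;\frac{\Con}{\sqrt{(1-\rho)n}}\;\Ex^{n;(a_1,a_2);\m{free}}\!\Big[\ind_{\m{A}}\,\ind_{\ni_p\ll 2,\rho n\rr}\,\max\{U(\rho n)+p,\,1\}\Big].
\]
For $\m{A}=\m{A}_0^-(\delta)$ this already closes the estimate: on $\m{A}_0^-(\delta)\cap\ni_p\ll 2,\rho n\rr$ one has $-p\le U(\rho n)\le\delta\sqrt{\rho n}$, hence $\max\{U(\rho n)+p,1\}\le\delta\sqrt{\rho n}+p$ for large $n$, and the remaining expectation is at most $\Pr^{n;(a_1,a_2);\m{free}}(\ni_p\ll 2,\rho n\rr)=\Pr^{\rho n;(a_1+p,a_2);\m{free}}(\ni)\le\Con(2R+p+1)/\sqrt{\rho n}$ by Lemma \ref{l:ni}(a). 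This gives $\limsup_n\sqrt{n}\,\Pr^{n;(a_1,a_2);\m{free}}(\m{A}_0^-(\delta)\cap\ni_p)\le\Con^2(2R+p+1)\delta/\sqrt{1-\rho}$, i.e.\ the target bound with $g(\delta)=\delta$.

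The events $\m{A}_0^+(\delta),\m{A}_1(\delta),\m{A}_2(\delta)$ are moderate deviations: each forces $U(\rho n)$ or some $\se{i}{\rho n}$ to lie at distance $\ge\delta^{-1}\sqrt{\rho n}$ from a bounded start by time $\lfloor\rho n\rfloor$. For these the expectation in the last display should be estimated using a ballot-type local bound for the free $\m{f}$-walk on $[2,\rho n]$ conditioned to stay $\ge-p$; under Assumption \ref{asp:in} one has, uniformly over $|a_i|\le R$ and all $x\ge-p$,
\[
\Pr^{n;(a_1,a_2);\m{free}}\big(U(\rho n)\in\d x,\ \ni_p\ll 2,\rho n\rr\big)\;\le\;\Con\,(1+p)(1+|x|+p)\,(\rho n)^{-3/2}\,e^{-x^2/(\Con\rho n)}\,\d x ,
\]
together with the analogous two-coordinate bound for $\m{A}_1,\m{A}_2$ carrying an extra factor $(\rho n)^{-1/2}e^{-(x_1^2+x_2^2)/(\Con\rho n)}$. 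Substituting into the last display and rescaling $x=u\sqrt{\rho n}$, the indicator $\ind_{\m{A}}$ restricts the integral to $|u|\ge\delta^{-1}$ in the relevant coordinate, which yields an $n$-independent bound $\Ex^{n;(a_1,a_2);\m{free}}[\ind_{\m{A}}\ind_{\ni_p\ll 2,\rho n\rr}\max\{U(\rho n)+p,1\}]\le\Con(1+p)\int_{\delta^{-1}}^{\infty}u^2 e^{-u^2/\Con}\,\d u=:\Con(1+p)g(\delta)$, with $g(\delta)\lesssim\delta^{-1}e^{-1/(\Con\delta^2)}\to0$. Combined with the $\Con/\sqrt{(1-\rho)n}$ prefactor, this is the target bound; summing over $p$ and taking $\delta$ small concludes.

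The step I expect to be the real obstacle is this last one: for $\m{A}_0^+,\m{A}_1,\m{A}_2$ one must resist dropping the factor $\ind_{\ni_p\ll 2,\rho n\rr}$ while estimating the moderate deviation, since doing so discards the $(\rho n)^{-1/2}$ needed to keep $\sqrt{n}\,\Pr^{n;(a_1,a_2);\m{free}}(\m{A}\cap\ni_p)$ bounded — a naive union bound or Cauchy--Schwarz is already too lossy and loses a power of $n$. Retaining it forces the ballot-type local (and joint) estimate above, which goes marginally beyond the bare non-intersection bounds of Lemma \ref{l:ni} but is classical for random walks with symmetric, log-concave, sub-exponential increments. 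Should one prefer not to quote such a local estimate, $\m{A}_0^+$ can instead be treated by a first-passage bound on the running maximum of the positive walk $U$ over $[2,\rho n]$, and $\m{A}_1,\m{A}_2$ reduced to $\m{A}_0^+(\sqrt\delta)$ together with a moderate deviation for the ``center of mass'' $\se{1}{\rho n}+\se{2}{\rho n}$, which is asymptotically decoupled from the non-intersection conditioning.
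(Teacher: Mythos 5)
Your opening reduction is exactly the paper's: invoke the unnamed lemma giving \eqref{eq:l37} to replace the weight $\wsc$ by the dyadic sum $\sum_p e^{-e^{p-1}}\sup_{|a_i|\le R}\Pr^{n;(a_1,a_2);\m{free}}(\m{A}\cap\ni_p)$, and then bound each term by $C(1+p)g(\delta)/\sqrt{n}$. (A side remark: as printed, $\m{A}_0(\delta)$ is the event that the gap lies \emph{inside} $[\delta\sqrt{\rho n},\delta^{-1}\sqrt{\rho n}]$, for which the bound cannot hold; the intended event, and the one both you and the downstream application in Proposition \ref{meander} actually use, is its complement, i.e.\ the union of the two tails you call $\m{A}_0^{\pm}$.) Your treatment of $\m{A}_0^-$ — condition at time $\rho n$, apply Lemma \ref{l:ni}(a) to the remaining walk to harvest a factor $\max\{U(\rho n)+p,1\}/\sqrt{(1-\rho)n}$, then bound that factor by $\delta\sqrt{\rho n}+p+1$ on the event and apply Lemma \ref{l:ni}(a) again to $\Pr(\ni_p\ll2,\rho n\rr)$ — is correct and self-contained.

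Where you diverge is on $\m{A}_0^+,\m{A}_1,\m{A}_2$, and here the paper's route is strictly simpler than yours. The paper shifts $S_1$ up by $p$ so that $\ni_p$ becomes $\ni$, factors $\Pr(\m{A}\cap\ni)=\Pr(\ni)\cdot\Pr(\m{A}\mid\ni)$, bounds $\Pr(\ni)\le C(p+R+1)/\sqrt{n}$ by Lemma \ref{l:ni}(a), and then observes that $\Pr(\m{A}\mid\ni)$ can be made uniformly small by the already-available diffusive tightness of non-intersecting random walks (Lemma C.12 in \bcd): conditioned on $\ni$, $S_1(\rho n)/\sqrt{\rho n}$ and $U(\rho n)/\sqrt{\rho n}$ are tight, so the events $\{|S_1(\rho n)|\ge\delta^{-1}\sqrt{\rho n}\}$ and $\{U(\rho n)\ge\delta^{-1}\sqrt{\rho n}\}$ have small conditional probability for small $\delta$ (and the lower tail of $U$ is handled by the a.s.\ positivity of the limiting meander endpoint). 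This conditioning trick is exactly the device you correctly identify as necessary to avoid losing the second factor of $n^{-1/2}$, but it requires no local limit theorem. By contrast, your ballot-type local density bound for the endpoint of the walk conditioned to stay above $-p$ (with the Gaussian factor and the joint two-coordinate version) is a genuine external input: it is classical under the standing Assumption \ref{asp:in}, but it is nowhere established or cited in the paper's toolbox, so as written your argument for the upper-tail events rests on an unproven estimate. If you replace that step by the factorization $\Pr(\m{A}\cap\ni)=\Pr(\ni)\Pr(\m{A}\mid\ni)$ plus conditional tightness, your proof collapses onto the paper's.
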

\begin{proof}  {We claim
\begin{align}
    \label{a1bd}
    \limsup_{n\to\infty} \sqrt{n} \cdot \Ex^{n;(\m{h}_1,\m{h}_2);\m{free}}\big[\wsc\ind_{\m{A}_{i}(\delta)}\big] \le  \e/3,
\end{align}} for $i=0,1,2$.  We omit floor functions for brevity. {We recall $R$ from Lemma \ref{rlem}. We shall first prove this lemma for deterministic initial data $(a_1,a_2)$ with $|a_i|\le R$ and then appeal to Lemma \ref{rlem}. Towards this end, we observe that} for  any $p\in [0,2\log\log n+1]$, by lifting the $S_1(\cdot)$ random walk by $p$ units we have
\begin{align}
  \nonumber  & \Pr^{n;(a_1,a_2);\m{free}}\big(\{|S_1(\rho n)| \ge \delta^{-1}\sqrt{\rho n}\} \cap \ni_p\big) \\ \nonumber &\qquad =\Pr^{n;(a_1+p,a_2);\m{free}}\big(\{|S_1(\rho n)-p| \ge \delta^{-1}\sqrt{\rho n}\} \cap \ni\big) \\ &\qquad =\Pr^{n;(a_1+p,a_2);\m{free}}(\ni) \cdot \Pr^{n;(a_1+p,a_2);\m{free}}\left(|S_1(\rho n)-p| \ge \delta^{-1}\sqrt{\rho n} \mid \ni\right) \label{eet}
\end{align}
From Lemma \ref{l:ni}, $\Pr^{n;(a_1+p,a_2);\m{free}}(\ni)\le \frac{\Con (p+R+1)}{\sqrt{n}}$. On the other hand, it is known that the non-intersecting random walks are tight under diffusive scaling, see Lemma C.12 in \bcd. Utilizing this fact, we see that  $\Pr^{n;(a_1+p,a_2);\m{free}}(|S_1(\rho n)-p| \ge \delta^{-1}\sqrt{\rho n} \mid \ni)$ can be made arbitrarily small uniformly over all $|a_i|\le R$ and $p\in [0,2\log\log n+1]$. Plugging these two estimates back in \eqref{eet}, we get an upper bound for $\Pr^{n;(a_1,a_2);\m{free}}\big(\{|S_1(\rho n)| \ge \delta^{-1}\sqrt{\rho n}\} \cap \ni_p\big)$, which upon inserting in \eqref{eq:l37}, leads to \eqref{a1bd} for $i=1$. {The argument for the event $\m{A}_2(\delta)$ is analogous. For $\mathsf{A}_0(\delta)$, the same trick as above leads to
\begin{align*}
  \nonumber  & \Pr^{n;(a_1,a_2);\m{free}}\big(\{\se{1}{ \rho n}-\se{2}{ \rho n}\le \delta\sqrt{\rho n}\} \cap \ni_p\big) \\ &\qquad \le C \cdot \frac{p+R+1}{\sqrt{n}}\cdot \Pr^{n;(a_1+p,a_2);\m{free}}\left(\se{1}{ \rho n}-\se{2}{ \rho n}\le \delta\sqrt{\rho n}+p \mid \ni\right)
\end{align*}
Thanks to Lemma C.2 in \bcd ~(Eq.~(C.1) specifically), the above probability can be made arbitrarily small uniformly over all $|a_i|\le R$ and $p\in [0,2\log\log n+1]$. Thus, inserting the above estimate into \eqref{eq:l37} leads to \eqref{a1bd} for $i=0$.}
\end{proof}
\begin{lemma}(Tightness near edges)\label{edgetight} Fix {$\rho\in (0,1/4)$}. Consider the event 
	\begin{align*}
		\m{A}_n(\gamma):=\left\{\sup_{i=1,2}\sup_{ k\in \ll1,\rho n\rr} |\se{i}{1}-\se{i}{k}| \ge \gamma\sqrt{\rho n}\right\}.
	\end{align*} 
	Given any $\e>0$ there exists $\gamma(\e,M)>0$ such that
	\begin{align*}
		\limsup_{n\to\infty} \sqrt{n} \cdot \Ex^{n;(\m{h}_1,\m{h}_2);\bullet}[\wsc\ind_{\m{A}_n(\gamma)}] \le  \e,
	\end{align*}
where  $\bullet\in\{(b_1,b_2),\operatorname{free}\}$.
\end{lemma}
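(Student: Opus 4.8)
The plan is to deduce the lemma from the generic bound \eqref{eq:l37}. Given $\e>0$, that bound fixes $R=R(\e,M)>0$, and it then suffices to show that
\[
\limsup_{n\to\infty}\ \sqrt{n}\sum_{p=0}^{\lfloor 2\log\log n\rfloor+1}e^{-e^{p-1}}\ \sup_{|a_1|,|a_2|\le R}\Pr^{n;(a_1,a_2);\bullet}\!\big(\m{A}_n(\gamma)\cap\ni_p\big)\le\e
\]
for a suitable $\gamma=\gamma(\e,M)$, in both cases $\bullet=\m{free}$ and $\bullet=(b_1,b_2)$. Since $\m{A}_n(\gamma)$ depends only on the increments $\se{i}{1}-\se{i}{k}$, translating the first walk vertically by $p$ turns $\ni_p$ into $\ni=\ni_0$ while leaving $\m{A}_n(\gamma)$ unchanged (this is exactly the manoeuvre used in the proof of Lemma \ref{spread}), so for deterministic $|a_i|\le R$ I would write
\[
\Pr^{n;(a_1,a_2);\bullet}\!\big(\m{A}_n(\gamma)\cap\ni_p\big)=\Pr^{n;(a_1+p,a_2);\bullet'}(\ni)\cdot\Pr^{n;(a_1+p,a_2);\bullet'}\!\big(\m{A}_n(\gamma)\mid\ni\big),
\]
where $\bullet'=\m{free}$ in the free case and $\bullet'=(b_1+p,b_2)$ in the bridge case.

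For the first factor I would invoke Lemma \ref{l:ni}: part (a) gives $\Pr^{n;(a_1+p,a_2);\m{free}}(\ni)\le\Con(2R+p+1)/\sqrt{n}$ directly, and in the bridge case part (c) applies for all large $n$ — its hypotheses hold since $p\le 2\log\log n$, $|a_i|\le R$, $b_1-b_2\ge\tfrac1M\sqrt{n}$ and $|b_i|\le M\sqrt{n}$ force $|a_1+p-a_2|\le(\log n)^{3/2}$, $b_1+p\ge b_2$ and $|b_i+p|\le 2M\sqrt n$ — yielding $\Pr^{n;(a_1+p,a_2);(b_1+p,b_2)}(\ni)\le\Con(2R+p+1)/\sqrt{n}$ as well, the extra factor $\max\{\tfrac1{\sqrt n}|a_1-b_1|,2\}^{3/2}$ being $O(1)$ because $|a_1-b_1|=O(\sqrt n)$.

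For the second factor the plan is to use the diffusive tightness of non-intersecting walks, respectively bridges, near the starting edge established in \bcd. Under the conditioned law $\Pr^{n;(a_1+p,a_2);\bullet'}(\cdot\mid\ni)$ the two trajectories are conditioned never to cross up to time $n$, their starting points lie in a window of size $o(\sqrt n)$ (as $p\le 2\log\log n$), and $\rho<\tfrac12$ keeps the window $\ll1,\rho n\rr$ bounded away from the far endpoint; the path-level versions of the estimates used in the proof of Lemma \ref{spread} (cf.\ \cite[Lemma C.12]{half1} and its bridge analogue) then give that $\sup_{i}\sup_{k\in\ll1,\rho n\rr}|\se{i}{1}-\se{i}{k}|/\sqrt{\rho n}$ is tight, uniformly over $|a_i|\le R$, over $0\le p\le 2\log\log n$, and over admissible endpoints. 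In particular, for every $\eta>0$ there is $\gamma_0$ with $\Pr^{n;(a_1+p,a_2);\bullet'}(\m{A}_n(\gamma_0)\mid\ni)\le\eta$ for all large $n$, uniformly. Combining the two factors gives $\sqrt n\,\Pr^{n;(a_1,a_2);\bullet}(\m{A}_n(\gamma)\cap\ni_p)\le\Con(2R+p+1)\,\eta(\gamma)$ uniformly, and since $\sum_{p\ge0}e^{-e^{p-1}}(2R+p+1)<\infty$ the displayed $\limsup$ is at most $\Con'\eta(\gamma)$, which is $\le\e$ once $\gamma$ is chosen large; applying \eqref{eq:l37} then finishes the proof.

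The hard part is precisely the uniform bound on the conditioned probability $\Pr^{\bullet'}(\m{A}_n(\gamma)\mid\ni)$: the event $\ni$ has probability of order $n^{-1/2}$, so one cannot simply dominate it by unconditioned estimates — that reintroduces a factor $\sqrt n$ and kills the gain in $\gamma$ — one really needs that walks and bridges conditioned on non-intersection stay diffusive near the starting edge with tails that are uniform in $n$, in $p$, and in the boundary data. In the bridge case there is a little extra bookkeeping (checking the hypotheses of Lemma \ref{l:ni}(c), or equivalently peeling off the negligible events on which $\se1{\rho n}-\se2{\rho n}$ or $\se1{\rho n}-b_1$ are atypically large, and verifying that the diffusive tightness persists uniformly as $p$ grows up to $2\log\log n$), but these are routine given the corresponding estimates in \bcd.
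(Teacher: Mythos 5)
Your proposal follows the same skeleton as the paper: start from \eqref{eq:l37}, vertically translate $S_1$ by $p$ to turn $\ni_p$ into $\ni$, split off the non-intersection probability via Lemma \ref{l:ni}, and control the remaining conditional probability of $\m{A}_n(\gamma)$ by tightness of non-intersecting walks under diffusive scaling. Where you depart from the paper is in how the bridge case $\bullet=(b_1,b_2)$ is handled, and this is the one place your argument has a real soft spot.

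The paper first applies Lemma \ref{lem:mrb} — comparing $\Pr^{n;(a_1,a_2);(b_1,b_2)}(\,\cdot\,)$ with $\Pr^{n;(a_1,a_2);\m{free}}(\,\cdot\,)$ for events measurable in the first $\lfloor n/4\rfloor$ steps, after replacing $\ni_p$ with the truncated event $\ni_p\ll 2,n/4\rr$ — before doing the $p$-translation and the Lemma \ref{l:ni} split. That single step means the entire subsequent analysis is done under the free walk law, so one only ever needs the diffusive tightness of non-intersecting \emph{walks} (Lemma C.12 in \bcd). In your version you carry the bridge all the way through and therefore have to invoke diffusive tightness of non-intersecting \emph{bridges}, uniformly over $p\le 2\log\log n$ and over endpoints in the admissible window, which you cite as ``Lemma C.12 and its bridge analogue.'' No such bridge analogue appears in \bcd, and establishing it (uniformly in the endpoints and in the growing $p$) is not entirely cost-free: this is exactly the extra work that Lemma \ref{lem:mrb} was designed to avoid. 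Your use of Lemma \ref{l:ni}(c) for the bridge non-intersection probability is fine — the hypothesis checks you do are correct — but the conditional-tightness step is where the proof is incomplete as written. Inserting a single application of Lemma \ref{lem:mrb} at the very start (as in the paper's proof of Lemma \ref{spread} and \ref{edgetight}) would let you drop the bridge analogue and reduce everything to the walk case, after which your argument goes through essentially verbatim.
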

\begin{proof} Fix any $|a_i|\le R$ and $p\in [0,2\log\log n+1]$. By Lemma \ref{lem:mrb} we have
\begin{align*}
    \Pr^{n;(a_1,a_2);\bullet}(\m{A}_n(\gamma)\cap \ni_p) & \le \Pr^{n;(a_1,a_2);\bullet}(\m{A}_n(\gamma)\cap \ni_p\ll 2,n/4\rr) \\ & \le \Con \cdot \Pr^{n;(a_1,a_2);\m{free}}(\m{A}_n(\gamma)\cap \ni_p\ll 2,n/4\rr) \\ & \le \Con \cdot \Pr^{n;(a_1+p,a_2);\m{free}}(\ni_0\ll 2,n/4\rr)\cdot \Pr^{n;(a_1+p,a_2);\m{free}}(\m{A}_n(\gamma)\mid \ni_0\ll 2,n/4\rr) 
\end{align*}
By Lemma \ref{l:ni} we have $\Pr^{n;(a_1+p,a_2);\m{free}}(\ni_0\ll 2,n/4\rr) \le \Con(p+R+1)/\sqrt{n}$. On the other hand, the second term can be viewed under the random walk law of length $\rho n$. Indeed, we have
$$\Pr^{n;(a_1+p,a_2);\m{free}}(\m{A}_n(\gamma)\mid \ni_0\ll 2,n/4\rr)=\Pr^{\rho n;(a_1+p,a_2);\m{free}}(\m{A}_n(\gamma)\mid \ni_0\ll 2,\rho n\rr).$$
Now, it is known that that $S_i(\cdot)$ are tight under diffusive scaling. Thus the probability on the right can be made arbitrary small by choosing $\gamma$ large enough. Note that here the choice of $\gamma$ does not depend on $\rho$ as we scale by $\sqrt{\rho n}$ since the walk length has been reduced to $\rho n$. Combining these estimates leads to an upper bound for $\Pr^{n;(a_1,a_2);\bullet}(\m{A}_n(\gamma)\cap \ni_p)$, which in view of \eqref{eq:l37} leads to the desired result.
\end{proof}
All of the above results provide estimates for the numerator on the r.h.s.~of \eqref{wscint} for certain types of events. We now record a lower bound for the denominator in \eqref{wscint}.
\begin{lemma}\label{EWlbd} There exists a constant $\Con=\Con(M)>0$ such that
\begin{align*}
	\liminf_{n\to\infty} \sqrt{n} \cdot \Ex^{n;(\m{h}_1,\m{h}_2);\bullet}[\wsc] \ge  \tfrac1{\Con},
\end{align*}
where  $\bullet\in\{(b_1,b_2),\operatorname{free}\}$.
\end{lemma}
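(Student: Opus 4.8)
The plan is to bound $\Ex^{n;(\m{h}_1,\m{h}_2);\bullet}[\wsc]\ge\Ex^{n;(\m{h}_1,\m{h}_2);\bullet}[\wsc\,\ind_{\m{G}}]\ge c_0\,\Pr^{n;(\m{h}_1,\m{h}_2);\bullet}(\m{G})$ for a suitable event $\m{G}$ on which $\wsc\ge c_0>0$ holds deterministically, and then to show $\Pr^{n;(\m{h}_1,\m{h}_2);\bullet}(\m{G})\ge c_1 n^{-1/2}$. Writing $U(k):=\se{1}{k}-\se{2}{k}$, every summand in $-\log\wsc$ has the form $e^{\se{2}{k}-\se{1}{\ell}}$ with $\ell\in\{k,k+1\}$, so it suffices that $\m{G}$ force the gap $U$ to stay above a barrier $(h_k)_k$ with $\sum_k e^{-h_k}<\infty$. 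A barrier of constant order kept over $\Theta(n)$ sites would contribute $\Theta(n)$ to $-\log\wsc$, so the barrier must grow; and since the initial law in $\m{IC}(M)$ may be a worst-case pair of point masses with $U(1)$ as negative as $-2M$, the walk must first be allowed a short recovery. Accordingly, take $\m{G}=\m{G}_{\mathrm{rec}}\cap\m{G}_{\mathrm{bar}}\cap\m{E}_\xi$, where $\m{G}_{\mathrm{rec}}$ is a fixed-length event (of length $K_0=K_0(\theta,M)$) requiring the increments of $S_1$ to lie in $[1,2]$ and those of $S_2$ in $[-2,-1]$ for $k\le K_0$ (so $U$ grows linearly and all positions stay bounded on this window), $\m{G}_{\mathrm{bar}}:=\{U(k)\ge h_k\text{ for all }K_0\le k\le n\}$ with $h_k:=\gamma\log(k+1)+\Con$ for a constant $\gamma=\gamma(\theta)$ satisfying $\gamma>2$ and $\gamma\theta>2$, and $\m{E}_\xi:=\{\sum_{k=1}^{n-1}(k+1)^{-\gamma}e^{-(\se{1}{k+1}-\se{1}{k})}\le\Con_\xi\}$. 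On $\m{G}$ the recovery phase contributes only $O(1)$ (finitely many bounded terms, with $e^{-U(k)}$ decaying geometrically for $k\le K_0$), the terms $e^{\se{2}{k}-\se{1}{k}}\le e^{-h_k}$ sum to $O(1)$ since $\gamma>1$, and the terms $e^{\se{2}{k}-\se{1}{k+1}}=e^{-U(k)}e^{-(\se{1}{k+1}-\se{1}{k})}$ are bounded by $e^{\Con}\sum_k(k+1)^{-\gamma}e^{-(\se{1}{k+1}-\se{1}{k})}\le\Con'$; hence $\wsc\ge c_0:=e^{-\Con(\theta,M,\Con_\xi)}>0$ on $\m{G}$.

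It remains to lower-bound $\Pr^{n;\bullet}(\m{G})$. The point of $\gamma\theta>2$ is that the left tail of $\ffa$ decays like $e^{\theta x}$ (consistent with Assumption \ref{asp:in}(3); for the log-gamma increment density this is exact), so $\sum_k(k+1)^{-\gamma}e^{-\xi_k}$ converges almost surely with uniformly controlled ``essential supremum'', whence $\Pr(\neg\m{E}_\xi)<\e$ uniformly in $n$ and in $(\m{h}_1,\m{h}_2)$ once $\Con_\xi$ is large; under the bridge law one splits this sum at $\lfloor n/4\rfloor$, treating the initial piece by Lemma \ref{lem:mrb} and the tail by a crude bound on $\max_{k\le n}e^{-\xi_k}$, which is polynomial in $n$ and hence beaten by $(k+1)^{-\gamma}$. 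The recovery event $\m{G}_{\mathrm{rec}}$ has probability bounded below by a positive constant uniformly: for the free law because $\ffa>0$ everywhere, and for the bridge law by Lemma \ref{bridgewalk} applied to this bounded-time event (intersected with $\m{C}_K$), together with $\Pr^{\m{free}}(\neg\m{C}_K)\to0$. Thus, up to a uniformly bounded multiplicative loss, the whole estimate reduces to the following: started from a bounded ordered configuration (the post-recovery state, which satisfies $U(K_0)\ge h_{K_0}$), the gap $U$ stays above $h_k$ for all $k\le n$ with probability at least $c_2 n^{-1/2}$.

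This last bound is the crux. Since $h_k=\gamma\log(k+1)=o(\sqrt k)$ is sub-diffusive, staying above $(h_k)_k$ should cost only a constant factor relative to plain non-intersection, and $\Pr^{n;\m{free}}(\ni)\asymp n^{-1/2}$ by Lemma \ref{l:ni}(a) (the ordered start giving the matching lower bound). I would condition on $\ni$ and use that, conditionally on $\ni$, the finite-dimensional marginals $(U(k))_{k\le K}$ converge as $n\to\infty$ to the random walk conditioned to stay positive, under which $U(k)\to\infty$ and a first/second-moment renewal estimate gives $\Pr(\exists k:\,U(k)<h_k)<1$; a uniform-in-$n$ version of the same estimate, of the Brownian-meander type available from Appendix C of \bcd\ and in the spirit of the forthcoming Proposition \ref{meander}, controls the contribution of large $k$. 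For the bridge law I would instead transfer the event $\{U(k)\ge h_k\text{ for }k\le\rho n\}$ to the free walk via Lemma \ref{bridgewalk} (where it has probability $\asymp n^{-1/2}$), and then use $b_1-b_2\ge\tfrac1M\sqrt n$ together with Lemma \ref{spread} to see that $U(\rho n)$ is typically of order $\sqrt n$, so that the remaining gap bridge runs between two macroscopic endpoints and stays above the $O(\log n)$ barrier with merely constant probability (via the bridge non-intersection estimates of Lemma \ref{l:ni}(b)--(c)). Combining these yields $\Pr^{n;\bullet}(\m{G})\ge c_1 n^{-1/2}$, hence the claim. The main obstacle is precisely this last step — establishing the barrier-crossing lower bound \emph{uniformly} over $(\m{h}_1,\m{h}_2)\in\m{IC}(M)$ and over the diffusive family of right endpoints $(b_1,b_2)$, i.e.\ quantifying that the logarithmic barrier (whose growth rate is governed by $\theta$) is cheap compared to the bare non-intersection constraint.
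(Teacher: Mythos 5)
The paper does not give a proof here, deferring entirely to Corollary 4.12 of [BCD], so there is no argument in the paper to compare against; I assess your sketch on its own merits. Your overall decomposition is the right one: lower-bound $\Ex[W_n]$ on a barrier event $\m{G}=\m{G}_{\mathrm{rec}}\cap\m{G}_{\mathrm{bar}}\cap\m{E}_\xi$ on which $W_n$ is bounded below by a constant, then show $\Pr(\m{G})\ge c\,n^{-1/2}$. The fixed-length recovery, the $\gamma\log(k+1)$ barrier, and the control of $\sum_k(k+1)^{-\gamma}e^{-\xi_k}$ are all appropriate ingredients, and you are right that the barrier-crossing lower bound is the crux. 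Two steps, however, are missing.

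First, the intersection with $\m{E}_\xi$. You write that ``up to a uniformly bounded multiplicative loss, the whole estimate reduces to'' bounding $\Pr(\m{G}_{\mathrm{bar}})$, citing $\Pr(\m{G}_{\mathrm{rec}})\ge c$ and $\Pr(\neg\m{E}_\xi)<\e$. But $\Pr(\m{G}_{\mathrm{bar}})\sim n^{-1/2}\to 0$, so an $\Omega(1)$ failure probability for $\m{E}_\xi$ cannot be subtracted off by a union bound; you need $\Pr(\neg\m{E}_\xi\mid\m{G}_{\mathrm{bar}})$ bounded away from $1$. Under the free law this is fixable via Harris/FKG: for each fixed $S_2$-trajectory, both $\m{G}_{\mathrm{bar}}$ and $\m{E}_\xi$ are increasing events in the i.i.d.\ increments of $S_1$ on $\ll K_0,n\rr$, so $\Pr(\m{G}_{\mathrm{bar}}\cap\m{E}_\xi)\ge\Pr(\m{G}_{\mathrm{bar}})\Pr(\m{E}_\xi)$ after integrating out $S_2$. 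Under the bridge law the increments are not independent and this positive-correlation step must be re-derived (e.g.\ by transferring to the free law on $\ll K_0,\rho n\rr$ via Lemma~\ref{bridgewalk} and handling the tail separately). As written, the proposal does not flag this, and the ``reduces to'' claim is unjustified.

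Second, the barrier-crossing estimate itself. You propose conditioning on $\ni$ and invoking ``a first/second-moment renewal estimate'' together with Proposition~\ref{meander}. But Proposition~\ref{meander} controls only the single-time marginal of $U$ and cannot deliver a trajectory-wide bound; and the naive union bound diverges, since conditioned on $\ni$ one has $\Pr\big(U(k)<\gamma\log(k+1)\big)\asymp(\log k)^2/k$, whose sum over $k$ is infinite. The statement you actually need is a uniform-in-$n$, uniform-in-starting-height estimate of the form
\[
\Pr^{n;\m{free}}\Big(U(k)\ge \gamma\log(k+1)+\Con \text{ for all } K_0\le k\le n\Big)\ge c_2\,n^{-1/2},
\]
and this requires a block argument or ladder-variable renewal theory: for instance, on $\ll 2^j,2^{j+1}\rr$ require $U\ge j$, so the contribution to $-\log W_n$ is $\lesssim\sum_j 2^je^{-j}<\infty$ while the conditional failure probability per block is $\lesssim j^22^{-j}$, summable, leaving a positive constant after multiplying by $\Pr(\ni)\asymp n^{-1/2}$. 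Without something of this kind written out, the step you yourself identify as the main obstacle remains unresolved, and the proof does not close.
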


The proof of this lemma is very similar to that of Corollary 4.12 in \bcd, so we omit the details. 

All of the above results will be useful in concluding that certain events have high probability under $\Pr_{\wsc}^{n;(\m{h}_1,\m{h}_2);\bullet}$. We end this section with two weak convergence results for this law under diffusive scaling. For $(S_1,S_2)\sim \Pr^{n;(\m{h}_1,\m{h}_2);\bullet}_{\wsc}$ we will write
\begin{equation}\label{Uk}
U_k := \frac{S_1(k)-S_2(k)}{\sqrt{n}}, \qquad 1\leq k\leq n,
\end{equation}
and we extend $U_\cdot$ to non-integer arguments by linear interpolation.

\begin{proposition}\label{invarprin1}
Suppose $(x_n,y_n)$ are sequences such that $n^{-1/2}(y_n-x_n) \to z > 0$ as $n\to\infty$. Then the law of $(U_{nt})_{t\in[0,1]}$ under $\Pr^{n;(x_n,y_n);\m{free}}_{\wsc}$ converges weakly as $n\to\infty$ to a Brownian motion $(B_t)_{t\in[0,1]}$ with $B_0 = z$ and variance $\sigma^2 = \int_{\R^2} x^2\fa(y)\fa(x-y)dxdy$, conditioned to remain positive on $[0,1]$.
\end{proposition}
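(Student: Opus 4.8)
The plan is to reduce the claim to an averaged form of the statement ``$\wsc$ behaves like the non-intersection indicator'' for the scaled difference walk, and then divide by the corresponding partition function. Set $D_k := S_1(k)-S_2(k)$ for $1\le k\le n$, so that the rescaled interpolated path is $U_{nt}=D_{\lfloor nt\rfloor}/\sqrt n$; note $U_1=n^{-1/2}(S_1(1)-S_2(1))$ is deterministic and tends to $z>0$ by hypothesis. Under the free law $\Pr^{n;(x_n,y_n);\m{free}}$, $(D_k)_k$ is a random walk whose increments are i.i.d., symmetric (since $\fa$ is symmetric) hence mean zero, with variance $\sigma^2=\int_{\R^2}x^2\fa(y)\fa(x-y)\,dx\,dy$, so by Donsker's invariance principle $(U_{nt})_{t\in[0,1]}$ converges weakly under $\Pr^{n;(x_n,y_n);\m{free}}$ to $B=(z+\sigma\mathcal B_t)_{t\in[0,1]}$. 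The structural point that makes the argument short is that the starting height is of order $\sqrt n$, so $B$ stays positive on $[0,1]$ with \emph{positive} probability; consequently the limit is an honest conditioning and $\Ex^{n;(x_n,y_n);\m{free}}[\wsc]$ converges to a positive constant, in contrast with the $O(1)$-start regime where it decays like $n^{-1/2}$. It therefore suffices to prove that, for every bounded continuous $\Phi\colon C([0,1])\to\R_{\ge 0}$,
\begin{equation*}
\Ex^{n;(x_n,y_n);\m{free}}\big[\Phi\big((U_{nt})_{t\in[0,1]}\big)\,\wsc\big]\ \xrightarrow[n\to\infty]{}\ \Ex\big[\Phi(B)\,\ind\{B_t>0\ \text{for all}\ t\in[0,1]\}\big],
\end{equation*}
since dividing the left-hand side for a general $\Phi$ by the same quantity with $\Phi\equiv1$ identifies $\lim_n\Ex^{n;(x_n,y_n);\m{free}}_{\wsc}[\Phi]$ with $\Ex[\Phi(B)\mid B_t>0\ \text{for all}\ t\in[0,1]]$, first for all bounded continuous $\Phi\ge0$ and then (adding a constant) for all bounded continuous $\Phi$, which is exactly weak convergence to the conditioned Brownian motion in the statement.

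For the \emph{upper bound} I would use $\wsc\le 1$ together with the fact that for each $2\le k\le n-1$ the single factor $e^{S_2(k)-S_1(k)}=e^{-D_k}$ appearing in the exponent of $\wsc$ already forces $\wsc\le\exp(-e^{\delta\sqrt n})$ on $\{D_k\le -\delta\sqrt n\}$. Because $D_1/\sqrt n\to z>0$ and $D_n$ differs from $D_{n-1}$ by one increment with exponential tails (Assumption~\ref{asp:in}), the contribution of paths with $\min_{1\le k\le n}D_k\le -\delta\sqrt n$ is $o(1)$, so
\begin{equation*}
\Ex^{n;(x_n,y_n);\m{free}}[\Phi(U)\,\wsc]\le \Ex^{n;(x_n,y_n);\m{free}}\big[\Phi(U)\,\ind\{\textstyle\min_{1\le k\le n}D_k>-\delta\sqrt n\}\big]+o(1).
\end{equation*}
By Donsker and the portmanteau theorem — the set $\{\omega:\min_{[0,1]}\omega\ge-\delta\}$ being a continuity set for the law of $B$ — the right-hand side tends to $\Ex[\Phi(B)\,\ind\{\min_{[0,1]}B\ge-\delta\}]$, and letting $\delta\downarrow0$ gives $\limsup_n\Ex^{n;(x_n,y_n);\m{free}}[\Phi(U)\wsc]\le \Ex[\Phi(B)\,\ind\{\min_{[0,1]}B\ge0\}]$.

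For the \emph{lower bound}, fix $0<\e<z$ and let $\mathcal E_\e:=\{\min_{1\le k\le n}D_k\ge\e\sqrt n\}\cap\{|S_i(k)-S_i(k-1)|\le\tfrac{\e}{2}\sqrt n\ \text{for all}\ k,\ i=1,2\}$. On $\mathcal E_\e$ one has $e^{-D_k}\le e^{-\e\sqrt n}$ and, writing $S_2(k)-S_1(k+1)=-D_k-(S_1(k+1)-S_1(k))\le-\tfrac{\e}{2}\sqrt n$, also $e^{S_2(k)-S_1(k+1)}\le e^{-\e\sqrt n/2}$; since the exponent of $\wsc$ has at most $2n$ such terms, $\wsc\ge\exp(-2n\,e^{-\e\sqrt n/2})=1-o(1)$ on $\mathcal E_\e$. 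The increment condition fails with probability at most $2n\int_{|x|>\e\sqrt n/2}\fa(x)\,dx\le 4\Con^2 n\,e^{-\e\sqrt n/(2\Con)}=o(1)$ by the tail bound $\fa(x)\le\Con e^{-|x|/\Con}$ in Assumption~\ref{asp:in}, so
\begin{equation*}
\Ex^{n;(x_n,y_n);\m{free}}[\Phi(U)\,\wsc]\ge(1-o(1))\Big(\Ex^{n;(x_n,y_n);\m{free}}\big[\Phi(U)\,\ind\{\textstyle\min_{1\le k\le n}D_k\ge\e\sqrt n\}\big]-o(1)\Big),
\end{equation*}
and by Donsker and portmanteau ($\{\omega:\min_{[0,1]}\omega\ge\e\}$ again a continuity set) the liminf is at least $\Ex[\Phi(B)\,\ind\{\min_{[0,1]}B\ge\e\}]$. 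Letting $\e\downarrow0$ and using monotone convergence, $\liminf_n\Ex^{n;(x_n,y_n);\m{free}}[\Phi(U)\wsc]\ge\Ex[\Phi(B)\,\ind\{\min_{[0,1]}B>0\}]$. Since $\Pr(\min_{[0,1]}B=0)=0$ (strong Markov property at the first hitting time of $0$), $\ind\{\min_{[0,1]}B\ge0\}=\ind\{\min_{[0,1]}B>0\}$ almost surely, so the upper and lower bounds coincide and the displayed limit holds; taking $\Phi\equiv1$ yields $\Ex^{n;(x_n,y_n);\m{free}}[\wsc]\to\Pr(B_t>0\ \text{for all}\ t\in[0,1])>0$, which completes the argument.

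I expect the only step requiring genuine care to be this two-sided sandwiching — in particular the bookkeeping of the boundary indices $k=1,n$ (benign because $U_1\to z>0$ and single increments are negligible on the $\sqrt n$ scale) and the portmanteau/continuity-set applications — rather than any conceptual obstacle: the macroscopic starting height turns the limiting conditioning into an ordinary one and keeps $\Ex[\wsc]$ bounded away from zero, so none of the delicate $n^{-1/2}$ non-intersection asymptotics of the $O(1)$-start case (e.g.\ Lemma~\ref{EWlbd} and its hypotheses) enter here.
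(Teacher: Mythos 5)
Your proof is correct, but it follows a genuinely different route from the paper. The paper's argument appeals to Skorokhod representation to pass to a probability space where $U_{nt}\to B_t$ uniformly almost surely, then observes that on that space the random variable $W_n$ converges almost surely to $\ind\{B_t>0\text{ for all }t\}$ (sandwiched between $\ind\{B>\delta\}$ and $\ind\{B>-\delta\}$, with the two boundary indicators coinciding a.s.\ by \cite[Cor.~2.9]{CH14}), and finishes by dominated convergence in the Radon–Nikodym formula. You instead work directly with the sequence of laws, replacing $W_n$ by the indicators $\ind\{\min_k D_k>-\delta\sqrt n\}$ from above (using the super-exponential penalty when $D_k\le-\delta\sqrt n$ for some interior $k$, plus a separate exponential-tail bound to handle the boundary index $k=n$ which is not itself penalized) and $\ind\{\min_k D_k\ge\e\sqrt n\}$ from below (using that on the good event every term in the exponent of $W_n$ is $\le e^{-\e\sqrt n/2}$), then invoke Donsker plus the portmanteau theorem on the continuity sets $\{\min_{[0,1]}\omega\ge\pm\delta\}$ and send $\delta,\e\downarrow0$. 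Both approaches ultimately rely on the same null-set identity $\{\min_{[0,1]}B\ge0\}=\{\min_{[0,1]}B>0\}$ a.s.\ and on the denominator $\Ex[W_n]$ converging to a strictly positive constant (the $\sqrt n$-scale starting gap is the essential input in both). The paper's Skorokhod route is shorter and avoids the two-sided $\e$-bookkeeping; yours is slightly more elementary, stays at the level of weak convergence, and has the minor virtue of being explicit about which part of the weight $W_n$ does the work on each side. One small caution on your upper bound: on $\{\min_{1\le k\le n}D_k\le-\delta\sqrt n\}$ the penalty in $W_n$ does not directly see $D_n$ (the sum in \eqref{defw} runs only over $k\in\ll2,n-1\rr$), so the case "minimum attained only at $k=n$ with $D_{n-1}$ safely above $-\delta\sqrt n$" must be killed by the large-increment tail bound you invoke — this is exactly what you do, but the splitting should be made explicit (say, penalize $\{\min_{2\le k\le n-1}D_k\le-\delta\sqrt n/2\}$ and bound the probability that $D_n-D_{n-1}<-\delta\sqrt n/2$). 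Your statement also silently corrects the sign in the hypothesis: consistent with the paper's own proof and with the application in Lemma \ref{V(x,y)}, the condition should read $n^{-1/2}(x_n-y_n)\to z>0$ so that $U_1\to z$.
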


\begin{proof}
    The proof of this lemma is similar to weak convergence results for non-intersecting random walks under diffusive scaling, e.g., \cite[Lemma 3.10]{serio}. The main difference here is the soft non-intersection condition. First note that under $\Pr^{n;(x_n,y_n);\m{free}}$, by the invariance principle $(U_{nt})_{t\in[0,1]}$ converges weakly to a Brownian motion $(B_t)_{t\in[0,1]}$ with $B_0 = z$ and variance $\sigma^2$. By the Skorohod representation theorem (since $C[0,1]$ with the uniform topology is a Polish space), we may pass to another probability space with measure $\Pr$ supporting random variables with the same laws as $(U_k)_{1\leq k\leq n}$ and $(B_t)_{t\in[0,1]}$ (for which we use the same notation for brevity), such that $U_{nt} \to B_t$ uniformly in $t\in[0,1]$, $\Pr$-a.s. Now in view of \eqref{wscint}, if $F$ is any bounded continuous functional on $C[0,1]$, then writing $F(U) := F((U_{nt})_{t\in[0,1]})$, we have
    \begin{equation}\label{URN}
    \Ex^{n;(x_n,y_n);\m{free}}_{\wsc} [ F(U)] = \frac{\Ex[F(U)\wsc]}{\Ex[\wsc]},
    \end{equation}
    where we recall the definition of $\wsc$ in \eqref{defw}. Now fix any $\delta>0$. The uniform convergence $U_{nt}\to B_t$ implies that $\Pr$-a.s.,
\begin{equation}\label{Wnonint}
\mathbf{1}\{ B_t > \delta, \, t\in [0,1]\} \leq \liminf_{n\to\infty} \wsc \leq \limsup_{n\to\infty} \wsc \leq \mathbf{1}\{ B_t > -\delta, \, t\in[0,1]\}.
\end{equation}
As $\delta\downarrow 0$, the indicators on the left and right both converge to $\mathbf{1}_{\{B_t>0, \, t\in[0,1]\}}$ and $\mathbf{1}_{\{B_t \geq 0, \, t\in[0,1]\}}$ respectively by continuity of the Brownian motion. These two indicators are equal $\Pr$-a.s., see e.g. \cite[Corollary 2.9]{CH14}. Since \eqref{Wnonint} holds for any $\delta$, it follows that $\Pr$-a.s.,
\begin{equation}\label{Wlim}
\lim_{n\to\infty} \wsc = \mathbf{1}\{B_t > 0 \mbox{ for all } t\in[0,1]\}.
\end{equation}
Using \eqref{Wlim} and the fact that $U_{nt}\to B_t$ uniformly in \eqref{URN}, the dominated convergence theorem implies that
\[
\Ex^{n;(x_n,y_n);\m{free}}_{\wsc}[F(U)] \longrightarrow \frac{\Ex[F(B)\mathbf{1}\{B_t>0 \mbox{ for all } t\in[0,1]\}]}{\Pr(B_t>0 \mbox{ for all } t\in[0,1])}.
\]
This proves the desired weak convergence.
\end{proof}

\begin{proposition}\label{meander} Under $\Pr_{\wsc}^{n;(\m{h}_1,\m{h}_2);\m{free}}$, 
$U_n$ converges weakly as $n\to\infty$ to the endpoint $B_1^{\m{me}}$ of the Brownian meander $(B_t^{\m{me}})_{t\in[0,1]}$ with variance $\sigma^2 = \int_{\R^2} x^2\fa(y)\fa(x-y)dxdy$. In particular, $B_1^{\m{me}}>0$ almost surely.
\end{proposition}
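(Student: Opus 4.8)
The plan is to split the walk at a small macroscopic time $\rho n$, note that the portion on the index range $\llbracket \rho n, n\rrbracket$ starts from a gap of order $\sqrt n$ so that Proposition~\ref{invarprin1} applies to it directly, and then let $\rho\downarrow 0$ and invoke the classical fact that Brownian motion started at $\e>0$ and conditioned to stay positive converges, as $\e\downarrow 0$, to the Brownian meander. Fix a bounded continuous $\phi:\R\to\R$; it suffices to show $\Ex_{\wsc}^{n;(\m h_1,\m h_2);\m{free}}[\phi(U_n)]\to \Ex[\phi(B_1^{\m{me}})]$, where $U_k=(S_1(k)-S_2(k))/\sqrt n$ as in \eqref{Uk}.

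Fix $\rho\in(0,1/2)$ and put $m=n-\lfloor\rho n\rfloor$. Using the factorization $\wsc=\hat W_{\rho n}\cdot W_{\rho n\to n}$ from \eqref{decomp}, the $\mathcal F_{\rho n}$-measurability of $\hat W_{\rho n}$, and the Markov property \eqref{transfin}, one checks that the conditional law of the segment $\big(S_1(\rho n+j-1),S_2(\rho n+j-1)\big)_{j=1}^{m+1}$ given $\mathcal F_{\rho n}$ under $\Pr_{\wsc}^{n;(\m h_1,\m h_2);\m{free}}$ is exactly $\Pr_{\wsc}^{m+1;(S_1(\rho n),S_2(\rho n));\m{free}}$ (this is the same bookkeeping as in Lemma~\ref{lem:IRWshift}, applied to the free rather than bridge version). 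By translation invariance (Lemma~\ref{traninv}), the law of the gap process of this segment, and hence of $U_n$, depends on $(S_1(\rho n),S_2(\rho n))$ only through $S_1(\rho n)-S_2(\rho n)=\sqrt n\,U_{\rho n}$.

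Now restrict to the good event $G_\delta:=\big\{\delta\sqrt{\rho n}\le S_1(\rho n)-S_2(\rho n)\le \delta^{-1}\sqrt{\rho n}\big\}$. Combining the spread estimate (Lemma~\ref{spread}), the near-edge tightness (Lemma~\ref{edgetight}), and the lower bound on the normalizer (Lemma~\ref{EWlbd}) via \eqref{wscint}, we get $\limsup_{n\to\infty}\Pr_{\wsc}^{n;(\m h_1,\m h_2);\m{free}}(\neg G_\delta)\le \omega_1(\delta)$ with $\omega_1(\delta)\to 0$ as $\delta\downarrow 0$, and — crucially — the rate $\omega_1$ does not depend on $\rho$ (Lemma~\ref{spread} fixes $\rho$ first but its $\delta$ is uniform in $\rho$). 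On $G_\delta$ the rescaled starting gap $u_n:=(S_1(\rho n)-S_2(\rho n))/\sqrt n$ lies in the compact set $[\delta\sqrt\rho,\delta^{-1}\sqrt\rho]\subset(0,\infty)$, so Proposition~\ref{invarprin1} applied to the length-$(m+1)$ segment — made uniform over $u_n$ in this compact range by a routine subsequence argument — gives, together with Brownian scaling,
\[
U_n=\sqrt{1-\rho}\cdot\frac{S_1(n)-S_2(n)}{\sqrt{m+1}}\ \Longrightarrow\ h_\rho(u_n),
\]
where $h_\rho(u)$ is the law of the value at time $1-\rho$ of a variance-$\sigma^2$ Brownian motion started at $u$ conditioned to remain positive on $[0,1-\rho]$, and the convergence $\Ex_{\wsc}^{n;(\m h_1,\m h_2);\m{free}}[\phi(U_n)\mid\mathcal F_{\rho n}]\to\int\phi\,dh_\rho(u_n)$ is uniform over $u_n\in[\delta\sqrt\rho,\delta^{-1}\sqrt\rho]$. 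Writing $\bar\phi_\rho(u):=\int\phi\,dh_\rho(u)$ (bounded by $\|\phi\|_\infty$, continuous on $(0,\infty)$), this yields $\Ex_{\wsc}^{n;(\m h_1,\m h_2);\m{free}}[\phi(U_n)\ind_{G_\delta}]=\Ex_{\wsc}^{n;(\m h_1,\m h_2);\m{free}}[\bar\phi_\rho(u_n)\ind_{G_\delta}]+o_n(1)$.

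Finally, let $\rho\downarrow 0$. As $\rho\downarrow 0$ the interval $[\delta\sqrt\rho,\delta^{-1}\sqrt\rho]$ shrinks to $\{0\}$; since $h_\rho(u)$ converges weakly, as $\rho\downarrow 0$, to the law of a variance-$\sigma^2$ Brownian motion started at $u$ conditioned positive on $[0,1]$ at time $1$, and that law converges weakly, as $u\downarrow 0$, to the law of $B_1^{\m{me}}$ — this is the classical characterization of the Brownian meander as the $u\downarrow 0$ limit of Brownian motion conditioned to stay positive, which in particular gives $B_1^{\m{me}}>0$ a.s. — we obtain $\sup_{u\in[\delta\sqrt\rho,\delta^{-1}\sqrt\rho]}|\bar\phi_\rho(u)-\Ex[\phi(B_1^{\m{me}})]|=:\omega_2(\rho,\delta)\to 0$ as $\rho\downarrow 0$ for each fixed $\delta$. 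Putting the pieces together, $\limsup_{n\to\infty}\big|\Ex_{\wsc}^{n;(\m h_1,\m h_2);\m{free}}[\phi(U_n)]-\Ex[\phi(B_1^{\m{me}})]\big|\le C\,\omega_1(\delta)+\omega_2(\rho,\delta)$ for some absolute $C$; sending first $\rho\downarrow 0$ and then $\delta\downarrow 0$ finishes the proof. The main obstacle is precisely this interchange of limits: Proposition~\ref{invarprin1} must be applied at a fixed, genuinely macroscopic starting gap of order $\sqrt n$ (so that the soft-conditioning has $\Theta(1)$ effect and the Skorohod/dominated-convergence argument underlying its proof goes through), and only afterward may the split time $\rho n$ be taken to zero — which is why the $\rho$-uniform estimates of Section~\ref{sec31} (Lemmas~\ref{spread}, \ref{edgetight}, \ref{EWlbd}), guaranteeing that the walk has escaped to height $\asymp\sqrt{\rho n}$ by time $\rho n$ and no further, are exactly what is needed; the final Brownian input, convergence of conditioned-positive Brownian motion to the meander as the starting point tends to $0$, is classical.
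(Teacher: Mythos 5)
Your proof is correct and takes essentially the same route as the paper: split at time $\rho n$, control the gap at $\rho n$ via Lemmas \ref{spread} and \ref{EWlbd}, apply Proposition \ref{invarprin1} to the remaining segment via the decomposition \eqref{decomp}--\eqref{transfin}, then send $\rho\downarrow 0$ using the classical fact that Brownian motion started at $u\downarrow 0$ conditioned to stay positive converges to the meander. The only cosmetic difference is that where you argue uniformity of Proposition \ref{invarprin1} over the compact range of starting gaps (invoking continuity of $u\mapsto h_\rho(u)$ on $(0,\infty)$), the paper instead passes to a Skorohod coupling and sandwiches $\Pr^+_U(B_{1-\rho}\le x)$ between $\Pr^+_{\delta\sqrt\rho}$ and $\Pr^+_{\delta^{-1}\sqrt\rho}$ using stochastic monotonicity of Brownian bridges, which accomplishes the same thing and makes the $\rho\downarrow 0$ limit a direct application of two monotone bounds.
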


\begin{proof} Fix any $x>0$. Fix $\e>0$ and for $\rho,\delta>0$, define the event $\m{A}_\delta^\rho = \{U_{\rho n} \in [\delta\sqrt{\rho},\delta^{-1}\sqrt{\rho}]\}$.
Note that by Lemmas \ref{spread} and \ref{EWlbd}, we may choose $\delta = \delta(\e,M)>0$ independent of $\rho$ so that
\begin{equation}\label{Adp}
\limsup_{n\to\infty} \Pr^{n;(\m{h}_1,\m{h}_2);\m{free}}_{W_n}(\neg\m{A}_\delta^\rho) < \e.
\end{equation}
In the following we omit the $\m{free}$ superscript for brevity. Let $\mathcal{F}_{\rho n} = \sigma \{(S_1(k), S_2(k))_{k\leq \rho n}\}$. Using the identities \eqref{decomp} and \eqref{transfin} for the measure $\Pr^{n;(\m{h}_1,\m{h}_2)}$ along with the tower property of conditional expectation, noting that $\widehat{W}_{\rho n} \in \mathcal{F}_{\rho n}$, we write
    \begin{equation}\label{Unlbd}
    \begin{aligned}
        \Pr^{n;(\m{h}_1,\m{h}_2)}_{W_n} ( U_n \le x) & \geq  \frac{\Ex^{n;(\m{h}_1,\m{h}_2)}[W_n \mathbf{1}_{\m{A}_\delta^\rho} \cdot \mathbf{1}_{U_n\leq x}] }{\Ex^{n;(\m{h}_1,\m{h}_2)}[W_n]} \\
        &= \frac{\Ex^{n;(\m{h}_1,\m{h}_2)}\left[ \mathbf{1}_{\m{A}_\delta^\rho} \widehat{W}_{\rho n} \cdot \Ex^{n-\rho n+1;(S_1(\rho n),S_2(\rho n))}[W_{n-\rho n+1} \mathbf{1}_{U_{n-\rho n+1}\leq x} ] \right]}{\Ex^{n;(\m{h}_1,\m{h}_2)}[W_n]} \\
        &= \Ex^{n;(\m{h}_1,\m{h}_2)}\bigg[ \mathbf{1}_{\m{A}_\delta^\rho} \cdot \frac{\widehat{W}_{\rho n}\Ex^{n-\rho n+1;(S_1(\rho n),S_2(\rho n)}[W_{n-\rho n+1}]}{\Ex^{n;(\m{h}_1,\m{h}_2)}[W_n]} \\ & \hspace{3cm} \cdot \Pr^{n-\rho n+1;(S_1(\rho n),S_2(\rho n))}_{W_{n-\rho n+1}}(U_{n-\rho n+1}\leq x ) \bigg],
    \end{aligned}
    \end{equation}
where the last equality follows from the definition of softly non-intersecting random walk bridges (see \eqref{wscint}).

{ Now consider the probability inside the expectation in the last line, which we will write for the moment as a function $P(S_1(\rho n), S_2(\rho n))$ of the boundary conditions. Shifting both walks vertically by $S_2(\rho n)$, since $U_{n-\rho n+1}$ depends only on the difference of the two walks, we see that $P(S_1(\rho n), S_2(\rho n)) = P(S_1(\rho n) - S_2(\rho n), 0)$. Observe that $P(\cdot, 0)$ is a continuous function of the first boundary condition, as it can be written as an integral against the continuous density $\m{f}$ using \eqref{freedensity} and \eqref{bridgedensity}. Therefore for each $n$, $P(\cdot, 0)$ attains a minimum on the compact set $\sqrt{n}[\delta\sqrt{\rho},\delta^{-1}\sqrt{\rho}]$; let $u_n$ be a point where the minimum is attained. On the event $\mathsf{A}_\delta^\rho$, $S_1(\rho n)-S_2(\rho n)$ lies in this compact set, so $P(S_1(\rho n)-S_2(\rho n),0) \geq P(u_n, 0)$. It follows from \eqref{Unlbd} that 
\begin{equation}\label{Unlbd2}
\begin{aligned}
\Pr^{n;(\m{h}_1,\m{h}_2)}_{W_n} ( U_n \le x) & \geq  \Ex^{n;(\m{h}_1,\m{h}_2)}\bigg[ \mathbf{1}_{\m{A}_\delta^\rho} \cdot \frac{\widehat{W}_{\rho n}\Ex^{n-\rho n+1;(S_1(\rho n),S_2(\rho n)}[W_{n-\rho n+1}]}{\Ex^{n;(\m{h}_1,\m{h}_2)}[W_n]} \\ & \hspace{4.5cm} \cdot \Pr^{n-\rho n+1;(u_n, 0)}_{W_{n-\rho n+1}}(U_{n-\rho n+1}\leq x ) \bigg].
\end{aligned}
\end{equation}
Since $n^{-1/2} u_n \in [\delta\sqrt{\rho}, \delta^{-1}\sqrt{\rho}]$ for all $n$, by passing to a subsequence we may assume that $n^{-1/2} u_n \to U \in [\delta \sqrt{\rho}, \delta^{-1}\sqrt{\rho}]$. }

For $z>0$, let us write $\Pr^+_z$ for the law of a Brownian motion $(B_t)_{t\in[0,1-\rho]}$ with $B_0 = z$ and variance $\sigma^2$ conditioned to remain positive on $[0,1-\rho]$. It then follows from Proposition \ref{invarprin1} that
\begin{equation}\label{Uinvar}
    \Pr^{n-\rho n+1; (u_n, 0)}_{W_{n-\rho n+1}}(U_{n-\rho n+1}\leq x) \longrightarrow \Pr^+_U(B_{1-\rho} \leq x).
\end{equation}
Since $\{B_{1-\rho}\leq x\}$ is an increasing event and $U\in[\delta\sqrt{\rho},\delta^{-1}\sqrt{\rho}]$, we observe by stochastic monotonicity for Brownian bridges, e.g., \cite[Lemma 2.7]{CH14}, that
\begin{equation}\label{Bmonot}
    \Pr^+_{\delta^{-1} \sqrt{\rho}}(B_{1-\rho} \leq x) \leq \Pr^+_U(B_{1-\rho} \leq x) \leq \Pr^+_{\delta\sqrt{\rho}}(B_{1-\rho} \leq x) .
\end{equation}
By \eqref{Uinvar} and \eqref{Bmonot}, we may choose $n_0(\e,\delta,\rho)$ so that for all $n\geq n_0$,
\begin{equation*}
    \Pr^{n-\rho n+1; (u_n, 0)}_{W_{n-\rho n+1}}(U_{n-\rho n+1}\leq x) \geq \Pr^+_{\delta^{-1}\sqrt{\rho}}(B_{1-\rho} \leq x) - \e .
\end{equation*}
Inserting this bound into \eqref{Unlbd2} and using \eqref{Adp} implies that
\begin{equation}\label{Uliminf}
\begin{split}
    &\liminf_{n\to\infty} \Pr^{n;(\m{h}_1,\m{h}_2)}_{\wsc}(U_n\leq x)\\ &\qquad \geq \frac{\Ex^{n;(\m{h}_1,\m{h}_2)}\left[ \mathbf{1}_{\m{A}_\delta^\rho} \widehat{W}_{\rho n}\Ex^{n-\rho n+1;(S_1(\rho n),S_2(\rho n)}[W_{n-\rho n+1}]  \left(\Pr_{\delta^{-1}\sqrt{\rho}}^+(B_{1-\rho}\leq x) - \e \right) \right]}{\Ex^{n;(\m{h}_1,\m{h}_2)}[W_n]} \\
    &\qquad \geq \left(\Pr_{\delta^{-1}\sqrt{\rho}}^+(B_{1-\rho}\leq x) - \e\right) \cdot \Pr^{n;(\m{h}_1,\m{h}_2)}_{W_n} (\m{A}_\delta^\rho)\\
    &\qquad \geq (1-\e)\left(\Pr_{\delta^{-1}\sqrt{\rho}}^+(B_{1-\rho}\leq x) - \e\right).
\end{split}
\end{equation}
In the second line we used the Gibbs property and the tower property again, essentially reversing the steps in \eqref{Unlbd}, and in the last line, we used \eqref{Adp}. A very similar argument leads to an upper bound of
\begin{equation}\label{Ulimsup}
\limsup_{n\to\infty} \Pr^{n;(\m{h}_1,\m{h}_2);\m{free}}_{\wsc}(U_n\leq x) \leq \Pr_{\delta\sqrt{\rho}}^+(B_{1-\rho}\leq x) + 2\e.
\end{equation}
Now for any fixed $\delta>0$, it is known that as $\rho\downarrow 0$ the two measures $\Pr^+_{\delta\sqrt{\rho}}$ and $\Pr^+_{\delta^{-1}\sqrt{\rho}}$ both converge weakly to the unique law $\Pr^+_0$ of the Brownian meander $B^{\m{me}}$ on $[0,1]$, see \cite[Section 2]{DurrBM}. Sending $\rho\downarrow 0$ in \eqref{Uliminf} and \eqref{Ulimsup}, we thus obtain
\begin{align*}
(1-\e)\left(\Pr^+_0\big(B_1^{\m{me}} \leq x\big) - \e\right) &\leq \liminf_{n\to\infty} \Pr^{n;(\m{h}_1,\m{h}_2);\m{free}}_{\wsc}(U_n\leq x)\\ &\leq \limsup_{n\to\infty} \Pr^{n;(\m{h}_1,\m{h}_2);\m{free}}_{\wsc}(U_n\leq x) \leq \Pr_0^+\big(B_1^{\m{me}}\leq x\big) + 2\e. 
\end{align*}
Since $\e$ was arbitrary, we in fact have
\[
\lim_{n\to\infty} \Pr^{n;(\m{h}_1,\m{h}_2);\m{free}}_{\wsc}(U_n\leq x) = \Pr_0^+\big(B_1^{\m{me}} \leq x\big),
\]
which proves the weak convergence. 
\end{proof}

\section{Local convergence for softly non-intersecting random walk bridges} \label{sec32}
In this section, we prove the following local convergence result (around the left endpoint) for softly non-intersecting random walk bridges. 

\begin{theorem} \label{thm:conv} Fix any $M,\e>0$ and $r\in \mathbb{Z}_{\geq 1}$. Recall $\ga$ from \eqref{defga}. Consider the law $\Pr_{W_n}^{n;(0,\ga);\bullet}$ from \eqref{wscint} with increments distributed as $\m{f}=\fa$ of Definition \ref{prb}. Fix any Borel set $A\subset \mathbb{R}^r \times \mathbb{R}^r$ and consider the event $\m{A}:=\{(S_1(k),S_2(k))_{k=1}^r \in A\}$. Let $B_M = \{(b_1,b_2)\in\mathbb{R}^2 : |b_1|+|b_2| \leq M\sqrt{n}, b_1-b_2 \geq \frac{1}{M}\sqrt{n}\}$.  Then there exists $n_0=n_0(M,\e)>r$ such that for all $n\ge n_0$ we have 
\begin{align}
    \sup_{(b_1,b_2)\in B_M}\bigg|\Pr_{W_n}^{n;(0,\ga);(b_1,b_2)}(\m{A}) -\Pr\big((S_1^{\uparrow}(k),S_2^{\uparrow}(k))_{k=1}^r \in A\big)\bigg| \le \e,
\end{align}
where the random variables $\big(S_1^{\uparrow}(k),S_2^{\uparrow}(k)\big)_{k\ge 1}$ are defined in Section \ref{sec:1.1}.
\end{theorem}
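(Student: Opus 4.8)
The plan is to reduce the first $r$ steps of the bridge to those of a free walk, to identify the normalized contribution of the remaining $n-r$ steps with the function $V$ of \eqref{Vdef}, and to recognize the resulting weighted free-walk law as the Markov chain of Section \ref{sec:1.1}. First observe that it suffices to establish, for every sequence $(b_1(n),b_2(n))\in B_M$, the convergence of $\Pr_{W_n}^{n;(0,\ga);(b_1(n),b_2(n))}(\m{A})$ to $\Pr\big((S_1^\uparrow(k),S_2^\uparrow(k))_{k=1}^r\in A\big)$ with a rate depending only on $M$: along any such sequence one may pass to a further subsequence with $b_i(n)/\sqrt n\to\beta_i$ for some $|\beta_i|\le M$, and since the candidate limit will turn out not to depend on $(\beta_1,\beta_2)$, the uniform statement over $B_M$ then follows by a routine compactness argument. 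Using the splitting $W_n=\widehat W_r\cdot W_{r\to n}$ of \eqref{decomp}, the Markov identity \eqref{transfin}, the tower property over $\mathcal F_r$, and $\m{A}\in\mathcal F_r$, one obtains the exact identity
\begin{equation}\label{eq:planid}
\Pr_{W_n}^{n;(0,\ga);(b_1,b_2)}(\m{A})=\Ex^{n;(0,\ga);(b_1,b_2)}\!\left[\ind_{\m{A}}\,\widehat W_r\cdot \mathsf R_n\big(S_1(r),S_2(r)\big)\right],\qquad \mathsf R_n(x,y):=\frac{\Ex^{n-r+1;(x,y);(b_1,b_2)}[W_{n-r+1}]}{\Ex^{n;(0,\ga);(b_1,b_2)}[W_n]}.
\end{equation}

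The heart of the argument is to show that $\mathsf R_n(x,y)\to V(x,y)$, uniformly over $(x,y)$ in compact sets and over $(b_1,b_2)\in B_M$. I would split each of the numerator and denominator expectations at an intermediate time $\rho n$ with $\rho$ small. Conditioning on $\mathcal F_{\rho n}$ and using \eqref{decomp}, \eqref{transfin} again, one writes $\Ex^{n;(x,y);\bullet}[W_n]=\Ex^{n;(x,y);\bullet}\big[\widehat W_{\rho n}\,\Ex^{n-\rho n+1;(S_1(\rho n),S_2(\rho n));\bullet}[W_{n-\rho n+1}]\big]$. By Lemma \ref{bridgewalk} (together with Lemmas \ref{smallupper}, \ref{spread}, \ref{edgetight}, \ref{EWlbd}, which make the relevant truncations high-probability events for $\Pr_{W_n}$) the first $\rho n$ steps do not feel the endpoints, so this equals $(1+o(1))\,\Ex^{\rho n;(x,y);\m{free}}\big[\widehat W_{\rho n}\,\Ex^{n-\rho n+1;(S_1(\rho n),S_2(\rho n));\bullet}[W_{n-\rho n+1}]\big]$. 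Writing $U_k=(S_1(k)-S_2(k))/\sqrt n$, Proposition \ref{meander} gives that $U_{\rho n}/\sqrt\rho$ converges under $\Pr^{\rho n;(x,y);\m{free}}_{W_{\rho n}}$ to the a.s.\ positive endpoint $R$ of a Brownian meander, while Proposition \ref{invarprin1} (and its bridge analogue) gives that the rescaled remaining difference path converges to a Brownian bridge from $\sqrt\rho\,R$ to $\beta_1-\beta_2$ conditioned to stay positive. Since the probability $\mathsf Z^{(\beta_1,\beta_2)}$ that this limiting object stays positive is strictly positive and its distribution depends only on the macroscopic data $(\beta_1,\beta_2)$ — not on $(x,y)$, by Proposition \ref{meander} — one arrives at
\[
\sqrt n\,\Ex^{n;(x,y);\bullet}[W_n]=(1+o(1))\,\mathsf Z^{(\beta_1,\beta_2)}\cdot\sqrt{\rho n}\,\Ex^{\rho n;(x,y);\m{free}}[W_{\rho n}],
\]
so that $\mathsf Z^{(\beta_1,\beta_2)}$ cancels in the ratio and $\mathsf R_n(x,y)=(1+o(1))\,V^{\rho n;\m{free}}_{\rho n}(x,y)$, where $V^{m;\m{free}}_m(x,y):=\Ex^{m;(x,y);\m{free}}[W_m]/\Ex^{m;(0,\ga);\m{free}}[W_m]$.

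The relation $\mathsf R_n(x,y)=(1+o(1))\,V^{\rho n;\m{free}}_{\rho n}(x,y)$ holds for every small $\rho$; combined with the two-sided bounds $\sqrt m\,\Ex^{m;\bullet}[W_m]\asymp 1$ (from Lemma \ref{EWlbd} and Lemma \ref{l:ni}), a soft real-analysis argument forces $V^{m;\m{free}}_m(x,y)$ to converge, and the limit is exactly $V(x,y)$ of \eqref{Vdef} — its positivity and finiteness being Lemma \ref{V(x,y)} — while the uniform tail bounds of Lemma \ref{l:tip} let me pass the limit through the outer expectation in \eqref{eq:planid}. This gives $\Pr_{W_n}^{n;(0,\ga);(b_1,b_2)}(\m{A})\to\Ex^{n;(0,\ga);\m{free}}\big[\ind_{\m{A}}\,\widehat W_r\,V(S_1(r),S_2(r))\big]$ uniformly over $B_M$. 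Since this applies to every Borel $A$, I would then upgrade it to total-variation convergence of the marginal law of $(S_1(k),S_2(k))_{k=1}^r$ by Scheff\'e's lemma at the level of densities, and finally identify the limiting density: expanding the free-walk density with left data $(0,\ga)$ against the factors $\widehat W_r$ and $V$ and repeatedly using the $h$-transform relation $V(x_1,y_1)\,p^V((x_1,y_1),(x_2,y_2))=V(x_2,y_2)\,p((x_1,y_1),(x_2,y_2))\,e^{-e^{y_1-x_2}-e^{y_2-x_2}}$ from \eqref{def:pv}, together with $p_0^V(y)=V(0,y)\ga(y)$ and the fact (Lemma \ref{idenf}) that $p^V_0$ and $p^V(\cdot,\cdot)$ are genuine densities so that all normalizing constants telescope, the density collapses precisely to that of $(S_1^\uparrow(k),S_2^\uparrow(k))_{k=1}^r$.

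The main obstacle is the convergence of $\mathsf R_n$. Comparison with true non-intersection (as in Lemma \ref{l:ni} and \cite{spit}) only yields $\sqrt n\,\Ex^{n;\bullet}[W_n]\asymp 1$ rather than the existence of $\lim\sqrt n\,\Ex^{n;\bullet}[W_n]$, and — unlike the single-walk setting of \cite{cn16} — there is no renewal-type formula available for $V$. The meander / positive-Brownian-bridge decomposition at the intermediate time $\rho n$ is precisely what circumvents this, by exhibiting $\mathsf R_n(x,y)$ as asymptotically insensitive to the split parameter $\rho$ (hence forcing convergence); the remaining delicate point is bookkeeping, namely that every estimate feeding into the decomposition must be uniform over the diffusive endpoint window $B_M$ and over $(x,y)$ in compacts, which is exactly why the estimates of Section \ref{sec31} were packaged with that uniformity.
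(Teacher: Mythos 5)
Your proposal is in substance the same as the paper's: the heart of the matter is showing that the ratio $\mathsf R_n(x,y)$ converges to the function $V$ of \eqref{Vdef}, and that this should be done by splitting at an intermediate time $\rho n$, applying Lemma \ref{bridgewalk} to forget the endpoints on the first $\rho n$ steps, and invoking Propositions \ref{meander} and \ref{invarprin1} to obtain a Brownian-meander-into-Brownian-bridge decomposition whose $(\beta_1,\beta_2)$-dependence cancels in the ratio. This is exactly the content of Lemma \ref{V(x,y)} in the paper. The identity \eqref{eq:planid} that you write down is also correct and is the starting point of the paper's argument.

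Where the two proofs diverge is in the bookkeeping after Lemma \ref{V(x,y)} is in hand. You propose to prove $\mathsf R_n(x,y)\to V(x,y)$ uniformly on compacts and uniformly over $(b_1,b_2)\in B_M$, and then pass the limit through the outer expectation in \eqref{eq:planid}; this requires a genuine dominated-/uniform-integrability argument, and your appeal to Lemma \ref{l:tip} for this is not quite right (that lemma gives tightness of the initial points, not domination of $\mathsf R_n$). The paper sidesteps both issues: it proves a \emph{one-sided} inequality only, truncating to the event $\m{C}_K=\{|S_1(r)|,|S_2(r)|\le K\}$, applying Lemma \ref{bridgewalk} to swap the outer bridge expectation for a free expectation at the price of a $(1-\e)$ factor, passing to the minimizing pair $(\beta_1,\beta_2)$ by continuity/compactness so that Fatou's lemma and Lemma \ref{V(x,y)} can be applied at a \emph{fixed} sequence of endpoints, and finally removing $\m{C}_K$ using dominated convergence together with the normalization identity $\Ex^{r;(\m{h}_1,\m{h}_2);\m{free}}[\hat W_r V(S_1(r),S_2(r))]=1$ of Lemma \ref{expec1}. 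The matching upper bound then costs nothing: apply the same lower bound to $\neg\m{A}$. This avoids proving any uniform convergence of $\mathsf R_n$ at all. Your Scheff\'e step is also superfluous: the theorem is stated for a single fixed Borel set $A$, and the identification of $\Ex^{r;(0,\ga);\m{free}}[\ind_{\m{A}}\hat W_r V(S_1(r),S_2(r))]$ with $\Pr((S_1^{\uparrow}(k),S_2^{\uparrow}(k))_{k=1}^r\in A)$ is a direct finite computation (Lemma \ref{idenf}, using Lemma \ref{expec1} to show $p_0^V$ and $p^V$ are genuine densities), with no total-variation convergence required. None of these are fatal gaps in your sketch, but they would make the write-up noticeably heavier than the paper's Fatou-plus-complement route, and the Lemma \ref{l:tip} citation would need to be replaced by Lemma \ref{expec1} together with dominated convergence.
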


\subsection{The function $V$ and its properties}  Throughout this section, we fix $M>0$ and two densities $\m{h}_1,\m{h}_2\in \m{IC}(M)$ as in Definition \ref{def:ic}. The results of this section are only needed with $(\m{h}_1,\m{h}_2) = (0,\ga)$ where $\ga$ is defined in \eqref{defga}, but we work in this slightly more general setting as it requires no extra work. {For notational convenience we fix the increment density to be $\fa$, but the results below apply with a general density $\m{f}$ satisfying Assumption \ref{asp:in}.} 
 For each $(x,y)\in \R^2$ we define
\begin{equation}\label{hatV}
  V_{\m{h}_1,\m{h}_2}(x,y)  = \lim_{n\to\infty} \frac{\Ex^{n;(x,y);\m{free}}[W_n]}{\Ex^{n;(\m{h}_1,\m{h}_2);\m{free}}[W_n]}.
\end{equation}
Because of the form of $W_n$, it is easy to see that $V_{\m{h}_1,\m{h}_2}(x,y)=V_{\m{h}_1,\m{h}_2}(0,y-x)$. Note that when $\m{h}_1=\delta_{0}$ and $\m{h}_2=\ga$, $V_{0,\ga}(0,z)$ agrees with $V(z)$ defined in \eqref{def:pv}. More generally, if $X,Y$ are random variables with densities $\m{f}_1,\m{f}_2\in\m{IC}(M)$ respectively for some $M>0$, we define
 \begin{equation}\label{hatVrandom}
     V_{\m{h}_1,\m{h}_2}(X,Y) = \lim_{n\to\infty} \frac{\Ex^{n;(\m{f}_1,\m{f}_2);\m{free}}[W_n]}{\Ex^{n;(\m{h}_1,\m{h}_2);\m{free}}[W_n]}.
 \end{equation}The next lemma is the main technical result of this section and it justifies the existence of the above limits.
 
\begin{lemma}\label{V(x,y)}
	Fix any $r\in\mathbb{Z}_{\geq 1}$, $M>0$, and $x,y\in\mathbb{R}$.
 Suppose $u_1,v_1,u_2,v_2 \in \m{IC}(M)$ and $b_1,b_2$ are two sequences with $|b_i|\le M\sqrt{n}$ and $b_1-b_2 \ge \frac1{M}\sqrt{n}$. Suppose $m_1, m_2$ are two sequences satisfying $m_1/n\to 1$ and $m_2/n\to 1$ as $n\to \infty$. Then for all $\bullet\in \{(b_1,b_2),\m{free}\}$,
 \begin{equation}\label{Vb2}
	\lim_{n\to\infty} \frac{\Ex^{{m_1}; (u_1,v_1); \bullet} [W_{m_1}]}{\Ex^{m_2; (u_2, v_2); \bullet} [W_{m_2}]}
	\end{equation}
 exists and is independent of $\bullet$.
\end{lemma}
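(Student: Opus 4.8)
The strategy is to reduce everything to a single limiting quantity by a chain of three comparisons, using the splitting identity \eqref{decomp} together with the weak convergence results of Section \ref{sec31}. Write $\rho\in(0,\tfrac12)$ to be a small parameter, and recall $W_{m} = \widehat{W}_{\rho m}\cdot W_{\rho m\to m}$ and the tower identity \eqref{transfin}. The plan is as follows.

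First I would show that the numerator and denominator in \eqref{Vb2} are each comparable to a ``free-law'' quantity evaluated on the first $\rho n$ steps. More precisely, fix a large cutoff $K$ and let $\m{C}_K = \{\max(|S_1(\rho n)|,|S_2(\rho n)|)\le K\sqrt{\rho n}\}$; by Lemma \ref{edgetight} (applied to the walks of length $\rho n$, after using \eqref{transfin}) and Lemma \ref{EWlbd}, the contribution of $\neg\m{C}_K$ to $\Ex^{m;(u,v);\bullet}[W_m]$ is negligible once $K$ is large, uniformly in the endpoints. On $\m{C}_K$, Lemma \ref{bridgewalk} lets us replace the bridge law $\Pr^{m;(u,v);(b_1,b_2)}$ on the first $\rho n$ coordinates by the free law $\Pr^{m;(u,v);\m{free}}$ at the cost of a $(1+o(1))$ factor that is uniform over $(b_1,b_2)\in B_M$. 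Hence, using \eqref{transfin} to peel off the tail,
\begin{align*}
\Ex^{m;(u,v);\bullet}[W_m] = (1+o(1))\,\Ex^{m;(u,v);\m{free}}\Big[\widehat{W}_{\rho n}\,\ind_{\m{C}_K}\,\Ex^{m-\rho n+1;(S_1(\rho n),S_2(\rho n));\bullet}[W_{m-\rho n+1}]\Big] + (\text{error}),
\end{align*}
where the error term vanishes after multiplying by $\sqrt{n}$ and sending $n\to\infty$ then $K\to\infty$ then $\rho\to 0$, by the estimates just cited.

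Second, I would identify the inner conditional expectation $\Ex^{m-\rho n+1;(S_1(\rho n),S_2(\rho n));\bullet}[W_{m-\rho n+1}]$ in the limit. Writing $U_{\rho n} = (S_1(\rho n)-S_2(\rho n))/\sqrt{n}$, on $\m{C}_K$ (and on the good event $\m{A}_\delta^\rho$ of Proposition \ref{meander}) the pair $(S_1(\rho n),S_2(\rho n))$ sits at $O(\sqrt{n})$ with a positive gap of order $\sqrt{n}$, so by the invariance principle of Proposition \ref{invarprin1} the diffusively-rescaled tail of $U$ converges to a Brownian bridge conditioned to stay positive, pinned between $\sqrt{\rho}\,R$ (with $R$ the Brownian meander endpoint, by Proposition \ref{meander}) and $\beta_1-\beta_2>0$. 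Comparing soft to true non-intersection via Lemma \ref{l:ni} (parts (b)--(c)) and the lower bound Lemma \ref{EWlbd}, one gets $\sqrt{m}\,\Ex^{m-\rho n+1;(x',y');\bullet}[W_{m-\rho n+1}] \to \mathfrak{Z}^{(\beta_1,\beta_2)}\cdot(1+o_\rho(1))$ where $\mathfrak{Z}^{(\beta_1,\beta_2)}>0$ depends only on the rescaled endpoints, \emph{not} on the starting coordinates (since those have been absorbed into the $o_\rho(1)$), and in the free case the analogous limit is a constant $\mathfrak{Z}_{\m{free}}$. Crucially, this limiting constant is the same ratio-wise for numerator and denominator, so it cancels. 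This yields
\begin{align*}
\frac{\Ex^{m_1;(u_1,v_1);\bullet}[W_{m_1}]}{\Ex^{m_2;(u_2,v_2);\bullet}[W_{m_2}]} = (1+o_{n}(1)+o_\rho(1))\,\frac{\Ex^{\rho n;(u_1,v_1);\m{free}}[W_{\rho n+1}]}{\Ex^{\rho n;(u_2,v_2);\m{free}}[W_{\rho n+1}]},
\end{align*}
where the right side no longer involves $\bullet$ or $(b_1,b_2)$. (Here I am using $m_1/n, m_2/n\to 1$ so that both sides see the same $\rho n$-length initial segment up to a harmless shift.)

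Third and finally, I would prove that the reduced ratio on the right has a limit as $n\to\infty$. Set $a_n(u,v) := \sqrt{\rho n}\,\Ex^{\rho n;(u,v);\m{free}}[W_{\rho n+1}]$; by Lemma \ref{l:ni}(a) and Lemma \ref{EWlbd} one has $c \le a_n(u,v) \le C$ for all large $n$, with constants depending only on $M$. Then a real-analysis argument (extract subsequential limits and show any two agree) pins down $\lim_n a_n(u_1,v_1)/a_n(u_2,v_2)$: the self-similar structure means that for any subsequence along which both $a_n(u_i,v_i)$ converge, splitting off a further $\rho$-fraction and iterating the comparison above forces the ratio of the limits to be independent of the subsequence. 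Combined with the second step, this shows \eqref{Vb2} converges and the limit is independent of $\bullet$. \textbf{The main obstacle} is Step 2: making the ``the initial coordinates get absorbed into $o_\rho(1)$'' claim rigorous, i.e., showing that the limiting normalized partition function $\sqrt{m}\,\Ex^{m;(x,y);\bullet}[W_m]$ genuinely factors (in the $\rho\to 0$ limit) into a piece depending only on $(x,y)$ through $\Ex^{\rho n;(x,y);\m{free}}[W_{\rho n}]$ times a piece depending only on the rescaled endpoints — this is exactly the point where the soft non-intersection prevents us from using the clean renewal-function description available in \cite{cn16}, and it must be handled by the two-sided Brownian-bridge-conditioned-positive estimates together with the non-intersection bounds of Lemma \ref{l:ni}.
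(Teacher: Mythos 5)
Your proposal follows the same three-step architecture as the paper's proof: split $W_m = \widehat{W}_{\rho n}\cdot W_{\rho n\to m}$ via \eqref{decomp}, use Lemma \ref{bridgewalk} to pass from bridge to free law on the initial $\rho n$-segment, show the tail factor cancels in the ratio up to $1+o(1)$, and then extract the limit of the reduced free-law ratio by a subsequential argument. You also correctly flag your Step 2 as the crux. The trouble is that the key claim you make there is wrong as stated: you assert that $\sqrt{m}\,\Ex^{m-\rho n+1;(x',y');\bullet}[W_{m-\rho n+1}]$ converges to a constant $\mathfrak{Z}^{(\beta_1,\beta_2)}$ independent of the starting coordinates. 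The $\sqrt{m}$ normalization is spurious: since $x'-y'\sim\sqrt{\rho n}\,R$ is of order $\sqrt{n}$, Lemma \ref{l:ni}(a) gives a non-intersection probability bounded by $\Con\max(x'-y',1)/\sqrt{n}=O(1)$, so the tail expectation is already $O(1)$ unnormalized; one can also see this from the dimensional check $\Ex^{\rho n;(u,v);\m{free}}[W_{\rho n}]\cdot(\text{tail})\approx\Ex^{m;(u,v);\bullet}[W_m]$, both sides $O(n^{-1/2})$. More substantively, the tail factor does \emph{not} converge to a deterministic quantity: in the paper's notation it converges a.s.\ (after Skorohod) to $\mathsf{Z}_\infty^\star=\Pr_R^\star(B_t>0 \text{ for } t\in[0,1-\rho])$, which depends explicitly on the meander endpoint $R$, a nondegenerate random variable of order one that is certainly not ``absorbed into $o_\rho(1)$.''

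What actually makes the ratio cancel, and this is the idea your sketch is missing, is that the \emph{law} of $R$ is the same for numerator and denominator: under $\Pr^{\rho n;(u_i,v_i);\m{free}}_{W_{\rho n}}$ for $i=1,2$, the rescaled gap converges to the Brownian meander endpoint no matter which $(\m{h}_1,\m{h}_2)\in\m{IC}(M)$ one starts from (Proposition \ref{meander}). Consequently $\Ex_{n,i}[\mathbf{1}_{\m{B}_\gamma}\,\mathsf{Z}^\bullet_{m_i-\rho n+1}]$ converges, by dominated convergence, to the \emph{same} deterministic limit $\Ex[\mathsf{Z}_\infty^\star]$ on both sides, and this limit is bounded below uniformly (since $R>0$ a.s.\ and $\beta_1-\beta_2\ge 1/M$), which is exactly \eqref{tratio}. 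It is equality of distributions of $R$, not independence of the limit from the starting point, that is doing the work. With that correction, the rest of your sketch — the initial-segment replacement, the reduction to the $\rho n$-length free ratio, and the real-analysis extraction of a limit — matches the paper's argument.
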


In plain words, the above lemma implies that the limit in \eqref{hatV} exists, and $V$ can be obtained from bridge measures in the same way as from walk measures.

\begin{remark}\label{vxyw} Taking $(u_1,v_1)=(x,y), (u_2, v_2)=(0,\m{g}_\alpha), m_1=m_2=n, \bullet=\m{free}$ in Lemma \ref{V(x,y)}, we see that the limit in \eqref{Vdef} exists. Below, we will consider initial conditions $(u_1,v_1) = (S_1(r),S_2(r))$, $(u_2,v_2)=(\m{h}_1,\m{h}_2)$, where $(S_1,S_2)\sim \Pr^{r;(\m{h}_1,\m{h}_2);\m{free}}$ and $r\geq 1$ is fixed. Since $S_1(r),S_2(r)$ are both sums of finitely many independent random variables with exponential tails, their densities lie in $\m{IC}(M)$ for sufficiently large $M$, and Lemma \ref{V(x,y)} then guarantees that the limit $V_{\m{h}_1,\m{h}_2}(S_1(r),S_2(r))$ in \eqref{hatVrandom} with $X=S_1(r), Y=S_2(r)$ exists.
    
\end{remark}

\begin{proof} Without loss of generality, we may pass to a subsequence and assume $b_1/\sqrt{n} \to \beta_1, b_2/\sqrt{n} \to \beta_2$ where $\beta_1,\beta_2 \in [-M,M]$ and $\beta_1-\beta_2 \in [M^{-1},M]$. 
Fix $\varepsilon \in (0,1/2)$. For clarity, we divide the proof into three steps.

\medskip

\noindent\textbf{Step 1.} The goal of this step is to provide an upper bound for the ratio
\begin{align}
     \frac{\Ex^{m_1; (u_1,v_1); \bullet } [W_{m_1}]}{\Ex^{m_2; (u_2,v_2); \bullet } [W_{m_2}]}.
\end{align}
Towards this end, we shall proceed by providing upper and lower bounds on $\Ex^{m;(u,v);\bullet}[W_m]$ for large $n$, assuming $m$ is of order $n$ and $u,v\in \m{IC}(M)$. 

For $\rho, \gamma>0$ (to be fixed later depending on $\e,M$) define the event
	\[
	\m{B}_\gamma = \{ \max(|S_1(\rho n)|, |S_2(\rho n)|) < \gamma\sqrt{\rho n} \} \cap \{\max(|S_1(1)|, |S_2(1)|) < \gamma\}.
	\]
As before, we have omitted floor signs for brevity and written $S_i(\rho n)$ for $S_i(\lfloor \rho n\rfloor)$. Let $\mathcal{F}_{\rho n}$ denote the $\sigma$-algebra generated by $(S_1(k),S_2(k))_{k=1}^{\rho n}$, and recall $\hat{W}_r$ from \eqref{wrdef}. We note that $\mathbf{1}_{\m{B}_\gamma}\hat{W}_{\rho n}$ is $\mathcal{F}_{\rho n}$-measurable. Using \eqref{decomp} and conditioning on $\mathcal{F}_{\rho n}$, we write
\begin{equation}
    \begin{split}
	\Ex^{m; (u,v); \bullet} [W_{m}] &\geq  \Ex^{m; (u,v); \bullet} \left[ \mathbf{1}_{\m{B}_\gamma} \hat{W}_{\rho n} \cdot \Ex^{m; (u,v); \bullet}\left[ {W}_{\rho n\to m} \mid \mathcal{F}_{\rho n} \right] \right] \\
	&\geq  (1-\e) \cdot \Ex^{ \rho n ; (u,v); \m{free}} \left[ \mathbf{1}_{\m{B}_\gamma} \hat{W}_{\rho n} \cdot \Ex^{m; (u,v);\bullet}\left[ {W}_{\rho n\to m} \mid \mathcal{F}_{\rho n} \right] \right].\label{Vnum}
    \end{split}
\end{equation}
The last inequality follows from Lemma \ref{bridgewalk} by taking $\rho$ small enough depending on $\e$.
Let us define
\begin{equation}\label{Zdef}
\mathsf Z_{m}^\bullet := \exp\left(-e^{S_2(\rho n)-S_1(\rho n)}\right)\cdot \Ex^{m;(S_1(\rho n),S_2(\rho n));\bullet}[W_m].    
\end{equation}
This allows us to write
\begin{equation}
    \label{3eqs}
    \begin{aligned}
 & \Ex^{ \rho n ; (u,v); \m{free}} \left[ \mathbf{1}_{\m{B}_\gamma} \hat{W}_{\rho n} \cdot \Ex^{m; (u,v);\bullet}\left[ {W}_{\rho n\to m} \mid \mathcal{F}_{\rho n} \right] \right] \\ &\qquad = \Ex^{\rho n;(u,v);\m{free}}\left[\mathbf{1}_{\m{B}_\gamma} \hat{W}_{\rho n} \cdot \Ex^{m-\rho n+1;(S_1(\rho n),S_2(\rho n));\bullet}[W_{m-\rho n+1}]\right] \\ &\qquad = \Ex^{\rho n; (u,v); \m{free}}[W_{\rho n}] \cdot \Ex^{\rho n; (u,v); \m{free}}_{W_{\rho n}} \left[ \mathbf{1}_{\m{B}_\gamma} \cdot \mathsf Z_{m-\rho n+1}^\bullet\right].
\end{aligned}
\end{equation}
The first equality follows from \eqref{transfin}, and the second is by definition of $\Ex^{\rho n; (x,y); \m{free}}_{W_{\rho n}}$ from \eqref{wscint}. Plugging the above expression back into \eqref{Vnum}, we get the following lower bound:
\begin{align}\label{uvlbd}
    \Ex^{m; (u,v); \bullet} [W_{m}] 
	&\geq  (1-\e) \cdot \Ex^{\rho n; (u,v); \m{free}}[W_{\rho n}] \cdot \Ex^{\rho n; (u,v); \m{free}}_{W_{\rho n}} \left[ \mathbf{1}_{\m{B}_\gamma} \cdot \mathsf Z_{m-\rho n+1}^\bullet\right].
\end{align}
On the other hand for the upper bound, we first note that using Lemmas \ref{spread} and \ref{EWlbd} one can take $\gamma = \gamma(\varepsilon, M)$ large enough such that for all large $m$,
\begin{align}\label{newg}
    \Ex^{m; (u,v); \bullet } [W_m\ind_{\neg \m{B}_{\gamma}}] \le \frac{\e}{\sqrt{m}} \le C \e \cdot \Ex^{m; (u,v); \bullet} [W_m] .
\end{align}
Taking $\rho$ small enough, in view of  Lemma \ref{bridgewalk}, 
\begin{align*}
    \Ex^{m; (u,v); \bullet } [W_m\ind_{\m{B}_{\gamma}}] &= \Ex^{\rho n ; (u,v); \bullet} \left[ \mathbf{1}_{\m{B}_\gamma} \hat{W}_{\rho n} \cdot \Ex^{m; (u,v); \bullet}\left[ {W}_{\rho n\to m} \mid \mathcal{F}_{\rho n} \right] \right]\\
    &\le (1+\e)\cdot \Ex^{\rho n ; (u,v); \m{free}} \left[ \mathbf{1}_{\m{B}_\gamma} \hat{W}_{\rho n} \cdot \Ex^{m; (u,v); \bullet}\left[ {W}_{\rho n\to m} \mid \mathcal{F}_{\rho n} \right] \right] \\ & = (1+\e)\cdot \Ex^{\rho n; (x,y); \m{free}}[W_{\rho n}] \cdot \Ex^{\rho n; (x,y); \m{free}}_{W_{\rho n}} \left[ \mathbf{1}_{\m{B}_\gamma} \cdot \mathsf Z_{m-\rho n+1}^{\bullet}\right],
\end{align*}
where the first line is due to \eqref{decomp} and the third line is due to \eqref{3eqs}.
Adding the above inequality with \eqref{newg} and rearranging the terms we get
\begin{align}\label{uvubd}
    \Ex^{m; (u,v); \bullet } [W_m] & \le \frac{1+\e}{1-C\e}\cdot \Ex^{\rho n; (u,v); \m{free}}[W_{\rho n}] \cdot \Ex^{\rho n; (u,v); \m{free}}_{W_{\rho n}} \left[ \mathbf{1}_{\m{B}_\gamma} \cdot \mathsf Z_{m-\rho n+1}^{\bullet}\right].
\end{align}
Combining the upper bound from above with the lower bound from \eqref{uvlbd}, we thus have
\begin{equation}
    \label{ratiobd}
    \begin{aligned}
    \frac{\Ex^{m_1; (u_1,v_1); \bullet } [W_{m_1}]}{\Ex^{m_2; (u_2,v_2); \bullet } [W_{m_2}]} & \le \frac{(1+\e)}{(1-\e)(1-C\e)}\cdot \frac{\Ex^{\rho n; (u_1,v_1); \m{free}}[W_{\rho n}]}{\Ex^{\rho n; (u_2,v_2); \m{free}}[W_{\rho n}]} \\ & \hspace{4cm} \cdot \frac{\Ex^{\rho n; (u_1,v_1); \m{free}}_{W_{\rho n}} \left[ \mathbf{1}_{\m{B}_\gamma} \cdot \mathsf Z_{m_1-\rho n+1}^\bullet\right]}{\Ex^{\rho n; (u_2,v_2); \m{free}}_{W_{\rho n}} \left[ \mathbf{1}_{\m{B}_\gamma} \cdot \mathsf Z_{m_2-\rho n+1}^\bullet\right]}.
\end{aligned}
\end{equation}

\medskip

\noindent\textbf{Step 2.} We claim that if $m_1/n \to 1$ and $m_2/n\to 1$ as $n\to\infty$, then for all large enough $\gamma$ and $n$ we have
\begin{align}\label{tratio}
 \frac{1-\e}{1+\e} \le \frac{\Ex^{\rho n; (u_1,v_1); \m{free}}_{W_{\rho n}} \left[ \mathbf{1}_{\m{B}_\gamma} \cdot \mathsf Z_{m_1-\rho n+1}^\bullet\right]}{\Ex^{\rho n; (u_2,v_2); \m{free}}_{W_{\rho n}} \left[ \mathbf{1}_{\m{B}_\gamma} \cdot \mathsf Z_{m_2-\rho n+1}^\bullet\right]} \le \frac{1+\e}{1-\e}.
\end{align}
We shall prove \eqref{tratio} in the next step. Let us complete the proof of the lemma assuming it. Plugging the above bound back into \eqref{ratiobd}, we arrive at 
\begin{equation}
\label{trag}
    \begin{aligned}
    \frac{\Ex^{m_1; (u_1,v_1); \bullet } [W_{m_1}]}{\Ex^{m_2; (u_2,v_2); \bullet } [W_{m_2}]} & \le \frac{(1+\e)^2}{(1-\e)^2(1-C\e)}\cdot \frac{\Ex^{\rho n; (u_1,v_1); \m{free}}[W_{\rho n}]}{\Ex^{\rho n; (u_2,v_2); \m{free}}[W_{\rho n}]}.
\end{aligned}
\end{equation}
for all large enough $n$. Let us take $m_1=m_2=n$ and $\bullet=\m{free}$. Defining $$a_n:=\frac{\Ex^{n; (u_1,v_1); \m{free} } [W_{n}]}{\Ex^{n; (u_2,v_2); \m{free} } [W_{n}]},$$ 
the above inequality then translates to $a_n\le \frac{(1+\e)^2}{(1-\e)^2(1-C\e)}a_{\rho n}$ for all large $n$. Also note that $(a_n)_{n\geq 1}$ is bounded by Lemmas \ref{smallupper} and \ref{EWlbd}. From here it follows that $(a_n)_{n\geq 1}$ converges by the following real analysis lemma.

\begin{lemma} Suppose $f : \mathbb{Z}_{\ge 1}\to \mathbb{Z}_{\ge 1}$ is a non-decreasing function with $f(n) \to \infty$ as $n\to \infty$. Let $(a_n)_{n\ge 1}$ be a bounded sequence such that for every $\e>0$, there exists $n_0(\e)$ such that for all $n\ge n_0$ and $m\ge f(n)$ we have $ a_{n+m} \le (1+\e) a_{n}$. Then $\lim_{n\to \infty} a_n$ exists.
\end{lemma}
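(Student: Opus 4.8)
The plan is to prove that $\limsup_{n\to\infty} a_n \le \liminf_{n\to\infty} a_n$; since the reverse inequality always holds and both quantities are finite by boundedness of $(a_n)$, this forces the limit to exist. Write $L := \limsup_{n\to\infty} a_n$ and $\ell := \liminf_{n\to\infty} a_n$.

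First I would fix $\e>0$ and let $n_0=n_0(\e)$ be as in the hypothesis. Choose a subsequence $(n_k)_{k\ge 1}$ with $n_k\to\infty$ and $a_{n_k}\to \ell$, discarding finitely many terms so that $n_k\ge n_0$ for all $k$. Now fix such a $k$ and apply the hypothesis with $n=n_k$: as $m$ ranges over all integers $\ge f(n_k)$, the index $n_k+m$ ranges over all integers $\ge n_k+f(n_k)$, so every $N\ge n_k+f(n_k)$ can be written $N=n_k+m$ with $m\ge f(n_k)$, whence $a_N\le (1+\e)\,a_{n_k}$. Taking $\limsup$ over $N\to\infty$ (with $k$ still frozen) gives $L\le (1+\e)\,a_{n_k}$.

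The remaining two limits are then routine: letting $k\to\infty$ and using $a_{n_k}\to\ell$ (the left-hand side being independent of $k$) yields $L\le (1+\e)\ell$, and since $\e>0$ was arbitrary, letting $\e\downarrow 0$ gives $L\le \ell$, as desired.

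There is no substantial obstacle here; the lemma is elementary, and the only point requiring care is the order in which the three limits are taken — first $N\to\infty$ with $\e$ and $k$ frozen, then $k\to\infty$, then $\e\downarrow 0$ — together with the trivial observation that each $f(n_k)$ is a finite positive integer, which is all that the argument uses (in fact neither the monotonicity of $f$ nor the divergence $f(n)\to\infty$ enters the proof, though both are available). The genuine content of this portion of the paper lies not in this lemma but in verifying its hypothesis for $a_n = \Ex^{n;(u_1,v_1);\m{free}}[W_n]/\Ex^{n;(u_2,v_2);\m{free}}[W_n]$ together with the boundedness of $(a_n)$, which was established in the preceding steps.
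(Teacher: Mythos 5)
Your proof is correct and takes essentially the same approach as the paper: extract a convergent subsequence (you use one realizing the liminf, the paper uses two arbitrary convergent subsequences and argues by symmetry), apply the hypothesis along it, then pass to the limit first in the large index and then in $\e$. The only cosmetic difference is that you directly establish $\limsup a_n \le \liminf a_n$ rather than phrasing it as equality of all subsequential limits; your observation that neither the monotonicity nor the divergence of $f$ is actually needed is also accurate.
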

\begin{proof} Let us consider two convergent subsequences $(a_{m_k})_{k\ge 1}$ and $(a_{n_k})_{k\ge 1}$ with limits $a^{(1)}$ and $a^{(2)}$ respectively. Note that we may find a subsequence of $n_k$, say $n_{k_{\ell}}$, such that
$n_{{k}_{\ell}} \ge m_{\ell}+f(m_\ell)$ for all $\ell \ge 1$. Then for all $\ell$ large enough we have $a_{n_{{k}_{\ell}}} \le (1+\e) a_{m_\ell}$.
Taking $\ell \to \infty$, we get $a^{(2)}\le (1+\e)a^{(1)}$. As $\e$ is arbitrary, we have $a^{(2)} \le a^{(1)}$. Interchanging the roles of the subsequences, we get the reverse inequality, so $a^{(1)}=a^{(2)}$. Hence all convergent subsequences have the same limit, and since $(a_n)_{n\geq 1}$ is bounded this implies convergence. 
\end{proof}	

Now since $\lim_{n\to\infty} a_n$ exists and equals $\lim_{n\to\infty} a_{\rho n}$ for any fixed $\rho>0$, we may take the limsup on both sides of \eqref{trag} to get
\begin{align}\label{ragn}
   \limsup_{n\to\infty} \frac{\Ex^{m_1; (u_1,v_1); \bullet } [W_{m_1}]}{\Ex^{m_2; (u_2,v_2); \bullet } [W_{m_2}]} & \le \frac{(1+\e)^2}{(1-\e)^2(1-C\e)}\cdot \lim_{n\to\infty} a_n.
\end{align}
We may use \eqref{uvlbd}, \eqref{uvubd}, and \eqref{tratio} to get the following lower bound for the ratio:
\begin{align*}
    \frac{\Ex^{m_1; (u_1,v_1); \bullet } [W_{m_1}]}{\Ex^{m_2; (u_2,v_2); \bullet } [W_{m_2}]} \ge \frac{(1-\e)^2(1-C\e)}{(1+\e)^2}\cdot\frac{\Ex^{\rho n; (u_1,v_1); \m{free}}[W_{\rho n}]}{\Ex^{\rho n; (u_2,v_2); \m{free}}[W_{\rho n}]}.
\end{align*}
Upon taking the liminf we obtain
\begin{align}\label{ragn2}
    \liminf_{n\to \infty}\frac{\Ex^{m_1; (u_1,v_1); \bullet } [W_{m_1}]}{\Ex^{m_2; (u_2,v_2); \bullet } [W_{m_2}]} \ge \frac{(1-\e)^2(1-C\e)}{(1+\e)^2}\cdot\lim_{n\to\infty} a_n.
\end{align}
As $\e$ is arbitrary, \eqref{ragn} and \eqref{ragn2} together prove the lemma.

\medskip

\noindent\textbf{Step 3.} In this step we prove \eqref{tratio}. Let us study the numerator of the ratio in \eqref{tratio}. For convenience, we write $\Pr_n=\Pr^{\rho n; (u_1,v_1); \m{free}}_{W_{\rho n}}$. We first investigate the weak limit of $\mathsf Z_{m-\rho n+1}^\bullet$. 
By Proposition \ref{meander}, under $\Pr_n$, $n^{-1/2}(S_1(\rho n)-S_2(\rho n))$ converges weakly to $\sqrt{\rho} R$ where $R$ is the endpoint of the Brownian meander. By the Skorohod representation theorem, we may pass to a new probability space with measure $\Pr$ supporting random variables with the same laws as $S_1(\rho n), S_2(\rho n), R$ (for which we use the same notation for brevity) such that $n^{-1/2}(S_1(n\rho)-S_2(n\rho)) \longrightarrow \sqrt{\rho}R$, $\mathbb{P}$-a.s. We then observe that for $(\widetilde{S}_1,\widetilde{S}_2) \sim \Pr^{m-\rho n+1;(S_1(\rho n),S_2(\rho n));\bullet}$, by the invariance principle for random walks/bridges $\{n^{-1/2}(\widetilde{S}_1(nt) - \widetilde{S}_2(nt))\}_{t\in[0,1-\rho]}$ converges in law  to 
\begin{itemize}
    \item a Brownian motion $(
B_t)_{t\in[0,1-\rho]}$ with $B_0 = \sqrt{\rho}R$ if $\bullet=\m{free}$;
\item a Brownian bridge  $(
B_t)_{t\in[0,1-\rho]}$  with $B_0 = \sqrt{\rho}R$ and $B_{1-\rho} = \beta_1-\beta_2$ if $\bullet=(b_1,b_2)$. {Recall that we assumed $b_1/\sqrt{n} \to \beta_1, b_2/\sqrt{n} \to \beta_2$ where $\beta_1,\beta_2 \in [-M,M]$ and $\beta_1-\beta_2 \in [M^{-1},M]$.}  
\end{itemize}
 We denote these limiting laws by $\mathbb{P}_{R}^\star$, where $\star = \m{free}$ if $\bullet = \m{free}$ and $\star = (\beta_1,\beta_2)$ if $\bullet = (b_1,b_2)$. Since $R>0$ a.s. and $\beta_1-\beta_2>0$, it follows by the same arguments as in Proposition \ref{invarprin1} (see \eqref{Wnonint} in particular) that $\Pr$-a.s.,
\begin{equation}
\begin{split}\label{Wtildeconv}
\mathsf Z_{m_1-\rho n+1}^\bullet &\longrightarrow \mathsf Z_{\infty}^\star := \mathbb{P}^\star_R \left ( B_t > 0 \mbox{ for all } t\in[0,1-\rho]\right).
\end{split}
\end{equation}
This random variable $\mathsf Z_\infty^{\star}$ is strictly positive almost surely for all choices of $\star$ in question. When $\bullet=(b_1,b_2)$, by monotonicity of Brownian bridges w.r.t.~endpoints, $\mathsf Z_{\infty}^{(\beta_1,\beta_2)}$ is stochastically larger than $\mathsf Z_{\infty}^{(M^{-1},0)}$ as $\beta_1-\beta_2\ge \frac1M$.
Thus $\Ex[\mathsf Z_{\infty}^\star]>\delta$ where $\delta>0$ is independent of $\star$. By Lemmas \ref{spread} and \ref{EWlbd}, $\Pr_n(\m{B}_\gamma)$ can be made arbitrarily close to $1$ by taking $\gamma,n$ large. As $\mathsf Z_{m-\rho n+1}^\bullet \le 1$, in view of \eqref{Wtildeconv} and dominated convergence we have
\begin{align*}
  (1-\e)\Ex[\mathsf Z_{\infty}^\star] \le \Ex_n[\mathbf{1}_{\m{B}_\gamma} \cdot \mathsf Z_{m_i-\rho n+1}^\bullet] \le \Ex_n[\mathsf Z_{m_i-\rho n+1}^\bullet] \le  (1+\e)\Ex[\mathsf Z_{\infty}^{\star}] 
\end{align*}
for all large $\gamma, n$. Using this bound, we arrive at \eqref{tratio}. \end{proof}

\begin{definition}\label{def:LGmeas} Using $V_{\m{h}_1,\m{h}_2}$ from \eqref{hatVrandom}, we define an induced measure ${\Pr}_{\m{LG}}^{(\m{h}_1, \m{h}_2)}$ on $\Omega_n$, for Borel sets $A\subset\mathbb{R}^r\times\mathbb{R}^r$ and corresponding events $\m{A} := \{(S_1(k),S_2(k))_{k=1}^r \in A\}$, by 
\begin{align}\label{LGmeas}
\Pr_{\m{LG}}^{(\m{h}_1,\m{h}_2)}(\m{A})=\Ex^{r; (\m{h}_1,\mathsf{h}_2); \m{free}}\left[\ind_{\m{A}} \cdot \hat{W}_r V_{\m{h}_1,\m{h}_2}(S_1(r),S_2(r))\right].
\end{align}
Here $V_{\m{h}_1,\m{h}_2}(S_1(r),S_2(r))$ is well-defined by Remark \ref{vxyw}.
\end{definition}

The fact that \eqref{LGmeas} defines a probability measure is encoded in the following lemma.

\begin{lemma}\label{expec1}
We have
\[
\Ex^{r;(\m{h}_1,\m{h}_2);\m{free}}\left[\hat{W}_r V_{\m{h}_1,\m{h}_2}(S_1(r),S_2(r))\right] = 1.
\]
\end{lemma}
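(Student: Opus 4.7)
The plan is to unfold the ratio that defines $V_{\m{h}_1,\m{h}_2}$ by combining the decomposition $W_n=\hat{W}_r\cdot W_{r\to n}$ from \eqref{decomp} with the Markov-type identity \eqref{transfin}, and then pass to the limit via dominated convergence. Fix $n>r$, condition on $\mathcal{F}_r$, and note that $\hat{W}_r$ is $\mathcal{F}_r$-measurable. Since $\Ex[W_{r\to n}\mid\mathcal{F}_r]=\Ex^{n-r+1;(S_1(r),S_2(r));\m{free}}[W_{n-r+1}]$ depends only on $(S_1(r),S_2(r))$, marginal consistency of the free law gives
\[
\Ex^{n;(\m{h}_1,\m{h}_2);\m{free}}[W_n]=\Ex^{r;(\m{h}_1,\m{h}_2);\m{free}}\!\left[\hat{W}_r\cdot\Ex^{n-r+1;(S_1(r),S_2(r));\m{free}}[W_{n-r+1}]\right].
\]
Dividing both sides by $\Ex^{n;(\m{h}_1,\m{h}_2);\m{free}}[W_n]$ yields
\[
1=\Ex^{r;(\m{h}_1,\m{h}_2);\m{free}}\!\left[\hat{W}_r\cdot\frac{\Ex^{n-r+1;(S_1(r),S_2(r));\m{free}}[W_{n-r+1}]}{\Ex^{n;(\m{h}_1,\m{h}_2);\m{free}}[W_n]}\right],
\]
and by Lemma \ref{V(x,y)} (applied with $m_1=n-r+1$, $m_2=n$, both satisfying $m_i/n\to 1$), conditionally on $\mathcal{F}_r$ the inner ratio tends almost surely to $V_{\m{h}_1,\m{h}_2}(S_1(r),S_2(r))$ as $n\to\infty$.

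Next I would pass the limit inside the outer expectation via dominated convergence, and the main ingredient is a uniform upper bound on the ratio with integrable majorant. Starting from the layer-cake bound $W_m\le\sum_{p\ge 0}e^{-e^{p-1}}\ind_{\ni_p}$ implied by \eqref{wineq}, and shifting $S_1$ upward by $p$ to convert $\ni_p$ under $\Pr^{m;(x,y);\m{free}}$ into $\ni_0$ under $\Pr^{m;(x+p,y);\m{free}}$, Lemma \ref{l:ni}(a) gives the uniform estimate
\[
\sqrt{m}\cdot\Ex^{m;(x,y);\m{free}}[W_m]\le C\big(1+|x-y|\big)
\]
for all $m\ge 1$ and $(x,y)\in\R^2$, with $C$ an absolute constant. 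Combined with the lower bound $\sqrt{n}\cdot\Ex^{n;(\m{h}_1,\m{h}_2);\m{free}}[W_n]\ge c>0$ for large $n$ from Lemma \ref{EWlbd}, the ratio in the display above is bounded by $C'(1+|S_1(r)-S_2(r)|)$ uniformly in $n$, and the factor $\hat{W}_r\le 1$ preserves the bound. Since $\m{h}_1,\m{h}_2\in\m{IC}(M)$ have exponential tails and the increments are subexponential by Assumption \ref{asp:in}(3), $|S_1(r)-S_2(r)|$ has finite first moment under $\Pr^{r;(\m{h}_1,\m{h}_2);\m{free}}$, so the majorant is integrable and dominated convergence yields the claimed identity.

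The main technical step is producing the uniform bound $\sqrt{m}\cdot\Ex^{m;(x,y);\m{free}}[W_m]\le C(1+|x-y|)$ with only linear growth in $|x-y|$; without such control the dominated convergence step would require additional tightness input on the law of $(S_1(r),S_2(r))$. Everything else is essentially bookkeeping once Lemma \ref{V(x,y)} is in hand.
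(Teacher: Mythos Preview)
Your proof is correct and follows essentially the same route as the paper: decompose $W_n=\hat{W}_r\cdot W_{r\to n}$, apply the tower property, and pass to the limit with dominated convergence using the bound $C(1+|S_1(r)-S_2(r)|)$ obtained from \eqref{wineq} and Lemma~\ref{l:ni}(a) together with Lemma~\ref{EWlbd}.

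The one difference is that the paper normalizes by $\Ex^{n;(\m{h}_1,\m{h}_2);\m{free}}[W_n]$ while working with $W_{n+r}$ in the numerator, so the outer expectation does not equal $1$ identically but rather $\Ex^{n+r}[W_{n+r}]/\Ex^{n}[W_n]$; the paper then proves separately (its Step~2) that this ratio tends to $1$. Your normalization instead produces the exact identity $1=\Ex^{r}[\cdots]$ before any limit, and you recover the pointwise convergence of the integrand directly from Lemma~\ref{V(x,y)} with $m_1=n-r+1\neq m_2=n$. This is a genuine simplification: the paper's Step~2 is in fact a special case of Lemma~\ref{V(x,y)} (take $u_i=\m{h}_1$, $v_i=\m{h}_2$, $m_1=n+r$, $m_2=n$), so your approach avoids reproving it. One small point worth making explicit is that the \emph{value} of the limit in \eqref{Vb2} does not depend on the particular sequences $m_1,m_2$ (not just on $\bullet$); this is shown in the proof of Lemma~\ref{V(x,y)} via \eqref{ragn}--\eqref{ragn2}, and is what identifies your limiting ratio with $V_{\m{h}_1,\m{h}_2}(S_1(r),S_2(r))$ as defined in \eqref{hatV}.
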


\begin{proof} For clarity, we divide the proof into two steps.

\medskip

\noindent\textbf{Step 1.} We claim that
\begin{align}
    \label{ratio11}
    \lim_{n\to\infty}\frac{\Ex^{n+r;(\m{h}_1,\m{h}_2);\m{free}}[W_{n+r}]}{\Ex^{n;(\m{h}_1,\m{h}_2);\m{free}}[W_n]} = 1.
\end{align}
We postpone its proof to \textbf{Step 2}. Let us complete the proof assuming it.
Let $\mathcal{F}_r$ denote the $\sigma$-algebra generated by $(S_1(k),S_2(k))_{k=1}^{r}$. We introduce the ratio
\[
T_r^n = \frac{\Ex^{n+r;(\m{h}_1,\m{h}_2);\m{free}}[W_{n+r} \mid \mathcal{F}_{r+1}]}{\Ex^{n;(\m{h}_1,\m{h}_2);\m{free}}[W_n]}.
\]
Note that $W_{n+r}=\hat{W}_{r+1}\cdot W_{r+1\to n+r}$ where $W_{r+1\to n+r}$ is defined in \eqref{wsplit}. Thus,
\begin{align*}
T_r^n &= \hat{W}_{r+1}\cdot \frac{\Ex^{n+r;(\m{h}_1,\m{h}_2);\m{free}}\left[W_{r+1\to n+r}\mid \mathcal{F}_{r+1}\right]}{\Ex^{n;(\m{h}_1,\m{h}_2);\m{free}}[W_n]}
= \hat{W}_{r+1} \cdot \frac{\Ex^{n;(S_1(r+1),S_2(r+1));\m{free}}[W_n]}{\Ex^{n;(\m{h}_1,\m{h}_2);\m{free}}[W_n]}.
\end{align*}
The second equality above follows from the Gibbs property for random walks. So by definition of $V_{\m{h}_1,\m{h}_2}(x,y)$, we have
\[
\lim_{n\to\infty} T_r^n = \hat{W}_{r+1} V_{\m{h}_1,\m{h}_2}(S_1(r+1),S_2(r+1)).
\]
By \eqref{ratio11}, we have $\Ex^{n+r;(\m{h}_1,\m{h}_2);\m{free}} [T_r^n] \to 1$ as $n\to\infty$.  
We now claim that 
\begin{align}\label{clafi}
    T_r^n \le C[|S_1(r+1)-S_2(r+1)|+1],
\end{align}
for some constant $C>0$ dependent only on $M$. Note that under the law $\Pr^{n+r;(\m{h}_1,\m{h}_2);\m{free}}$, for $i=1,2$, $S_i(r+1)$ is just a sum of $r+1$ independent random variables each with exponential tails. This implies that $\Ex^{n+r;(\m{h}_1,\m{h}_2);\m{free}}|S_1(r+1)-S_2(r+1)|$ is finite. Then applying dominated convergence  with the claim in \eqref{clafi} gives the result. We thus focus on proving \eqref{clafi}. Towards this end, applying the inequality in \eqref{wineq} we see that
\begin{align*}
    \Ex^{n;(S_1(r+1),S_2(r+1));\m{free}}[W_n] \le e^{-(\log n)^2}+\sum_{p=0}^{\lfloor 2\log\log n\rfloor+1} e^{-e^{p-1}}\Pr^{n;(S_1(r+1),S_2(r+1));\m{free}}(\ni_p).
\end{align*}
Thanks to Lemma \ref{l:ni}, we can estimate the above non-intersection probability:
\begin{align*}
    \Pr^{n;(S_1(r+1),S_2(r+1));\m{free}}(\ni_p) = \Pr^{n;(S_1(r+1)+p,S_2(r+1));\m{free}}(\ni) \le \frac{|S_1(r+1)-S_2(r+1)|+p+1}{\sqrt{n}}.
\end{align*}
Using the trivial inequality $\hat{W}_r\le 1$ and the lower bound on $\Ex^{n;(\m{h}_1,\m{h}_2);\m{free}}[W_n]$ of the order $1/\sqrt{n}$ from Lemma \ref{EWlbd} we thus have
\begin{align*}
    T_r^n & \le \Con\left[\sqrt{n}e^{-(\log n)^2}+\sum_{p=0}^{\lfloor 2\log\log n\rfloor+1} e^{-e^{p-1}} [|S_1(r+1)-S_2(r+1)|+p+1]\right] 
\end{align*}
which is clearly less than $\Con[|S_1(r+1)-S_2(r+1)|+1]$, completing the proof of \eqref{clafi}.

\medskip

  \noindent\textbf{Step 2.} In this step, we prove \eqref{ratio11}.  First observe that we have a trivial lower bound $W_{n+r} \leq W_n$, so the limsup of the ratio is at most 1. For a lower bound, fix $\varepsilon>0$. For $\delta>0$, we introduce two events
    \[
    \m{C}_\delta = \left\{ \inf_{k\in\llbracket n,n+r\rrbracket} S_1(k) - \sup_{k\in\llbracket n,n+r\rrbracket} S_2(k)  > \tfrac{\delta}{4}\sqrt{n}\right\}, \quad \m{D}_\delta  = \left\{ S_1(n)-S_2(n) > \tfrac{\delta}{2}\sqrt{n}\right\}
    \]
    Then we have
    \begin{align*}
        \Ex^{n+r;(\m{h}_1,\m{h}_2);\m{free}} [ W_{n+r}] &\geq \Ex^{n+r;(\m{h}_1,\m{h}_2);\m{free}} [ W_{n+r} \mathbf{1}_{\m{C}_\delta}]\\
        &= \Ex^{n+r;(\m{h}_1,\m{h}_2);\m{free}} \left[ W_n \mathbf{1}_{\m{C}_\delta} \exp\left(-\sum_{k=n+1}^{n+r}(e^{S_2(k-1)-S_1(k)} + e^{S_2(k)-S_1(k)})\right)\right]\\
        &\geq \exp(-2r e^{-\delta\sqrt{n}/4}) \cdot \Ex^{n+r;(0,\mathsf{h});\m{free}}[W_n \mathbf{1}_{\m{C}_\delta}].
    \end{align*}
    If $n$ is large enough depending on $\delta,\varepsilon$, this implies
    \begin{align}\label{xfr0}
    \Ex^{n+r;(\m{h}_1,\m{h}_2);\m{free}} [ W_{n+r}] \geq (1-\varepsilon)\left\{ \Ex^{n;(\m{h}_1,\m{h}_2);\m{free}} [ W_{n}] - \Ex^{n+r;(\m{h}_1,\m{h}_2);\m{free}} [ W_{n}\mathbf{1}_{\neg \m{C}_\delta}]\right\}.
    \end{align}
    We now seek to lower bound the second term on the right, {i.e., $\Ex^{n+r;(\m{h}_1,\m{h}_2);\m{free}} [ W_{n}\mathbf{1}_{\neg \m{C}_\delta}]$. We write
    \begin{align}\label{xfr}
        \Ex^{n+r;(\m{h}_1,\m{h}_2);\m{free}} [ W_{n}\mathbf{1}_{\neg \m{C}_\delta}] &\leq \Ex^{n+r;(\m{h}_1,\m{h}_2);\m{free}} [ W_{n}\mathbf{1}_{\neg \m{C}_\delta}\mathbf{1}_{\m{D}_\delta}] + \Ex^{n+r;(\m{h}_1,\m{h}_2);\m{free}} [ W_{n}\mathbf{1}_{\neg \m{D}_\delta}].
    \end{align}
    By Lemma \ref{spread}, we can choose $\delta$ depending on $\varepsilon,M$ so that the second term on the right-hand side of \eqref{xfr} is at most $\varepsilon/\sqrt{n}$ for all large $n$, and by Lemma \ref{EWlbd} we know $\Ex^{n;(\m{h}_1,\m{h}_2);\m{free}}[W_n]\ge \frac1{C \sqrt{n}}$. Thus we get 
    \begin{align} \label{xfr1}
    \Ex^{n+r;(\m{h}_1,\m{h}_2);\m{free}} [ W_{n}\mathbf{1}_{\neg \m{D}_\delta}] \le C\e \cdot \Ex^{n;(\m{h}_1,\m{h}_2);\m{free}}[W_n].
    \end{align}
    For the first term on the right-hand side of \eqref{xfr}, we condition on the $\sigma$-algebra $\mathcal{F}_n = \sigma\{(S_1(k),S_2(k))_{k=1}^n\}$ and write
    \begin{align*}
        \Ex^{n+r;(\m{h}_1,\m{h}_2);\m{free}} [ W_{n}\mathbf{1}_{\neg \m{C}_\delta}\mathbf{1}_{\m{D}_\delta}] &= \Ex^{n+r;(\m{h}_1,\m{h}_2);\m{free}} [ W_{n}\mathbf{1}_{\m{D}_\delta} \cdot \Ex^{n+r;(\m{h}_1,\m{h}_2);\m{free}}[\mathbf{1}_{\neg\m{C}_\delta} \mid \mathcal{F}_n]]\\
        &= \Ex^{n;(\m{h}_1,\m{h}_2);\m{free}}[W_n \mathbf{1}_{\m{D}_\delta} \cdot \Pr^{r;(S_1(n),S_2(n));\m{free}}(\neg \m{C}_\delta)].
    \end{align*}
    It follows from tail bounds on independent walks of length $r$, i.e., Kolmogorov's inequality, that
    $\mathbf{1}_{\m{D}_\delta} \cdot \Pr^{r;(S_1(n),S_2(n));\m{free}}(\neg \m{C}_\delta) < \varepsilon$
    for large enough $n$ depending on $\delta,\varepsilon$. This leads to
    \begin{align*}
        \Ex^{n+r;(\m{h}_1,\m{h}_2);\m{free}} [ W_{n}\mathbf{1}_{\neg \m{C}_\delta}\mathbf{1}_{\m{D}_\delta}] \le \e \cdot \Ex^{n;(\m{h}_1,\m{h}_2);\m{free}} [ W_{n}].
    \end{align*}
    Plugging the above bound along with the bound in \eqref{xfr1} back in \eqref{xfr} gives us
     \begin{align*}
        \Ex^{n+r;(\m{h}_1,\m{h}_2);\m{free}} [ W_{n}\mathbf{1}_{\neg \m{C}_\delta}] &\leq (C\e +\e) \cdot \Ex^{n;(\m{h}_1,\m{h}_2);\m{free}} [ W_{n}].
    \end{align*}
    Finally, plugging the above bound back in \eqref{xfr0} leads to} 
    \[
    \Ex^{n+r;(\m{h}_1,\m{h}_2);\m{free}} [ W_{n+r}] \geq (1-\varepsilon)(1 - C\varepsilon-\e) \cdot \Ex^{n;(\m{h}_1,\m{h}_2);\m{free}} [ W_{n}].
    \]
    Since $\e$ was arbitrary the liminf of the ratio in \eqref{ratio11} is at least 1, finishing the proof.
\end{proof}

\subsection{Proof of Theorem \ref{thm:conv}} In this subsection we prove Theorem \ref{thm:conv}. We first argue that the densities involved in the limiting distributions are indeed valid density functions, and that the limit distribution can be viewed as the measure $\Pr_{\m{LG}}^{(\m{h}_1,\m{h}_2)}$ in Definition \ref{def:LGmeas} with $\m{h}_1=\delta_0$, $\m{h}_2=\ga$, where $\ga$ is defined in \eqref{defga}.

\begin{lemma}\label{idenf} The functions $p_0^V, p^V$ defined in Section \ref{sec:1.1} are valid density functions. The process $\big(S_1^\uparrow(k),S_2^\uparrow(k)\big)_{k=1}^r$ defined in Section \ref{sec:1.1} is equal in distribution to $(S_1(k),S_2(k))_{k=1}^r$ under $\Pr_{\m{LG}}^{(0,\ga)}$, where $\ga$ is defined in \eqref{defga}.
\end{lemma}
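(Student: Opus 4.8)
The statement has two parts: (i) $p_0^V$ and $p^V$ from \eqref{def:pv} are genuine probability densities, and (ii) the Markov chain $\big(S_1^\uparrow(k),S_2^\uparrow(k)\big)_{k=1}^r$ coincides in law with $(S_1(k),S_2(k))_{k=1}^r$ under $\Pr_{\m{LG}}^{(0,\ga)}$. My strategy is to deduce (i) from (ii), or rather to prove (ii) first in a form that automatically yields (i). The key identity to extract is a one-step consistency relation for $V := V_{0,\ga}$. Recall from \eqref{hatVrandom} and Lemma \ref{V(x,y)} that $V(x,y)=\lim_{n\to\infty}\Ex^{n;(x,y);\m{free}}[W_n]/\Ex^{n;(0,\ga);\m{free}}[W_n]$. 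Writing $W_{n+1}$ in terms of its first interaction factors and then shifting the index, the Gibbs/Markov property of the free walk gives
\begin{align*}
\Ex^{n+1;(x,y);\m{free}}[W_{n+1}] = \int_{\R^2} p\big((x,y),(x',y')\big)\,\exp\!\big(-e^{y-x'}-e^{y'-x'}\big)\,\Ex^{n;(x',y');\m{free}}[W_n]\,dx'\,dy',
\end{align*}
where I should double-check that the pair of interaction terms peeled off ($e^{S_2(1)-S_1(2)}$ and $e^{S_2(2)-S_1(2)}$ in the shifted walk) are exactly $e^{y-x'}$ and $e^{y'-x'}$ after relabeling the new first step as $(x',y')$. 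Dividing by $\Ex^{n+1;(0,\ga);\m{free}}[W_{n+1}]$, using \eqref{ratio11} from Lemma \ref{expec1} to replace the denominator by $\Ex^{n;(0,\ga);\m{free}}[W_n]$ up to $(1+o(1))$, and passing to the limit (the interchange of limit and integral being justified by the domination $\Ex^{n;(x',y');\m{free}}[W_n]\le C n^{-1/2}(|x'-y'|+1)$ used already in the proof of Lemma \ref{expec1}, together with the exponential decay of $p$ in the relevant direction), I obtain the \emph{harmonicity relation}
\begin{align*}
V(x,y) = \int_{\R^2} p\big((x,y),(x',y')\big)\,\exp\!\big(-e^{y-x'}-e^{y'-x'}\big)\,V(x',y')\,dx'\,dy'.
\end{align*}
Dividing both sides by $V(x,y)$ shows exactly that $p^V((x,y),\cdot)$ integrates to $1$; since $V>0$ everywhere (Lemma \ref{V(x,y)}, Remark \ref{vxyw}) and $p,\exp(\cdots)$ are nonnegative, $p^V((x,y),\cdot)$ is a valid density. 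For $p_0^V(y)=V(0,y)\ga(y)$ one argues similarly by peeling the first step off of $\Ex^{n;(0,\ga);\m{free}}[W_n]$: since the left data is $S_1(1)=0$, $S_2(1)\sim\ga$, and the first interaction factor in $W_n$ is $e^{S_2(1)-S_1(2)}$, an analogous computation (or simply taking $r=0$ in the identity of Lemma \ref{expec1} suitably interpreted, noting $\hat W_0\equiv 1$ with the convention that with $r=1$ one has $\Ex^{1;(0,\ga);\m{free}}[\hat W_1 V(S_1(1),S_2(1))] = \int \ga(y) V(0,y)\,dy = 1$ since $\hat W_1\equiv 1$) gives $\int_\R p_0^V(y)\,dy=1$. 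In fact the cleanest route is: apply Lemma \ref{expec1} with $r=1$, which directly states $\Ex^{1;(0,\ga);\m{free}}[\hat W_1 V(S_1(1),S_2(1))]=1$; since $\hat W_1\equiv 1$ and $S_1(1)=0$, $S_2(1)\sim\ga$, this is precisely $\int_\R V(0,y)\ga(y)\,dy=1$, so $p_0^V$ is a density.

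\textbf{Identifying the laws.} With the harmonicity relation in hand, part (ii) is a telescoping computation. Fix a Borel set $A\subset\R^r\times\R^r$ and $\m{A}=\{(S_1(k),S_2(k))_{k=1}^r\in A\}$. By definition \eqref{LGmeas},
\begin{align*}
\Pr_{\m{LG}}^{(0,\ga)}(\m{A}) = \Ex^{r;(0,\ga);\m{free}}\big[\ind_{\m{A}}\,\hat W_r\, V(S_1(r),S_2(r))\big].
\end{align*}
Expanding the free-walk expectation as an integral against $\ga(y_1)\prod_{k=2}^r \fa(x_k-x_{k-1})\fa(y_k-y_{k-1})$ with $x_1=0$, and expanding $\hat W_r=\prod_{k=1}^{r-1}\exp(-e^{y_k-x_{k+1}}-e^{y_{k+1}-x_{k+1}})$, I want to reorganize the integrand to match the density of $\big(S_1^\uparrow(k),S_2^\uparrow(k)\big)_{k=1}^r$, which by \eqref{def:pv} is
\begin{align*}
p_0^V(y_1)\,\delta_0(x_1)\prod_{k=2}^r \frac{V(x_k,y_k)}{V(x_{k-1},y_{k-1})}\,p\big((x_{k-1},y_{k-1}),(x_k,y_k)\big)\exp\!\big(-e^{y_{k-1}-x_k}-e^{y_k-x_k}\big).
\end{align*}
The product of ratios $\prod_{k=2}^r V(x_k,y_k)/V(x_{k-1},y_{k-1})$ telescopes to $V(x_r,y_r)/V(x_1,y_1)=V(x_r,y_r)/V(0,y_1)$, and $p_0^V(y_1)/V(0,y_1)=\ga(y_1)$; the remaining factors $\prod_{k=2}^r p((x_{k-1},y_{k-1}),(x_k,y_k))\exp(-e^{y_{k-1}-x_k}-e^{y_k-x_k})$ are exactly $\big(\prod_{k=2}^r\fa(x_k-x_{k-1})\fa(y_k-y_{k-1})\big)\cdot\hat W_r$ (using $x_1=0$ and $p((a,b),(c,d))=\fa(c-a)\fa(d-b)$). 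Hence the two integrands agree identically, and since $\hat W_r\equiv 1$ as the limiting case is handled and the Radon–Nikodym bookkeeping is immediate, $\Pr_{\m{LG}}^{(0,\ga)}(\m{A})=\Pr\big((S_1^\uparrow(k),S_2^\uparrow(k))_{k=1}^r\in A\big)$. This establishes (ii); I noted above how (i) follows along the way (and is also implicitly reconfirmed by the fact that $\Pr_{\m{LG}}^{(0,\ga)}$ is a probability measure by Lemma \ref{expec1}).

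\textbf{Main obstacle.} The only real work is establishing the harmonicity relation for $V$ with rigor, specifically justifying the interchange of the limit $n\to\infty$ with the integral over the first step $(x',y')$. I expect to handle this exactly as in the proof of Lemma \ref{expec1}: use the pointwise convergence $\Ex^{n+1;(0,\ga);\m{free}}[W_{n+1}]/\Ex^{n;(0,\ga);\m{free}}[W_n]\to 1$ from \eqref{ratio11}, the uniform bound $\Ex^{n;(x',y');\m{free}}[W_n]\le \Con\,n^{-1/2}(|x'-y'|+1)$ coming from \eqref{wineq} and Lemma \ref{l:ni}(a) after shifting, and the lower bound $\Ex^{n;(0,\ga);\m{free}}[W_n]\ge (\Con\sqrt n)^{-1}$ from Lemma \ref{EWlbd}; then $\big(\sqrt n\,\Ex^{n;(x',y');\m{free}}[W_n]/(\sqrt n\,\Ex^{n;(0,\ga);\m{free}}[W_n])\big)$ is dominated by $\Con(|x'-y'|+1)$ uniformly in $n$, and $p((0,y_1),(x',y'))\exp(-e^{y_1-x'}-e^{y'-x'})\cdot\Con(|x'-y'|+1)$ is integrable (the increment density $\fa$ has exponential tails, and the exponential weight kills the $x'\to-\infty$ direction), so dominated convergence applies. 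Everything else is bookkeeping; there is no analytic difficulty beyond what the earlier lemmas already supply.
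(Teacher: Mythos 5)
Your argument is correct and matches the paper's proof in substance; the only difference is in how you normalize $p^V$. The paper gets the ``harmonicity relation'' for free by applying Lemma \ref{expec1} with $r=2$ and \emph{Dirac} initial data $\m{h}_1=\delta_{x_1}$, $\m{h}_2=\delta_{y_1}$ (permitted by Definition \ref{def:ic}), which gives
\[
\Ex^{2;(x_1,y_1);\m{free}}\big[\hat W_2\, V_{x_1,y_1}(S_1(2),S_2(2))\big]=1,
\]
and this, upon writing $V_{x_1,y_1}(x_2,y_2)=V_{0,\ga}(x_2,y_2)/V_{0,\ga}(x_1,y_1)$, is precisely the statement that $p^V((x_1,y_1),\cdot)$ integrates to $1$. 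Your route instead re-derives this identity from scratch by peeling off the first interaction factors of $W_{n+1}$ and interchanging limit and integral via dominated convergence; this is valid (and your dominating bound $\Con(|x'-y'|+1)$, obtained from \eqref{wineq}, Lemma \ref{l:ni}(a), and Lemma \ref{EWlbd}, is exactly the bound \eqref{clafi} from the proof of Lemma \ref{expec1}), but it duplicates work that Lemma \ref{expec1} already encapsulates. For $p_0^V$ and for the density-matching in part (ii), your argument coincides with the paper's. One blemish: the sentence ``since $\hat W_r\equiv 1$ as the limiting case is handled and the Radon--Nikodym bookkeeping is immediate'' is garbled --- $\hat W_r$ is not identically $1$ for $r\ge 2$ --- but it is unnecessary, since the preceding term-by-term comparison of the two integrands already completes the identification of the laws.
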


\begin{proof} Note that $V$ defined in \eqref{Vdef} equals $V_{0,\ga}(0,\cdot)$. Thus taking $r=1, \m{h}_1=\delta_0, \m{h}_2=\ga$, in view of Lemma \ref{expec1} we have
\begin{align*}
    \int_{\R} p_0^V(y) dy =\Ex^{1;(0,\ga);\m{free}} \left[V_{0,\ga}(0,S_2(1))\right] = \Ex^{1;(0,\ga);\m{free}} \left[V_{0,\ga}(S_1(1),S_2(1))\right]=1.
\end{align*}
Thus $p_0^V$ is a density. Now taking $r=2, \m{h}_1=\delta_{x_1}, \m{h}_2=\delta_{y_1}$, Lemma \ref{expec1} yields
\begin{align*}
    \int_{\R^2} p^V\big((x_1,y_1),(x_2,y_2)\big)\, dx_2\, dy_2 = \Ex^{2;(x_1,y_1);\m{free}}\left[\hat{W}_2V_{x_1,y_1}(S_1(2),S_2(2))\right]=1,
\end{align*}
and $p^V$ is also a density. Let us take any Borel set ${A}\subset \R^r$. Then
\begin{align*}
 \Pr\big((S_1^\uparrow(k),S_2^\uparrow(k))_{k=1}^r\in A\big) & =  \Ex^{r;(0,\ga);\m{free}}\bigg[\ind_{(S_1(k),S_2(k))_{k=1}^r\in A}\cdot\hat{W}_r {V_{0,\ga}(S_1(r),S_2(r))}\bigg] \\ & =\Pr_{\m{LG}}^{(0,\ga)}\big((S_1(k),S_2(k))_{k=1}^r\in A\big). 
\end{align*}
The last equality follows from the definition. 
\end{proof}

\begin{proof}[Proof of Theorem \ref{thm:conv}] Fix any $\m{h}_1,\m{h}_2 \in \m{IC}(M)$. For $K>0$ let $\m{C}_K = \{|S_1(r)|, |S_2(r)| \leq K\}$. We have the trivial lower bound
\begin{equation*}\label{AMlbd}
\Pr^{n; (\m{h}_1, \m{h}_2); (b_1,b_2)}_{W_n} (\m{A}) \geq \Pr^{n; (\m{h}_1, \m{h}_2); (b_1,b_2)}_{W_n} (\m{A}\cap\m{C}_K).
\end{equation*}
Let us write $\mathcal{F}_r$ for the $\sigma$-algebra generated by $(S_1(k),S_2(k))_{k=1}^r$. Using \eqref{decomp} and \eqref{transfin}, noting that $\ind_{\m{A}\cap \m{C}_K}\hat{W}_r$ is $\mathcal{F}_r$-measurable, we write
\begin{align*}
    \Pr^{n; (\m{h}_1, \m{h}_2); (b_1,b_2)}_{W_n} (\m{A}\cap\m{C}_K) &= \frac{\Ex^{n;(\m{h}_1, \m{h}_2);(b_1,b_2)}[\mathbf{1}_{\m{A}\cap\m{C}_K} W_n ]}{\Ex^{n;(\m{h}_1,\m{h}_2);(b_1,b_2)}[W_n]}\\
    &= \frac{\Ex^{n;(\m{h}_1, \m{h}_2);(b_1,b_2)}\left[\mathbf{1}_{\m{A}\cap\m{C}_K} \hat{W}_r \cdot \Ex^{n;(\m{h}_1,\m{h}_2);(b_1,b_2)}\left[{W}_{r\to n}\mid \mathcal{F}_r\right]\right]}{\Ex^{n;(\m{h}_1,\m{h}_2);(b_1,b_2)}[W_n]}\\
    &\geq \frac{\Ex^{n;(\m{h}_1, \m{h}_2);(b_1,b_2)}\left[\mathbf{1}_{\m{A}\cap\m{C}_K} \hat{W}_r\cdot \Ex^{n-r+1;(S_1(r),S_2(r));(b_1,b_2)}\left[W_{n-r+1} \right]\right]}{\Ex^{n;(\m{h}_1, \m{h}_2);(b_1,b_2)}[W_n]}\\
&= \Ex^{n;(\m{h}_1, \m{h}_2);(b_1,b_2)}\left[\mathbf{1}_{\m{A}\cap\m{C}_K} \hat{W}_r \cdot \frac{\Ex^{n-r+1;(S_1(r),S_2(r));(b_1,b_2)}[W_{n-r+1}]}{\Ex^{n;(\m{h}_1, \m{h}_2);(b_1,b_2)}[W_n]}\right]\\
&\geq (1-\e) \cdot \Ex^{r;(\m{h}_1, \m{h}_2);\m{free}}\left[\mathbf{1}_{\m{A}\cap\m{C}_K} \hat{W}_r \cdot \frac{\Ex^{n-r+1;(S_1(r),S_2(r));(b_1,b_2)}[W_{n-r+1}]}{\Ex^{n;(\m{h}_1, \m{h}_2);(b_1,b_2)}[W_n]}\right].
\end{align*}
The last line follows from Lemma \ref{bridgewalk}.  Now observe that for each $n$, {the last line is a continuous function of $(b_1,b_2)$ in the compact set $B_M$ defined in Theorem \ref{thm:conv}} and thus attains a minimum at some $(\beta_1,\beta_2) \in B_M$. Indeed, the expectations can be written as integrals against the continuous density $\m{f}$ using \eqref{freedensity} and \eqref{bridgedensity}, and the denominator is always nonzero. Therefore, for each $n$,  
\begin{align*}
    \Pr^{n; (\m{h}_1, \m{h}_2); (b_1,b_2)}_{W_n} (\m{A}) &\geq (1-\e) \cdot \Ex^{r;(\m{h}_1, \m{h}_2);\m{free}}\left[\mathbf{1}_{\m{A}\cap\m{C}_K} \hat{W}_r\cdot \frac{\Ex^{n-r+1;(S_1(r),S_2(r));(\beta_1,\beta_2)}[W_{n-r+1}]}{\Ex^{n;(\m{h}_1, \m{h}_2);(\beta_1, \beta_2)}[W_n]}\right].
\end{align*}
Now using Fatou's lemma and Lemma \ref{V(x,y)} {(with $(b_1,b_2) = (\beta_1,\beta_2) \in B_M$)}, we may choose $n_0$ (independent of $b_1,b_2$) so that for $n\geq n_0$ we have
\begin{align*}
    \Pr^{n; (\m{h}_1, \m{h}_2); (b_1,b_2)}_{W_n} (\m{A}) &\geq (1-\e)^2 \cdot \liminf_{n\to\infty} \Ex^{r;(\m{h}_1, \m{h}_2);\m{free}}\left[\mathbf{1}_{\m{A}\cap\m{C}_K} \hat{W}_r\cdot \frac{\Ex^{n-r+1;(S_1(r),S_2(r));(\beta_1,\beta_2)}[W_{n-r+1}]}{\Ex^{n;(\m{h}_1, \m{h}_2);(\beta_1, \beta_2)}[W_n]}\right]\\
    &\geq (1-\e)^2 \cdot
\Ex^{r;(\m{h}_1, \m{h}_2);\m{free}}\left[\mathbf{1}_{\m{A}}\cdot \hat{W}_r V_{\m{h}_1,\m{h}_2}(S_1(r),S_2(r)) \cdot \mathbf{1}_{\m{C}_K}\right].
\end{align*}
Note that passing to $(\beta_1,\beta_2)$ was necessary to ensure that we can choose $n_0$ independent of $b_1,b_2$. To remove the indicator $\mathbf{1}_{\m{C}_K}$ on the right side of the above equation, note by Lemma \ref{expec1} that $\Ex^{r;(\m{h}_1,\m{h}_2);\m{free}}[\hat{W}_rV(S_1(r),S_2(r))] = 1$, and $\hat{W}_r V_{\m{h}_1,\m{h}_2}(S_1(r),S_2(r))\mathbf{1}_{\neg\m{C}_K} \to 0$ a.s. as $K\to\infty$. By dominated convergence we can choose $K$ large enough so that for $n\geq n_0(M,\e)$ and for all $(b_1,b_2)$,
\begin{align}\label{Eflbd}
\Pr^{n; (\m{h}_1, \m{h}_2); (b_1,b_2)}_{W_n} (\m{A}) &\geq 
(1-\e)^3 \cdot \Ex^{r;(\m{h}_1, \m{h}_2);\m{free}}\left[\mathbf{1}_{\m{A}}\cdot \hat{W}_r V_{\m{h}_1,\m{h}_2}(S_1(r),S_2(r))\right].
\end{align}
For the upper bound, applying \eqref{Eflbd} to $\neg\m{A}$ in place of $\m{A}$ gives
\begin{equation}
\begin{split}
    \Pr^{n; (\m{h}_1, \m{h}_2); (b_1,b_2)}_{W_n} (\m{A}) &= 1- \Pr^{n; (\m{h}_1, \m{h}_2); (b_1,b_2)}_{W_n} (\neg\m{A})\\
    &\leq 1 - (1-\e)^3 \cdot \Ex^{r;(\m{h}_1, \m{h}_2);\m{free}}\left[\mathbf{1}_{\neg\m{A}}\cdot \hat{W}_r V_{\m{h}_1,\m{h}_2}(S_1(r),S_2(r))\right]\\
    &= 1 - (1-\e)^3 \cdot \Ex^{r;(\m{h}_1, \m{h}_2);\m{free}}[\hat{W}_r V_{\m{h}_1,\m{h}_2}(S_1(r),S_2(r))] \\
    &\qquad\qquad + (1-\e)^3 \cdot \Ex^{r;(\m{h}_1, \m{h}_2);\m{free}} \left[\mathbf{1}_{\m{A}}\cdot \hat{W}_r V_{\m{h}_1,\m{h}_2}(S_1(r),S_2(r))\right]\\
    &= (1-\e)^3 \cdot \Ex^{r;(\m{h}_1, \m{h}_2);\m{free}} \left[\mathbf{1}_{\m{A}} \cdot \hat{W}_r V_{\m{h}_1,\m{h}_2}(S_1(r),S_2(r))\right] + 1 - (1-\e)^3, \label{Efubd}
\end{split}
\end{equation}
with the last line following from Lemma \ref{expec1}. In view of Lemma \ref{idenf}, the desired bound follows from \eqref{Eflbd} and \eqref{Efubd} by taking $\m{h}_1=\delta_0, \m{h}_2=\ga$, and readjusting $\e$.
\end{proof}

\section{Separation between second and third curves} \label{sec4}

The main goal of this section is to show that with high probability there is a positive separation between the second and third curves of the $\hslg$ line ensemble at the the left boundary: Theorem \ref{t.lsep23} and Corollary \ref{t.usep23}. The proof of the separation result relies on the fact that the limit points of the left boundary value of the second curve are non-atomic. In the following proposition, we shall prove this non-atomicity result in the case of interacting random walk laws. As alluded to in the introduction, this will eventually translate into non-atomicity for the left boundary of the second curve of the line ensemble via the Gibbs property.

\begin{proposition}[Limit points are non-atomic]\label{p.nonatomic}
    Fix any $\e\in (0,1)$. There exists $\delta=\delta(\e)>0$ such that
    \begin{align*}
        \liminf_{T\to \infty} \sup_{r\in \mathbb{R}} \Pr_{\m{IRW}}^{T;(0,-\sqrt{T})}\left(|L_2(1)-r| \ge \delta \sqrt{T}\right) \ge 1-\e.
    \end{align*}
\end{proposition}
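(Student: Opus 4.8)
The plan is to reduce the non-atomicity statement to a local central limit type estimate on the left endpoint $L_2(1)$ of an interacting random walk bridge, using the connection to weighted paired random walks (Lemma \ref{wprwconn}) and the soft non-intersection estimates established in Section \ref{sec31}. By Lemma \ref{wprwconn}, under $\Pr_{\m{IRW}}^{T;(0,-\sqrt{T})}$ the law of $(L_1(2i-1),L_2(2i))_{i=1}^T$ is $\Pr_{\m{WPRW}}^{T;(0,-\sqrt{T})}$, so it suffices to show that $S_2(1)$ under $\Pr_{\m{WPRW}}^{T;(0,-\sqrt{T})}$ cannot concentrate near any single point $r\in\mathbb{R}$ at scale $\delta\sqrt{T}$. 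Here I would use the relation in Lemma \ref{lem:IRWshift}: conditioning on $S_1(1)$, the shifted walk has law $\Pr_{\wsc}^{T;(0,\ga);(a',b')}$ with $a'=-S_1(1)$, $b'=-\sqrt{T}-S_1(1)$. Since the endpoints $(a',b')$ lie in the diffusive window $B_M$ (up to the $O(1)$-tightness of $S_1(1)$ from Lemma \ref{l:tip}) with $a'-b'=\sqrt{T}\geq \frac{1}{M}\sqrt{T}$, we are exactly in the regime of the softly non-intersecting bridge estimates.

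The key step is then a smoothness/anti-concentration estimate for $S_2(1)$ (equivalently $S_2(1)-S_1(1)=U(1)\sqrt{T}$, which has a density against Lebesgue) under $\Pr_{\wsc}^{T;(0,\ga);(b_1,b_2)}$, uniform over $(b_1,b_2)\in B_M$. The strategy I would follow is: first, write $\Pr_{\wsc}^{T;(0,\ga);(b_1,b_2)}(U(1)\in I)$ as the ratio $\sqrt{T}\,\Ex^{T;(0,\ga);(b_1,b_2)}[\wsc\ind_{\{U(1)\in I\}}]$ over $\sqrt{T}\,\Ex^{T;(0,\ga);(b_1,b_2)}[\wsc]$. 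The denominator is bounded below by a positive constant by Lemma \ref{EWlbd}. For the numerator with $I=I(r,\delta)$ a window of radius $\delta\sqrt{T}$ around a point, I would apply Lemma \ref{smallupper} with the event $\m{A}=\{U(1)\in I(r,\delta)\}\in\sigma\{S_1(1),S_2(1)\}$ (taking $r=1$ in that lemma's notation), which gives
\[
\limsup_{T\to\infty}\sqrt{T}\cdot\Ex^{T;(0,\ga);\bullet}[\wsc\ind_{\m{A}}] \le \e/2 + \Con\cdot\sqrt{\limsup_{T\to\infty}\Pr^{T;(0,\ga);\m{free}}(\m{A})}.
\]
So everything reduces to showing that $\Pr^{T;(0,\ga);\m{free}}\big(S_2(1)-S_1(1) \in I(r,\delta)\big)$ is small uniformly in the centering point — but under the free law $S_1(1)\sim\delta_0$, $S_2(1)\sim\ga$, so $S_2(1)-S_1(1)$ has the fixed bounded density $\ga$, hence $\Pr^{T;(0,\ga);\m{free}}(S_2(1)-S_1(1)\in I)\le \|\ga\|_\infty\cdot 2\delta\sqrt{T}$. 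Wait — this has an extra $\sqrt{T}$; the point is that $\m{A}$ depends on $T$ through the window width $\delta\sqrt{T}$, so I should instead choose the event $\m{A}$ to be $\{|U(1)-r/\sqrt{T}|\le \delta\}$ and note that under the free law $U(1)=S_2(1)-S_1(1)$ has density $\ga$ which is bounded, giving $\Pr^{T;(0,\ga);\m{free}}(\m{A})\le 2\delta\|\ga\|_\infty$, so the right side above is $\le \e/2 + \Con\sqrt{2\delta\|\ga\|_\infty}$, which is $\le\e$ for $\delta$ small. Dividing by the constant lower bound on the denominator and taking complements yields the claim with a suitably adjusted $\delta$.

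I expect the main obstacle to be handling the random endpoints carefully: Lemma \ref{smallupper} is stated for deterministic sequences $(b_1,b_2)\in B_M$, whereas after conditioning on $S_1(1)$ the endpoints $(a',b')=(-S_1(1),-\sqrt{T}-S_1(1))$ are random. This is dealt with by the standard compactness argument sketched in Section \ref{sec:1.2.3}: condition on $S_1(1)$ and use Lemma \ref{l:tip} (uniform tightness of initial points — note $S_1(1)$ under $\m{IRW}$, equivalently under $\m{WPRW}$, is $O(1)$-tight) to restrict to $|S_1(1)|\le R$ with probability $\ge 1-\e$, so $(a',b')$ ranges over a compact subset of $B_M$, and then apply the uniform-in-$(b_1,b_2)$ estimate. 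One must also verify that the $\limsup$ in Lemma \ref{smallupper} can be upgraded to a statement valid for all large $T$ uniformly over the relevant endpoint set, which follows by a routine extraction-of-subsequences argument since the bound depends on the endpoints only through the constant $M$. Finally, I would assemble these pieces: for the given $\e$, choose $\delta$ small so that $\Con\sqrt{2\delta\|\ga\|_\infty}<c\e$ (with $c$ absorbing the denominator lower bound), conclude $\sup_r\Pr_{\m{IRW}}^{T;(0,-\sqrt{T})}(|L_2(1)-r|<\delta\sqrt{T})<\e$ for large $T$, hence the complementary probability is $\ge 1-\e$, as desired.
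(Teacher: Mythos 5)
There is a genuine scale confusion that makes your argument break down. The variable $S_2(1)$ (equivalently $L_2(2)$, and hence $L_2(1)$) under $\Pr_{\m{WPRW}}^{T;(0,-\sqrt{T})}$ has diffusive fluctuations of order $\sqrt{T}$, and the anti-concentration you need is at the window scale $\delta\sqrt{T}$ — so the relevant source of randomness is the $O(\sqrt{T})$ spread of the \emph{free left endpoint} of the bridge. By conditioning on $S_1(1)$ via Lemma~\ref{lem:IRWshift} you destroy exactly this randomness: after shifting, the new $S_2(1)$ equals the old $S_2(1)-S_1(1)$, whose free density is $\ga$ and whose spread is $O(1)$. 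The original event $\{|S_2(1)-r|<\delta\sqrt{T}\}$ becomes, in shifted coordinates, $\{|S_2(1) - (r-S_1(1))|<\delta\sqrt{T}\}$, a window of width $\delta\sqrt{T}$ for an $O(1)$ variable with a \emph{random, conditioning-dependent} center. When $|r - S_1(1)|$ is small relative to $\delta\sqrt{T}$ — which is precisely the regime controlling the final answer — the conditional probability is close to $1$, not small; so the numerator estimate does not close. The ``wait'' moment in your proposal, where you notice the spurious factor of $\sqrt{T}$ and shrink the window to $O(1)$ width, replaces the target event by a different one: you end up proving $O(1)$-scale anti-concentration of $S_2(1)-S_1(1)$, which says nothing about the $\delta\sqrt{T}$-scale anti-concentration of $S_2(1)$. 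Relatedly, your claim that ``$S_1(1)$ under $\m{IRW}$, equivalently under $\m{WPRW}$, is $O(1)$-tight'' is incorrect — $S_1(1)$ fluctuates on the $\sqrt{T}$ scale (the paper cites that $S_1(1)/\sqrt{T}$ has exponential tails); Lemma~\ref{l:tip} concerns measures with $O(1)$ left initial data and does not apply here. Also note that Lemma~\ref{smallupper} requires a fixed event depending only on $(S_1(k),S_2(k))_{k\le r}$; after conditioning on $S_1(1)$ your event has a random center $r - S_1(1)$, so the lemma is not applicable as stated.

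The paper's actual proof does \emph{not} condition on $S_1(1)$. After the reduction from $L_2(1)$ to $L_2(2)$ (via the observation that $e^{L_2(1)-L_2(2)}\sim\operatorname{Gamma}(\alpha+\theta)$, which you also omit) and the translation to $\m{WPRW}$, it works with the unconditioned $\m{PRW}$ law: it lower-bounds $\Ex_{\m{PRW}}[W_n]$ by $1/(C\sqrt{T})$, upper-bounds $\Ex_{\m{PRW}}[W_n\ind_{\m{A}}]$ using a BCD estimate plus Cauchy--Schwarz to extract $\sqrt{\Pr_{\m{PRW}}(\m{A})}$, and then proves $\Pr_{\m{PRW}}(\m{A})\le C\delta$ via Berry--Esseen and a local CLT for the bridge endpoints $S_1(1)$, $S_2(1)$, which genuinely live at the $\sqrt{T}$ scale with approximately Gaussian density of order $T^{-1/2}$. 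It is this local-limit estimate for the walk positions themselves — not the $O(1)$-scale density $\ga$ of their difference — that produces the factor of $\delta$.
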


\begin{proof}
    Note that under the $\m{IRW}$ law,
    $e^{L_2(1)-L_2(2)}$ is distributed as $\operatorname{Gamma}(\alpha+\theta)$. {Indeed, in Figure \ref{fig:IRWgraph}, $L_2(1)-L_2(2)$ is given by the yellow arrow, whose weight prescribed in \eqref{def:wfn} is log-Gamma with parameter $\alpha+\theta$.} Hence it suffices to show the non-atomicity for $L_2(2)$ instead. 
    By Lemma \ref{wprwconn}, $L_2(2)$ is equal in distribution to $S_2(1)$ where $(S_1(k),S_2(k))_{k=1}^T$ are distributed as $\Pr_{\m{WPRW}}^{T;(0,-\sqrt{T})}$ defined in Definition \ref{prb}. 

    Fix $r\in\mathbb{R}$. Let $\mathsf{A} = \{|S_2(1)-r| < \delta\sqrt{T}\}$. Our goal is to show
    \begin{enumerate}[leftmargin=20pt]
        \item[($\star$)] for all large enough $T$, $\mathbb{P}^{T;(0,-\sqrt{T})}_{\m{WPRW}}(\mathsf{A} )$ can be made arbitrarily small (uniformly in $r\in \mathbb{R}$) by taking $\delta$ small enough. 
    \end{enumerate}
    We use \eqref{wscint} to write
    \[
    \mathbb{P}^{T;(0,-\sqrt{T})}_{\m{WPRW}}(\mathsf{A} ) = \frac{\mathbb{E}^{T;(0,-\sqrt{T})}_{\m{PRW}}[  \wsc\ind_\mathsf{A}]}{\mathbb{E}^{T;(0,-\sqrt{T})}_{\m{PRW}}[\wsc]}.
    \]
    We now provide lower and upper bounds for the denominator and the numerator of the r.h.s.~of the above equation respectively. Thanks to Corollary 4.12 from \bcd, we have $ \mathbb{E}^{T;(0,-\sqrt{T})}_{\m{PRW}}[\wsc] \geq \frac{1}{C\sqrt{T}}$. On the other hand, by Lemma 4.11 in \bcd, we have
    \begin{align*}
        \mathbb{E}^{T;(0,-\sqrt{T})}_{\m{PRW}}[ \wsc\mathbf{1}_\mathsf{A} ] &\leq  \frac{C}{T} + \frac{C}{\sqrt{T}} \mathbb{E}^{T;(0,-\sqrt{T})}_{\m{PRW}}\left[\mathbf{1}_\mathsf{A}  [(S_2(1)-S_1(1) + 1) \vee 1] \big[\tfrac{|S_1(1)|}{\sqrt{T}}\vee 2\big]^{3/2}\right]\\
        &\leq \frac{C}{T} + \frac{C}{\sqrt{T}} \sqrt{\mathbb{P}^{T;(0,-\sqrt{T})}_{\m{PRW}}(\mathsf{A}) \cdot\mathbb{E}^{T;(0,-\sqrt{T})}_{\m{PRW}}\left[[(S_2(1)-S_1(1) + 1) \vee 1]^2 \big[\tfrac{|S_1(1)|}{\sqrt{T}}\vee 2\big]^3\right]},
    \end{align*}
    where in the second line we have used Cauchy-Schwarz inequality.  It is known from Lemma 4.7 in \bcd\ that $S_2(1)-S_1(1)$ and $S_1(1)/\sqrt{T}$ have exponential tails {(independent of $T$)} under the $\m{PRW}$ law. This implies, with another application of Cauchy-Schwarz, that the expectation factor in the last line is bounded by a constant uniformly in $T$. Thus, to conclude the proof, it now suffices to show $(\star)$ under the $\m{PRW}$ law.

    Recall the density of the $\m{PRW}$ law from \eqref{den}. We may write
    
    \[
    \mathbb{P}^{T;(0,-\sqrt{T})}_{\m{PRW}}(\mathsf{A} ) = \frac{\mathbb{E}^{T;(0,-\sqrt{T})}_{\m{RW}}[\mathbf{1}_\mathsf{A}  \cdot \mathsf{g}_\alpha(S_2(1)-S_1(1))]}{\mathbb{E}^{T;(0,-\sqrt{T})}_{\m{RW}}[ \mathsf{g}_{\alpha}(S_2(1)-S_1(1))]},
    \]
    where $\mathbb{P}^{T;(0,-\sqrt{T})}_{\m{RW}}$ denotes the measure proportional to  $\delta_{\omega_1(T)=0}\delta_{\omega_2(T)=-\sqrt{T}}$ times a density
    \[
    \prod_{k=2}^T \mathsf{f}_\theta(\omega_1(k) - \omega_1(k-1)) \mathsf{f}_\theta(\omega_2(k) - \omega_2(k-1)).
    \]
    By (4.18) in \bcd, there is a constant $C>0$ so that for all $T$,
    \begin{equation}\label{glbd}\mathbb{E}^{T;(0,-\sqrt{T})}_{\m{RW}}[ \mathsf{g}_\alpha(S_2(1)-S_1(1))] \geq \tfrac{1}{C\sqrt{T}}.
    \end{equation}
    From the precise expression of $\ga$, we see that it has exponential tails. In particular, $\mathsf{g}_\alpha(x) \leq Ce^{-|x|/C}$. Using this we have
    \begin{align} \nonumber
        & \mathbb{E}^{T;(0,-\sqrt{T})}_{\m{RW}}[\mathbf{1}_\mathsf{A}  \cdot \mathsf{g}_\alpha(S_2(1)-S_1(1))] \\ \nonumber &\leq \mathbb{E}^{T;(0,-\sqrt{T})}_{\m{RW}}\big[\mathbf{1}_\mathsf{A}  \mathbf{1}_{|S_2(1)-S_1(1)|>\sqrt{T}} \cdot \mathsf{g}_\alpha(S_2(1)-S_1(1))\big]\\ \nonumber
        &\quad \hspace{2cm}+ \sum_{0\leq p\leq \sqrt{T}} \mathbb{E}^{T;(0,-\sqrt{T})}_{\m{RW}}\big[\mathbf{1}_\mathsf{A}  \mathbf{1}_{|S_2(1)-S_1(1)|\in [p,p+1]} \cdot \mathsf{g}_\alpha(S_2(1)-S_1(1))\big]\\
        &\leq Ce^{-\frac{\sqrt{T}}C} \cdot \mathbb{P}^{T;(0,-\sqrt{T})}_{\m{RW}}(\mathsf{A} ) + \sum_{p\leq\sqrt{T}} Ce^{-\frac{p}C} \cdot\mathbb{P}^{T;(0,-\sqrt{T})}_{\m{RW}}\big(\mathsf{A}  \cap \{p \leq |S_2(1)-S_1(1)| \leq p+1\}\big). \label{Esplit}
    \end{align}
Note that under $\mathbb{P}^{T;(0,-\sqrt{T})}_{\m{RW}}$, $S_1(1)$ and $S_2(1)+\sqrt{T}$ are i.i.d.~variables each distributed as the sum of $T$ i.i.d. variables with density $\fa$.  Thus, for the first term in \eqref{Esplit}, the Berry-Esseen theorem implies that the distribution function $F$ of $S_2/\sqrt{T}$ satisfies $\lVert F - \Phi\rVert_\infty \leq C\rho/\sigma^3\sqrt{T}$, where $\Phi$ is the cdf of a Gaussian random variable $Z_2$ with mean $-1$ and variance $\sigma^2:=\int x^2\fa (x)$. It follows that
    \begin{equation}\label{Eubd}
    \mathbb{P}^{T;(0,-\sqrt{T})}_{\m{RW}}(\mathsf{A} ) \leq \mathbb{P} \left(\big|Z_2 - \tfrac{r}{\sqrt{T}} \big| < \delta \right) + \frac{4C\rho}{\sigma^3\sqrt{T}} \leq C\delta,
    \end{equation}
        uniformly over $r\in\mathbb{R}$ once $T$ is large enough depending on $\delta$. To handle the sum in \eqref{Esplit}, we can write
    \begin{equation}\label{Esum}\begin{split}
        &\mathbb{P}^{T;(0,-\sqrt{T})}_{\m{RW}}\big(\mathsf{A}  \cap \{p \leq |S_2(1)-S_1(1)| \leq p+1\}\big)\\ &\qquad = \int_{|y-r|<\delta\sqrt{T}} \mathbb{P}^{T;(0,-\sqrt{T})}_{\m{RW}}\big(p \leq |S_1(1)-y| \leq p+1\big)\mathbb{P}^{T;(0,-\sqrt{T})}_{\m{RW}}(S_2(1)=dy).
    \end{split}
    \end{equation}
    To estimate the first probability in the sum, we rely on a convergence result for the density of the random walk endpoint to a Gaussian density,  Lemma B.3 in \bcd. This lemma implies that uniformly over $p\in[0,\sqrt{T}]$,
    \[
    \mathbb{P}^{T;(0,-\sqrt{T})}_{\m{RW}}(p \leq |S_1(1)-y| \leq p+1) = (1 + o_T(1)) \cdot \mathbb{P}\left(\big|Z_1-\tfrac{y}{\sqrt{T}}\big| \in [\tfrac{p}{\sqrt{T}},\tfrac{p+1}{\sqrt{T}}]\right) \leq \frac{C}{\sqrt{T}},
    \]
    where $Z_1$ is a Gaussian with mean 0 and variance $\sigma^2$ independent from $Z_1$. Inserting this bound in \eqref{Esum} and using \eqref{Eubd}, we find
    \begin{align*}
        \mathbb{P}^{T;(0,-\sqrt{T})}_{\m{RW}}(\mathsf{A}  \cap \{p \leq |S_2(1)-S_1(1)| \leq p+1\}) &\leq \frac{C}{\sqrt{T}}\int_{|y-r|<\delta\sqrt{T}} \mathbb{P}^{T;(0,-\sqrt{T})}_{\m{RW}}(S_2(1)=dy)\\
        &= \frac{C}{\sqrt{T}} \cdot \mathbb{P}^{T;(0,-\sqrt{T})}_{\m{RW}}(\mathsf{A} ) \leq \frac{C\delta}{\sqrt{T}}. 
    \end{align*}
    Plugging the above estimate and the bound from \eqref{Eubd} into \eqref{Esum}, we find
    \[
    \mathbb{E}^{T;(0,-\sqrt{T})}_{\m{RW}}[\mathbf{1}_\mathsf{A} \cdot \mathsf{g}_\alpha(S_2(1)-S_1(1))] \leq C\delta e^{-\sqrt{T}/C} + \sum_{p\leq\sqrt{T}} Ce^{-p/C}\tfrac{\delta}{\sqrt{T}} \leq\frac{C\delta}{\sqrt{T}}.
    \]
    In combination with \eqref{glbd}, this implies $(\star)$ for the $\m{PRW}$ law, completing the proof.
\end{proof}

With the aid of the above proposition, we now establish the high probability separation at the left boundary of the second and third curves.

\begin{theorem}[Left boundary separation] \label{t.lsep23}
    Fix any $\e\in (0,1)$. There exists $\delta=\delta(\e)>0$ such that
    \begin{align*}
        \liminf_{N\to \infty} \Pr\big(\min\{\L_2^N(1),\L_2^N(3)\}-\L_3^N(2) \ge \delta N^{1/3}\big) \ge 1-\e.
    \end{align*}
\end{theorem}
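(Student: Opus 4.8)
\textbf{Proof strategy for Theorem \ref{t.lsep23}.} The plan is to combine the non-atomicity result of Proposition \ref{p.nonatomic} with stochastic monotonicity and the Gibbs property of the $\hslg$ line ensemble, much as sketched in Section \ref{sec:1.2.2}. Write $T = \lfloor N^{2/3}\rfloor$. The first step is a reduction: by Proposition \ref{pp:order}, with probability at least $1-\e/4$ for large $N$ we have $|\L_2^N(1)-\L_2^N(3)|\le 2(\log N)^{7/6}$, so it suffices to prove the separation $\L_2^N(3)-\L_3^N(2)\ge \delta N^{1/3}$ (absorbing the $(\log N)^{7/6}$ loss into the $\delta N^{1/3}$ window by shrinking $\delta$). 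Actually one needs both points $\L_2^N(1),\L_2^N(3)$ above $\L_3^N(2)+\delta N^{1/3}$; since by Proposition \ref{pp:order} these two values of $\L_2^N$ are within $(\log N)^{7/6}$ of each other, it is enough to control one of them, say work with $\L_2^N(2)$ which is even closer to the $\m{WPRW}$ picture, or directly with the pair. I will phrase the argument so that the event of interest is decreasing in all the boundary data except $\L_3^N(2)$, which must be held fixed since it enters the event.

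\textbf{Conditioning and stochastic monotonicity.} Fix $\mathcal{F}$ to be the $\sigma$-algebra generated by all curves $\L_k^N$ for $k\ge 3$ together with the values $\L_1^N(2T-1)$, $\L_2^N(2T)$ at the right boundary of a window of width $T$; then the law of $(\L_1^N(j),\L_2^N(j))_{j\le 2T}$ given $\mathcal{F}$ is a $\hslg$ Gibbs measure on the domain $\Phi$ from \eqref{k2t}, i.e.\ $\Pr_\Phi^{T;(a,b,\vec c)}$ with $a=\L_1^N(2T-1)$, $b=\L_2^N(2T)$, $c_j = \L_3^N(2j)$. Using the tightness of the top three curves on the diffusive scale (Theorem \ref{p.tight3}) together with Proposition \ref{pp:order}, restrict to a good event $\m{B}_M$ on which $|a|,|b|\le MN^{1/3}$ and $a-b\ge -(\log N)^{7/6}$, which has probability $\ge 1-\e/4$ for $M=M(\e)$ large. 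On $\m{B}_M$, the event $\{\L_2^N(3)-\L_3^N(2)<\delta N^{1/3}\}$ is increasing under decreasing the boundary data $(a,b)$ \emph{and} under decreasing $c_j$ for $j\ge 2$ (but \emph{not} $c_1 = \L_3^N(2)$, which is why it is kept). By Proposition \ref{p:gmc} we may therefore push $(a,b)$ down to $(-MN^{1/3},-MN^{1/3})$ — or rather to $(-MN^{1/3}, -MN^{1/3}-\sqrt T\,$-scale adjustments so the parity and the $a\ge b$ structure are respected — and push $c_j$ for $j\ge 2$ down to $-\infty$. After this monotonicity step the conditional law becomes $\Pr_\Phi^{T;(a',b',(c_1,-\infty,\dots,-\infty))}$, which by \eqref{def:Gibbslaw} (with the Radon–Nikodym factor $e^{-\mathcal H(\vec c)}$ now containing only the $c_1$ terms) is absolutely continuous w.r.t.\ $\Pr_{\m{IRW}}^{T;(a',b')}$ with a bounded density. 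Translating vertically (Lemma \ref{traninv}) so the window sits near $0$, it suffices to bound, uniformly over $|r|\le M'N^{1/3}$ deterministic, the quantity $\Pr_{\m{IRW}}^{T;(a',b')}(L_2(1)\in [r-\delta N^{1/3}, r+\delta N^{1/3}])$ (here $r$ plays the role of $\L_3^N(2)$ after translation), plus a correction from the $c_1$-dependent Radon–Nikodym factor.

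\textbf{Invoking non-atomicity.} The non-atomicity Proposition \ref{p.nonatomic} is stated for $\Pr_{\m{IRW}}^{T;(0,-\sqrt T)}$, and by translation invariance and stochastic monotonicity (exactly as in the proof of Lemma \ref{lem:IRWupperbd}) it upgrades to: for every $\e>0$ there is $\delta=\delta(\e,M')$ with $\limsup_{T\to\infty}\sup_{|a_1-a_2|\le M'\sqrt T}\sup_{r\in\mathbb R}\Pr_{\m{IRW}}^{T;(a_1,a_2)}(|L_2(1)-r|\le \delta\sqrt T)\le \e/4$. Since the relevant boundary data $(a',b')$ satisfy $|a'-b'|\le M'\sqrt T$ after the monotone shift (note $N^{1/3}=T^{1/2}$, so all the $O(N^{1/3})$ quantities are $O(\sqrt T)$ and the $\delta N^{1/3}$ window is a $\delta\sqrt T$ window), this controls the $\m{IRW}$ probability. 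The leftover $e^{-\mathcal H(c_1)}$ factor only multiplies the numerator probability and also lower-bounds the normalizing denominator by a constant times the $\m{IRW}$ partition function on the good event where $L_2(\cdot)$ stays at $O(\sqrt T)$ scale (which has probability bounded below by Lemma \ref{lem:IRWupperbd}); since $c_1 = \L_3^N(2) = O(N^{1/3})$ on $\m{B}_M$ and $L_2$ is at scale $O(\sqrt T)$, the factor $e^{-\mathcal H(c_1)}$ is bounded above and below by positive constants, so it does not affect the estimate. Putting the pieces together — $\e/4$ from Proposition \ref{pp:order}, $\e/4$ from leaving $\m{B}_M$, $\e/4$ from the non-atomicity bound, with a little room — yields $\liminf_N \Pr(\min\{\L_2^N(1),\L_2^N(3)\}-\L_3^N(2)\ge\delta N^{1/3})\ge 1-\e$.

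\textbf{Main obstacle.} The routine parts are the conditioning and the two applications of stochastic monotonicity; the genuinely delicate point is controlling the Radon–Nikodym factor $e^{-\mathcal H(\vec c)}$ (here reduced to its $c_1$-term) uniformly, and more precisely ensuring that after sending $c_j\to-\infty$ for $j\ge 2$ the conditional law is still comparable to $\m{IRW}$ so that Proposition \ref{p.nonatomic} applies — this is exactly the content of formula \eqref{def:Gibbslaw} together with the observation that $\Pr_\Phi^{T;(a,b,(-\infty)^T)}=\Pr_{\m{IRW}}^{T;(a,b)}$, so in fact keeping only $c_1$ finite changes $\m{IRW}$ by a factor $e^{-(e^{c_1-L_2(1)}+e^{c_1-L_2(3)})}$ which is $\le 1$ and $\ge$ a constant on the event that $L_2$ is not abnormally low, the latter being handled by the ``second curve can't be too low'' type estimate implicit in the first bullet of Section \ref{sec:1.2.2}. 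The parity bookkeeping (odd points of $\L_1^N$, even points of $\L_2^N$, and that $\L_3^N(2)$ is an even point) must also be tracked carefully but introduces no new ideas.
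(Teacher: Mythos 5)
Your high-level strategy exactly matches the paper's: condition on the $\sigma$-field containing the right boundary of a length-$T$ window plus all lower curves, apply stochastic monotonicity to push $(a,b)$ down to a deterministic location and the $c_j$ for $j\geq 2$ down to $-\infty$, translate so that Proposition~\ref{p.nonatomic} applies, and control the leftover $e^{-\mathcal{H}(c_1)}$ factor in the denominator by a positive constant. However, two of the specific tool invocations are incorrect, and these are real gaps.

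First, your opening reduction claims that Proposition~\ref{pp:order} gives $|\L_2^N(1)-\L_2^N(3)|\le 2(\log N)^{7/6}$ with high probability. That is false: the event $\m{Ord}_{k,N}$ only sandwiches $\L_2^N(1),\L_2^N(3)$ between $\L_3^N(2)-(\log N)^{7/6}$ and $\L_2^N(2)+(\log N)^{7/6}$, and the gap $\L_2^N(2)-\L_3^N(2)$ is precisely what the theorem is trying to show is of order $N^{1/3}$, so the spread $|\L_2^N(1)-\L_2^N(3)|$ is not controlled at this scale by ordering alone. The correct tool is tightness of the second curve (Theorem~\ref{p.tight3}), which gives the one-sided diffusive-scale bound $\L_2^N(3)-\L_2^N(1)\geq -\delta N^{1/3}$ with high probability; this is all that is needed to reduce to the single point $\L_2^N(1)$ at the cost of halving $\delta$.

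Second, and more seriously, after pushing the boundary data down to $a'=-M\sqrt T$, $b'=-(M+1)\sqrt T$, your claim that the factor $\exp(-e^{c_1-L_2(1)}-e^{c_1-L_2(3)})$ is "bounded above and below by positive constants" on the event where $L_2$ stays at $O(\sqrt T)$ scale is wrong. Under $\Pr_{\m{IRW}}^{T;(a',b')}$, the typical value of $L_2(1)$ is near $-M\sqrt T$, while $c_1$ can be as high as $+M\sqrt T$, so on a high-probability event $c_1-L_2(1)\sim 2M\sqrt T>0$ and the factor $e^{-e^{c_1-L_2(1)}}$ is doubly-exponentially small, not bounded below. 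Lemma~\ref{lem:IRWupperbd} is a fluctuation \emph{upper} bound and cannot save this. What is needed is a positive-probability "high points" estimate: Proposition~4.1 in~\bcd\ gives $\phi>0$ with $\Pr_{\m{IRW}}^{T;(a',b')}(\min\{L_2(1),L_2(3)\}\ge 2M\sqrt T)\ge\phi$ despite the low boundary data, and on that event $c_1-L_2(\cdot)\le -M\sqrt T$ so the weight is $\ge\exp(-2e^{-M\sqrt T})\ge\tfrac12$. This delivers the denominator lower bound $\ge\phi/2$; tightness alone cannot. A minor additional issue is that the claimed "upgrade" of Proposition~\ref{p.nonatomic} via stochastic monotonicity is not justified for the non-monotone event $\{|L_2(1)-r|\le\delta\sqrt T\}$; after the deterministic choice of $(a',b')$ with $a'-b'=\sqrt T$, translation invariance alone reduces you directly to the $(0,-\sqrt T)$ case and no monotonicity step is needed there.
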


\begin{proof}
First note that by tightness of the second curve (Theorem \ref{p.tight3}),  for any $\delta>0$ we have
\[
\liminf_{N\to\infty} \Pr\left(\L_2^N(3) - \L_2^N(1) \geq - \delta N^{1/3}\right) \geq 1-\tfrac{\e}{2}.
\]
It follows that for sufficiently large $N$,
\begin{equation*}
\Pr\left(\min\{\L_2^N(1), \L_2^N(3)\} - \L_3^N(2) \geq \delta N^{1/3}\right) \geq \Pr\left(\L_2^N(1) - \L_3^N(2) \geq 2\delta N^{1/3}\right) - \tfrac{\e}{2}.
\end{equation*}
Now let us set $T = \lceil N^{2/3}\rceil$. By tightness of the $\hslg$ line ensemble (Theorem \ref{p.tight3}), we can choose $M$ large enough depending on $\varepsilon$ so that for all large $N$, 
\begin{align*}
    \Pr(\m{A}) \ge 1-\tfrac\e4, \mbox{ where } \m{A}:=\bigg\{|\mathcal{L}_1^N(2T-1)|+ |\mathcal{L}_2^N(2T)|+|\mathcal{L}_3^N(2)| \le M\sqrt{T}\bigg\}.
\end{align*}
Thus it suffices to show
\begin{align*}
    \Pr\left(\m{A}\cap\big\{\L_2^N(1) - \L_3^N(2) \leq 2\delta N^{1/3}\big\}\right) \le \tfrac{\e}4.
\end{align*}
Let us consider the $\sigma$-field $$\mathcal{F}=\big\{\L_1^N\ll 2T-1,2N\rr, \L_2^N\ll2T,2N\rr, \L_i^N\ll 1,2N-2i+2\rr \mbox{ for } i\in \ll3,N\rr\big\}.$$
Observe that $\m{A} \in \mathcal{F}$. Invoking the Gibbs property, we see that the conditional measure given  $\mathcal{F}$ can be viewed as a $\hslg$ Gibbs measure on the domain $\Phi$ defined in \eqref{k2t}. By the tower property of conditional expectation we have
\begin{align*}
    \Pr\left(\m{A}\cap\big\{\L_2^N(1) - \L_3^N(2) \leq 2\delta N^{1/3}\big\}\right) = \Ex\left[\ind_{\m{A}} \Pr_{\Phi}^{(a,b,\vec{c})}\big(L_2(1) - \L_3^N(2) \leq 2\delta N^{1/3}\big)\right],
\end{align*}
where $\Pr_{\Phi}^{(a,b,\vec{c})}$ was defined in \eqref{def:Gibbslaw}. Here $a=\L_1^N(2T-1)$, $b=\L_2^N(2T)$, and $c_j=\L_3^N(2j)$ for $j\in \ll1,T\rr$.
However, on the event $\m{A}$, we have $a,b,c_1 \in [-M\sqrt{T},M\sqrt{T}]$. Thus it suffices to prove that uniformly over $a,b,c_1\in[-M\sqrt{T},M\sqrt{T}]$ and $c_2,\dots,c_T \in\mathbb{R}$, we can find $\delta>0$ so that
\begin{equation}\label{Eabc}
\limsup_{T\to\infty} \Pr_{\Phi}^{(a,b,\vec{c})}\left(L_2(1) - c_1 \leq 2\delta \sqrt{T}\right) \leq \tfrac{\e}{4}.
\end{equation}
Observe that the above event is increasing. Thus by stochastic monotonicity (Proposition \ref{p:gmc}) and the relation \eqref{def:Gibbslaw},
\begin{align}\nonumber
    \Pr_{\Phi}^{(a,b,\vec{c})}\left(L_2(1) - c_1 \leq 2\delta \sqrt{T}\right) & \le \Pr_{\Phi}^{(a',b',\vec{c}^{\,'})}\left(L_2(1) - c_1 \leq 2\delta \sqrt{T}\right) \\ & = \frac{\Ex_{\m{IRW}}^{T;(a',b')}[\exp(-e^{c_1-L_2(1)}-e^{c_1-L_2(3)})\ind_{L_2(1) - c_1 \leq 2\delta \sqrt{T}}]}{\Ex_{\m{IRW}}^{T;(a',b')}[\exp(-e^{c_1-L_2(1)}-e^{c_1-L_2(3)})]} \label{Eabc2}
\end{align}
where $a'=-M\sqrt{T}, b'=-(M+1)\sqrt{T}$, and $\vec{c}^{\,'}=(c_1,c_2',\ldots,c_T')$ with $c_j'=-\infty$ for $j\in \ll2,T\rr$. We now provide upper and lower bounds for the numerator and the denominator in \eqref{Eabc2} respectively.
To lower bound the denominator, note that by Proposition 4.1 in \bcd\ there exists $\phi>0$ depending on $M$ so that
\[
\Pr_{\m{IRW}}^{T;(a',b')}\left(\min\{L_2(1),L_2(3)\} \geq 2M\sqrt{T}\right) \geq \phi
\]
for large enough $T$.
Since $c_1\leq M\sqrt{T}$, it follows that 
\begin{equation}\label{denomlbd}
\Ex_{\m{IRW}}^{T;(a',b')}\left[\exp(-e^{c_1 - L_2(1)} - e^{c_1 - L_2(3)}) \right] \geq \phi \cdot \exp(-2e^{-M\sqrt{T}}) \geq \tfrac{\phi}{2}
\end{equation}
for sufficiently large $T$ depending on $M$. As for the numerator in \eqref{Eabc2}, invoking translation invariance (Lemma \ref{traninv}) yields
\begin{align*}
   & \Ex_{\m{IRW}}^{T;(a',b')}\left[\exp(-e^{c_1-L_2(1)}-e^{c_1-L_2(3)})\ind_{L_2(1) - c_1 \leq 2\delta \sqrt{T}}\right] \\ & \hspace{3cm} = \Ex_{\m{IRW}}^{T;(0,-\sqrt{T})}\left[\exp(-e^{r-L_2(1)}-e^{r-L_2(3)})\ind_{L_2(1) - r\leq 2\delta \sqrt{T}}\right]
\end{align*}
where $r=c_1+M\sqrt{T}$.  By splitting the indicator and bounding the exponential by 1 on the second part, we get an upper bound of
\begin{align*}
&\Ex_{\m{IRW}}^{T;(0,-\sqrt{T})}\left[\mathbf{1}_{L_2(1) - r < -2\delta\sqrt{T}} \, \exp(-e^{r - L_2(1)} - e^{r - L_2(3)}) \right] + \Pr_{\m{IRW}}^{T;(0,-\sqrt{T})}\left( \left|L_2(1) - r\right| \le 2\delta\sqrt{T} \right)\\
&\qquad \leq \exp(-e^{2\delta\sqrt{T}}) + \Pr_{\m{IRW}}^{T;(0,-\sqrt{T})}\left( \left|L_2(1) - r\right| \le 2\delta\sqrt{T} \right).
\end{align*}
The first term vanishes as $T\to\infty$ for any $\delta>0$, and by Proposition \ref{p.nonatomic} the second term can be made less than $\frac{\e}{4}\cdot\frac{\phi}{2}$ uniformly over $r$ (hence over $a,b,\vec{c}$) for large $T$ by choosing $\delta$ small enough. Plugging this estimate and the bound in \eqref{denomlbd} back into \eqref{Eabc2} verifies \eqref{Eabc}, completing the proof.
\end{proof}

Using the tightness of the $\hslg$ line ensemble, the above separation can be extended into a small $N^{2/3}$ window around the left boundary.

\begin{corollary}[Uniform separation] \label{t.usep23}
    Fix any $\e\in (0,1)$. There exist $\delta=\delta(\e), \rho=\rho(\e)>0$ such that
    \begin{align}\label{eq:t.usep23}
        \liminf_{N\to \infty} \Pr\bigg(\inf_{k\in \ll1,\rho N^{2/3}\rr}\big[\min\{\L_2^N(2k-1),\L_2^N(2k+1)\}-\L_3^N(2k)\big] \ge \delta N^{1/3}\bigg) \ge 1-\e.
     \end{align}
\end{corollary}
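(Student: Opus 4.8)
\textbf{Proof proposal for Corollary \ref{t.usep23}.}
The plan is to upgrade the pointwise separation in Theorem \ref{t.lsep23} to a uniform statement over a window of width $\rho N^{2/3}$ by combining it with the process-level tightness of the second and third curves (Theorem \ref{p.tight3}). First I would fix $\e\in(0,1)$ and apply Theorem \ref{t.lsep23} to obtain $\delta_0=\delta_0(\e)>0$ such that, for all large $N$,
\[
\Pr\big(\min\{\L_2^N(1),\L_2^N(3)\}-\L_3^N(2)\ge \delta_0 N^{1/3}\big)\ge 1-\tfrac{\e}{3}.
\]
We will ultimately take $\delta=\delta_0/3$ (say). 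The event in \eqref{eq:t.usep23} fails only if either the above pointwise separation fails, or one of the four curves $\L_2^N,\L_3^N$ oscillates by more than $\tfrac{\delta_0}{3}N^{1/3}$ over the window $\ll1,\rho N^{2/3}\rr$ (after rescaling). So the key step is a modulus-of-continuity bound.

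Concretely, by Theorem \ref{p.tight3} the processes $(\L_2^N(xN^{2/3})/N^{1/3})_{x\in[0,1]}$ and $(\L_3^N(xN^{2/3})/N^{1/3})_{x\in[0,1]}$ are tight in $C([0,1])$ under the uniform topology; tightness in $C([0,1])$ is equivalent, via the Arzelà--Ascoli criterion, to a uniform control of the modulus of continuity. Hence there is $\rho=\rho(\e)>0$ so that for all large $N$,
\[
\Pr\Big(\sup_{|x-y|\le\rho,\ x,y\in[0,1]}\big|\L_i^N(xN^{2/3})-\L_i^N(yN^{2/3})\big|\le \tfrac{\delta_0}{3}N^{1/3}\Big)\ge 1-\tfrac{\e}{6}
\]
for $i=2,3$. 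I would also need to absorb the parity gap $\L_2^N(2k-1)$ versus $\L_2^N(2k+1)$ into the same oscillation bound, which is harmless since both indices differ by $O(1)$ and lie inside the window. On the intersection of these three high-probability events, for every $k\in\ll1,\rho N^{2/3}\rr$ we estimate
\[
\min\{\L_2^N(2k-1),\L_2^N(2k+1)\}-\L_3^N(2k)\ge \big(\min\{\L_2^N(1),\L_2^N(3)\}-\tfrac{\delta_0}{3}N^{1/3}\big)-\big(\L_3^N(2)+\tfrac{\delta_0}{3}N^{1/3}\big)\ge \tfrac{\delta_0}{3}N^{1/3},
\]
using that $|2k\pm1|\le 2\rho N^{2/3}+1\le \rho' N^{2/3}$ for an adjusted constant and that the oscillation bound applies over the relevant separations in the index. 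A union bound over the three events then gives \eqref{eq:t.usep23} with $\delta=\delta_0/3$ and the chosen $\rho$, for all sufficiently large $N$, which yields the claimed $\liminf$.

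The main obstacle is bookkeeping rather than conceptual: one must make sure the window $\ll1,\rho N^{2/3}\rr$ in the corollary translates to an interval of length $O(\rho)$ after rescaling by $N^{2/3}$ (it does, up to adjusting $\rho$), and that the parity structure (odd indices of $\L_2^N$, even indices of $\L_3^N$) does not spoil the modulus-of-continuity argument — it does not, since consecutive odd or even indices are within $2$ of each other and tightness in $C([0,1])$ controls all such fluctuations simultaneously. A secondary point is that Theorem \ref{p.tight3} is stated for a single curve $\L_k^N$; one applies it twice, once for $k=2$ and once for $k=3$, and intersects the resulting events. No new estimates beyond Theorems \ref{t.lsep23} and \ref{p.tight3} are required.
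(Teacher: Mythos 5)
Your proposal is correct and follows essentially the same route as the paper: both apply Theorem \ref{t.lsep23} for pointwise separation at the left boundary, then use the process-level tightness of $\L_2^N$ and $\L_3^N$ from Theorem \ref{p.tight3} to bound their oscillation over a window of width $O(\rho N^{2/3})$, and conclude with a union bound and the triangle-inequality decomposition you wrote. The only differences from the paper are cosmetic (the paper uses constants $2\delta$ and $\delta/2$ where you use $\delta_0$ and $\delta_0/3$, and it phrases the oscillation bounds as explicit events $\m{B}_\delta^\rho,\m{C}_\delta^\rho$), and your handling of the parity offset and window-length bookkeeping matches the paper's.
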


\begin{proof}
    For $\delta,\rho>0$, define the three events
    \begin{align*}
        \mathsf{A}_{\delta} &= \left\{ \L_2^N(1) - \L_3^N(2) \geq 2\delta N^{1/3}\right\}, \quad \mathsf{B}_{\delta}^{\rho} = \left\{\inf_{k\in\llbracket 1,3\rho N^{2/3}\rrbracket} \L_2^N(k) - \L_2^N(1) \geq -\tfrac{\delta}{2}N^{1/3}\right\},\\
        \mathsf{C}_{\delta}^\rho &= \left\{\sup_{k\in\llbracket 1,\rho N^{2/3}\rrbracket} \L_3^N(2k) - \L_3^N(2) \leq \tfrac{\delta}{2}N^{1/3}\right\},
    \end{align*}
    and let $\mathsf{D}_\delta^\rho =  \mathsf{A}_{\delta} \cap \mathsf{B}_{\delta}^{\rho} \cap \mathsf{C}_{\delta}^{\rho}$.
    For $\ell \in \{2k-1, 2k+1\}$ where $k\in\llbracket 1,\rho N^{2/3}\rrbracket$, on the event $\mathsf{D}_\delta^\rho$ we have
    \begin{align*}
    \L_2^N(\ell) - \L_3^N(2k) &= \big(\L_2^N(\ell) - \L_2^N(1)\big) - \big(\L_3^N(2k) - \L_3^N(2)\big) + \big(\L_2^N(1) - \L_3^N(2)\big)\\
    &\geq -\tfrac{\delta}{2}N^{1/3} - \tfrac{\delta}{2}N^{1/3} + 2\delta N^{1/3} = \delta N^{1/3}.
    \end{align*}
    Thus the event in \eqref{eq:t.usep23} is satisfied on $\mathsf{D}_\delta^\rho$. By Theorem \ref{t.lsep23}, we may first choose $\delta(\e)>0$ so that $\Pr(\m{A}_{\delta}) \ge 1-\frac\e3$ for large enough $N$. By the tightness of the line ensemble (Theorem \ref{p.tight3}), we may then choose $\rho(\e)>0$ such that $\Pr(\m{B}_{\delta}^{\rho})\ge 1-\frac\e3$ and $\Pr(\m{C}_{\delta}^{\rho}) \ge 1-\frac\e3$ for large enough $N$. This leads to $\Pr(\mathsf{D}_\delta^\rho) \geq 1-\e$ by a union bound.
\end{proof}

\section{Proof of the main theorem} \label{sec5}

In this section, we prove our main theorem, Theorem \ref{t.main1}, about convergence for increments of the first curve of the $\hslg$ line ensemble at the left boundary. Before delving into the proof of Theorem \ref{t.main1} we require one final ingredient, which establishes diffusive separation between the first and second curves of the line ensemble at any mesoscopic scale near the left boundary.

\begin{proposition}[Diffusive separation between first and second curve] \label{p.sep12}
    Fix any $\e\in (0,1)$ and $\gamma\in (0,1)$. There exists $\delta=\delta(\e,\gamma)>0$ such that
    \begin{align*}
        \liminf_{N\to \infty}  \Pr\big(\L_1^N(2\lfloor N^{2\gamma/3}\rfloor -1)-\L_2^N(2\lfloor N^{2\gamma/3}\rfloor) \in [\delta N^{\gamma/3},\delta^{-1}N^{\gamma/3}]\big) \ge 1-\e.
    \end{align*}
\end{proposition}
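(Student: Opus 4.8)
The plan is to deduce this diffusive two-sided separation at scale $N^{\gamma/3}$ from the tools already developed: the uniform separation of the second and third curves from Corollary \ref{t.usep23}, the Gibbs property that identifies the conditional law of the top two curves with the $\m{IRW}$ law, and the diffusive tightness of $\m{IRW}$ from Lemma \ref{lem:IRWupperbd} together with the local convergence result of Theorem \ref{thm:conv}. First I would set $n = \lfloor N^{2\gamma/3}\rfloor$ and $T = \lceil N^{2/3}\rceil$, so that $n/T \to 0$. By Corollary \ref{t.usep23} and tightness of the line ensemble (Theorem \ref{p.tight3}), on an event of probability at least $1-\e/4$ we have, for some $\delta_0(\e),\rho(\e)>0$, a lower bound $\L_2^N(2k-1)\wedge\L_2^N(2k+1) - \L_3^N(2k) \ge \delta_0 N^{1/3}$ uniformly for $k \in \llbracket 1, \rho N^{2/3}\rrbracket$ (in particular on the window $\llbracket 1,n\rrbracket$ since $n = o(N^{2/3})$), and also control $|\L_1^N(2n-1)|, |\L_2^N(2n)|, |\L_3^N(2k)| \le M\sqrt{T}$ for appropriate $M(\e)$ and $\L_1^N(2n-1) - \L_2^N(2n) \le M N^{\gamma/3}$ via tightness. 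I would condition on the $\sigma$-field generated by the curves outside the box $\Phi$ (with right boundary at column $2n$), so that the conditional law of $(\L_1^N, \L_2^N)$ on $\llbracket 1, 2n-1\rrbracket$ is the $\hslg$ Gibbs measure $\Pr_\Phi^{n;(a,b,\vec c)}$ with $a = \L_1^N(2n-1)$, $b = \L_2^N(2n)$, $c_j = \L_3^N(2j)$.

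Next I would pass from $\Pr_\Phi^{n;(a,b,\vec c)}$ to the $\m{IRW}$ law via the Radon--Nikodym formula \eqref{def:Gibbslaw}. On the good event the penalty $\mathcal H(\vec c) = \sum_j [e^{c_j - L_2(2j-1)} + e^{c_j - L_2(2j+1)}] + e^{c_T - L_2(2T-1)}$ is controlled: since $c_j \le L_2(2k\pm 1) - \delta_0 N^{1/3}$ uniformly on the relevant window (this requires also transferring the separation bound to the $\m{IRW}$ walk, which follows because the event $\{\L_2^N(\cdot) - \L_3^N(\cdot) \ge \delta_0 N^{1/3}\}$ is increasing in the boundary data of the $\Phi$-box and hence can be compared by stochastic monotonicity, Proposition \ref{p:gmc}), each summand is of order $e^{-\delta_0 N^{1/3}}$, so $e^{-\mathcal H(\vec c)} = 1 + o(1)$ with overwhelming probability. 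Hence for any event $\m E$ measurable with respect to $(L_1,L_2)$ on $\llbracket 1, 2n-1\rrbracket$, $\Pr_\Phi^{n;(a,b,\vec c)}(\m E) = (1+o(1))\Pr_{\m{IRW}}^{n;(a,b)}(\m E) + o(1)$, uniformly over the allowed $(a,b,\vec c)$; more carefully I would split the expectation over $\{\mathcal H(\vec c) \le \eta\}$ and its complement, and choose the high-probability event for the latter using the uniform separation and monotonicity. This reduces the problem to showing: uniformly over $|a| + |b| \le M\sqrt T$ and $a - b \le M N^{\gamma/3}$, there is $\delta(\e,\gamma)>0$ with
\begin{align*}
\liminf_{N\to\infty} \Pr_{\m{IRW}}^{n;(a,b)}\big(L_1(2n-1) - L_2(2n) \in [\delta N^{\gamma/3}, \delta^{-1}N^{\gamma/3}]\big) \ge 1 - \tfrac{\e}{2}.
\end{align*}

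Finally, for the $\m{IRW}$ statement I would observe that $L_1(2n-1) - L_2(2n)$ is the ``gap at the right endpoint'' of the pair of interacting walks of length $n$ with right boundary $(a,b)$. Reversing time (the $\m{IRW}$ law has a left-right symmetry up to the pinning structure, or more directly using Lemma \ref{wprwconn} which identifies $(L_1(2i-1), L_2(2i))_{i=1}^n$ with $\Pr_{\m{WPRW}}^{n;(a,b)}$, and then Lemma \ref{lem:IRWshift} which, after conditioning on the endpoint, realizes the time-reversed gap process as a softly non-intersecting bridge), the gap $U_{n} := n^{-1/2}(L_1(2n-1) - L_2(2n))$ is governed near its ``starting'' end by the softly non-intersecting bridge measure $\Pr_{W_n}^{n;(0,\ga);(b_1,b_2)}$ in the diffusive regime. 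When $a - b = O(N^{\gamma/3}) = o(\sqrt n)$, the relevant endpoint datum degenerates and Theorem \ref{thm:conv} (or rather its building block Proposition \ref{meander}, giving convergence of the rescaled gap to a Brownian meander endpoint, which is strictly positive and has a density) yields that $n^{-1/2}(a-b)^{-1}\cdot(\text{gap})$ — equivalently the gap measured on the correct scale $N^{\gamma/3}$ after the diffusive rescaling by $\sqrt n \asymp N^{\gamma/3}$ — converges to a random variable that is a.s.\ finite and a.s.\ positive, hence tight away from $0$ and $\infty$. I expect the main obstacle to be the bookkeeping in this last step: carefully matching the exponent $\gamma/3$ (which comes from $\sqrt{n} = \sqrt{N^{2\gamma/3}} = N^{\gamma/3}$) against the diffusive scaling in Proposition \ref{meander} and Theorem \ref{thm:conv}, and handling the fact that the right-endpoint data $(a,b)$ is itself only $O(\sqrt T)$-bounded rather than $O(\sqrt n)$-bounded, so one must split the walk at an intermediate time $\asymp n$ and use Lemma \ref{bridgewalk} / Lemma \ref{lem:mrb} to decouple the initial segment (where the meander limit applies) from the remainder (whose influence is absorbed into a $(1+o(1))$ factor as in the proof of Lemma \ref{V(x,y)}). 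The uniformity over all $(a,b,\vec c)$ in the allowed range is the other point requiring care, but it is exactly the kind of uniformity that Theorem \ref{thm:conv} and Lemma \ref{lem:IRWupperbd} were designed to provide.
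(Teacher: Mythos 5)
Your high-level strategy — condition on the line ensemble outside a two-curve box to reduce to an $\m{IRW}$-type law, then deduce the diffusive separation of the top two curves from meander-type endpoint tightness for softly non-intersecting walks — is sound and genuinely parallel to the paper's argument. The paper conditions on a similar $\sigma$-field, but handles the Radon--Nikodym factor $e^{-\mathcal{H}(\vec c)}$ differently: rather than arguing $e^{-\mathcal{H}}=1+o(1)$ on a high-probability event via Corollary~\ref{t.usep23}, it invokes a size-biasing lower bound $\Ex_{\m{IRW}}^{T;(a,b)}[e^{-\mathcal{H}}]\geq\beta$ on a high-probability event $\m{C}(\beta)$ (eq.~(5.12) of \bcd), which requires no separation input at all. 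Your route is also viable (the paper uses exactly that combination in the proof of Theorem~\ref{t.main1}), provided the box over which $\mathcal{H}$ is taken lies inside the $\rho N^{2/3}$ window of Corollary~\ref{t.usep23}. Likewise, the paper obtains the $\m{IRW}$ gap estimate through the $\m{WPRW}$ law, eq.~(5.34) of \bcd, and the non-intersecting random walk bound Lemma~C.5 of \bcd, rather than the soft-bridge machinery of Section~\ref{sec31}; both routes rest on the same non-degeneracy of the Brownian meander endpoint.

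There is, however, a genuine error in the way you set up the final reduction. You place the right boundary of the $\Phi$-box at column $2n$ with $n=\lfloor N^{2\gamma/3}\rfloor$, so the conditioning data includes $a=\L_1^N(2n-1)$ and $b=\L_2^N(2n)$ — exactly the quantity the proposition is about. In your target estimate $\Pr_{\m{IRW}}^{n;(a,b)}\big(L_1(2n-1)-L_2(2n)\in[\delta N^{\gamma/3},\delta^{-1}N^{\gamma/3}]\big)$, the event is deterministic: by Definition~\ref{def:IRW} the pair $(L_1(2n-1),L_2(2n))$ are the pinned boundary values $(a,b)$, so the probability equals $\ind\{a-b\in[\delta N^{\gamma/3},\delta^{-1}N^{\gamma/3}]\}$ and nothing remains to be estimated. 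The circularity is compounded by the assertion that "$\L_1^N(2n-1)-\L_2^N(2n)\leq MN^{\gamma/3}$ via tightness": Theorem~\ref{p.tight3} gives control only at scale $N^{1/3}$ (and $o(N^{1/3})$ as the rescaled position $n/N^{2/3}\to 0$, by uniform continuity), not the refined $N^{\gamma/3}$ bound, which is precisely the upper half of what is to be proved. The fix is to condition on a box of width $T$ with $n\ll T$ (and $T\leq\rho N^{2/3}$ if you want to use Corollary~\ref{t.usep23}), so that $(a,b)$ live at column $\sim 2T$, are $O(\sqrt T)=O(N^{1/3})$-bounded by ordinary tightness, and $L_1(2n-1)-L_2(2n)$ is an \emph{interior} observable to be estimated under the resampled $\m{IRW}$ law. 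Once the reduction is set up this way, your idea of passing via Lemma~\ref{wprwconn} and Lemma~\ref{lem:IRWshift} to a softly non-intersecting bridge and invoking a diffusive gap estimate at time $n$ (e.g.\ Lemma~\ref{spread} combined with Lemma~\ref{lem:mrb} and Lemma~\ref{EWlbd}) does close the argument, with the caveat that Lemma~\ref{spread} as stated is for fixed $\rho$ and here $\rho=n/T\to 0$, requiring a small extension.
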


\begin{proof} Fix  any $\e\in (0,1)$ and $\gamma\in (0,1/3)$. Set $T=N^{1/3}$. We assume $N$ (and hence $T$) is large enough throughout the proof and for convenience we also assume $T$  and $T^{2\gamma}$ are integers. For each $\delta>0$, consider the event
	\begin{align*}
		\m{A}_{\delta} := \big\{\L_1^N(2T^{2\gamma} -1)-\L_2^N(2T^{2\gamma}) \notin [\delta T^{\gamma},\delta^{-1}T^{\gamma}]\big\}.
	\end{align*}		
	By Theorem \ref{p.tight3} and Proposition \ref{pp:order}, there exists $M=M(\e)>0$ such that $\Pr(\m{B}_{M}) \ge 1-\e/3,$ where $\m{B}_M=\m{B}_{1,M}\cap \m{B}_{2,M}$ with
	\begin{align*}
	\m{B}_{1,M} & := \left\{|\L_1^N(1)|+|\L_2^N(2)|\le M\sqrt{T}\right\}, \\
        \m{B}_{2,M} & :=\Big\{ |\L_1^N(2T-1)|+|\L_2^N(2T)| \le M\sqrt{T}, \,\L_1^N(2T-1) \ge \L_1^N(2T)-(\log T)^{7/6}\Big\}.
	\end{align*}
Consider the $\sigma$-field
	\begin{align*}
		\mathcal{F}:=\sigma\left\{(\L_i^N\ll 1,2N-2i+2\rr)_{i\ge 3}, (\L_i^N(j))_{j\ge 2T+i-2, i\in \ll1,2\rr}\right\}.
	\end{align*}
 Recall the domain $\Phi$ from \eqref{k2t}. Using Theorem \ref{thm:conn} and the relation \eqref{def:Gibbslaw} we have 
\begin{align*}
    \Ex\left[\ind_{\m{B}_{1,M}\cap \m{A}_{\delta}}\mid \mathcal{F}\right] = \Pr_{\Phi}^{\vec{u}}(\m{B}_{1,M}\cap \m{A}_{\delta}) = \frac{\Ex_{\m{IRW}}^{T;(a,b)}[e^{-\mathcal{H}(\vec{u})}\ind_{\m{B}_{1,M}\cap \m{A}_{\delta}}]}{\Ex_{\m{IRW}}^{T;(a,b)}[e^{-\mathcal{H}(\vec{u})}]}
\end{align*}
where $a=\L_1^N(2T-1)$, $b=\L_2^N(2T)$, and $\vec{u}\in \mathbb{R}^{|\partial \Phi|}$ with $u_{i,2T+i-2}=\L_i^N(2T+i-2)$ for $i\in \{1,2\}$ and $u_{3,2j}=\L_3^N(2j)$ for $j\in \ll1,T\rr$. For each $\beta>0$, let us set $\m{C}(\beta):=\{\Ex_{\m{IRW}}^{T;(a,b)}[e^{-\mathcal{H}(\vec{u})}] \ge \beta\}.$ It was shown in  \bcd\ that given any $\e>0$ there exists $\beta(\e)>0$ such that $\Pr(\neg \m{C}(\beta))\le \e/3$ (see eq.~(5.12) in \bcd). We work with this choice of $\beta$ for the rest of the proof.
	Note that $\m{B}_{2,M}\cap \m{C}(\beta) \in \mathcal{F}$. By a union bound and the tower property of conditional expectation we have
	\begin{equation}
		\label{3terms}
		\begin{aligned}
			\Pr(\m{A}_{\delta}) &  \le \Pr(\neg \m{B}_{M})+\Pr\left(\neg \m{C}(\beta)\right) +\Ex\left[\ind_{\m{B}_{2,M}\cap\m{C}_{}(\beta)}\Ex\left[\ind_{\m{B}_{1,M}\cap \m{A}_{\delta}}\mid \mathcal{F}\right]\right] \\ & \le 2\e/3+ \Ex\left[\ind_{\m{B}_{2,M}\cap\m{C}_{}(\beta)}\Ex\left[\ind_{\m{B}_{1,M}\cap \m{A}_{\delta}}\mid \mathcal{F}\right]\right] \\ & \le 2\e/3+ \beta^{-1}\Ex\left[\ind_{\m{B}_{2,M}}\Pr_{\m{IRW}}^{T;(a,b)}({\m{B}_{1,M}\cap \m{A}_{\delta}})\right].
		\end{aligned}
	\end{equation}
We claim that 
\begin{itemize}[leftmargin=15pt]
    \item[$(\star)$] there exists $\delta(\e)>0$ such that for all $(u,v) \in D_M:=\{(a,b) : |a|+|b|\le M\sqrt{T}, a-b \ge -(\log T)^{7/6}\}$ we have
\begin{align} \label{irwbd2}
    \Pr_{\m{IRW}}^{T;(u,v)}(\m{B}_{1,M}\cap \m{A}_\delta) \le \beta\e/3.
\end{align}
\end{itemize}
for $T$ large enough. Plugging this bound back into \eqref{3terms} yields $\Pr(\m{A}_\delta) \le \e$ for all large enough $T$, which is precisely what we want to show. We thus focus on proving \eqref{irwbd2}. Towards this end, recall the $\m{WPRW}$ law from Definition \ref{prb} and its connection to the $\m{IRW}$ law from Lemma \ref{wprwconn}. Thanks to this connection, it suffices to show $(\star)$ under the $\m{WPRW}$ law where the events are now interpreted as
\begin{align*}
    \m{B}_{1,M}:=\left\{|S_1(1)|+|S_2(1)| \le M\sqrt{T}\right\}, \qquad \m{A}_{\delta}=\left\{S_1(T^{2\gamma})-S_2( T^{2\gamma}) \in [\delta T^{\gamma},\delta^{-1}T^{\gamma}]\right\}.
\end{align*}
From eq.~(5.34) in \bcd\  we know that for any event $\m{E} \in \sigma\{S_1(k),S_2(k) : k\in \ll1,T/4\rr\}$, there exists $M'>0$ depending on $\beta, \e, M$ such that 
    \begin{align}\label{rrte}
        \sup_{(u,v)\in D_M} \Pr_{\m{WPRW}}^{T;(u,v)}\big(\m{B}_{1,M} \cap \m{A}_\delta\big)  \le  \tfrac{\beta\e}{6}+ M' \cdot \sup_{\substack{p \in [0,2\log\log T] \\ |a_1-a_2|\le M_2+2\log\log T}} \Pr_{\m{RW}}^{T/4;(a_1,a_2)}\left(\m{B}_{1,M}\cap \m{A}_{\delta,p} \mid \m{NI}\right), 
    \end{align}
where $\m{A}_{\delta,p}:= \{S_1(T^{\gamma})-p-S_2(T^{\gamma}) \not\in [\delta T^{\gamma},\delta^{-1}T^{\gamma}]\}$ and $\m{NI}:= \bigcap_{i\in \ll2,T/4\rr} \{S_1(k)>S_2(k)\}.$ Under the non-intersection condition, it is well known (see \cite{igl} for example) that $S_1(\cdot)-S_2(\cdot)$ under diffusive scaling converges to a Brownian meander, whose endpoint is strictly positive with probability $1$. Using this result, it is not hard to obtain estimates for the probability on the right of \eqref{rrte} uniform over the starting points, as was done in Appendix C in \bcd. In particular, invoking Lemma C.5 in \bcd, we can make the supremum on the r.h.s.~of \eqref{rrte} arbitrary small by taking $\delta$ small enough. This proves $(\star)$ for the $\m{WPRW}$ law.
\end{proof}

\begin{proof}[Proof of Theorem \ref{t.main1}]
By Theorem \ref{thm:conn} it suffices to show that for any Borel set $A\subset\mathbb{R}^r$, we have
\begin{align}\label{eq:mainthm}
    \lim_{N\to\infty}{\Pr}(\m{A}_N) = \Pr\left(\big(S_1^\uparrow(k)\big)_{k=1}^r \in A \right),
\end{align}
where $\mathsf{A}_N = \left\{(\mathcal{L}_1^N(2k-1) - \mathcal{L}_1^N(1))_{k=1}^r \in A\right\}$ and $\big(S_1^\uparrow(k)\big)_{k\geq 1}$ is defined in Section \ref{sec:1.1}. We write $T = \lfloor N^{1/4}\rfloor$ and define the $\sigma$-algebra
\begin{align*}
    \mathcal{F} &= \sigma\left(\{\mathcal{L}_1^N(j) : j\geq 2T-1 \} \cup \{\mathcal{L}_2^N(j) : j\geq 2T\} \cup \{\mathcal{L}_i^N(j) : i\geq 3, j\geq 1\}\right).
\end{align*}
For $N\in\mathbb{Z}_{\geq 1}$ and $\delta>0$, we define the events
\begin{align*}
        \mathsf{Gap}^{2,3}_N(\delta) &:= \left\{\mathcal{L}_2^N(2T) - \mathcal{L}_3^N(2T) \geq \delta N^{\frac13}\right\}, \quad
         \mathsf{Sep}^{1,2}_N(\delta) := \left\{\mathcal{L}_1^N(2T-1) - \mathcal{L}_2^N(2T) \in [\delta \sqrt{T}, \delta^{-1}\sqrt{T}]\right\},\\
       \mathsf{Low}^3_N(\delta) & := \left\{\sup_{k\in\llbracket 1,T\rrbracket}\left(\mathcal{L}_3^N(2k) - \mathcal{L}_3^N(2T)\right) \leq \tfrac{\delta}{4}N^{\frac13}\right\}, \quad \mathsf{Good}_N(\delta) := \mathsf{Gap}^{2,3}_N(\delta) \cap \mathsf{Low}^3_N(\delta) \cap \mathsf{Sep}^{1,2}_N(\delta).
    \end{align*}
    Note that all the above events are measurable with respect to $\mathcal{F}$.
Fix any $\e>0$. Combining Corollary \ref{t.usep23}, Theorem \ref{p.tight3}, and Proposition \ref{p.sep12} we get that
    there exists $\delta(\e)>0$ so that
    \begin{equation}\label{good>1-e}
\mathbb{P}\left(\mathsf{Good}_N(\delta)\right) \geq 1-\varepsilon
    \end{equation}
for all large enough $N$. This implies    
\begin{equation}\label{AGood}
\big|{\Pr}(\mathsf{A}_N) - {\Pr}(\mathsf{A}_N \cap \mathsf{Good}_N(\delta))\big|\le \e,
\end{equation}
for sufficiently large $N$. Thus it suffices to show  $\Pr(\mathsf{A}\cap\mathsf{Good}_N(\delta))$ can be made arbitrarily close to by taking $N$ large enough and $\delta$ small enough. Towards this end, we recall the Gibbs property from Theorem \ref{thm:conn} and the law $\Pr_{\Phi}^{(a,b,\vec{c})}$ from \eqref{def:Gibbslaw}. Using the tower property of conditional expectation followed by the Gibbs property, we can write
\begin{align}\label{atower}
    \Pr\big(\mathsf{A}_N\cap\mathsf{Good}_N(\delta)\big) = \Ex\left[\ind_{\mathsf{Good}_N(\delta)}\cdot\mathbb{E}[\mathbf{1}_{\mathsf{A}_N} \mid \mathcal{F}]\right] = \Ex\left[\ind_{\mathsf{Good}_N(\delta)}\cdot\mathbb{P}_{\Phi}^{(a,b,\vec{c})}(\mathsf{A}_N)\right]
\end{align}
where $a=\mathcal{L}_1^N(2T-1), b=\mathcal{L}_2^N(2T)$, $c_j=\L_3^N(2j)$ for $j\in \ll1,T\rr$. In the r.h.s.~of the above equation, we interpret the event $\m{A}$ as $\{(L_1(2k-1)-L_1(1))_{k=1}^r \in A\}$. 

For the rest of the proof we work with deterministic $(a,b,\vec{c}) \in D_N(\delta)$ where
\begin{align*}
    D_N(\delta):= \bigg\{(a,b,\vec{c}) : b-c_T \ge \delta N^{\frac13}, \, \delta\sqrt{T}\le a-b \le \delta^{-1}\sqrt{T}, \, \sup_{k\in \ll1,T\rr} (c_k-c_T) \le \tfrac\delta4N^{\frac13}\bigg\}.
\end{align*}
Clearly on the event $\m{Good}_N(\delta)$, the random boundary data of the Gibbs measure in \eqref{atower} always lies in $D_N(\delta)$. Thus it suffices to obtain estimates that are uniform over all choices in $D_N(\delta)$. This will allow us to use the same estimates  for the random boundary conditions in \eqref{atower}. We now claim that
\begin{align}\label{eq:LEIRW}
\sup_{(a,b,\vec{c})\in D_N(\delta)} \left|\mathbb{P}_{\Phi}^{(a,b,\vec{c})}(\mathsf{A}_N)- \mathbb{P}_{\m{IRW}}^{(a,b)}(\mathsf{A}_N)\right| \le \e.
\end{align}
We postpone the proof of \eqref{eq:LEIRW}. 
Consider the event $\m{Left}_T(M) = \left\{|L_1(1) - a| \leq M\sqrt{T} \right\} \supset \m{Diff}^1_T(M)$, where $\m{Diff}_T^1(M)$ is defined in \eqref{eq:diff}.  
By Lemma \ref{lem:IRWupperbd} we may choose $M(\e,\delta)>0$ such that
\begin{align}\label{LRevent1}
    \sup_{\delta\sqrt{T}\le a-b \le \delta^{-1}\sqrt{T}} \Pr_{\m{IRW}}^{T;(a,b)}\big(\m{Left}_T(M)\big) \ge 1-\e.
\end{align}
for all large enough $T$. Hence with this choice of $M$ we have
\begin{align}\label{leftt}
    \sup_{\delta\sqrt{T}\le a-b \le \delta^{-1}\sqrt{T}}\left|{\Pr}_{\m{IRW}}^{T;(a,b)}(\mathsf{A}_N)-{\Pr}_{\m{IRW}}^{T;(a,b)}(\mathsf{A}_N\cap \m{Left}_T(M))\right| \le \e,
\end{align}
for sufficiently large $T$.  Recall from Lemma \ref{lem:IRWshift}  that the conditional law of $(L_1(2k-1)-L_1(1))_{k=1}^T$ under $\Pr_{\m{IRW}}^{T;(a,b)}$ given $L_1(1)$ is $\Pr_{W_T}^{T;(0,\m{h});(a',b')}$ (defined in \eqref{wscint}), where we have abbreviated $a'=a - L_1(1), b'=b-L_1(1)$. Thus by the tower property,
\begin{align*}
    {\Pr}_{\m{IRW}}^{T;(a,b)}(\mathsf{A}_N\cap \m{Left}_N(M)) & = {\Ex}_{\m{IRW}}^{T;(a,b)}\left[\ind_{\m{Left}_T(M)} \cdot {\Ex}_{\m{IRW}}^{T;(a,b)}\left[\ind_{\m{A}_N}\mid L_1(1)\right]\right] \\ & = \Ex_{\m{IRW}}^{T;(a,b)}\left[\ind_{\m{Left}_T(M)}\cdot\Pr_{W_T}^{T;(0,\m{h});(a',b')}\big((S_1(k))_{k=1}^r \in A\big)\right]
\end{align*}
Observe that on the event $\m{Left}_T(M)$ we have $a'-b' \ge \delta \sqrt{T}$ and $|a'|+|b'| \le (2M+\delta^{-1})\sqrt{T}$. Furthermore, by Theorem \ref{thm:conv}, for large enough $N$ we have 
\begin{align*}
  \sup_{a'-b' \ge \delta \sqrt{T}, |a'|+|b'| \le (2M+\delta^{-1})\sqrt{T}}  \left|\Pr_{W_T}^{T;(0,\m{h});(a',b')}\big((S_1(k))_{k=1}^r \in A\big)-\Pr\left(\big(S_1^\uparrow(k)\big)_{k=1}^r \in A\right)\right|\le \e.
\end{align*}
 Let us write $\rho_{A}:=\Pr\big(\big(S_1^\uparrow(k)\big)_{k=1}^r \in A\big)$. Combining the above estimate with \eqref{leftt}, \eqref{AGood}, and \eqref{eq:LEIRW} we get that
\begin{align*}
    \left|\Pr(\m{A}_N)- \rho_{A} \cdot \Ex\left[\ind_{\m{Good}_N(\delta)} \Pr_{\m{IRW}}^{T;(a,b)}(\m{Left}_T(M))\right]\right|\le 4\e.
\end{align*}
for large enough $N$. Combining the above estimate with the probability estimates in \eqref{good>1-e} and \eqref{LRevent1} leads to \eqref{eq:mainthm}. All we are left to show is \eqref{eq:LEIRW}. Towards this end, by the relation in \eqref{def:Gibbslaw}, we have 
    \begin{equation}\label{lem5.1gibbs}
     \mathbb{P}_{\Phi}^{(a,b,\vec{c})}(\mathsf{A}_N) = \frac{\Ex_{\mathsf{IRW}}^{T;(a,b)}[e^{-\mathcal{H}(\vec{c})}\mathbf{1}_{\mathsf{A}_N}]}{\Ex_{\mathsf{IRW}}^{T;(a,b)}[e^{-\mathcal{H}(\vec{c})}]},
    \end{equation}
    where $\mathcal{H}(\vec{c})$ is defined in \eqref{def:Gibbslaw}.
    Define a new event
    \[
    \mathsf{High}^2_N(\delta) = \left\{\inf_{k\in\llbracket 1,2T\rrbracket} \left(L_2(k)-b\right) \geq -\tfrac{\delta}{4}N^{1/3}\right\}.
    \]
    Lemma \ref{lem:IRWupperbd} implies that for sufficiently small $\delta$ we have
    \begin{equation*}
    \inf_{\delta\sqrt{T} \le a-b \le \delta^{-1}\sqrt{T}} \Pr_{\mathsf{IRW}}^{T;(a,b)}\left(\mathsf{High}^2_N(\delta)\right) > 1-\e.
    \end{equation*}
   for all large enough $T$. Note that on the event $\mathsf{High}^2_N(\delta)$, we have
    \[
    \inf_{k\in\llbracket 1,T-1\rrbracket } \left(\min\{L_2(2k-1), L_2(2k+1)\} - c_k\right) \geq \tfrac{\delta}{2}N^{1/3},
    \]
    for all $(a,b,\vec{c})\in D_N(\delta)$. In particular, this implies all exponents in the weight $\mathcal{H}$ are bounded above by $-\frac{\delta}{2}N^{1/3}$, which forces $e^{-\mathcal{H}(\vec{c})} \geq \exp(-2N^{1/3}e^{-\frac{\delta}{2}N^{1/3}}) = 1-o_N(1)$ on the event $\m{High}^2_N(\delta)$. Thus, for any $\eta>0$, 
    \[
    \sup_{(a,b,\vec{c})\in D_N(\delta)} \Pr_{\mathsf{IRW}}^{T;(a,b)}\left(|e^{-\mathcal{H}(\vec{c})}-1| < \eta \right) \geq 1-C\varepsilon,
    \]
    for large enough $N$. In other words, $e^{-\mathcal{H}(\vec{c})}$ is close to $1$ with high probability. Using this inequality in \eqref{lem5.1gibbs}, a straightforward computation leads to \eqref{eq:LEIRW} by readjusting $\e$. 
\end{proof}

\section{Tightness of the half-space log-gamma line ensemble}\label{sec6}

In this section, we prove Theorem \ref{p.tight3}. Towards this end, we first establish endpoint tightness in Section \ref{seca1} and then conclude process-level tightness in Section \ref{seca2}. Before going into the details, we introduce certain multilevel versions of softly non-intersecting random walk bridges and $\m{IRW}$ that will appear in the proof.

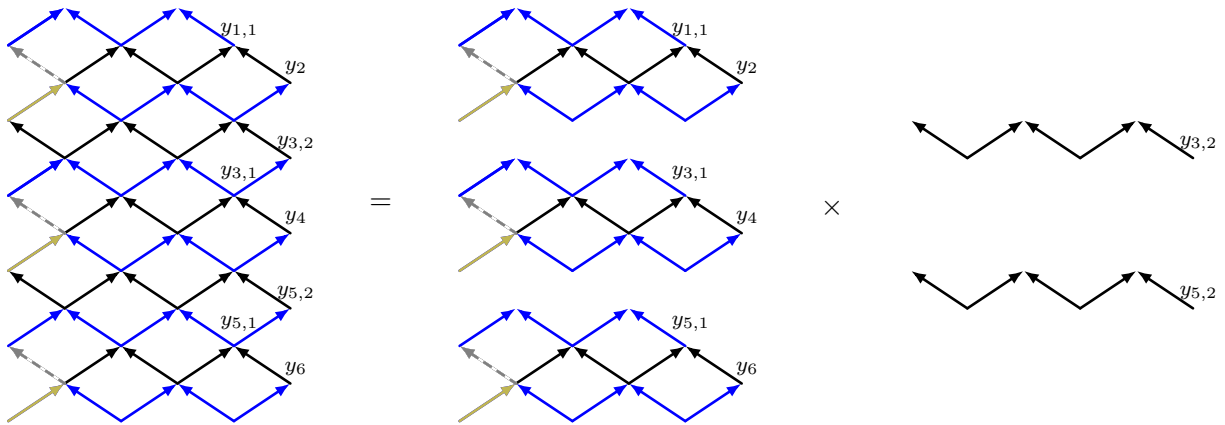
\begin{figure}[h!]
    \centering
    \begin{tikzpicture}[line cap=round,line join=round,>=triangle 45,x=1.5cm,y=1cm]
				\foreach \x in {0,1}
				{
				
					\draw[line width=1pt,blue,{Latex[length=2mm]}-]  (\x,0) -- (\x-0.5,-0.5);
					\draw[line width=1pt,blue,{Latex[length=2mm]}-] (\x,0) -- (\x+0.5,-0.5);
					\draw[line width=1pt,black,{Latex[length=2mm]}-] (\x-0.5,-0.5) -- (\x,-1);
					\draw[line width=1pt,black,{Latex[length=2mm]}-] (\x+0.5,-0.5) -- (\x,-1);
					\draw[line width=1pt,blue,{Latex[length=2mm]}-]  (\x,-1) -- (\x-0.5,-1.5);
					\draw[line width=1pt,blue,{Latex[length=2mm]}-] (\x,-1) -- (\x+0.5,-1.5);
					\draw[line width=1pt,black,{Latex[length=2mm]}-] (\x-0.5,-1.5) -- (\x,-2);
					\draw[line width=1pt,black,{Latex[length=2mm]}-] (\x+0.5,-1.5) -- (\x,-2);
					\draw[line width=1pt,blue,{Latex[length=2mm]}-]  (\x,-2) -- (\x-0.5,-2.5);
					\draw[line width=1pt,blue,{Latex[length=2mm]}-] (\x,-2) -- (\x+0.5,-2.5);
					\draw[line width=1pt,black,{Latex[length=2mm]}-] (\x-0.5,-2.5) -- (\x,-3);
					\draw[line width=1pt,black,{Latex[length=2mm]}-] (\x+0.5,-2.5) -- (\x,-3);
					\draw[line width=1pt,blue,{Latex[length=2mm]}-]  (\x,-3) -- (\x-0.5,-3.5);
					\draw[line width=1pt,blue,{Latex[length=2mm]}-] (\x,-3) -- (\x+0.5,-3.5);
					\draw[line width=1pt,blue,{Latex[length=2mm]}-]  (\x,-4) -- (\x-0.5,-4.5);
					\draw[line width=1pt,blue,{Latex[length=2mm]}-] (\x,-4) -- (\x+0.5,-4.5);
                    \draw[line width=1pt,blue,{Latex[length=2mm]}-]  (\x,-5) -- (\x-0.5,-5.5);
					\draw[line width=1pt,blue,{Latex[length=2mm]}-] (\x,-5) -- (\x+0.5,-5.5);  
                    \draw[line width=1pt,black,{Latex[length=2mm]}-] (\x-0.5,-3.5) -- (\x,-4);
					\draw[line width=1pt,black,{Latex[length=2mm]}-] (\x+0.5,-3.5) -- (\x,-4);
                    \draw[line width=1pt,black,{Latex[length=2mm]}-] (\x-0.5,-4.5) -- (\x,-5);
					\draw[line width=1pt,black,{Latex[length=2mm]}-] (\x+0.5,-4.5) -- (\x,-5);
				}  
    \foreach \x in {0,1,2,3,4}
				{
    \draw[line width=1pt,blue,{Latex[length=2mm]}-]  (2,\x-5) -- (1.5,\x-5.5);
    \draw[line width=1pt,black,{Latex[length=2mm]}-]  (1.5,\x-4.5) -- (2,\x-5);
    }
			\draw[line width=1pt,white,{Latex[length=2mm]}-] (-0.5,-2.5) -- (0,-3);
   \draw[line width=1pt,gray,{Latex[length=2mm]}-,dashed] (-0.5,-2.5) -- (0,-3);
   \draw[line width=1pt,white,{Latex[length=2mm]}-] (-0.5,-4.5) -- (0,-5);
   \draw[line width=1pt,gray,{Latex[length=2mm]}-,dashed] (-0.5,-4.5) -- (0,-5);
				\draw[line width=1pt,white,{Latex[length=2mm]}-] (-0.5,-0.5) -- (0,-1);
				
				\draw[line width=1pt,blue,{Latex[length=2mm]}-] (0,-2) -- (-0.5,-2.5);
				\draw[line width=1pt,yellow!70!black,{Latex[length=2mm]}-] (0,-3) -- (-0.5,-3.5);
                \draw[line width=1pt,yellow!70!black,{Latex[length=2mm]}-] (0,-5) -- (-0.5,-5.5);
				\draw[line width=1pt,gray,{Latex[length=2mm]}-,dashed] (-0.5,-0.5) -- (0,-1);
				\draw[line width=1pt,blue,{Latex[length=2mm]}-] (0,0) -- (-0.5,-0.5);
				\draw[line width=1pt,yellow!70!black,{Latex[length=2mm]}-] (0,-1) -- (-0.5,-1.5);
    \foreach \x in {4,5}
				{
				
					\draw[line width=1pt,blue,{Latex[length=2mm]}-]  (\x,0) -- (\x-0.5,-0.5);
					\draw[line width=1pt,blue,{Latex[length=2mm]}-] (\x,0) -- (\x+0.5,-0.5);
					\draw[line width=1pt,black,{Latex[length=2mm]}-] (\x-0.5,-0.5) -- (\x,-1);
					\draw[line width=1pt,black,{Latex[length=2mm]}-] (\x+0.5,-0.5) -- (\x,-1);
					\draw[line width=1pt,blue,{Latex[length=2mm]}-]  (\x,-1) -- (\x-0.5,-1.5);
					\draw[line width=1pt,blue,{Latex[length=2mm]}-] (\x,-1) -- (\x+0.5,-1.5);
					\draw[line width=1pt,blue,{Latex[length=2mm]}-]  (\x,-2) -- (\x-0.5,-2.5);
					\draw[line width=1pt,blue,{Latex[length=2mm]}-] (\x,-2) -- (\x+0.5,-2.5);
					\draw[line width=1pt,black,{Latex[length=2mm]}-] (\x-0.5,-2.5) -- (\x,-3);
					\draw[line width=1pt,black,{Latex[length=2mm]}-] (\x+0.5,-2.5) -- (\x,-3);
					\draw[line width=1pt,blue,{Latex[length=2mm]}-]  (\x,-3) -- (\x-0.5,-3.5);
					\draw[line width=1pt,blue,{Latex[length=2mm]}-] (\x,-3) -- (\x+0.5,-3.5);
					\draw[line width=1pt,blue,{Latex[length=2mm]}-]  (\x,-4) -- (\x-0.5,-4.5);
					\draw[line width=1pt,blue,{Latex[length=2mm]}-] (\x,-4) -- (\x+0.5,-4.5);
                    \draw[line width=1pt,blue,{Latex[length=2mm]}-]  (\x,-5) -- (\x-0.5,-5.5);
					\draw[line width=1pt,blue,{Latex[length=2mm]}-] (\x,-5) -- (\x+0.5,-5.5);  
                    \draw[line width=1pt,black,{Latex[length=2mm]}-] (\x-0.5,-4.5) -- (\x,-5);
					\draw[line width=1pt,black,{Latex[length=2mm]}-] (\x+0.5,-4.5) -- (\x,-5);
				}  
    \foreach \x in {0,2}
				{
    \foreach \y in {2,3,4}
    {
    \draw[line width=1pt,black,{Latex[length=2mm]}-]  (\y+5.5,\x-3.5) -- (\y+6,\x-4);
    }
    \foreach \y in {2,3}
    {
    \draw[line width=1pt,black,{Latex[length=2mm]}-]  (\y+6.5,\x-3.5) -- (\y+6,\x-4);
    }
    }
    \foreach \x in {0,2,4}
				{
    \draw[line width=1pt,blue,{Latex[length=2mm]}-]  (6,\x-5) -- (5.5,\x-5.5);
    \draw[line width=1pt,black,{Latex[length=2mm]}-]  (5.5,\x-4.5) -- (6,\x-5);
    }
			\draw[line width=1pt,white,{Latex[length=2mm]}-] (3.5,-2.5) -- (4,-3);
   \draw[line width=1pt,gray,{Latex[length=2mm]}-,dashed] (3.5,-2.5) -- (4,-3);
   \draw[line width=1pt,white,{Latex[length=2mm]}-] (3.5,-4.5) -- (4,-5);
   \draw[line width=1pt,gray,{Latex[length=2mm]}-,dashed] (3.5,-4.5) -- (4,-5);
				\draw[line width=1pt,white,{Latex[length=2mm]}-] (3.5,-0.5) -- (4,-1);
				
				\draw[line width=1pt,blue,{Latex[length=2mm]}-] (4,-2) -- (3.5,-2.5);
				\draw[line width=1pt,yellow!70!black,{Latex[length=2mm]}-] (4,-3) -- (3.5,-3.5);
                \draw[line width=1pt,yellow!70!black,{Latex[length=2mm]}-] (4,-5) -- (3.5,-5.5);
				\draw[line width=1pt,gray,{Latex[length=2mm]}-,dashed] (3.5,-0.5) -- (4,-1);
				\draw[line width=1pt,blue,{Latex[length=2mm]}-] (4,0) -- (3.5,-0.5);
				\draw[line width=1pt,yellow!70!black,{Latex[length=2mm]}-] (4,-1) -- (3.5,-1.5);
    \begin{scriptsize}
        \draw (1.55,-0.1) node[anchor=north] {$y_{1,1}$};
        \draw (1.55,-2) node[anchor=north] {$y_{3,1}$};
        \draw (1.55,-4) node[anchor=north] {$y_{5,1}$};
        \draw (2.05,-1.6) node[anchor=north] {$y_{3,2}$};
        \draw (2.05,-3.6) node[anchor=north] {$y_{5,2}$};
        \draw (2.05,-0.6) node[anchor=north] {$y_{2}$};
        \draw (2.05,-2.6) node[anchor=north] {$y_{4}$};
        \draw (2.05,-4.6) node[anchor=north] {$y_{6}$};
        \draw (5.55,-0.1) node[anchor=north] {$y_{1,1}$};
        \draw (5.55,-2) node[anchor=north] {$y_{3,1}$};
        \draw (5.55,-4) node[anchor=north] {$y_{5,1}$};
        \draw (6.05,-0.6) node[anchor=north] {$y_{2}$};
        \draw (6.05,-2.6) node[anchor=north] {$y_{4}$};
        \draw (6.05,-4.6) node[anchor=north] {$y_{6}$};
        \draw (10.05,-1.6) node[anchor=north] {$y_{3,2}$};
        \draw (10.05,-3.6) node[anchor=north] {$y_{5,2}$};
    \end{scriptsize}
    \draw (2.8,-2.4) node[anchor=north] {$=$};
    \draw (6.8,-2.4) node[anchor=north] {$\times$};
			\end{tikzpicture}
   \caption{Graphical structure for the $m$-$\m{IRW}$ law with $m=3$ shown on the left. It can be decomposed into three independent $\m{IRW}$s as shown in the middle along with a Radon--Nikodym derivative $W_{\m{block}}$ coming from the black edges shown on the right.}
    \label{fig:enter-label}
\end{figure}

\begin{definition}\label{mirw} We define the $m$-$\m{IRW}$ law of length $T$ with boundary conditions
\begin{align}\label{newvec}
    \vec{y}:=(y_{1,1},y_2,y_{3,1},y_{3,2},y_4, y_{5,1},y_{5,2},\ldots, y_{2m})
\end{align}
to be the $\hslg$ Gibbs measure on the domain 
\begin{align}
    \label{lambdamt}
    \Lambda_{m,T}:=\{(i,j) : i\in \ll1,2m\rr, j\in \ll1,2T+\ind_{i\;\m{even}}-2\rr\}
\end{align} with boundary conditions $u_{2k-1,2T-1}=y_{2k-1,1}, u_{2k,2T}=y_{2k}, u_{2k-1,2T}=y_{2k-1,2}$ for $k\in \ll2,m\rr$, and  $u_{2m+1,2j}=-\infty$ for $j\in \ll1,T\rr$. We denote the law of this measure by $\Pr_{m}^{T;\vec{y}}$. Note that the boundary data of $m$-$\m{IRW}$ is an element of $\R^{3m-1}$. {In the following we will always write such boundary conditions as in \eqref{newvec}}. 
\end{definition}

\begin{remark}
    We remark that $m$-$\m{IRW}$ is absolutely continuous w.r.t.~the law of $m$ independent $\m{IRW}$s. Indeed, we have
\begin{align}\label{block}
    \Pr_{m}^{T;\vec{y}}(\m{A})= \frac{\Ex_{\m{IRW}^{(m)}}^{T;\vec{y}}[W_{\m{block}}\ind_{\m{A}}]}{\Ex_{\m{IRW}^{(m)}}^{T;\vec{y}}[W_{\m{block}}]}
\end{align}
where $$W_{\m{block}}=\exp\bigg(-\sum_{i=1}^{m-1}e^{y_{2i+1,2}-L_{2i}(2T-1)}-\sum_{i=1}^{m-1}\sum_{j=1}^{T-1} (e^{L_{2i+1}(2j)-L_{2i}(2j-1)}+e^{L_{2i+1}(2j)-L_{2i}(2j+1)})\bigg),$$ 
and $\Pr_{\m{IRW}^{(m)}}^{T;\vec{y}}$ denotes the joint law of $m$ independent $\m{IRW}$s of length $T$ with boundary conditions $(y_{2i-1,1},y_{2i})$ for $i\in \ll1,m\rr$ (see Figure \ref{fig:enter-label}).
\end{remark}

For $m\ge 2$, we shall also be interested in the $\hslg$ Gibbs measure on the domain
\begin{align}
\label{fre}\Upsilon=\Upsilon(m):=\big\{(i,j)\in \Z_{\ge 1}^2 : i\in \{2m-1,2m\}, j\in \ll1,2T+\ind_{i\;\m{even}}-2 \rr\big\}    
\end{align}
with boundary conditions $u_{2m-1,2T-1}=a$, $u_{2m,2T}=b$, $u_{2m-2,2j-1}=c_j$, and $u_{2m+1,2j}=-\infty$ for $j\in \ll1,T\rr$. We shall denote this measure by $\Pr_{\Upsilon}^{T;(a,b,\vec{c})}$. Here we have suppressed the dependency on $m$ from the notation. As shown in Figure \ref{fig:enter-label2},  this law can also be viewed as an $\m{IRW}$ law hit with a Radon--Nikodym derivative. Indeed, just like \eqref{def:Gibbslaw}, here we have
\begin{align}\label{def:Glaw2}
   \Pr_{\Upsilon}^{T;(a,b,\vec{c})}(\m{A})= \frac{\Ex_{\m{IRW}}^{T;(a,b)}[V\ind_{\m{A}}]}{\Ex_{\m{IRW}}^{T;(a,b)}\left[V\right]}, \quad V:=V(\vec{c}):=\exp\bigg(- \sum_{j=1}^{T-1}(e^{L_{2m-1}(2j)-c_j}+e^{L_{2m-1}(2j)-c_{j+1}})\bigg).
\end{align}

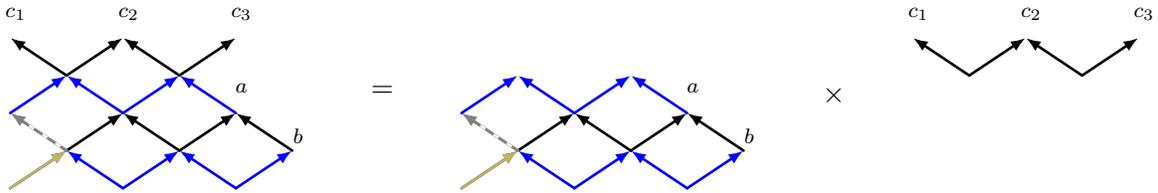
\begin{figure}[h!]
    \centering
    \begin{tikzpicture}[line cap=round,line join=round,>=triangle 45,x=1.5cm,y=1cm]
				\foreach \x in {0,1}
				{
					\draw[line width=1pt,blue,{Latex[length=2mm]}-]  (\x,-4) -- (\x-0.5,-4.5);
					\draw[line width=1pt,blue,{Latex[length=2mm]}-] (\x,-4) -- (\x+0.5,-4.5);
                    \draw[line width=1pt,blue,{Latex[length=2mm]}-]  (\x,-5) -- (\x-0.5,-5.5);
					\draw[line width=1pt,blue,{Latex[length=2mm]}-] (\x,-5) -- (\x+0.5,-5.5);  
                    \draw[line width=1pt,black,{Latex[length=2mm]}-] (\x-0.5,-3.5) -- (\x,-4);
					\draw[line width=1pt,black,{Latex[length=2mm]}-] (\x+0.5,-3.5) -- (\x,-4);
                    \draw[line width=1pt,black,{Latex[length=2mm]}-] (\x-0.5,-4.5) -- (\x,-5);
					\draw[line width=1pt,black,{Latex[length=2mm]}-] (\x+0.5,-4.5) -- (\x,-5);
				}  
    \foreach \x in {0}
				{
    \draw[line width=1pt,blue,{Latex[length=2mm]}-]  (2,\x-5) -- (1.5,\x-5.5);
    \draw[line width=1pt,black,{Latex[length=2mm]}-]  (1.5,\x-4.5) -- (2,\x-5);
    }
			\draw[line width=1pt,white,{Latex[length=2mm]}-] (-0.5,-2.5) -- (0,-3);
   \draw[line width=1pt,white,{Latex[length=2mm]}-] (-0.5,-4.5) -- (0,-5);
   \draw[line width=1pt,gray,{Latex[length=2mm]}-,dashed] (-0.5,-4.5) -- (0,-5);
                \draw[line width=1pt,yellow!70!black,{Latex[length=2mm]}-] (0,-5) -- (-0.5,-5.5);
    \foreach \x in {4,5}
				{
					\draw[line width=1pt,blue,{Latex[length=2mm]}-]  (\x,-4) -- (\x-0.5,-4.5);
					\draw[line width=1pt,blue,{Latex[length=2mm]}-] (\x,-4) -- (\x+0.5,-4.5);
                    \draw[line width=1pt,blue,{Latex[length=2mm]}-]  (\x,-5) -- (\x-0.5,-5.5);
					\draw[line width=1pt,blue,{Latex[length=2mm]}-] (\x,-5) -- (\x+0.5,-5.5);  
                    \draw[line width=1pt,black,{Latex[length=2mm]}-] (\x-0.5,-4.5) -- (\x,-5);
					\draw[line width=1pt,black,{Latex[length=2mm]}-] (\x+0.5,-4.5) -- (\x,-5);
				}  
    \foreach \x in {0}
				{
    \foreach \y in {2,3}
    {
    \draw[line width=1pt,black,{Latex[length=2mm]}-]  (\y+5.5,\x-3.5) -- (\y+6,\x-4);
    }
    \foreach \y in {2,3}
    {
    \draw[line width=1pt,black,{Latex[length=2mm]}-]  (\y+6.5,\x-3.5) -- (\y+6,\x-4);
    }
    }
    \foreach \x in {0}
				{
    \draw[line width=1pt,blue,{Latex[length=2mm]}-]  (6,\x-5) -- (5.5,\x-5.5);
    \draw[line width=1pt,black,{Latex[length=2mm]}-]  (5.5,\x-4.5) -- (6,\x-5);
    }
			
   \draw[line width=1pt,white,{Latex[length=2mm]}-] (3.5,-4.5) -- (4,-5);
   \draw[line width=1pt,gray,{Latex[length=2mm]}-,dashed] (3.5,-4.5) -- (4,-5);
				
                \draw[line width=1pt,yellow!70!black,{Latex[length=2mm]}-] (4,-5) -- (3.5,-5.5);
    \begin{scriptsize}
        \draw (1.55,-4) node[anchor=north] {$a$};
         \draw (5.55,-4) node[anchor=north] {$a$};
        \draw (1.55,-3) node[anchor=north] {$c_3$};
        \draw (0.55,-3) node[anchor=north] {$c_2$};
        \draw (-0.45,-3) node[anchor=north] {$c_1$};
        \draw (9.55,-3) node[anchor=north] {$c_3$};
        \draw (8.55,-3) node[anchor=north] {$c_2$};
        \draw (7.55,-3) node[anchor=north] {$c_1$};
        \draw (2.05,-4.6) node[anchor=north] {$b$};
        \draw (6.05,-4.6) node[anchor=north] {$b$};
    \end{scriptsize}
    \draw (2.8,-4) node[anchor=north] {$=$};
    \draw (6.8,-4) node[anchor=north] {$\times$};
			\end{tikzpicture}
   \caption{The $\Pr_{\Upsilon}^{T;(a,b,\vec{c})}$ measure with $T=3$. 
   It can be decomposed into an $\m{IRW}$ as shown in the middle times a Radon--Nikodym derivative $V$ coming from the top black edges shown on the right.}
    \label{fig:enter-label2}
\end{figure}

The $m$-$\m{IRW}$ measure arises upon conditioning the $\hslg$ line ensemble with one-sided boundary data. We can also condition on two-sided boundary data, giving rise to softly non-intersecting random walk bridges. The following proposition says that such bridges converge weakly to non-intersecting Brownian bridges under diffusive scaling provided the endpoints are separated (on the diffusive scale).

\begin{proposition}\label{sftnon} Suppose $(T_{k,i})_{k\in \{1,2\},i\in \ll1,m\rr}$ is a collection of integers satisfying $T_{2,i} > T_{1,i}+2\ge 5$ and $|T_{k,i_1}-T_{k,i_2}| \le 1$  for $k\in \{1,2\}$ and $i_1,i_2 \in \ll1,2m\rr$. Consider the $\hslg$ Gibbs measure on the domain $$\hat\Lambda:=\{(i,j) : i\in \ll1,m\rr, j\in \ll T_{1,i}+1,T_{2,i}-1\rr\}$$ 
with boundary conditions $(u_{i,j})_{(i,j)\in \partial\hat\Lambda}$ satisfying $u_{m+1,j}=-\infty$ for all $(m+1,j)\in \partial\hat\Lambda$. Assume
$$\frac{u_{i,j_1}}{\sqrt{T_{2,i}-T_{1,i}}} \to a_i \mbox{ and } \frac{u_{i,j_2}}{\sqrt{T_{2,i}-T_{1,i}}} \to b_i \mbox{ for }(i,j_1),(i,j_2)\in \partial\hat\Lambda \mbox{ with }j_1\le T_{1,i} \mbox{ and }j_2\ge T_{2,i},$$
as $T_{2,i}-T_{1,i} \to \infty.$
Suppose further that $a_1>a_2>\cdots>a_{m}$ and $b_1>b_2>\cdots>b_m$. Then we have
$$\bigg(\frac{L_i\big(T_{1,i}+x(T_{2,i}-T_{1,i})\big)}{\sqrt{T_{2,i}-T_{1,i}}}\bigg)_{i=1}^m \stackrel{d}{\longrightarrow} (B_i(x))_{i=1}^m$$
in the topology of uniform convergence on $C([0,1], \R^m)$. Here $(B_i)_{i=1}^m$ are $m$ Brownian bridges starting from $(a_1,\ldots, a_m)$ and ending at $(b_1,\ldots,b_m)$ conditioned not to intersect.
\end{proposition}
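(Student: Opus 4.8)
\textbf{Proof plan for Proposition \ref{sftnon}.} The plan is to reduce the Gibbs measure on $\hat\Lambda$ to a weighted version of $m$ independent random walk bridges, then pass to the diffusive limit using a Skorokhod-coupling argument analogous to Proposition \ref{invarprin1}. First I would write out the explicit density of the $\hslg$ Gibbs measure on $\hat\Lambda$ from \eqref{e:hsgibb}: after collecting the blue and black edge weights, the density factorizes as a product of a ``free'' part, which is exactly the density of $m$ independent random walk bridges with $\fa$-increments (in the parity-split sense, using the odd/even interlacing as in Lemma \ref{wprwconn} and the definition of $\m{IRW}$), times a Radon--Nikodym factor of the form $\exp(-\sum_{i,k} e^{L_{i+1}(\cdot)-L_i(\cdot)})$ coming from the interaction edges between consecutive levels (with the bottom boundary $u_{m+1,\cdot}=-\infty$ contributing nothing). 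Call this weight $\mathcal{W}$; it is a soft-non-intersection penalty between the $m$ curves of exactly the type appearing in \eqref{defw}, just with $m$ levels instead of $2$. Thus the measure on $\hat\Lambda$ is the law of $m$ independent $\fa$-random walk bridges from the left boundary data to the right boundary data, reweighted by $\mathcal{W}$.

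Next I would set $n_i := T_{2,i}-T_{1,i}$ (all comparable, differing by $O(1)$) and rescale: $\tilde L_i(x) := L_i(T_{1,i}+xn_i)/\sqrt{n_i}$ for $x\in[0,1]$. Under the \emph{unweighted} law of independent bridges, Donsker's invariance principle for bridges (using Assumption \ref{asp:in}, in particular the variance $\sigma^2=\int x^2\fa$ and the subexponential tails, exactly as invoked throughout Section \ref{sec31}) gives that $(\tilde L_i)_{i=1}^m$ converges weakly in $C([0,1],\R^m)$ to $m$ independent Brownian bridges $(\beta_i)_{i=1}^m$ from $(a_i)$ to $(b_i)$, with common variance $\sigma^2$. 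By the Skorokhod representation theorem I would pass to a probability space on which this convergence is almost sure and uniform in $x$. Then, exactly as in \eqref{Wnonint}--\eqref{Wlim}, the uniform convergence $\tilde L_i \to \beta_i$ forces $\mathcal{W} \to \mathbf{1}\{\beta_1(x)>\beta_2(x)>\cdots>\beta_m(x)\text{ for all }x\in[0,1]\}$ almost surely: the penalty terms $e^{L_{i+1}-L_i}$ scaled in the exponent blow up to $+\infty$ (so $\mathcal{W}\to 0$) on $\{\beta_i(x)<\beta_{i+1}(x)\text{ for some }x\}$ and vanish (so $\mathcal{W}\to 1$) on the strictly-ordered event, and the boundary event $\{\beta_i=\beta_{i+1}\text{ somewhere, but nowhere crossing strictly}\}$ has probability zero by continuity of Brownian bridges (cf.\ \cite[Corollary 2.9]{CH14}). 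Crucially, the strict ordering $a_1>\cdots>a_m$ and $b_1>\cdots>b_m$ of the endpoints guarantees that $\Pr(\beta_1>\cdots>\beta_m\text{ on }[0,1]) > 0$, so the limiting conditional law is well-defined.

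Finally, for any bounded continuous functional $F$ on $C([0,1],\R^m)$ I would write $\Ex_{\hat\Lambda}[F((\tilde L_i))] = \Ex[F((\tilde L_i))\mathcal{W}]/\Ex[\mathcal{W}]$ (the normalization being finite and bounded below, which follows from the soft-vs-true non-intersection comparisons of the type in Lemmas \ref{l:ni} and \ref{EWlbd}, adapted to $m$ levels --- this is where I would borrow the $m$-level non-intersection estimates, essentially the $m=2$ arguments applied consecutively), and apply dominated convergence using $\mathcal{W}\le 1$ and the a.s.\ convergence above. This yields
\[
\Ex_{\hat\Lambda}\big[F\big((\tilde L_i)_{i=1}^m\big)\big] \longrightarrow \frac{\Ex\big[F((\beta_i))\,\mathbf{1}\{\beta_1>\cdots>\beta_m\text{ on }[0,1]\}\big]}{\Pr(\beta_1>\cdots>\beta_m\text{ on }[0,1])},
\]
which is precisely the law of $m$ non-intersecting Brownian bridges from $(a_i)$ to $(b_i)$, giving the claimed weak convergence.

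\textbf{Main obstacle.} The delicate point is not the invariance principle itself but controlling the normalizing constant $\Ex[\mathcal{W}]$ from below uniformly as $n_i\to\infty$ --- i.e.\ checking that the soft-non-intersection weight for $m$ bridges with strictly separated endpoints does not decay faster than the true non-intersection probability, so that the Radon--Nikodym derivative $\mathcal{W}/\Ex[\mathcal{W}]$ stays uniformly integrable. This requires extending the $m=2$ estimates of Appendix C of \bcd\ (used here as Lemma \ref{l:ni}) and the lower bound Lemma \ref{EWlbd} to $m$ levels; the extension is routine in spirit --- one compares $\mathcal{W}$ to the indicator of the true ordering event and uses known polynomial asymptotics ($\sim c\, n^{-m(m-1)/4}$) for non-intersection probabilities of random walk bridges with separated endpoints --- but it is the technical heart of the argument. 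A secondary point is bookkeeping the odd/even parity structure of the Gibbs property so that the ``free part'' genuinely becomes independent bridges with $\fa$-increments; this is handled exactly as in Lemma \ref{wprwconn} and Definition \ref{def:IRW}, at the cost of some index juggling which I would suppress.
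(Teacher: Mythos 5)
Your proposal is correct in its main thrust, and it is essentially the same approach the paper takes: the paper declines to write out a detailed proof, citing \cite[Lemma~3.10]{serio} for the truly non-intersecting case and pointing to Proposition~\ref{invarprin1} as a model for how to handle soft non-intersection; what you have written is precisely the $m$-curve version of the Skorokhod-coupling argument from Proposition~\ref{invarprin1}. Your factorization of the Gibbs density into independent $\fa$-bridges times a soft-non-intersection weight $\mathcal{W}$, the Donsker/Skorokhod step, the a.s.\ convergence $\mathcal{W}\to\mathbf{1}\{\beta_1>\cdots>\beta_m\}$ on $[0,1]$ via the squeeze in \eqref{Wnonint}--\eqref{Wlim}, and the dominated-convergence conclusion are all as in the paper's template.

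Where you go astray is in the ``main obstacle'' paragraph: the difficulty you single out is not actually present here. You worry about a uniform lower bound on the normalizer $\Ex[\mathcal{W}]$ and invoke the $n^{-m(m-1)/4}$ decay of non-intersection probabilities, which is what one would need if the left and right boundary data were converging to a common point after diffusive rescaling. But the hypotheses of Proposition~\ref{sftnon} give strictly separated limits $a_1>\cdots>a_m$ and $b_1>\cdots>b_m$. Consequently, the \emph{same} Skorokhod plus dominated-convergence argument you ran for a general bounded continuous $F$, applied with $F\equiv 1$, already yields $\Ex[\mathcal{W}]\to \Pr(\beta_1>\cdots>\beta_m\mbox{ on }[0,1])$, and the strict separation of the endpoints forces this limit to be a positive constant (this is exactly the role of the condition $z>0$ in Proposition~\ref{invarprin1}). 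There is therefore no rate estimate to chase, no need for the polynomial asymptotics from Lemma~\ref{l:ni}, and no uniform-integrability issue beyond $0\le\mathcal{W}\le 1$. The argument is as clean as your main sketch suggests; your closing paragraph invents a harder problem than the one at hand. The genuine bookkeeping you correctly flag is the parity/interlacing structure in reading off the free density and the interaction terms from \eqref{e:hsgibb}, and that is indeed handled as in Definition~\ref{def:IRW}, Lemma~\ref{wprwconn}, and the block decomposition \eqref{block}.
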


The above proposition for the case of (truly) non-intersecting random walk bridges essentially appears as Lemma 3.10 in \cite{serio}. The same proof goes through under soft non-intersection as well upon minor modification. We skip the details for brevity, but we refer to the proof of Proposition \ref{invarprin1} for a special case of the argument which illustrates how to deal with the soft non-intersection.

\subsection{Endpoint tightness} \label{seca1} The goal of this section is to show that the left endpoint of the line ensemble is tight (see Theorem \ref{thm:eptight} for precise statement). To begin with, we first claim that there are points on the $m$th curve which are at height $O(N^{1/3})$.

\begin{proposition}[High points on the $m$th curve] \label{p:high2k} Fix any $\e\in (0,1)$ and $m,k>0$. There exists $R_0(m,k,\e)>0$ such that for all $R\ge R_0$,
		\begin{align}\label{e:thigh}
			\liminf_{N\to \infty}\Pr\left(\sup_{p\in [kN^{2/3},RN^{2/3}]}\mathcal{L}_{m}^N(2p) \ge -\big(\tfrac18R^2\nu+2\sqrt{R}\big)N^{1/3} \right)>1-\e,
		\end{align}
		where $\nu:=\frac{(\Psi'(\theta))^2}{(-\Psi''(\theta))^{4/3}}.$
\end{proposition}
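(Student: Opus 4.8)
\textbf{Proof proposal for Proposition \ref{p:high2k}.}

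The plan is to use the known one-point lower tail behavior of the free energy of the half-space log-gamma polymer on the anti-diagonal, together with the identification \eqref{impi} of the top curve with $\log Z$ (up to a deterministic shift) and the soft ordering \eqref{t41} that controls how far the $m$th curve sits below the first. Concretely, recall that under the identification in Theorem \ref{thm:conn}\ref{i01}, $\L_1^N(2j+1) \stackrel{(d)}{=} \log Z(N+j,N-j)+2N\Psi(\theta)$. The correct centering scale is governed by the constant $\nu = (\Psi'(\theta))^2/(-\Psi''(\theta))^{4/3}$, so that along the direction parametrized by $j = sN$ one expects $\log Z(N+sN,N-sN) + 2N\Psi(\theta) \approx -\tfrac18 s^2 \nu N + O(N^{1/3})$ fluctuations; restricting to $j$ of order $N^{2/3}$, i.e. $s$ of order $N^{-1/3}$, this deterministic parabola becomes of order $N^{1/3}$. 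The point $p \in [kN^{2/3}, RN^{2/3}]$ in the statement corresponds to horizontal coordinate $2p$ of the $m$th curve, which after going up from curve $m$ to curve $1$ via \eqref{t41} is at height at least $\L_1^N(\text{nearby odd index}) - O(m(\log N)^{7/6})$.

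The first step would be to invoke the one-point lower-bound estimate for $\log Z$ from \bcd\ (this is exactly the type of estimate used to prove tightness of the top curve there): for fixed $s>0$ small and large $R$, with probability at least $1-\e/2$ one has $\L_1^N(2\lfloor R N^{2/3}\rfloor + 1) \ge -(\tfrac18 R^2 \nu + \sqrt{R})N^{1/3}$ for all large $N$. More precisely, I would show that the one-point law of $N^{-1/3}(\L_1^N(2\lfloor R N^{2/3}\rfloor+1) + \tfrac18 R^2\nu N^{1/3})$ is tight (this follows from the one-point tightness established in \bcd\ and the known first-order parabolic correction $-\tfrac18 R^2 \nu N^{1/3}$ to the free energy increment), which gives the $-\sqrt R$ slack for $R$ large. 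The second step is to transfer this from the top curve to the $m$th curve. Here I would run the soft ordering: on the event $\m{Ord}_{m,N}$ of Proposition \ref{pp:order}, which has probability $\ge 1-\rho^N$, we have
\begin{align*}
\L_m^N(2p) \ge \min\big(\L_{m-1}^N(2p-1), \L_{m-1}^N(2p+1)\big) - (\log N)^{7/6} \ge \cdots \ge \L_1^N(2p \pm 1) - (m-1)(\log N)^{7/6}
\end{align*}
for an appropriate choice of the $\pm$ at each level, for all $p \le N - m - 2$. Since $(\log N)^{7/6} = o(N^{1/3})$, this extra error is absorbed into the $\sqrt R N^{1/3}$ slack. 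Taking $p = \lfloor R N^{2/3}\rfloor$ (which lies in $[kN^{2/3}, RN^{2/3}]$ since $R > k$ can be assumed — one only needs $R_0 > k$) and combining the two steps via a union bound gives \eqref{e:thigh}.

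I also need to control the constant $\nu$ and the exact parabolic shift: the claim is that $\log Z(N+j, N-j) + 2N\Psi(\theta) = -\tfrac{(\Psi'(\theta))^2}{(-\Psi''(\theta))^{4/3}} \cdot \tfrac{j^2}{8N} \cdot (\text{const})^{-1}$ to leading order. I would extract this from the known law of large numbers / first-order expansion for the half-space log-gamma free energy as established in \bcd\ (the same place the centering $2N\Psi(\theta)$ and the $N^{1/3}$ scaling come from), where the Hessian of the rate function at the anti-diagonal direction produces precisely the coefficient $\tfrac18 R^2 \nu$ after the $p = R N^{2/3}$ substitution. \textbf{The main obstacle} I anticipate is making sure the first-order parabolic correction is matched to the precise constant $\nu$ appearing in the statement — the vertical $N^{1/3}$ fluctuations and the $(\log N)^{7/6}$ Gibbs-ordering errors are genuinely negligible, but pinning down that the deterministic curvature is exactly $\tfrac18 R^2 \nu N^{1/3}$ (and not some other multiple) requires carefully expanding the variational formula for the free energy from \bcd\ around the anti-diagonal direction and identifying the second derivative with $(\Psi'(\theta))^2/(-\Psi''(\theta))^{4/3}$. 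Once that computation is in hand, everything else is a union bound over the two events above.
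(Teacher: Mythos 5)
Your approach has a fundamental gap in the step that transfers the high point from the top curve down to the $m$th curve: the ordering inequality from Proposition \ref{pp:order} goes the wrong way. The event $\m{Ord}_{k,N}$ in \eqref{t41} asserts that $\L_{i+1}^N(2p) \le \min\big(\L_i^N(2p-1),\L_i^N(2p+1)\big)+(\log N)^{7/6}$, i.e.\ the $(i+1)$th curve is bounded \emph{above} by the $i$th curve up to a logarithmic error. It gives no lower bound on $\L_{i+1}^N$ in terms of $\L_i^N$. The chain of inequalities you write,
\[
\L_m^N(2p) \ge \min\big(\L_{m-1}^N(2p-1), \L_{m-1}^N(2p+1)\big) - (\log N)^{7/6} \ge \cdots,
\]
is precisely the reverse of what the ordering provides and is false in general. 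Since the curves are stacked with lower-indexed curves on top, there is no soft-exclusion or ordering mechanism that directly forces the $m$th curve up to meet the first; a priori $\L_m^N$ could sit arbitrarily far below $\L_1^N$ while still respecting the ordering, and indeed this is the whole content of the proposition.

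The paper's argument works by contradiction through the Gibbs property rather than by propagation. Assume the $m$th curve is uniformly low on $[kN^{2/3},RN^{2/3}]$ (event $\m{A}$) and that the endpoints of curves $1,\dots,m-1$ are also suitably low (event $\m{B}$, which \emph{does} use the ordering, but in the legitimate direction: to say that $\L_2,\dots,\L_{m-1}$ are bounded above by $\L_1$ at the endpoints). Conditioning on the $m$th curve and the exterior data, curves $1,\dots,m-1$ on the interval are a $\hslg$ Gibbs measure; raising the boundary data via stochastic monotonicity and invoking the invariance principle to non-intersecting Brownian bridges (Proposition \ref{sftnon}), one sees that under $\m{A}\cap\m{B}$ the top curve is, with high probability, trapped below a line that dips well under the threshold $-(M_0+\SSS^2\nu)N^{1/3}$ on a sub-window. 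This contradicts the high-points property of $\L_1^N$ from Proposition 3.4 in \bcd\ (your step 1 is a one-point version of that input; the paper actually needs the sup over a mesoscopic window, since the contradiction must be with a high point \emph{somewhere} inside). Your one-point LLN/tightness argument for $\L_1^N$ and the parabolic coefficient $\nu$ is in the right spirit for the first input, but the propagation step simply cannot be carried out the way you envision, and the pinching argument via the Gibbs property and Brownian bridge limit is the essential missing idea.
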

The $m=2$ case for the above proposition is Theorem 3.3 in \bcd. The strategy of our proof follows the same idea as in \bcd, so we will be brief.

	\begin{proof}[Proof of Theorem \ref{p:high2k}] For clarity we divide the proof into several steps.
		
		\medskip
		
		\noindent\textbf{Step 1.} In this step we define the notation and events used in the proof. Fix $\e\in (0,1)$ and $k>0$. By Proposition 3.4 in \bcd, there exists $M_0$ such that for all $Q>0$ we have
         \begin{align*}
	\Pr(\m{C}) \ge 1-\e, \quad 		\mathsf{C}:=\bigg\{ \sup_{p\in \ll QN^{2/3},(M_0+2Q)N^{2/3}\rr} {\L^N_1(2p+1)}+\SSS^2\nu N^{1/3}\ge -M_0N^{1/3}\bigg\}
		\end{align*}
  for all large enough $N$. We set $R$ large enough so that
		\begin{align}\label{eR}
			2^{-5}R\ge 2k+1, \quad M_0+\sqrt{R}-2^{-5}(\tfrac18R^2\nu+M_0)+R^{3/2} \le -M_0-2^{-10}R^2\nu, \quad R\ge 2M_0
		\end{align}
		and $\SSS:=2^{-5}R$. 
		We will assume some additional conditions on $R$ later, which will depend on certain probability bounds that will be specified in the next step. For convenience, we will also assume $kN^{2/3}$ and $RN^{2/3}$ are integers (instead of using floor functions below).  We set
		\begin{align*}
			a:=M_0N^{1/3}, \ \ b:=-\tfrac18R^2N^{1/3}\nu, \ \ n:=RN^{2/3}-kN^{2/3}, \ \ v:=-\big(\tfrac18R^2\nu+2\sqrt{R}\big)N^{1/3}.
		\end{align*}
		Let us define the sets $\mathcal{I}:=\ll \SSS N^{2/3},(M_0+2\SSS)N^{2/3}\rr$ and $\mathcal{J}:=\ll kN^{2/3},RN^{2/3}\rr$. Due to \eqref{eR}, we have $\mathcal{I} \subset \mathcal{J}$. Next, we define the following events:
		\begin{align*}
			\mathsf{A} &:=\bigg\{\sup_{p\in \mathcal{J}}{\mathcal{L}_{m}^N(2p)} \le v  \bigg\}, \quad \mathsf{B}_1 :=\bigg\{ \mathcal{L}_1^N(2kN^{2/3}+1) \le a,\mathcal{L}_1^N(2RN^{2/3}+1) \le b \bigg\}, \\
   \m{B}_2 & := \bigcap_{i=2}^{2m-1}\bigg\{  \L_{i}^N(2kN^{2/3}+1) \le a+2m(\log N)^2, \L_{i}^N(2RN^{2/3}+1) \le b+2m(\log N)^2 \bigg\}.
		\end{align*}
		Set $\m{B}:=\m{B}_1\cap \m{B}_2$.  By Propositions 3.4 and 3.5 in \bcd, we have $\Pr(\neg \m{B}_1) \le \e$ for all large enough $R$. On the other hand, by Proposition \ref{pp:order}, $\Pr(\m{B}_1 \cap \neg\m{B}_2) \le \e$. We claim that
		for all large enough $R$ we have
		\begin{align}\label{e:tosh}
			\Pr(\m{A}\cap\m{B}\cap \m{C}) \le \e.
		\end{align}
		We prove \eqref{e:tosh} in the subsequent steps. Assuming this, note that by union bound we have
		\begin{align*}
			\Pr(\neg \m{A}) \ge \Pr(\m{C})-\Pr(\neg \m{B})-\Pr(\m{A}\cap\m{B}\cap \m{C}) \ge 1-4\e.
		\end{align*}
		Changing $\e \mapsto \e/4$ we arrive at \eqref{e:thigh}. This completes the proof modulo \eqref{e:tosh}.
		
\medskip

		\noindent\textbf{Step 2.} We consider the $\sigma$-algebra
  $$\mathcal{F}:=\sigma\big(\L^N_{m}\ll1,2N-2m+2\rr, \L^N_{i}\ll1,2kN^{2/3}+1 \rr, \mathcal{L}_i^N\ll 2RN^{2/3}+1,2N-2i+2\rr) : i\in \ll1,m-1\rr\big).$$ Note that $\m{A}\cap \m{B} \in \mathcal{F}$. Hence $\Pr(\m{A}\cap \m{B}\cap \m{C})=\Ex\left[\ind_{\m{A}\cap\m{B}} \Ex\left[\ind_{\m{C}}\mid \mathcal{F}\right]\right].$ Using the Gibbs property we have $\Ex\left[\ind_{\m{C}}\mid \mathcal{F}\right] = \Pr_{\m{Gibbs}}^{\vec{x},\vec{y};\vec{z}}(\m{C}),$
where $\Pr_{\m{Gibbs}}^{\vec{x},\vec{y};\vec{z}}$ denotes the $\hslg$ Gibbs measure on the domain $\Lambda=\ll1,m-1\rr \times  \ll 2kN^{2/3}+2,2RN^{2/3}\rr$ with boundary conditions
\begin{align*}
   &  x_i:=\L_i^N(2kN^{2/3}+1), \, y_i:=\L_i^N(2RN^{2/3}+1) \mbox{ for } i\in \ll1,m-1\rr,  \\ & z_j:=\L_{m}^N(2kN^{2/3}+2j)  \mbox{ for } j\in \ll1,n\rr.
\end{align*}
  Observe that  on $\m{A}\cap \m{B}$,
\begin{align*}
    & x_i\le a_i:=a+\frac1{m}(m-i+1) \sqrt{n}, \quad 
    y_i \le b_i:=b+\frac1{m}(m-i+1)\sqrt{n}, \quad z_j \le v.
\end{align*}
By stochastic monotonicity the probability of the event $\m{C}$ increases as we increase the boundary data. Thus
\begin{equation}
\begin{split}
\ind_{\m{A}\cap\m{B}}\cdot\Pr_{\m{Gibbs}}^{\vec{x},\vec{y};\vec{z}}(\m{C}) & \le \ind_{\m{A}\cap\m{B}}\cdot\Pr_{\m{Gibbs}}^{\vec{a},\vec{b};v}(\m{C}) \\ & =\ind_{\m{A}\cap\m{B}}\cdot\frac{\Ex_{\m{Gibbs}}^{\vec{a},\vec{b};(-\infty)^n}\hspace{-0.1cm}\left[\exp\left(-2\displaystyle\sum_{j=kN^{2/3}+1}^{RN^{2/3}-1}e^{v-L_{m-1}(2j+1)}\right)\ind_{\m{C}}\right]}{\Ex_{\m{Gibbs}}^{\vec{a},\vec{b};(-\infty)^n}\left[\exp\left(-2\displaystyle\sum_{j=kN^{2/3}+1}^{RN^{2/3}-1}e^{v-L_{m-1}(2j+1)}\right)\right]}.\label{wcd}
\end{split}
\end{equation}
By Proposition \ref{sftnon}, we know under $\Pr_{\m{Gibbs}}^{\vec{a},\vec{b};(-\infty)^n}$

$$\left(L_i(2kN^{2/3}+nt+1)/\sqrt{n}\right)_{i=1}^{m-1} \stackrel{d}{\longrightarrow} (B_i(t))_{i=1}^{m-1}$$  as $N\to\infty$, where $B_i$ are non-intersecting Brownian bridges on $[0,2]$ with $B_i(0)=M_0/\sqrt{R-k}+(m-i+1)/m$ and $B_i(2)=-\frac18R^2\nu/\sqrt{R-k} +(m-i+1)/m$. Since $v/\sqrt{n} < B_{m-1}(0), B_{m-1}(2)$, there is a positive probability that $B_{m-1}(\cdot)$ stays above $v/\sqrt{n}+\delta$ for some $\delta>0$. But then for large enough $N$ we have $\Pr_{\m{Gibbs}}^{\vec{a},\vec{b};(-\infty)^n}(L_{m-1}(2j+1)-v \ge \tfrac12\delta\sqrt{n}) \ge \rho$ for some $\rho>0$. This forces
\begin{align}\label{wcd1}
    \Ex_{\m{Gibbs}}^{\vec{a},\vec{b};(-\infty)^n}\left[\exp\left(-2\displaystyle\sum_{j=kN^{2/3}+1}^{RN^{2/3}-1}e^{v-L_{m-1}(2j+1)}\right)\right] \ge \frac12\rho
\end{align}
for all large enough $N$. We now claim that for all large enough $R$ and $N$,
		\begin{equation}
  \begin{split}\label{edc}
			& \m{D} \subset \neg\m{{C}}, \quad \Pr_{\m{Gibbs}}^{\vec{a},\vec{b};(-\infty)^n}(\m{D})\ge 1-\tfrac12\e\rho, \ \ \mbox{ where } \\ &  \m{D}:=\left\{\sup_{i\in \ll 1,2n+1\rr} \left(L_1(i)-a_1-\tfrac{(i-1)(b_1-a_1)}{2n} \right) \le \sqrt{Rn} \right\}.
		\end{split}
  \end{equation}
		Note that \eqref{edc} implies 
  
  $$\Ex_{\m{Gibbs}}^{\vec{a},\vec{b};(-\infty)^n}\left[\exp\left(-2\displaystyle\sum_{j=kN^{2/3}+1}^{RN^{2/3}-1}e^{v-L_{m-1}(2j+1)}\right)\ind_{\m{C}}\right] \le \Pr_{\m{Gibbs}}^{\vec{a},\vec{b};(-\infty)^n}(\mathsf{C}) \le \tfrac12\e\rho.$$ Plugging this back in \eqref{wcd} along with the bound in \eqref{wcd1} yields that the r.h.s.~of \eqref{wcd} is at most $\e$. This proves \eqref{e:tosh}.	To verify \eqref{edc}, simply note that $\Pr_{\m{Gibbs}}^{\vec{a},\vec{b};(-\infty)^n}(\m{D})$ can be made arbitrarily close to $1$ by choosing $R$ and $N$ large enough due to the weak convergence from Proposition \ref{sftnon}.
		Let us now verify $\m{D}\subset \neg\m{C}$. For $q\ge \SSS$ we see that
		\begin{align*}
			a_1+\tfrac{(q-k)(b_1-a_1)}{R-k}+\sqrt{Rn} & \le \left(M_0+\sqrt{R} - \tfrac{\SSS-k}{R-k}(\tfrac18R^2\nu+M_0)+R^{3/2}\right)N^{1/3}  \\ & \le \left(M_0+\sqrt{R}  - 2^{-5}(\tfrac18R^2\nu+M_0)+R^{3/2}\right)N^{1/3} \le -\left(M_0 +\SSS^2\nu\right)N^{1/3}.
		\end{align*}
		The penultimate inequality follows by observing that as $\SSS=2^{-5} R$, we have $\SSS-k \ge 2^{-5}(R-k) >0$. Te last inequality follows from \eqref{eR}. Thus for all $p\ge \SSS N^{2/3}$,
		\begin{align*}
			a_1+\tfrac{(p-kN^{2/3})(b_1-a_1)}{(R-k)N^{2/3}}+\sqrt{Rn} \le -(M_0 + \SSS^2\nu) N^{1/3}.
		\end{align*}
		Clearly this implies $\m{D}\subset \neg\m{C}$, completing the proof of \eqref{edc}.
	\end{proof}

\begin{proposition} \label{p.nottoo} Fix $m\in \mathbb{Z}_{\geq 1}$ and $k,\e>0$. There exists $M=M(m,k,\e)>0$ such that
\begin{align} \label{e:highmn}
    \limsup_{N\to\infty}\Pr\big(\m{High}_{k,m,N}(M)\big) \le \e, \ \mbox{ where } \ \m{High}_{k,m,N}(M):=\bigg\{\sup_{j\in \ll 1, 2kN^{\frac23}\rr} \L_{m}^N(j)\ge M N^{\frac13}\bigg\}.
\end{align}
\begin{align} \label{e:lowmn}
    \limsup_{N\to\infty}\Pr\big(\m{Low}_{k,m,N}(M)\big) \le \e, \ \mbox{ where } \ \m{Low}_{k,m,N}(M):=\bigg\{\inf_{j\in \ll 1, 2kN^{\frac23}\rr} \L_{m}^N(j)\le -M N^{\frac13}\bigg\}.
\end{align}
\end{proposition}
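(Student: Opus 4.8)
\textbf{Proof plan for Proposition \ref{p.nottoo}.}

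The plan is to prove \eqref{e:highmn} and \eqref{e:lowmn} by induction on $m$, combining the high-points estimate from Proposition \ref{p:high2k} with the Gibbs property, stochastic monotonicity, and the diffusive tightness of $\m{IRW}$ from Lemma \ref{lem:IRWupperbd}. The base cases $m=1,2$ are essentially contained in \bcd\ (the first curve is the free energy, and the second curve is handled by Theorem 3.8 there); alternatively, one can run the inductive step starting from $m=1$. For the inductive step, suppose \eqref{e:highmn}--\eqref{e:lowmn} hold for all curves up to index $m-1$ (with the constant $M$ chosen large enough to cover both), and we wish to establish them for curve $m$.

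First I would prove the lower bound \eqref{e:lowmn}. Set $T=\lfloor kN^{2/3}\rfloor$ (up to constants; one may enlarge $k$ freely). By Proposition \ref{p:high2k} applied at level $m$, with high probability there is some $p_\star\in[k N^{2/3}, RN^{2/3}]$ with $\L_m^N(2p_\star)\ge -C_R N^{1/3}$ for an explicit $C_R$; and by the inductive hypothesis applied to curves $1,\dots,m-1$ together with Proposition \ref{p:high2k} at those levels, the curves $\L_1^N,\dots,\L_{m-1}^N$ on the window $\ll 1, 2RN^{2/3}\rr$ are all bounded below by $-C' N^{1/3}$ and above by $C' N^{1/3}$ on a high-probability event. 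Condition on the $\sigma$-algebra generated by all curves of index $\ge m+1$, the curve $\L_{m-1}^N$ (and $\L_m^N$ outside the window, in particular its value at $2p_\star$ and at the right endpoint), using the Gibbs property of Theorem \ref{thm:conn}: the conditional law of $(\L_{m-1}^N, \L_m^N)$ on the window is an $\hslg$ Gibbs measure on a two-row domain with boundary data controlled by the above events, and $\L_{m+1}^N$ below. Decreasing the lower boundary $\L_{m+1}^N$ to $-\infty$ (stochastic monotonicity, Proposition \ref{p:gmc}) and pushing the remaining boundary data down to its extreme $O(N^{1/3})$ values only decreases the probability of the increasing event $\{\inf_{j\le 2T}\L_m^N(j)\ge -MN^{1/3}\}$. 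This reduces matters to the $\m{IRW}$-type law $\Pr_{\Upsilon}^{T;(a,b,\vec c)}$ of \eqref{def:Glaw2} (or $\Pr_{\m{IRW}}^{T;(a,b)}$ after absorbing the top boundary via a bounded Radon--Nikodym factor, exactly as in the argument for Theorem \ref{t.lsep23}), with boundary data of size $O(\sqrt T)$ after the $N^{1/3}=\sqrt T$ rescaling since $T\asymp N^{2/3}$. Lemma \ref{lem:IRWupperbd} then gives, for $M$ large depending on $(m,k,\e)$, that $\sup_k|L_{m}(k)|\le M\sqrt T$ with probability $>1-\e$ uniformly over such boundary data, which is precisely \eqref{e:lowmn} after undoing the conditioning and adding up the $O(\e)$ error terms. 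The only subtlety is that the Radon--Nikodym factor $V(\vec c)$ in \eqref{def:Glaw2} must be shown to be bounded below with high probability; this follows as in \eqref{denomlbd}, using that on our good event the entries of $\vec c=\L_{m-1}^N(\cdot)$ dominate $\L_m^N(\cdot)$ at the relevant points with a gap of order $N^{1/3}$, which can be arranged by invoking Proposition \ref{p:high2k} and Proposition \ref{pp:order} so that all exponents in $V$ are $\le -c N^{1/3}$, making $V\ge 1-o_N(1)$.

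The upper bound \eqref{e:highmn} is simpler and also uses the reduction to $\Pr_{\m{IRW}}^{T;(a,b)}$: after the same conditioning on curves of index $\ge m+1$ and on curve $m-1$ outside the window, stochastic monotonicity lets us \emph{increase} the boundary data $a,b$ and the curve $\L_{m-1}^N$ up to their $O(N^{1/3})$ upper bounds (valid on a high-probability event by the inductive hypothesis for index $m-1$ together with Proposition \ref{p.nottoo} at that level and Proposition \ref{pp:order}); since $\{\sup_{j\le 2T}\L_m^N(j)\ge MN^{1/3}\}$ is increasing, this gives an upper bound for its probability by the corresponding event under $\Pr_\Upsilon^{T;(a,b,\vec c)}$, and again absorbing the top boundary and applying the $\sup|L_m(k)|\le M\sqrt T$ estimate of Lemma \ref{lem:IRWupperbd} finishes the proof. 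The main obstacle is the bookkeeping in the reduction step: one must carefully choose a single high-probability ``good event'' on which all the boundary curves $\L_1^N,\dots,\L_{m-1}^N$ and the endpoints of $\L_m^N$ are simultaneously $O(N^{1/3})$ with the right sign of separation, so that the monotonicity pushes and the lower bound on the Radon--Nikodym factor all apply at once; this is where the inductive hypothesis, Proposition \ref{p:high2k}, and Proposition \ref{pp:order} must be combined with some care, but each ingredient is already available in the excerpt.
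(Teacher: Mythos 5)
The starting points coincide (Proposition \ref{p:high2k} for a high point on the $m$th curve, the Gibbs property, and stochastic monotonicity), and the upper tail \eqref{e:highmn} is indeed easy — though much easier than you make it: the paper dispatches it in one sentence by combining tightness of the top curve with the ordering event $\m{Ord}_{m,N}$ of Proposition \ref{pp:order}, which already gives $\L_m^N \leq \L_1^N + m(\log N)^{7/6}$. Your conditioning-and-monotonicity machinery is not needed there.

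For \eqref{e:lowmn}, however, your proposal diverges from the paper in a way that introduces a genuine gap. You reduce to a \emph{two-curve} Gibbs measure for $(\L_{m-1}^N, \L_m^N)$ with $\L_{m-2}^N$ as top boundary data, which forces you to control the Radon--Nikodym factor $V(\vec c)$ of \eqref{def:Glaw2}. You claim $V \geq 1-o_N(1)$ on a high-probability event because ``$\vec c$ dominates $\L_{m-1}^N(\cdot)$ with a gap of order $N^{1/3}$.'' But that is a \emph{uniform} separation between $\L_{m-2}^N$ and $\L_{m-1}^N$ over the whole $O(N^{2/3})$ window. Such a statement is of the same strength as Theorem \ref{t.lsep23}/Corollary \ref{t.usep23}, which are nontrivial standalone results proved only for curves $2,3$, and whose proofs in turn rely on the full tightness Theorem \ref{p.tight3}, which relies on the very proposition you are proving — so the argument as written is circular. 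Neither Proposition \ref{p:high2k} (which gives one high point, not uniform separation) nor Proposition \ref{pp:order} (which gives an upper bound $\L_{m-1}^N \le \L_{m-2}^N + (\log N)^{7/6}$, in the wrong direction and without a gap) supplies what is needed. The fixable version of your claim is $\Ex_{\m{IRW}}^{T;(a,b)}[V(\vec c)] \geq \beta > 0$, obtained via a positive-probability band event as in the size-biasing arguments of Proposition \ref{pimc} and Lemma \ref{l:region}, and then absorbed as a $1/\beta$ loss; but this is a different and more delicate argument than the one you sketched.

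The paper avoids the whole issue by conditioning on all $2m$ curves simultaneously, landing directly on the $m$-$\m{IRW}$ measure $\Pr_m^{T;\vec x}$ of Definition \ref{mirw}, which has no top boundary curve at all. After pushing the right-endpoint data down (by stochastic monotonicity) to deterministic values $\vec y_p$ chosen to be \emph{separated} by $O(\sqrt T)$ between consecutive indices, the block Radon--Nikodym derivative $W_{\m{block}}$ from \eqref{block} is automatically $1-o_T(1)$ on the diffusive-tightness event $\m{D}_p$, with no separation lemma needed. The disjoint decomposition over the first high point $p$ (reading from the right) handles the randomness of its location. Your inductive route could likely be repaired along the lines above, but the paper's choice of domain and of separated boundary data is both simpler and sidesteps the circularity.
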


\begin{proof} \eqref{e:highmn} follows easily from tightness of the top curve of the $\hslg$ line ensemble and Theorem \ref{pp:order} which forces the line ensemble to obey a certain ordering. We focus on the proof of \eqref{e:lowmn}. For convenience, we shall prove it for even-index curves: $m\mapsto 2m$.  Fix any $\e>0$.  From Proposition \ref{p:high2k} we get an $R$ so that
\begin{align*}
    \Pr\bigg(\bigcup_{p\in\ll kN^{2/3}, RN^{2/3}\rr } \m{A}_p\bigg) \ge 1-\e, \mbox{ where } \m{A}_p &:=\big\{\L_{2m}^N(2p) \ge -(\tfrac18R^2\nu+2\sqrt{R})N^{1/3}\big\}.
\end{align*}
Choose $\mathfrak{M}$ from Lemma \ref{lem:IRWupperbd} so that \eqref{e:IRWupperbd} holds. Let us set $T:=kN^{2/3}$, $M_1:=\tfrac18R^2\nu+2\sqrt{R}$,  and $\mathcal{I}:=\ll T, RN^{2/3}\rr$. We consider the disjoint decomposition of $\{\m{A}_p\}_{p\in \mathcal{I}}$ given by $\m{A}_p'  :=\m{A}_p\cap \bigcap_{q\in \ll p+1,RN^{2/3}\rr} \neg\m{A}_q$, so that $\m{A}:=\bigcup_{p\in \mathcal{I}}\m{A}_p=\bigsqcup_{p\in \mathcal{I}}\m{A}_p'$, where the latter is a union over a disjoint collection of events. For each $p\in \mathcal{I}$, we define the event
\begin{align*}
    \m{B}_p & :=\bigcap_{i=1}^m\bigg\{ \min\{\L_{2i-1}^N(2p-1) , \L_{2i-1}^N(2p)\} \ge -M_1N^{\frac13}-i(\mathfrak{M}+1)\sqrt{T}, \\ & \hspace{7cm} \L_{2i}^N(2p) \ge -M_1N^{\frac13}-i(\mathfrak{M}+2)\sqrt{T} \bigg\},
\end{align*}
and the $\sigma$-field $$\mathcal{F}_p:= \{\L_{i_1}^N\ll 2p-\ind_{i_1\;\m{odd}}, 2N-2i_1+2\rr, \L_{i_2}^N\ll 1,2N-2i_2+2\rr : i_1\in \ll1,2m\rr, i_2\in\ll 2m+1,N\rr\}.$$
Recall the event $\m{Ord}_{2m,N}$ from \eqref{t41} and write $\m{C}:= \m{Low}_{k,2m,N}(M)$ where we set $M:=M_1+4m(\mathfrak{M}+1)\sqrt{R}$. Using the disjointness of $\{\m{A}_p'\}_{p\in \mathcal{I}}$ we obtain 
\begin{align*}
    \Pr\big(\m{C}\cap \m{A}\cap \m{Ord}_{2m,N}\big)=\sum_{p\in \mathcal{I}} \Pr\big(\m{C}\cap \m{A}_p'\cap \m{Ord}_{2m,N}\big) \le \sum_{p\in \mathcal{I}} \Pr\big(\m{C}\cap \m{A}_p'\cap \m{B}_{p}\big) = \sum_{p\in \mathcal{I}}\Ex[\ind_{\m{A}_p'\cap \m{B}_p}\Ex[\ind_{\m{C}}\mid \mathcal{F}_p]].
\end{align*}
The above inequality follows by observing that
$\m{A}_p'\cap \m{Ord}_{2m,N} \subset \m{A}_p'\cap \m{B}_p$, and the final equality is due to the fact that $\m{A}_p'\cap \m{B}_p\in \mathcal{F}_p$. Invoking the Gibbs property, we have that $\Ex[\ind_{\m{C}}\mid \mathcal{F}_p]=\Pr_{m}^{T;\vec{x}}({\m{C}})$ where $\vec{x}$ is a vector of the type \eqref{newvec} with
$$x_{2i-1,1}:=\L_{2i-1}^N(2p-1), \quad x_{2i-1,2}:=\L_{2i-1}^N(2p), \quad x_{2i}:=\L_{2i}^N(2p).$$ 
By stochastic monotonicity,
\begin{align*}
    \ind_{\m{B}_p}\Ex[\ind_{\m{C}}\mid \mathcal{F}_p] = \ind_{\m{B}_p}\Pr_{m}^{T;\vec{x}}({\m{C}})\le \ind_{\m{B}_p}\Pr_{m}^{T;\vec{y}}({\m{C}}) 
\end{align*}
where $\vec{y}_p$ is of the form \eqref{newvec} with 
$$y_{2i-1,1}=y_{2i-1,2}=-M_1N^{1/3}-i(\mathfrak{M}+1)\sqrt{p}, \quad y_{2i}=-M_1N^{1/3}-i(\mathfrak{M}+2)\sqrt{p}.$$
Let us consider the event
\begin{align*}
    \m{D}_p& :=\bigcap_{i=1}^{2m} \bigg\{\sup_{j\in \ll 1,2p-\ind_{i\;\m{odd}}\rr}|L_i(j)+M_1N^{1/3}+i(M+2-\ind_{i\;\m{odd}})\sqrt{p}| \le M\sqrt{p}\bigg\}.
\end{align*}
Note that $\neg \m{C} \supset \m{D}_p$. Using \eqref{block} we thus get
$\Pr_{m}^{T;\vec{y}_p}(\neg{\m{C}}) \ge \Pr_{m}^{T;\vec{y}_p}({\m{D}_p}) \ge \Ex_{\m{IRW}^{(m)}}^{T;\vec{y}_p}[W_{\m{block}}\ind_{\m{D}_p}].$ On the event $\m{D}_p$, $W_{\m{block}} \ge \exp(-2mpe^{-\sqrt{p}})$ and by \eqref{e:IRWupperbd}, $\Pr_{\m{IRW}^{(m)}}^{T;\vec{y}_p}({\m{D}}) \ge (1-\e)^m \ge 1-m\e$.   Thus
\begin{align*}
    \Pr\big(\m{C}\cap \m{A}\cap \m{Ord}_{2m,N}\big)\le \sum_{p\in \mathcal{I}} (m+1)\e \cdot \Pr(\m{A}_p') \le (m+1)\e,
\end{align*}
and $\Pr(\m{C})\le \Pr\big(\m{C}\cap \m{A}\cap \m{Ord}_{2m,N}\big)+\Pr(\neg\m{A})+\Pr\big(\neg\m{Ord}_{2m,N}\big) \le (m+3)\e.$ Adjusting $\e$, we get the desired result.
\end{proof}

A similar result can be proven under the $m$-$\m{IRW}$ law. We record this in the following proposition.  
\begin{proposition}\label{p.2nto} Fix $\e>0$, $m\in \Z_{\ge 1}$, and $M>0$. Suppose $\vec{x}$ is a vector of the type \eqref{newvec} with all entries within $[-M\sqrt{T},M\sqrt{T}]$. Then there exist $M_1(m,M,\e)>0$ and $T_0(m,M,\e)>0$ such that 
\begin{align}\label{6.61}
 &   \Pr_{m}^{T;\vec{x}}\bigg( \sup_{i\in \ll1,2m\rr, j\in \ll1,2T-\ind_{i\;\m{odd}}\rr} |L_{i}(j)|\le  M_1\sqrt{T}\bigg) \ge 1-\e 
\end{align}
for all $T\ge T_0$. We also have $ \Pr_m^{2T;\vec{x}}(\widetilde{\m{Ord}}_{m,T}) \ge 1-\e $ where
\begin{align*}
        \widetilde{\m{Ord}}_{m,T}& :=\bigcap_{i=1}^{2m} \bigcap_{p=1}^{T} \big\{\max (L_i(2p+1),L_i(2p-1)) \le L_i(2p)+(\log T)^{7/6}\big\} \\ & \hspace{4cm}\cap\big\{L_{i+1}(2p) \le \min (L_i(2p-1),L_i(2p+1))+(\log T)^{7/6}\big\}.
    \end{align*}  
\end{proposition}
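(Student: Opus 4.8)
\textbf{Proof proposal for Proposition \ref{p.2nto}.}

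The plan is to deduce both claims from the corresponding estimates for a \emph{single} $\m{IRW}$, namely Lemma \ref{lem:IRWupperbd} (the diffusive bound \eqref{e:IRWupperbd}) and Proposition \ref{pp:order}/Proposition \ref{p.2nto}-type ordering for $\m{IRW}$, by transferring them through the Radon–Nikodym description \eqref{block} of $m$-$\m{IRW}$ in terms of $m$ independent $\m{IRW}$s. The first step is to note that, by the definition of $W_{\m{block}}$, we always have $W_{\m{block}} \le 1$, so for any event $\m{E}$,
\begin{align*}
\Pr_{m}^{T;\vec{x}}(\neg\m{E}) = \frac{\Ex_{\m{IRW}^{(m)}}^{T;\vec{x}}[W_{\m{block}}\ind_{\neg\m{E}}]}{\Ex_{\m{IRW}^{(m)}}^{T;\vec{x}}[W_{\m{block}}]} \le \frac{\Pr_{\m{IRW}^{(m)}}^{T;\vec{x}}(\neg\m{E})}{\Ex_{\m{IRW}^{(m)}}^{T;\vec{x}}[W_{\m{block}}]}.
\end{align*}
Thus it suffices to (i) lower bound the denominator $\Ex_{\m{IRW}^{(m)}}^{T;\vec{x}}[W_{\m{block}}]$ by a constant independent of $T$ (for $\vec x$ with entries in $[-M\sqrt T, M\sqrt T]$), and (ii) upper bound $\Pr_{\m{IRW}^{(m)}}^{T;\vec{x}}(\neg\m{E})$ for $\m{E}$ equal to the diffusive confinement event in \eqref{6.61} and the ordering event $\widetilde{\m{Ord}}_{m,T}$.

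For (ii), the independence of the $m$ copies of $\m{IRW}$ under $\Pr_{\m{IRW}^{(m)}}^{T;\vec{x}}$ reduces everything to the single-$\m{IRW}$ statements. Since each boundary vector $(y_{2i-1,1}, y_{2i})$ has $|y_{2i-1,1}-y_{2i}| \le 2M\sqrt T$, translation invariance (Lemma \ref{traninv}) and Lemma \ref{lem:IRWupperbd} give, for suitable $M_1(M,\e)$, that each copy stays within $M_1\sqrt T$ of its boundary data (hence within $(M+M_1)\sqrt T$ of the origin) with probability $\ge 1-\e/m$; a union bound over $i \in \ll 1,m\rr$ yields the analogue of \eqref{6.61} under $\Pr_{\m{IRW}^{(m)}}^{T;\vec x}$, after relabeling $M_1$. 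For the ordering event, Proposition \ref{pp:order} gives the ordering within each single $\m{IRW}$ (the relevant statement there is stated for the line ensemble, but as remarked in the text its proof applies to $\m{IRW}$ = bottom-free measure; alternatively one cites the $k=1$ case of the line-ensemble ordering after noting $\m{IRW}$ is the conditional law on the appropriate domain); again a union bound over the $m$ blocks handles $\widetilde{\m{Ord}}_{m,T}$. Note $\widetilde{\m{Ord}}_{m,T}$ also involves the interlacing between block $i$ and block $i+1$, i.e. $L_{2i+1}(2p) \le \min(L_{2i}(2p-1),L_{2i}(2p+1)) + (\log T)^{7/6}$; this is \emph{not} automatic from independence, since consecutive blocks are only coupled through $W_{\m{block}}$. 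This is where the main work lies.

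The main obstacle — and the only genuinely nontrivial point — is (i): showing $\Ex_{\m{IRW}^{(m)}}^{T;\vec{x}}[W_{\m{block}}] \ge c(m,M) > 0$ uniformly in $T$, and, relatedly, obtaining the cross-block interlacing in $\widetilde{\m{Ord}}_{m,T}$. For the denominator, the strategy is to restrict to a favorable event on which $W_{\m{block}}$ is close to $1$: by Lemma \ref{lem:IRWupperbd} and stochastic monotonicity one can, with probability bounded below, push each $\m{IRW}$ block to lie in a window so that consecutive blocks are well-separated on the diffusive scale — specifically, force $L_{2i+1}(2j) \le L_{2i}(2j\pm 1) - \sqrt T$ for all $i,j$ (using that the boundary data of block $i+1$ can be taken below that of block $i$, combined with the diffusive confinement of each block around its boundary). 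On such an event every exponent appearing in $W_{\m{block}}$ is $\le -\sqrt T$, so $W_{\m{block}} \ge \exp(-Cm T e^{-\sqrt T}) \ge 1/2$ for large $T$; this gives $\Ex_{\m{IRW}^{(m)}}^{T;\vec x}[W_{\m{block}}] \ge \tfrac12 \Pr_{\m{IRW}^{(m)}}^{T;\vec x}(\text{favorable event}) \ge c(m,M)$. For the cross-block interlacing in $\widetilde{\m{Ord}}_{m,T}$: conditioning on blocks $1,\dots,i$ and blocks $i+2,\dots,m$, the conditional law of block $i+1$ is an $\m{IRW}$ with the extra soft-penalty from $W_{\m{block}}$ (coupling it to block $i$); stochastic monotonicity lets us decrease block $i$'s values to $+\infty$... more precisely, since raising block $i$ only makes the penalty weaker, we can bound the conditional law of block $i+1$ by a plain $\m{IRW}$ with boundary data from $\vec x$, and then apply the single-$\m{IRW}$ ordering/confinement estimate together with the confinement of block $i$ (which keeps $L_{2i}$ near its boundary data $\ge -M\sqrt T - M_1\sqrt T$). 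Running this inductively over $i = 1, \dots, m-1$, together with a union bound, yields the cross-block part of $\widetilde{\m{Ord}}_{m,T}$ with probability $\ge 1-\e$. This is essentially the same bootstrap used in the proof of \eqref{e:lowmn} in Proposition \ref{p.nottoo}, so the details are routine modulo careful bookkeeping.
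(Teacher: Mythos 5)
There is a genuine gap at the very step you flag as the main obstacle, and it is not repairable within your framework.

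Your plan is to bound $\Pr_{m}^{T;\vec{x}}(\neg\m{E}) \le \Pr_{\m{IRW}^{(m)}}^{T;\vec{x}}(\neg\m{E})\,/\,\Ex_{\m{IRW}^{(m)}}^{T;\vec{x}}[W_{\m{block}}]$ and then lower-bound the denominator by a constant $c(m,M)>0$ uniformly in $T$. This is false under the stated hypotheses. The proposition only assumes the entries of $\vec{x}$ lie in $[-M\sqrt{T},M\sqrt{T}]$; there is no ordering assumption between consecutive blocks. If, say, $y_{2i+1,1}-y_{2i} \asymp \sqrt{T}$ (block $i+1$ starting far \emph{above} block $i$), then every realization under the independent-blocks measure $\Pr_{\m{IRW}^{(m)}}^{T;\vec{x}}$ has $L_{2i+1}(2T-2)-L_{2i}(2T-1) \asymp \sqrt{T}$, because both curves are pinned to their boundary data near $j=2T$. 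The corresponding penalty term $e^{L_{2i+1}(2j)-L_{2i}(2j\pm1)}$ is then $\asymp e^{c\sqrt{T}}$ on essentially every sample path, so $W_{\m{block}} \le \exp(-e^{c\sqrt{T}})$ and $\Ex_{\m{IRW}^{(m)}}^{T;\vec{x}}[W_{\m{block}}]$ is super-exponentially small, not bounded below. The ``favorable event'' you propose — $L_{2i+1}(2j)\le L_{2i}(2j\pm1)-\sqrt T$ for \emph{all} $j$ — is actually \emph{empty} near the right boundary for such $\vec{x}$, since at $j$ close to $T$ the walks cannot escape their endpoints. Even if one strengthens the hypothesis to near-ordered data with slack $(\log T)^{7/6}$ (which is all that is ever used in the paper), the near-boundary terms still contribute a factor like $\exp(-e^{(\log T)^{7/6}})$ that decays faster than any polynomial, so the denominator is still not bounded below by a $T$-independent constant. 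The cross-block interlacing argument has the same defect: the bound $L_{2i+1}(2p)\le y_{2i+1,1}+M_1\sqrt T$ combined with $L_{2i}(2p\pm1)\ge y_{2i}-M_1\sqrt T$ only yields the interlacing if $y_{2i+1,1}-y_{2i}\lesssim -2M_1\sqrt T$, which is not assumed.

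The route the paper indicates avoids this by never dividing by $\Ex[W_{\m{block}}]$. For the confinement bound, one decomposes $\neg\m{E}$ into the monotone events $\{\sup L_i > M_1\sqrt T\}$ and $\{\inf L_i < -M_1\sqrt T\}$ and applies stochastic monotonicity (Proposition \ref{p:gmc}) \emph{directly at the level of the $\hslg$ Gibbs measure}, shifting all boundary entries simultaneously to $+M\sqrt T$ (resp.\ $-M\sqrt T$) so that the unordered case reduces to the degenerate ordered case, and then one runs the one-block diffusive estimate (Lemma 5.4 in [BCD], as in Lemma \ref{lem:IRWupperbd}) inductively: conditionally on blocks $1,\dots,i-1$ the remaining curves form a lower-indexed $m'$-$\m{IRW}$ whose top barrier can be removed/raised monotonically. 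For the ordering event, the paper explicitly says the argument of Theorem 3.1 in [BCD] already proceeds by conditioning on boundary data and proving ordering under the conditional $\hslg$ Gibbs measure; that argument applies verbatim here and does not route through the independent-blocks measure at all. So your approach is not a streamlined variant of the paper's — it is a different decomposition that breaks precisely at the step where the paper's argument stays at the Gibbs-measure level.
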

The $m=1$ version of the first statement is already present in Lemma \ref{lem:IRWupperbd} (which relies on Lemma 5.4 in \bcd). The general $m$ case follows easily by a slight modification of the argument in Lemma \ref{lem:IRWupperbd} and Lemma 5.4 in \bcd. On the other hand, the second statement above is the Gibbs measure version of Theorem \ref{pp:order}. 
The proof of the second part is in fact contained in the proof of Theorem \ref{pp:order}, as the argument in \bcd\ proceeds by conditioning on the boundary data and then proving the ordering property under the $\hslg$ Gibbs measure. We skip the details for brevity.

	\begin{theorem}[Endpoint tightness] \label{thm:eptight} The sequences $\{{N^{-1/3}}\L_{2m-1}^N(1)\}_{N\geq 1}$ and $\{{N^{-1/3}}\L_{2m}^N(2)\}_{N\geq 1}$ are tight for all $m\in\mathbb{Z}_{\geq 1}$.
	\end{theorem}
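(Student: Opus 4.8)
\textbf{Proof proposal for Theorem \ref{thm:eptight}.}

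The plan is to establish tightness of $\{N^{-1/3}\L_{2m-1}^N(1)\}$ and $\{N^{-1/3}\L_{2m}^N(2)\}$ by combining the ``high points'' lower bound (Proposition \ref{p:high2k}), the ``not too high / not too low'' upper and lower bounds (Proposition \ref{p.nottoo}), and the ordering property (Proposition \ref{pp:order}). The upper tail is the easy direction: by Proposition \ref{p.nottoo} applied with $k=1$ and curve index $2m$ (or $2m-1$), for any $\e>0$ there is $M$ with $\limsup_N \Pr(\sup_{j\le 2N^{2/3}}\L_{2m}^N(j) \ge MN^{1/3}) < \e$, which in particular controls $N^{-1/3}\L_{2m}^N(2)$ from above; the ordering event $\m{Ord}_{2m,N}$ from \eqref{t41} then transfers this to $\L_{2m-1}^N(1)$ up to an additive $(\log N)^{7/6}=o(N^{1/3})$ error, so $\{N^{-1/3}\L_{2m-1}^N(1)\}$ is bounded above with high probability as well. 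Similarly $\m{Ord}_{2m,N}$ gives $\L_{2m}^N(2) \le \L_{2m-1}^N(1) + (\log N)^{7/6}$, so a lower bound for $\L_{2m-1}^N(1)$ implies one for $\L_{2m}^N(2)$; hence it suffices to prove the lower tail bound for a single curve, say $\L_{2m-1}^N(1)$ (equivalently $\L_{2m}^N(2)$).

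For the lower tail, the strategy mirrors the argument in Proposition \ref{p.nottoo}. Set $T=\lceil N^{2/3}\rceil$. By Proposition \ref{p:high2k} (with $k=1$), choose $R$ large enough that with probability $\ge 1-\e$ there is some $p\in\ll N^{2/3},RN^{2/3}\rr$ with $\L_{2m}^N(2p) \ge -M_1 N^{1/3}$, where $M_1 = \tfrac18 R^2\nu + 2\sqrt R$. Decompose this high-point event into disjoint pieces $\m{A}_p'$ according to the largest such $p$, and condition on the $\sigma$-algebra $\mathcal{F}_p$ generated by the curves $\L_1^N,\dots,\L_{2m}^N$ from column $2p$ onward together with all lower curves. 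On the ordering event $\m{Ord}_{2m,N}$ and on $\m{A}_p'$, the boundary data at column $2p$ for curves $1$ through $2m$ are all bounded below by $-M_1N^{1/3} - \Con m \sqrt T$; by stochastic monotonicity (Proposition \ref{p:gmc}) we may lower these to deterministic values of that order, and the resulting conditional law is the $m$-$\m{IRW}$ law $\Pr_m^{T;\vec y_p}$ (after absorbing the $\L_{2m+1}^N=-\infty$-type bottom via the structure in Definition \ref{mirw}). Then Proposition \ref{p.2nto} (the $m$-$\m{IRW}$ upper bound \eqref{6.61}) applied after translating shows that under $\Pr_m^{T;\vec y_p}$ all $2m$ curves stay within $M_1' \sqrt T$ of their boundary level on the whole column range $\ll 1, 2T-1\rr$ with probability $\ge 1-\e$; in particular $L_{2m-1}(1)$ and $L_{2m}(2)$ are bounded below by $-(M_1 + \Con m M_1')N^{1/3}$. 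Summing over the disjoint $\m{A}_p'$ and adding $\Pr(\neg\m{Ord}_{2m,N})$, $\Pr(\text{no high point})$ yields $\Pr(N^{-1/3}\L_{2m-1}^N(1) \le -M) < \Con\e$ for $M = M(m,\e)$ large, as desired.

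The main technical obstacle is making the conditioning/stochastic-monotonicity step rigorous on the correct domain: one must check that conditioning on $\mathcal{F}_p$ really produces an $m$-$\m{IRW}$-type Gibbs measure with the bottom curve $\L_{2m+1}^N$ (or its $-\infty$ proxy at the boundary) in the role of the ``$u_{2m+1,2j}=-\infty$'' boundary, that the parity bookkeeping (odd versus even columns, the pinning at the left edge) matches Definition \ref{mirw}, and that the high-point event $\m{A}_p'$ and the ordering event are genuinely $\mathcal{F}_p$-measurable so the tower property applies. This is exactly the kind of careful bookkeeping done in \bcd\ and in Proposition \ref{p.nottoo}, so it should go through, but it is where the real work lies; the probabilistic inputs (Propositions \ref{p:high2k}, \ref{p.nottoo}, \ref{p.2nto}, \ref{pp:order}) are all already in hand.
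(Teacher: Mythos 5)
Your argument is sound, but it is substantially more work than necessary, because it re-derives a bound that the paper already has. Proposition~\ref{p.nottoo} (specifically \eqref{e:lowmn}, applied with $m\mapsto 2m$ and $k=1$) directly states that $\inf_{j\in\ll 1,2N^{2/3}\rr}\L_{2m}^N(j) \geq -MN^{1/3}$ with high probability, which in particular gives lower-tail tightness of $N^{-1/3}\L_{2m}^N(2)$. The paper's proof of Theorem~\ref{thm:eptight} is then a three-line deduction: (i) $\L_1^N(1)$ is tight (hence upper tight) from Theorem~3.10 of \bcd; (ii) $\L_{2m}^N(2)$ is lower tight from \eqref{e:lowmn}; (iii) the ordering event of Proposition~\ref{pp:order}, which holds with probability $1-\rho^N$, sandwiches all intermediate endpoint values $\L_{2k-1}^N(1)$, $\L_{2k}^N(2)$ for $k\in\ll 1,m\rr$ between $\L_1^N(1)+O((\log N)^{7/6})$ and $\L_{2m}^N(2)-O((\log N)^{7/6})$, and the $(\log N)^{7/6}$ error is $o(N^{1/3})$. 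Your second and much longer paragraph—re-running the high-point/disjoint-decomposition/conditioning/stochastic-monotonicity/$m$-$\m{IRW}$ machinery—is precisely the proof of \eqref{e:lowmn} itself, so it is redundant once that proposition is available. (Your choice to appeal to Proposition~\ref{p.2nto} rather than decomposing against $m$ independent $\m{IRW}$s and $W_{\m{block}}$ as in the paper's proof of \eqref{e:lowmn} is a minor, equally valid variant, but again, you don't need to prove the estimate at all.) The caution you raise about the parity bookkeeping and $\mathcal{F}_p$-measurability is legitimate, but it is handled in the proof of Proposition~\ref{p.nottoo}, not something you need to redo here. One small point of emphasis: your upper-tail step invokes \eqref{e:highmn}, which is itself derived from $\L_1^N(1)$ tightness plus ordering, so you and the paper are ultimately using the same input there.
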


	\begin{proof}[Proof of Theorem \ref{thm:eptight}] We know $N^{-1/3}\L_1^N(1)$ is tight via Theorem 3.10 in \bcd. In particular $\L_1^N(1)$ is upper tight. On the other hand, from Proposition \ref{p.nottoo} we know $N^{-1/3}\L_{2m}^N(2)$ is lower tight.  By Proposition \ref{pp:order}, it follows that $N^{-1/3}\L_{2k-1}^N(1)$ and $N^{-1/3}\L_{2k}^N(2)$ are tight for all $k\in \ll1,m\rr$. 
	\end{proof}

\subsection{Process-level tightness} \label{seca2} Having established pointwise tightness, we next proceed to process-level tightness of the line ensemble.

We begin with a basic lemma ascertaining that the probability of $\m{IRW}$ passing through certain regions can be uniformly bounded below.

\begin{lemma}\label{l:region} Fix any $\gamma>0$. There exists $M_0>0$ such that for all $M\ge M_0$, there exists $\phi(M,\gamma)>0$ such that
    \begin{align*}
        & \liminf_{T\to\infty}\Pr_{\m{IRW}}^{T;(0,-\sqrt{T})}\bigg(\inf_{i\in \ll1,2\rr, j\in \ll1,T\rr}L_i(j)\ge M\sqrt{T},\sup_{i\in \ll1,2\rr, j\in \ll1,2T-\ind_{i\;\m{odd}}\rr}L_i(j)\le (M+\gamma)\sqrt{T}\bigg)\ge \phi, \\ &
        \liminf_{T\to\infty}\Pr_{\m{IRW}}^{T;(0,-\sqrt{T})}\bigg(\sup_{i\in \ll1,2\rr, j\in \ll1,T\rr}L_i(j)\le -M\sqrt{T},\inf_{i\in \ll1,2\rr, j\in \ll1,2T-\ind_{i\;\m{odd}}\rr}L_i(j)\ge -(M+\gamma)\sqrt{T}\bigg)\ge \phi.
    \end{align*}
\end{lemma}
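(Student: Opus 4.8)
\textbf{Proof plan for Lemma \ref{l:region}.} The two statements are mirror images of each other under the reflection $L_i \mapsto -L_i$ combined with the reflection of boundary data, so I will focus on the first one; the second follows by an identical argument after this symmetry (note that the vertex weights are not symmetric under $x\mapsto -x$, but the soft non-intersection penalty $e^{-e^x}$ is dominated by $1$ on the relevant regions either way, and the key estimates below — Proposition \ref{sftnon}, Lemma \ref{lem:IRWupperbd}, and Lemma \ref{EWlbd} — are insensitive to this). The overall idea is to realize the $\m{IRW}$ measure as weighted paired random walk bridges, pass to the diffusive scale, and invoke the invariance principle for softly non-intersecting bridges from Proposition \ref{sftnon} to conclude that the scaled walks converge to a pair of non-intersecting Brownian bridges starting at $(0,0)$-ish and ending at $(0,-1)$-ish; this limit lives in the open region in question with positive probability.

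First I would fix $\gamma>0$ and consider the event, say $\m{R}_{M,\gamma}$, inside the probability. The key point is that after scaling by $\sqrt{T}$, the event $\m{R}_{M,\gamma}$ becomes a closed diffusive-scale constraint: both curves are squeezed into the band $[M, M+\gamma]$ on $\ll 1, T\rr$ (with a slightly larger band $[M, M+\gamma]$ allowed on the remaining $\ll T, 2T-\ind_{i\;\m{odd}}\rr$ where only an upper bound is imposed). By translation invariance (Lemma \ref{traninv}), shifting vertically by $M\sqrt{T}$, it suffices to lower bound the $\Pr_{\m{IRW}}^{T;(M\sqrt{T}, M\sqrt{T}-\sqrt{T})}$-probability that both curves stay in $[0,\gamma\sqrt{T}]$ on the first half and below $\gamma\sqrt{T}$ throughout. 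Now I would use Lemma \ref{wprwconn} to pass to the $\m{WPRW}$ law, and then Lemma \ref{lem:IRWshift} (or directly the structure of $\m{WPRW}$) to view the odd-even subsequence as softly non-intersecting random walk bridges $\Pr_{W_T}^{T;(\cdot,\cdot);(\cdot,\cdot)}$ with diffusively separated endpoints. The endpoints here are $(M\sqrt{T}, (M-1)\sqrt{T})$, which satisfy $b_1-b_2 = \sqrt{T} \ge \frac{1}{M}\sqrt{T}$ and $|b_i|\le (M+1)\sqrt{T}$, so they lie in the admissible regime $B_{M+1}$.

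Next I would apply Proposition \ref{sftnon} with $m=2$: it gives that, under the $\hslg$ Gibbs measure on the relevant two-curve domain with the $-\infty$ boundary below, the diffusively rescaled curves $\big(L_i(Tx)/\sqrt{T}\big)_{i=1}^2$ converge weakly in $C([0,1],\R^2)$ to a pair of non-intersecting Brownian bridges $(B_1,B_2)$ running from $(M, M-1)$ to $(M,M-1)$ (after the vertical shift; equivalently from $(0,-1)$ to $(0,-1)$ in the shifted coordinates). For $M$ large enough — say $M\ge M_0$ where $M_0$ depends on $\gamma$ — the limiting bridges start and end inside the open band $(0,\gamma)$, and since Brownian bridges on a compact interval have positive probability of staying inside any fixed open tube around a fixed continuous path connecting their endpoints, the event $\{B_1(x), B_2(x) \in (0,\gamma) \text{ for all } x\}$ has strictly positive probability $\phi_0>0$. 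More precisely one takes $M_0$ so that the affine interpolation of the endpoints lies in $(\gamma/4, 3\gamma/4)$; then a small-tube estimate for non-intersecting Brownian bridges gives positive probability of remaining in $(0,\gamma)$. Since this limiting event is open in $C([0,1],\R^2)$, the portmanteau theorem yields $\liminf_{T\to\infty}\Pr_{\m{IRW}}^{T;(0,-\sqrt{T})}(\m{R}_{M,\gamma}) \ge \phi_0 =: \phi > 0$. Strictly, one must also handle the extra $\ll T, 2T-1\rr$ portion of the first curve and $\ll T, 2T\rr$ of the second (where only an upper bound is demanded); this is controlled by the same weak-convergence statement extended to $x\in[0,1]$ covering the full domain, or alternatively by Lemma \ref{lem:IRWupperbd} together with a union bound, shrinking $\phi$ if necessary.

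\textbf{Main obstacle.} The technical heart is making Proposition \ref{sftnon} applicable with the right boundary data and checking that the parity/indexing details of the $\hslg$ two-curve domain (odd points of $\L_1$, even points of $\L_2$) do not interfere with the diffusive limit — this is exactly the kind of bookkeeping the authors flag in the introduction as routine but tedious. A second, milder obstacle is the small-tube lower bound for \emph{non-intersecting} Brownian bridges: one needs that the probability of two Brownian bridges staying ordered and inside a fixed open band is positive, which follows from a standard reflection/Karlin–McGregor argument or from absolute continuity with respect to independent bridges on a subinterval bounded away from the endpoints, but it should be cited rather than reproven. Everything else — translation invariance, the passage to $\m{WPRW}$, the diffusive scaling — is by now standard in this paper's framework.
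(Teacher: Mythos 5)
Your proposal runs into a genuine obstruction when you try to apply Proposition \ref{sftnon} directly to the full $\m{IRW}$ measure. That proposition governs the $\hslg$ Gibbs measure on an \emph{interior} domain $\hat\Lambda = \{(i,j): i\in\ll1,m\rr, j\in\ll T_{1,i}+1, T_{2,i}-1\rr\}$ with fixed boundary data on both sides, and its hypotheses require the scaled boundary points to be \emph{strictly diffusively separated} ($a_1>a_2$ and $b_1>b_2$). The $\m{IRW}$ measure, by contrast, has only the right boundary pinned; the left endpoints $(L_1(1), L_2(1))$ are random, and — by the structure of the $\m{WPRW}$/pinned boundary — $L_1(1)-L_2(1)$ is $O(1)$, not $O(\sqrt{T})$. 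After diffusive rescaling the two curves start at the same point, so the separation hypothesis of Proposition \ref{sftnon} fails on the nose. Passing to the $\m{WPRW}$ law and conditioning on $L_1(1)$ via Lemma \ref{lem:IRWshift} does not fix this: you then have softly non-intersecting bridges with left endpoint data $(0, S_2(1)-S_1(1))$, still $O(1)$ apart. The softly non-intersecting bridge machinery of Sections 3--4 handles exactly that regime but gives \emph{local} convergence near the left boundary (to the Markov chain) and a meander endpoint limit, not a diffusive-scale tube estimate. In short, your plan has no ingredient that controls the pinned-boundary region, which is where the real work is.

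The paper's proof avoids this by splitting the event. It defines $\m{A}$ — both curves confined to $[M\sqrt{T}, (M+\tfrac\gamma2)\sqrt{T}]$ only on $j\in\ll1,T\rr$ — and $\m{B}$ — the upper bound over the whole domain. The key estimate $\Pr(\m{A})\ge\phi_2>0$ is supplied by Proposition 4.1 in \bcd\ (cf.\ eq.~(4.27) there), which is precisely a positive-probability band estimate for the $\m{IRW}$ measure \emph{including the pinned left boundary}; this is the ingredient you are missing. The proof then conditions on $\mathcal{F}=\sigma\{L_i(j):i\in\ll1,2\rr, j\in\ll1,T\rr\}$, observes that $\m{B}$ restricted to the second half is decreasing in the boundary data, applies stochastic monotonicity (Proposition \ref{p:gmc}) to push the now-fixed left boundary of the second-half domain up to deterministic values $((M+\tfrac{3\gamma}{4})\sqrt{T},(M+\tfrac\gamma2)\sqrt{T})$, and only \emph{then} invokes Proposition \ref{sftnon} — legitimately, because that conditional domain has fixed boundary data on both sides with the required diffusive separation. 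This two-stage structure is not cosmetic bookkeeping; it is exactly what reconciles the pinned left boundary with the two-sided bridge invariance principle. Your "small-tube lower bound for non-intersecting Brownian bridges" step is then not needed: once the limiting bridges have endpoints strictly below $M+\gamma$, the event that they stay below $M+\gamma$ is open with positive probability by continuity, which is all the paper uses.
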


\begin{figure}
\centering
\begin{overpic}[page=1,scale=0.5]{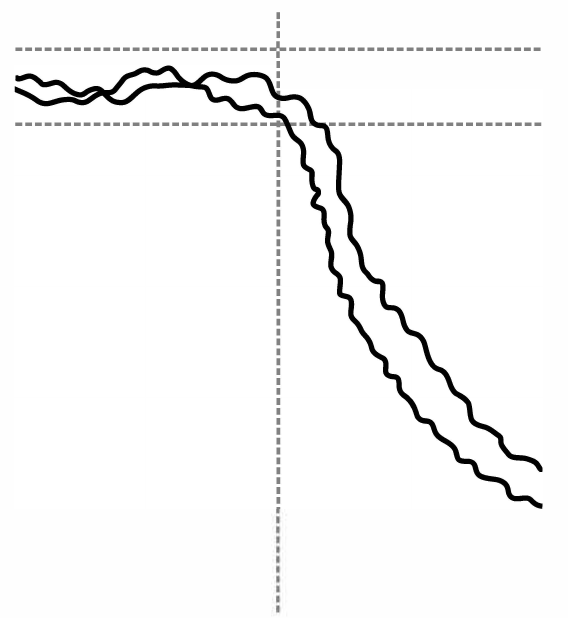}
\put(65,72){$M\sqrt{T}$}
\put(55,95){$(M+\gamma)\sqrt{T}$}
\put(38,2){$T$}
\put(5,2){$1$}
\put(70,2){$2T-1$}
\put(67,50){$L_1$}
\put(67,20){$L_2$}
\end{overpic}
\hspace{2cm}
\begin{overpic}[page=2,scale=0.5]{l6.4.pdf} \qquad
\put(10,22){$-M\sqrt{T}$}
\put(-2,-2){$-(M+\gamma)\sqrt{T}$}
\put(75,90){$L_1$}
\put(75,60){$L_2$}
\end{overpic}
\caption{Events considered in Lemma \ref{l:region}.}
\end{figure}

\begin{proof} Let us consider the events
\begin{align*}
    \m{A} & :=\bigcap_{i\in \ll1,2\rr, j\in \ll1,T\rr}\bigg\{L_i(j)\in [M\sqrt{T},(M+\tfrac\gamma2)\sqrt{T}]\bigg\}, \\ 
    \m{B} & :=\bigg\{\sup_{i\in \ll1,2\rr, j\in \ll 1,2T-\ind_{i\;\m{odd}}\rr}L_i(j)\le (M+\gamma)\sqrt{T}\bigg\}
\end{align*}
and the $\sigma$-field $\mathcal{F}:=\sigma\big\{L_i(j) : i\in \ll1,2\rr, j\in \ll1,T\rr\big\}$. We write $\Pr$ for $\Pr_{\m{IRW}}^{T;(0,-\sqrt{T})}$. To prove the first part of the lemma, it suffices to provide a lower bound for $\Pr(\m{A}\cap\m{B})$. By the tower property of conditional expectation, we write $\Pr(\m{A}\cap \m{B}) = \Ex[\ind_{\m{A}}\Ex[\ind_{\m{B}}\mid \mathcal{F}]].$ Note that by the Gibbs property, the above conditional law can be viewed as a $\hslg$ Gibbs measure. Since $\m{B}$ is a decreasing event w.r.t.~the boundary data, we may increase the left endpoints to $\big((M+3\gamma/4)\sqrt{T}, (M+\gamma/2)\sqrt{T}\big)$. By Proposition \ref{sftnon}, we can thus conclude $\ind_{\m{A}}\Ex[\ind_{\m{B}}\mid \mathcal{F}] \ge \ind_{\m{A}}\phi_1$ for some deterministic constant $\phi_1>0$. On the other hand, Proposition 4.1 in \bcd\ can be modified (see eq.~(4.27) in \bcd\ for a similar statement), to show $\Ex[\ind_{\m{A}}] \ge \phi_2$ for some constant $\phi_2>0$ and for all large enough $T$.
Thus combining we get that $\Pr(\m{A}\cap\m{B})\ge \phi_1\phi_2$. This proves the first inequality in Lemma \ref{l:region}. The second one follows similarly.
\end{proof}

We use the above result to prove the tightness of $m$-$\m{IRW}$ defined in Definition \ref{mirw}. Fix any $m,U\in \Z_{\ge 1}$, and for each $i\in \ll1,m\rr$ let $f_i : A_i \to \mathbb{R}$ where $\ll 1, U\rr \subset A_i \subset \mathbb{Z}$. We define the joint modulus of continuity for $f_1,\ldots,f_m$ as 
\begin{align*}
    \omega_{\delta}^T(f_1,f_2,\ldots,f_m;\ll1,U\rr)= \sup_{1\leq i\leq m} \sup_{\substack{x,y\in \ll1,U\rr \\ |x-y|\le \delta T}}|f_i(x)-f_i(y)|.
\end{align*}
With tightness of the left boundary of the $\hslg$ line ensemble in place, it suffices to show that the modulus of continuity for the line ensemble with $T=N^{2/3}$ upon dividing by $\sqrt{T}=N^{1/3}$ can be made arbitrarily small by taking $\delta\downarrow 0$. Towards this end, we first control the modulus of continuity at the level of $m$-$\m{IRW}$ in the following proposition.

\begin{proposition}\label{pimc} Fix any $m\in \mathbb{Z}_{\geq 1}$ and $M, k_1, k_2, \gamma, V ,\e >0$ with $k_1<k_2$.  
For each $R,T>0$, define the set
		\begin{align*}
I_{M} & :=\Big\{(u_{i,j})_{(i,j)\in \partial \Lambda_{m,4R}}\in \R^{|\partial\Lambda_{m,4R}|} : |u_{i,j}|\le M\sqrt{T} \mbox{ for all } (i,j)\in \partial\Lambda_{m,4R}, \\ & \hspace{4cm} u_{i,j_1}-u_{i+1,j_2}\ge -(\log T)^{7/6} \mbox{ for all } (i,j_1),(i+1,j_2)\in \partial\Lambda_{m,4R}\Big\},
		\end{align*}
where the domain $\Lambda_{m,4R}$ is defined in \eqref{lambdamt}.		There exist $\delta=\delta(M,V,k_1,k_2,\gamma,\e)>0$ and $T_0=T_0(M,V,k_1,k_2,\gamma,\e)>0$ such that for all $\vec{x}\in I_{M}$, $R\in \ll k_1T,k_2T\rr$, and $T\ge T_0$ we have	
		\begin{align*}
		\Pr_{m}^{4R;\vec{x}}(\m{A})\le \e, \mbox{ where } \m{A}:=\left\{ \big\{\omega_{\delta}^T(L_{2m-1},L_{2m}; \ll1,R\rr) \ge \gamma 	\sqrt{T}\big\}\cap \bigcap_{k=1}^m\big\{|L_{2k-1}(1)|+|L_{2k}(2)|\le V\sqrt{T}\big\}\right\}.
		\end{align*}
	\end{proposition}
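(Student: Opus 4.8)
\textbf{Proof proposal for Proposition \ref{pimc}.}

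The plan is to reduce the modulus-of-continuity bound for the bottom two curves of the $m$-$\m{IRW}$ law to a modulus-of-continuity bound for two softly non-intersecting random walk bridges, and then to invoke the diffusive convergence of such bridges to non-intersecting Brownian bridges (Proposition \ref{sftnon}, or rather its two-curve special case contained essentially in Proposition \ref{invarprin1}). First I would set up a ``good'' boundary event for the curves $L_{2m-1}, L_{2m}$ at the interior time $2R$: using Proposition \ref{p.2nto} (the $m$-$\m{IRW}$ version of uniform boundedness and ordering), I can ensure that, with probability at least $1-\e/2$, all curves at all times in $\ll 1, 4R\rr$ stay within a window $[-M_1\sqrt{T}, M_1\sqrt{T}]$ and the $(2m-2)$-nd curve values at the relevant times exceed those of $L_{2m-1}$ by at most $(\log T)^{7/6}$. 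Call the corresponding $\sigma$-algebra of ``outside'' data $\mathcal F$ (generated by all curves at times $\geq 2R$ together with the $\geq (2m+1)$-st curves), so that conditionally on $\mathcal F$ the law of $L_{2m-1}, L_{2m}$ on $\ll 1, 2R-1\rr$ is the $\hslg$ Gibbs measure $\Pr_{\Upsilon}^{R;(a,b,\vec c)}$ from \eqref{def:Glaw2}, with $a = L_{2m-1}(2R-1)$, $b = L_{2m}(2R)$, $c_j = L_{2m-2}(2j-1)$, and bottom boundary $-\infty$.

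Next I would use stochastic monotonicity (Proposition \ref{p:gmc}) to remove the influence of the top boundary $\vec c$: since the event $\m A$ — bounded left endpoints together with large modulus of continuity — is not monotone, I will instead bound $\Pr_{\Upsilon}^{R;(a,b,\vec c)}(\m A)$ by comparing with the plain $\m{IRW}$ law via the explicit Radon--Nikodym derivative $V(\vec c) \leq 1$ in \eqref{def:Glaw2}. Writing $\Pr_{\Upsilon}^{R;(a,b,\vec c)}(\m A) = \Ex_{\m{IRW}}^{R;(a,b)}[V \ind_{\m A}] / \Ex_{\m{IRW}}^{R;(a,b)}[V]$, I lower-bound the denominator uniformly by a positive constant using Lemma \ref{l:region}: on the event that $L_{2m-1}$ stays far below each $c_j$ (which has probability bounded below, since the $c_j$ are at height $\geq -M_1\sqrt T$ while $L_{2m-1}$ can be pushed down to height $\leq -2M_1\sqrt T$ over the whole interval with uniformly positive probability), we get $V \geq \exp(-2R e^{-c\sqrt T}) \geq \tfrac12$. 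For the numerator I simply bound $V\leq 1$ and are reduced to estimating $\Ex_{\m{IRW}}^{R;(a,b)}[\ind_{\m A}] = \Pr_{\m{IRW}}^{R;(a,b)}(\m A)$ uniformly over $a,b \in [-M_1\sqrt T, M_1\sqrt T]$ with $a - b \geq -(\log T)^{7/6}$.

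It then remains to show $\Pr_{\m{IRW}}^{R;(a,b)}(\m A)$ is small. On the sub-event $\{|L_{2m-1}(1)| + |L_{2m}(2)| \leq V\sqrt T\}$, the left endpoints are diffusively bounded, so after conditioning on $L_1(1)$ and applying Lemma \ref{lem:IRWshift}, the shifted pair $(L_1(\cdot) - L_1(1), L_2(\cdot) - L_1(1))$ becomes a softly non-intersecting random walk bridge $\Pr_{W_R}^{R;(0,\ga);(a',b')}$ with endpoints in a diffusive window and $a' - b'$ of order $\sqrt T$ (here I use that $|a - b| \leq O(M_1\sqrt R)$ from the boundedness event, and that the true separation $a'-b'$ at the right endpoint can be taken positive on a further good event by Lemma \ref{lem:IRWupperbd} applied to the two-curve case — actually the cleaner route is to invoke Proposition \ref{p.sep12}-type separation, but since we only need an upper bound on the modulus we can split into the cases $a' - b' \geq \delta_1\sqrt T$ and its complement, the latter having small probability). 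On the main case, Proposition \ref{invarprin1} (and its bridge analogue, cf.\ Proposition \ref{sftnon} with $m=2$) gives that the diffusively rescaled pair converges to a pair of non-intersecting Brownian bridges on $[0, R/(2R)] = [0,1/2]$ or so; since Brownian paths have a.s.\ continuous, hence uniformly continuous, sample paths, the modulus of continuity $\omega_\delta$ of the limit tends to $0$ in probability as $\delta \downarrow 0$, and a standard weak-convergence/portmanteau argument transfers this to the prelimit for $T$ large, uniformly over the (compact sets of) boundary data. Choosing $\delta$ small and $T_0$ large then makes every error term below $\e$.

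The main obstacle I expect is the uniformity over the interior boundary data $(a,b,\vec c)$ and over $R \in \ll k_1 T, k_2 T\rr$: the weak-convergence input (Proposition \ref{sftnon}/Proposition \ref{invarprin1}) is stated for fixed convergent boundary sequences, so to get a bound uniform over the compact family $I_M$ I will need a compactness/subsequence argument — supposing the conclusion fails, extract boundary data along which the probability stays $\geq \e$, pass to convergent rescaled subsequences of the boundary values (possible since they lie in $[-M, M]$ after rescaling), apply the weak convergence along that subsequence, and reach a contradiction with the continuity of Brownian sample paths. The other somewhat delicate point is ensuring the denominator lower bound in the Radon--Nikodym reduction is genuinely uniform; this is where Lemma \ref{l:region} (pushing two curves uniformly low while keeping them from straying too far) does the work, together with the observation that the penalty terms $e^{-c\sqrt T}$ in $V$ are negligible precisely on that low event.
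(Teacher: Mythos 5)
Your overall strategy matches the paper's: condition on an event confining the boundary data $(a,b)=(L_{2m-1}(2R-1),L_{2m}(2R))$ and the curve $L_{2m-2}$ to diffusive windows; write the conditional law of $(L_{2m-1},L_{2m})$ on $\ll 1,2R\rr$ as a Radon--Nikodym tilt of the plain $\m{IRW}$ law via \eqref{def:Glaw2}; lower-bound the RN denominator; and reduce to a modulus-of-continuity estimate for a single $\m{IRW}$ pair. Your direct denominator bound — push both $\m{IRW}$ curves uniformly below the $c_j$'s with a uniformly positive probability via Lemma \ref{l:region}, stochastic monotonicity, and translation invariance, on which event $V\geq\tfrac12$ — is a legitimate simplification. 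The paper packages this differently: it introduces the intermediate event $\m B(\beta)=\{\Ex_{\m{IRW}}^{R;(a,b)}[V(\vec z)]\geq\beta\}$ and bounds $\Pr(\neg\m B(\beta)\cap\m D)$ via a size-biasing identity before applying Lemma \ref{l:region}. Since $\m C$ already confines $a,b$ to a diffusive window and $\m D$ bounds $\vec z$, your deterministic inclusion $\m C\cap\m D\subseteq\m B(\beta)$ achieves the same end without size biasing; both variants rest on the same underlying ingredients.

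The genuine gap is in the final reduction. The paper does not re-derive the modulus-of-continuity bound for $\Pr_{\m{IRW}}^{R;(a,b)}(\m A)$; it cites Proposition 5.2 of \bcd, which is precisely the $m=1$ case of the present proposition and is treated as a black box, so that the entire argument is a reduction from $m$ curves to one. Your proposed re-derivation via Lemma \ref{lem:IRWshift} and Propositions \ref{invarprin1}/\ref{sftnon} has a real obstruction: Proposition \ref{invarprin1} is a free-endpoint invariance principle (not a bridge result), and Proposition \ref{sftnon} requires the limiting endpoints to satisfy $a_1>a_2$ strictly; but the good event $\m C$ only guarantees $a-b\geq -(\log T)^{7/6}$, so the separation at time $\approx 2R$ may be $o(\sqrt T)$ or even negative. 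You acknowledge this and propose to show the small-separation case has small probability, but there is no result in the paper giving $L_{2m-1}(2R-1)-L_{2m}(2R)\geq\delta_1\sqrt T$ with high probability under the $m$-$\m{IRW}$ law with arbitrary $\vec x\in I_M$ — Proposition \ref{p.sep12} is a statement about the full line ensemble, not about the Gibbs measure with given boundary data, and importing it here would require a separate argument. Absent such a separation result, or a version of Proposition \ref{sftnon} handling coinciding endpoints (Brownian-excursion-type limits), the reduction to Brownian bridges is incomplete. The cleaner and correct route is exactly what the paper does: take the $m=1$ modulus bound as given.
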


\begin{proof}  For $m=1$, i.e., for $\m{IRW}$, the above result was established in Proposition 5.2 in \bcd. Our proof will rely on the $m=1$ case. We divide the proof into two steps for clarity.

\medskip

\noindent\textbf{Step 1.} Fix any $\e\in (0,1)$. First, from Proposition \ref{p.2nto} it follows that there exist $M_1,M_2$ large enough so that
\begin{equation}
    \label{events}
    \begin{aligned}
& \Pr_{m}^{4R;\vec{x}}(\m{C})  \ge 1-\e, \mbox{ where } \m{C}:=\big\{|L_{2m-1}(2R-1)|+|L_{2m}(2R)|\le M_1\sqrt{T}, \\ & \hspace{6.5cm} L_{2m-1}(2R-1)-L_{2m}(2R) \ge -(\log T)^{7/6}\big\}, \\ &
 \Pr_{m}^{4R;\vec{x}}(\m{D})  \ge 1-\e, \mbox{ where } \m{D}:=\bigg\{\inf_{j\in \ll 1,4R\rr} L_{2m-2}(j) \ge -M_2\sqrt{T}\bigg\}. 
\end{aligned}
\end{equation}
Let us consider the $\sigma$-algebra
$$\mathcal{F}=\big\{L_{2m-1}\ll 2R-1,8R-1\rr, L_{2m}\ll 2R, 8R\rr, L_i\ll1,8R+\ind_{i\;\m{even}}-2\rr : i\in \ll1,2m-2\rr \big\}.$$
Note by the Gibbs property that the conditional law of $(L_{2m-1}\ll 1,2R-2\rr, L_{2m}\ll 1,2R-1\rr)$ given $\mathcal{F}$ is $\Pr_{\Upsilon}^{R;(a,b,\vec{z})}$ defined in \eqref{def:Glaw2}. Thus we have the following representation:
\begin{align*}
    \Ex_{m}^{4R;\vec{x}}[\mathbf{1}_{\m{A}}\mid \mathcal{F}] =\Pr_{\Upsilon}^{R;(a,b,\vec{z})}(\m{A})= \frac{\Ex_{\m{IRW}}^{R;(a,b)}[V\ind_{\m{A}}]}{\Ex_{\m{IRW}}^{R;(a,b)}\left[V\right]}.
\end{align*}
Here we are abusing notation slightly: we are now using $L_{2m-1}(\cdot), L_{2m}(\cdot)$ to denote the underlying random variables in the $\Pr_{\m{IRW}}^{R;(a,b)}$ law. Let us set $\m{B}(\beta):=\big\{\Ex_{\m{IRW}}^{R;(a,b)}\left[V\right]\ge \beta\big\}$. We claim that
\begin{align}\label{acceptbot}
    \Pr_{m}^{4R;\vec{x}}(\neg\m{B}(\beta)\cap \m{D}) \le \e.
\end{align}
We shall prove \eqref{acceptbot} in \textbf{Step 2}. Assuming it, observe that
\begin{align*}
    \Pr_{m}^{4R;\vec{x}}(\m{A}\cap \m{B}(\beta)\cap\m{C}) & =\Ex_{m}^{4R;\vec{x}}\big[\ind_{\m{B}(\beta)\cap\m{C}}\cdot \Ex_{m}^{4R;\vec{x}}[\ind_{\m{A}}\mid \mathcal{F}]\big] \\ & =\Ex_m^{4R;\vec{x}}\left[\ind_{\m{B}(\beta)\cap\m{C}}\cdot\frac{\Ex_{\m{IRW}}^{R;(a,b)}[V\ind_{\m{A}}]}{\Ex_{\m{IRW}}^{R;(a,b)}\left[V\right]}\right] \le \beta^{-1}\cdot\Ex_{m}^{4R;\vec{x}}\big[\ind_{\m{C}}\cdot\Pr_{\m{IRW}}^{R;(a,b)}(\m{A})\big].
\end{align*}
By Proposition 5.2 in \bcd, we can take $\delta>0$ (depending on $\beta$ along with other parameters) small enough so that $\ind_{\m{C}}\cdot\Pr_{\m{IRW}}^{R;(a,b)}(\m{A})\le \beta\e$. Thus we get $\Pr_{m}^{4R;\vec{x}}(\m{A}\cap \m{B}(\beta)\cap\m{C}) \le \e$, which in view of \eqref{events} and \eqref{acceptbot} implies $\Pr_{m}^{4R;\vec{x}}(\m{A}) \le 4\e$. This completes the proof modulo \eqref{acceptbot}. 

\medskip

\noindent\textbf{Step 2.} To prove \eqref{acceptbot}, using the tower property of conditional expectation we write
\begin{align*}
    \Pr_{m}^{4R;\vec{x}}(\neg\m{B}(\beta)\cap \m{D}) = \Ex_m^{4R;\vec{x}}\left[\ind_{\m{D}}\cdot \Ex_{m}^{4R;\vec{x}}[\ind_{\neg\m{B}(\beta)} \mid \sigma\{L_1,\ldots,L_{2m-2}\}]\right]
\end{align*}
Invoking the Gibbs property again we have
\begin{align*}
    \Ex_{m}^{4R;\vec{x}}[\ind_{\neg\m{B}(\beta)} \mid \sigma\{L_1,\ldots,L_{2m-2}\}] = \Pr_{\Upsilon}^{4R;(a',b';\vec{z},\vec{w})}(\neg\m{B}(\beta)),
\end{align*}
where $\Pr_{\Upsilon}^{4R;(a',b',\vec{z},\vec{w})}$ denotes the $\hslg$ Gibbs measure on the domain $\Upsilon$ defined in \eqref{fre} with the boundary conditions $u_{2m-1,8R-1}=a'=x_{2m-1,1}$, $u_{2m,8R}=b'=x_{2m}$, and $u_{2m-2,2j-1}=z_j=L_{2m-2}(2j-1)$ for $j\in \ll1,R\rr$ and $u_{2m-2,2j-1}=w_{j-R}=L_{2m-2}(2j-1)$ for $j\in \ll R+1,4R\rr$. Note that on the event $\m{D}$, we know $z_j,w_j\ge -M_2\sqrt{T}$. Thus it suffices to provide an upper bound for $\Pr_{\Upsilon}^{4R;(a',b',\vec{z},\vec{w})}(\neg\m{B}(\beta))$ that is uniform over deterministic boundary conditions $\vec{x}\in I_M$ and $\vec{z},\vec{w}$ satisfying $z_j,w_j \ge -M_2\sqrt{T}$.

To do this, we use {the size biasing trick} to provide a lower bound. This trick is quite standard in line ensemble calculations (see e.g. Section 4.3 in \cite{bcd2} or Section 5 in \cite{bcd}). Essentially, the size biasing argument proceeds by writing, for any event $\m{E}\in \mathcal{F}$,
\begin{align}\label{sizebias}
    \Pr_{\Upsilon}^{4R;(a',b',\vec{z},\vec{w})}(\m{E}) = \frac{\Ex_{\Upsilon}^{4R;(a',b',(\infty)^R,\vec{w})}\big[\ind_{\m{E}} \exp(-e^{L_{2m-1}(2R)-z_{R}}) \Ex_{\m{IRW}}^{R;(a',b')}[V(\vec{z})]\big]}{\Ex_{\Upsilon}^{4R;(a',b',(\infty)^R,\vec{w})}\big[\exp(-e^{L_{2m-1}(2R)-z_{R}}) \Ex_{\m{IRW}}^{R;(a,b)}[V(\vec{z})]\big]}. 
\end{align}
The above formula follows from the definitions of each of the measures involved (see eq.~(5.25) in \bcd~for a similar formula).   By Lemma \ref{lem:IRWupperbd} and stochastic monotonicity (Proposition \ref{p:gmc}), we can choose $M_3$ large enough so that
\begin{align*}
    \Pr_{\m{IRW}}^{R;(a,b)}\bigg(\sup_{j\in \ll1,2R-1\rr} L_{2m-1}(j), \sup_{j\in \ll1,2R\rr} L_{2m}(j) \le M_3\sqrt{T}\bigg) \ge 1-\e
\end{align*}
for all $a,b\le 0$ (recall that we are using $L_{2m-1}(\cdot), L_{2m}(\cdot)$ to denote the underlying random variables in the $\Pr_{\m{IRW}}^{R;(a,b)}$ law). Using translation invariance (Lemma \ref{traninv}) and the definition of $V(\vec{z})$, this implies for $a,b \le -(M_2+M_3+1)\sqrt{T}$ and $z_j \ge -M_2\sqrt{T}$
\begin{align}\label{vlbd}
    \Ex_{\m{IRW}}^{R;(a,b)}[V(\vec{z})] & \ge \exp\big(-2(R-1)e^{-\sqrt{T}}\big)(1-\e) \ge \tfrac12 
\end{align}
for all large enough $T$. Let us now consider the event
\begin{align*}
    \m{G}:=\{ L_{2m-1}(2R), L_{2m-1}(2R-1), L_{2m}(2R) \le -(M_2+M_3+1)\sqrt{T}\}.
\end{align*}
Using \eqref{vlbd} we get
\begin{align*}
   & \Ex_{\Upsilon}^{4R;(a',b',(\infty)^R,\vec{w})}\big[\exp(-e^{L_{2m-1}(2R)-z_{R}}) \Ex_{\m{IRW}}^{R;(a,b)}[V(\vec{z})]\big] \\ & \ge \Ex_{\Upsilon}^{4R;(a',b',(\infty)^R,\vec{w})}\big[\ind_{\m{G}}\exp(-e^{L_{2m-1}(2R)-z_{R}}) \Ex_{\m{IRW}}^{R;(a,b)}[V(\vec{z})]\big] \ge \tfrac12\cdot \exp(-e^{-\sqrt{T}}) \cdot \Pr_{\Upsilon}^{4R;(a',b',(\infty)^R,\vec{w})}(\m{G}).
\end{align*}
By stochastic monotonicity, the probability of the event $\m{G}$ is decreasing as we increase the boundary data. As $\vec{x} \in I_M$, we can choose $M_4$ such that $x_{2m-1,1},x_{2m}\le M_4\sqrt{4R}$. Thus taking $w_j \to \infty$, $x_{2m-1,1} \to (M_4+1)\sqrt{4R}$, and $x_{2m} \to M_4\sqrt{4R}$ we get 
\begin{align*}
    \Pr_{\Upsilon}^{4R;(a',b',(\infty)^R,\vec{w})}(\m{G}) & \ge \Pr_{\Upsilon}^{4R;(a',b',(\infty)^{4R})}(\m{G}) =\Pr_{\m{IRW}}^{4R;x_{2m-1,1},x_{2m}}(\m{G}) \ge \Pr_{\m{IRW}}^{4R;((M_4+1)\sqrt{4R}, M_4\sqrt{4R})}(\m{G}).
\end{align*}
By translation invariance, the above probability is equal to
\begin{align*}
    \Pr_{\m{IRW}}^{4R;(0,-\sqrt{4R})}\big(L_{2m-1}(2R), L_{2m-1}(2R-1), L_{2m}(2R) \le -(M_2+M_3+1)\sqrt{T}-(M_4+1)\sqrt{4R}\big).
\end{align*}
By Lemma \ref{l:region}, this has a uniform (in $R,T$) lower bound by a positive constant. We thus see that for large enough $T$, the denominator of the r.h.s.~ of \eqref{sizebias} has a uniform lower bound by some constant $\phi>0$ for all $z_j,w_j \ge -M_2\sqrt{T}$. With $\m{E}=\neg \m{B}(\beta)$, from the definition of the event $\m{B}(\beta)$ we thus have
\begin{align*}
    \eqref{sizebias} \le \phi^{-1}\cdot \Ex_{\Upsilon}^{4R;(a',b',(\infty)^R,\vec{w})}\big[\ind_{\neg\m{B}(\beta)} \Ex_{\m{IRW}}^{R;(a',b')}[V(\vec{z})]\big] \le \phi^{-1} \beta.
\end{align*}
Taking $\beta$ small enough, we can make the above bound arbitrarily small. This completes the proof.
\end{proof}

 
\begin{lemma}\label{highmt} Fix $m,M_1,M_2>0$. There exists $\phi=\phi(m,M_1,M_2)>0$ and $T_0=T_0(m,M_1,M_2)>0$ such that for all $\vec{x}\in I_{M_1}$ and $T\ge T_0$ we have 
    \begin{align*}      \Pr_{m}^{T;\vec{x}}\bigg(\bigcap_{i=1}^{2m}\bigcap_{j\in \ll1,2T/4^m\rr} \big\{L_i(j) \ge M_2\sqrt{T}\big\}\bigg) \ge \phi.
    \end{align*}
\end{lemma}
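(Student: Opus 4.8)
\textbf{Proof plan for Lemma \ref{highmt}.}

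The plan is to induct on $m$, using the decomposition \eqref{block} of the $m$-$\m{IRW}$ measure into $m$ independent $\m{IRW}$s reweighted by $W_{\m{block}}$, together with Lemma \ref{l:region} as the base-case input and the second inequality of Lemma \ref{l:region} (or rather its interplay with the first) to handle the interaction between consecutive pairs of curves. The key observation is that if we can force the $(2i-1,2i)$ pair of curves to stay in a high window $[M_i\sqrt{T},(M_i+\gamma_i)\sqrt{T}]$ on the relevant time interval for each $i$, with the windows \emph{ordered} so that $M_1$ is largest and $M_m$ still exceeds $M_2$, then on that event $W_{\m{block}}$ is bounded below by $\exp(-C m T e^{-c\sqrt{T}}) \to 1$: every exponent $e^{L_{2i+1}(j)-L_{2i}(j')}$ appearing in $W_{\m{block}}$ is of the form $e^{-(\text{positive multiple of }\sqrt{T})}$ because the lower curve of pair $i+1$ lies below the window of pair $i$. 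Hence on this good event the Radon–Nikodym factor is harmless and the probability of the intersection under $\Pr_m^{T;\vec x}$ is, up to a factor of $\Ex_{\m{IRW}^{(m)}}^{T;\vec y}[W_{\m{block}}]^{-1}$, comparable to the product of the $m$ independent $\m{IRW}$ probabilities.

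Concretely, I would first choose a decreasing sequence of heights $M_2 =: H_m < H_{m-1} < \cdots < H_1$ and a gap $\gamma>0$ (with $H_{i}+\gamma < H_{i-1}$) so that consecutive windows $[H_i\sqrt{T},(H_i+\gamma)\sqrt{T}]$ are separated. For the $i$th independent $\m{IRW}$ with boundary data $(y_{2i-1,1},y_{2i})$ lying in $[-M_1\sqrt T, M_1\sqrt T]$, translation invariance (Lemma \ref{traninv}) and stochastic monotonicity (Proposition \ref{p:gmc}) let me push the right endpoints down to $(0,-\sqrt{T})$ after subtracting a suitable multiple of $\sqrt T$; then the first inequality of Lemma \ref{l:region}, applied at the appropriate shifted height, gives that the $i$th walk stays inside its window on $\ll 1, 2T/4^m\rr \subset \ll 1,2T\rr$ (in fact on all of $\ll 1,2T-\ind_{i\ \mathrm{odd}}\rr$ if desired — I only need the window containment up to time $2T/4^m$ for the lower bound, and the upper containment everywhere so that $W_{\m{block}}$ stays controlled) with probability at least some $\phi_i(H_i,\gamma,M_1)>0$, uniformly in $T$ large. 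Here I must be a little careful: Lemma \ref{l:region} is stated for boundary data exactly $(0,-\sqrt T)$, so the reduction to that case via translation invariance and monotonicity — decreasing the boundary values increases a decreasing event, increasing them increases an increasing event, and the window-containment event is an intersection of an increasing and a decreasing event, so I should split it and apply monotonicity in both directions, exactly as in the proof of Lemma \ref{lem:IRWupperbd}. Denote by $\m{W}_i$ the event that the $i$th $\m{IRW}$ stays in its window; by independence $\Pr_{\m{IRW}^{(m)}}^{T;\vec y}\big(\bigcap_i \m{W}_i\big) \ge \prod_i \phi_i =: \phi_0 > 0$.

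Next, on $\bigcap_i \m{W}_i$ we have $W_{\m{block}} \ge \exp\big(-2m(m-1)T e^{-\gamma'\sqrt T}\big) \ge \tfrac12$ for $T$ large, where $\gamma' = \min_i (H_i - H_{i+1} - \gamma) > 0$ controls the gaps; the boundary term $\sum_i e^{y_{2i+1,2} - L_{2i}(2T-1)}$ in $W_{\m{block}}$ is handled the same way, since $y_{2i+1,2} \le M_1\sqrt T$ while $L_{2i}(2T-1)$ — wait, $L_{2i}$ is an even-index curve within pair $i$ and its value at $2T-1$ sits inside the window at height $\ge H_i\sqrt T$; if $H_i > M_1$ for all $i$ this boundary exponent is also $e^{-(\text{positive})\sqrt T}$, so I should additionally require $H_m > M_1$, i.e.\ take $M_2 \vee M_1 < H_m < \cdots < H_1$. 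With this, $\Ex_{\m{IRW}^{(m)}}^{T;\vec y}[W_{\m{block}}] \ge \tfrac12 \phi_0$, and likewise $\Ex_{\m{IRW}^{(m)}}^{T;\vec y}[W_{\m{block}} \ind_{\bigcap_i \m{W}_i}] \ge \tfrac12\phi_0$. Since $\bigcap_i \m{W}_i \subset \bigcap_{i}\bigcap_{j\le 2T/4^m}\{L_i(j) \ge H_i\sqrt T\} \subset \bigcap_{i}\bigcap_{j\le 2T/4^m}\{L_i(j) \ge M_2\sqrt T\}$, formula \eqref{block} gives
\begin{align*}
\Pr_m^{T;\vec x}\bigg(\bigcap_{i=1}^{2m}\bigcap_{j\in\ll1,2T/4^m\rr}\{L_i(j)\ge M_2\sqrt T\}\bigg) \ge \frac{\Ex_{\m{IRW}^{(m)}}^{T;\vec y}[W_{\m{block}}\ind_{\bigcap_i \m{W}_i}]}{\Ex_{\m{IRW}^{(m)}}^{T;\vec y}[W_{\m{block}}]} \ge \frac{\tfrac12\phi_0}{1} =: \phi > 0,
\end{align*}
uniformly over $\vec x \in I_{M_1}$ and $T \ge T_0$. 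The main obstacle I anticipate is purely bookkeeping: correctly choosing the hierarchy of heights $H_1 > \cdots > H_m > M_1 \vee M_2$ with a common gap $\gamma$, and verifying that the translation-invariance/monotonicity reductions to the base point $(0,-\sqrt T)$ go through for each pair simultaneously while keeping all the shift amounts bounded by absolute constants (depending only on $m, M_1, M_2$) so that Lemma \ref{l:region} applies with $T$-independent lower bounds. There is no essential analytic difficulty beyond what is already packaged in Lemma \ref{l:region} and Proposition \ref{sftnon}; the role of $2T/4^m$ rather than $2T$ is merely to leave slack, and in fact the argument as sketched works with $2T$ in place of $2T/4^m$, so the stated bound is comfortably implied.
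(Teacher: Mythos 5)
Your plan has the right spirit—use \eqref{block} to reduce to $m$ independent $\m{IRW}$s, force each pair into a high window, and bound $W_{\m{block}}$ below—but it contains a genuine gap: your claim that $W_{\m{block}}\ge \exp(-CmTe^{-c\sqrt{T}})$ on the window-containment event does not hold. The interaction sum in $W_{\m{block}}$ runs over $j\in\ll1,T-1\rr$, so positions $2j\pm1$ cover essentially the whole domain $\ll1,2T-1\rr$. Lemma \ref{l:region} only provides the \emph{lower} containment $L_i(j)\ge H_i\sqrt{T}$ on $j\in\ll1,T\rr$; on the right half $\ll T,2T\rr$ the curves are pinned to their boundary data $y_{\cdot}\in[-M_1\sqrt{T},M_1\sqrt{T}]$, which (a) can lie well below your windows if $H_i>M_1$, and (b) is only guaranteed to be ordered up to $-(\log T)^{7/6}$, not separated by $\Omega(\sqrt{T})$. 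Consequently, on a set of positions of order $T$ near the right boundary, the exponents $L_{2i+1}(2j)-L_{2i}(2j\pm1)$ can be of size $(\log T)^{7/6}$ (or worse, positive fluctuations of independent bridges descending from separated heights to nearly-coinciding endpoints), and the corresponding terms $e^{L_{2i+1}(2j)-L_{2i}(2j\pm1)}$ are not $e^{-c\sqrt{T}}$. The ``upper containment everywhere'' you invoke controls $L_{2i+1}$ from above but cannot save the argument without a matching lower bound on $L_{2i}$ near the boundary, which simply fails for bridges.

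The paper's proof avoids this by a different architecture: it inducts on $m$ and uses the Gibbs property to shrink the domain to length $4R=4T/4^m$ before invoking Lemma \ref{l:region}, after first pushing the right boundary of the last pair down to $(-M_3\sqrt{T},-M_3\sqrt{T}-\sqrt{4R})$ via stochastic monotonicity (valid since the target event $\m{C}$ is increasing in the boundary data). The induction handles the first $2m-2$ curves on the larger window $\ll1,8R\rr$, and the Radon--Nikodym factor $V(\vec{z})$ between the $\Upsilon$-Gibbs measure and the $\m{IRW}$ law then lives entirely on a domain where separation is under control. The geometric factor $4^m$ in the statement is the footprint of this domain-shrinking recursion; your observation that ``the argument works with $2T$ in place of $2T/4^m$'' is another symptom of the gap, since the paper needs the slack precisely to avoid the uncontrolled boundary region. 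If you wish to pursue a direct (non-inductive) argument, you would have to additionally establish a uniform-in-$T$ lower bound on the probability (under the independent-bridges law) that the $m$ pairs stay ordered with $\Omega(\sqrt{T})$ separation \emph{all the way to the pinned right boundary}; this is a non-intersection-of-bridges estimate that is not contained in Lemma \ref{l:region} and would need a separate proof.

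Secondarily, the reduction to boundary data $(0,-\sqrt{T})$ is less innocuous than you suggest. The window-containment event is an intersection of an increasing and a decreasing event, and stochastic monotonicity gives one-sided comparisons only; it does not lower-bound the probability of such an intersection after changing the boundary. The reduction you cite from Lemma \ref{lem:IRWupperbd} works there because one \emph{upper}-bounds the probability of a union of monotone events; lower-bounding the intersection is a different matter. The paper sidesteps this as well: after the Gibbs reduction only the increasing event $\m{C}$ needs to be lower-bounded, and the decreasing constraint ($\m{D}$) is brought in afterwards under the already-lowered boundary data.
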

\begin{proof} We proceed via induction. For $m=1$, this is Proposition 4.1 in \bcd. Let us assume it holds for $m-1$. We shall prove it holds for $m$. To avoid working with floor functions, we will assume $T$ is a multiple of $4^{m+1}$. Set $R=T/4^{m}$. We define several events to be used in the proof:
\begin{align*}
    & \m{A}:=\bigcap_{i=1}^{2m-2}\bigcap_{j\in \ll1,8R\rr} \big\{L_i(j) \ge 2M_2\sqrt{T}\big\}, \quad \m{B}(M_3):=\big\{L_{2m-1}(8R-1), L_{2m}(8R) \ge -M_3\sqrt{T}\big\}, \\
    & \m{C}:=\bigcap_{i=2m-1}^{2m}\bigcap_{j\in \ll1,2R\rr} \big\{L_i(j) \ge M_2\sqrt{T}\big\}, \quad \m{D}:=\bigcap_{i=2m-1}^{2m}\bigcap_{j\in \ll1,8R-\ind_{i\;\m{odd}}\rr} \big\{L_i(j) \le (M_2+1)\sqrt{T}\big\}.
\end{align*}
Note that by stochastic monotonicity and the inductive hypothesis, there exists $\phi_1>0$ such that $\Pr_{m}^{T;\vec{x}}(\m{A}) \ge \Pr_{m-1}^{T;\vec{x}'}(\m{A}) \ge  \phi_1,$ where $\vec{x}'$ is obtained from $\vec{x}$ by removing $x_{2m-1,1},x_{2m-1,2},x_{2m}$ from the list. From Proposition \ref{p.2nto},  we can get $M_3=M_3(\phi)>0$ large enough so that $\Pr_{m}^{T;\vec{x}}(\m{B}(M_3)) \ge 1-\phi_1/2.$ Let us fix this $M_3$ and write $\m{B}=\m{B}(M_3)$. We thus have $\Pr_{m}^{T;\vec{x}}(\m{A}\cap \m{B}) \ge \phi_1/2$. Consider the $\sigma$-field
\begin{align*}
    \mathcal{F}:=\big\{L_{2m-1}\ll 8R-1,2T-1\rr,L_{2m}\ll 8R,2T\rr,  L_i\ll 1, 2T-\ind_{i\;\m{odd}}\rr : i\in \ll1,2m-2\rr \big\}.
\end{align*}
We have $\Pr(\m{A}\cap \m{B}\cap \m{C})= \Ex[\ind_{\m{A}\cap \m{B}}\Ex[\ind_{\m{C}}\mid \mathcal{F}]]$. The conditional measure is given by $\Pr_{\Upsilon}^{4R;(a,b,\vec{z})}$ defined in \eqref{def:Glaw2} where $a=L_{2m-1}(8R-1)$, $b=L_{2m}(8R)$, and $z_j=L_{2m-2}(2j-1)$ for $j\in \ll1,4R\rr$. By stochastic monotonicity, reducing the boundary data will decrease the probability of $\m{C}$. Thus we get
\begin{align*}
    \ind_{\m{A}\cap \m{B}}\Ex[\ind_{\m{C}}\mid \mathcal{F}] = \ind_{\m{A}\cap \m{B}} \cdot \Pr_{\Upsilon}^{4R;(a,b,\vec{z})}(\m{C}) \ge \ind_{\m{A}\cap \m{B}} \cdot \Pr_{\Upsilon}^{4R;(a',b',\vec{z}\,')}(\m{C}) \ge  \ind_{\m{A}\cap \m{B}}\Pr_{\Upsilon}^{4R;(a',b',\vec{z}\,')}(\m{C}\cap \m{D}) 
\end{align*}
where $a'=-M_3\sqrt{T}, b'=-M_3\sqrt{T}-\sqrt{4R}$, $z_j'=2M_2\sqrt{T}$. Recall $V(\vec{c})$ from \eqref{def:Glaw2}. Note that under the event $\m{D}$ and the boundary conditions $(a',b',\vec{z}\,')$, we always have $V(\vec{z}) \ge \exp\big(-8Re^{-\sqrt{T}}\big) \ge \frac12$ for large enough $T$. Thus utilizing the relation in \eqref{def:Glaw2} we get 
\begin{align*}
    \Pr_{\Upsilon}^{4R;(a',b',\vec{z}')}(\m{C}\cap \m{D}) = \frac{\Ex_{\m{IRW}}^{4R;(a',b')}[V(\vec{z})\ind_{\m{C}\cap \m{D}}]}{\Ex_{\m{IRW}}^{4R;(a',b')}[V(\vec{z})]} \ge \tfrac12\cdot\Pr_{\m{IRW}}^{4R;(a',b')}(\m{C}\cap \m{D}) \ge \phi_3
\end{align*}
for some $\phi_3>0$. Here the last inequality follows by an application of Lemma \ref{l:region}. Plugging this estimate back into the above we get $\Pr(\m{A}\cap \m{B} \cap \m{C}) \ge \phi_1\phi_3/2$. 
\end{proof}

We now have all the necessary ingredients to complete the proof of Theorem \ref{p.tight3}.

\begin{proof}[Proof of Theorem \ref{p.tight3}] Fix any $m\in \mathbb{Z}_{\ge 1}$ and $V>0$. Let 
\begin{align*}
    \m{A}_1 & :=\bigcap_{k=1}^{2m} \{|\L_{2k-1}^N(1)|+|\L_{2k}^N(2)|\le VN^{1/3}\}, \qquad
    \m{A}_2  :=\big\{\omega_\delta^{N^{2/3}}(\L_{2m-1}^N,\L_{2m}^N; \ll 1, AN^{2/3} \rr) \ge \gamma N^{1/3}\big\}.
\end{align*}
Set $\m{A}:=\m{A}_1\cap \m{A}_2$. Due to endpoint tightness (Theorem \ref{thm:eptight}), it suffices to show that for each $V>0$, $\Pr(\m{A})$ can be made arbitrarily small by taking $\delta$ small enough. 

Set $T=AN^{2/3}/4^{m+1}$. To avoid working with floor functions, we will assume $T\in \mathbb{Z}_{\ge 1}$. Throughout the proof we will assume $N$ (and hence $T$) is large enough. For each $R\in \Z_{\ge 1}$, consider the events
\begin{align*}
    \m{C}_{R} & :=\bigcap_{i=1}^m\Big\{|\L_{2i-1}^N(2R-1)|,|\L_{2i-1}^N(2R)|, |\L_{2i}^N(2R)| \le M\sqrt{T}, \\ & \hspace{1cm} \min\{\L_{2i-1}^N(2R-1), \L_{2i-1}^N(2R)\} - \L_{2i}^N(2R) \ge -(\log T)^2, \\ & \hspace{2cm}\L_{2i}^N(2R)- \max \{\L_{2i+1}^N(2R-1), \L_{2i+1}^N(2R)\} \ge -(\log T)^2\Big\}, \\
    \m{D}' & := \Big\{\sup_{j\in \ll1,4^{m+1}T\rr} \L_{2m+1}^N(j) \le M_2\sqrt{T}\Big\}
\end{align*}
We write $\m{C}:=\m{C}_T$. Set $T':=4^{m+1}T$ and $\m{D}:=\m{D}'\cap \m{C}_{T'}$. Thanks to Propositions \ref{p.nottoo} and \ref{pp:order}, we may choose $M, M_2$ large enough so that
\begin{align}\label{events2}
    \Pr(\neg \m{D}) \le \e. 
\end{align}
For each $R\in \Z_{\ge 1}$ define
\begin{align*}
    \mathcal{F}_R:=\sigma\big\{\L_{i_1}^N(j), \L_{i_2}^N(\cdot) : i_1\in \ll1,2m\rr, j\ge 2R-\ind_{i\;\m{odd}}, i_2 \ge 2m+1 \big\}.
\end{align*}
The proof strategy roughly follows that of Proposition \ref{pimc}. Invoking the Gibbs property (Theorem \ref{thm:conn}),  the random variable $\Ex[\m{A}\mid \mathcal{F}_T]$ has the following representation:
\begin{align*}
    \Ex[\m{A}\mid \mathcal{F}_T] = \frac{\Ex_{m}^{T;\vec{x}}[U(\vec{z})\ind_{\m{A}}]}{\Ex_{m}^{T;\vec{x}}\left[U(\vec{z})\right]},
\end{align*}
where $x_{2i-1,1}=\L_{2i-1}^N(2T-1)$, $x_{2i-1,2}=\L_{2i-1}^N(2T)$, $x_{2i}=\L_{2i}^N(2T)$, and
\begin{align*}
    U(\vec{z}):=\exp\bigg(-e^{z_T-L_{2m}(2T-1)}-\sum_{j=1}^{T-1}(e^{z_j-L_{2m}(2j-1)}+e^{z_j-L_{2m}(2j+1)})\bigg),
\end{align*}
with $z_j:=\L_{2m+1}^N(2j)$ for $j\in \ll1,T\rr$. Let us set $\m{B}(\beta):=\{\Ex_{m}^{T;\vec{x}}\left[U(\vec{z})\right] \ge \beta\}.$  We claim that
\begin{align}\label{acceptbot2}
    \Pr(\neg\m{B}(\beta)\cap \m{D})\le \e.
\end{align}
We shall prove \eqref{acceptbot} in \textbf{Step 2}. Assuming it, observe that
\begin{align*}
    \Pr(\m{A}\cap \m{B}(\beta)\cap\m{C}) & =\Ex\big[\ind_{\m{B}(\beta)\cap\m{C}}\cdot \Ex[\ind_{\m{A}}\mid \mathcal{F}_{T}]\big]  =\Ex\left[\ind_{\m{B}(\beta)\cap\m{C}}\cdot\frac{\Ex_{m}^{T;\vec{x}}[U\ind_{\m{A}}]}{\Ex_{m}^{T;\vec{x}}\left[U\right]}\right] \le \beta^{-1}\cdot\Ex\big[\ind_{\m{C}}\cdot\Pr_{m}^{T;\vec{x}}(\m{A})\big].
\end{align*}
By Proposition \ref{pimc}, we can take $\delta>0$ (depending on $\beta$ along with other parameters) small enough so that $\ind_{\m{C}}\cdot\Pr_{m}^{T;\vec{x}}(\m{A})\le \beta\e$. Thus we get $\Pr(\m{A}\cap \m{B}(\beta)\cap\m{C}) \le \e$, which in view of \eqref{events2} and \eqref{acceptbot2} implies $\Pr(\m{A}) \le 4\e$. This completes the proof modulo \eqref{acceptbot2}. 

\medskip

\noindent\textbf{Step 2.} To prove \eqref{acceptbot2}, we use the tower property of conditional expectation to write
\begin{align*}
    \Pr(\neg\m{B}(\beta)\cap \m{D}) = \Ex\left[\ind_{\m{D}}\cdot \Ex[\ind_{\neg\m{B}(\beta)} \mid \mathcal{F}_{T'}]\right].
\end{align*}
By Theorem \ref{thm:conn}, the conditional measure can be viewed as a $\hslg$ Gibbs measure on the domain $\Lambda_{m,T'}$ (defined in \eqref{lambdamt}) with boundary conditions
\begin{align*}
    & u_{2i-1,j}=y_{2i-1,j}=\L_{2i-1}^N(2T'-1)  \mbox{ for } i\in \ll1,m\rr, j\in \ll 2T-1,2T\rr, \\
    & u_{2i}=y_{2i}=\L_{2i}^N(2T') \mbox{ for } i\in \ll1,m\rr,\\
    & u_{2m+1,2j}=z_j=\L_{2m+1}^N(2j) \mbox{ for }j\in \ll1,T\rr,  \\ 
    & u_{2m+1,2j}=w_{j-T}=\L_{2m+1}^N(2j)  \mbox{ for }j\in \ll T+1,T'\rr. 
\end{align*}
We denote this Gibbs measure as $\Pr_{m}^{T';\vec{y},\vec{z},\vec{w}}$. On the event $\m{D}$, we have  
\begin{equation}
    \label{bdata}
    \begin{aligned}
    & |u_{i,j}| \le M\sqrt{T} \mbox{ for all } (i,j) \in \partial \Lambda_{m,T} \mbox{ with }i\le 2m, \\
    & u_{i,j_1}-u_{i+1,j_2} \ge -(\log T)^{7/6}     \mbox{ for all } (i,j) \in \partial \Lambda_{m,T},   \\
    & z_j \le M_2\sqrt{T} \mbox{ for } j \in \ll1,T\rr, \qquad w_j \le M_2\sqrt{T} \mbox{ for } j\in \ll1,T-T'\rr.
\end{aligned}
\end{equation}
It therefore suffices to provide an upper bound for $\Pr_{m}^{T';\vec{y},\vec{z},\vec{w}}(\neg \m{B}(\beta))$ that is uniform over all deterministic boundary data satisfying \eqref{bdata}. A size biasing argument leads to 
\begin{align*}
    \Pr_{m}^{T';\vec{y},\vec{z},\vec{w}}(\neg\m{B}(\beta)) & = \frac{\Ex_{m}^{T';\vec{y},(-\infty)^T, \vec{w}}\big[\ind_{\neg\m{B}(\beta)} \exp(-e^{z_T-L_{2m}(2T+1)}) \Ex_{m}^{T;\vec{x}}[U(\vec{z})]\big]}{\Ex_{m}^{T';\vec{y},(-\infty)^T,\vec{w}}\big[\exp(-e^{z_T-L_{2m}(2T+1)}) \Ex_{m}^{T;\vec{x}}[U(\vec{z})]\big]} \\ & \le \beta\cdot \left(\Ex_{m}^{T';\vec{y},(-\infty)^T,\vec{w}}\big[\exp(-e^{z_T-L_{2m}(2T+1)}) \Ex_{m}^{T;\vec{x}}[U(\vec{z})]\big]\right)^{-1}. 
\end{align*}
Thus it suffices to show that the last expectation above has a uniform lower bound. Using Proposition \ref{p.2nto} and stochastic monotonicity,   we obtain $M_3>1$ such that $\Pr_{m}^{T;\vec{x}}(L_{2m}(2j-1) \ge -M_3\sqrt{T}) \ge 1-\e$ for all $\vec{x}$ of type \eqref{newvec} with entries all larger than $-\sqrt{T}$. By translation invariance this implies
\begin{align*}
    \Pr_{m}^{T;\vec{x}}(L_{2m}(2j-1) \ge (M_2+1)\sqrt{T}) \ge 1-\e
\end{align*}
for all $\vec{x}$ of type \eqref{newvec} with entries all larger than $(M_3+M_2)\sqrt{T}$. This forces
\begin{align*}
    \Ex_{m}^{T;\vec{x}}[U(\vec{z})] & \ge \exp\big(-2(T-1)e^{-\sqrt{T}}\big)(1-\e) \ge \tfrac12.  
\end{align*}
for all $\vec{x}$ of type \eqref{newvec} with entries all larger than $(M_3+M_2)\sqrt{T}$. Let us now consider the event $$\m{G}=\bigcap_{i=1}^{2m}\bigcap_{j\in \ll1,2T'/4^m\rr} \{L_{i}(j) \ge (M_2+M_3)\sqrt{T}\}.$$ 
By stochastic monotonicity and Lemma \ref{highmt},
\begin{align*}
    \Pr_{m}^{T';\vec{y},(-\infty)^T,\vec{w}}(\m{G}) & \ge \Pr_{m}^{T';\vec{y},(-\infty)^T,(-\infty)^T}(\m{G})  =\Pr_{m}^{T';\vec{y}}(\m{G}) \ge \phi
\end{align*}
for some $\phi>0$. Combining the above, we find
\begin{align*}
   & \Ex_{m}^{T';\vec{y},(-\infty)^T,\vec{w}}\big[\exp(-e^{z_T-L_{2m}(2T+1)}) \Ex_{m}^{T;\vec{x}}[U(\vec{z})]\big] \\ &\qquad \ge \Ex_{m}^{T';\vec{y},(-\infty)^T,\vec{w}}\big[\ind_{\m{G}}\exp(-e^{z_T-L_{2m}(2T+1)}) \Ex_{m}^{T;\vec{x}}[U(\vec{z})]\big] \\ &\qquad \ge \tfrac12\cdot \exp(-e^{-\sqrt{T}}) \cdot \Pr_{m}^{T';\vec{y},(-\infty)^T,\vec{w}}(\m{G}) \ge \tfrac12 \cdot \exp(-e^{-\sqrt{T}}) \cdot \phi >0,
\end{align*}
and the proof is complete.
\end{proof}

	\bibliographystyle{alpha}		
	\bibliography{half3}

\begin{thebibliography}{GRASY15}

\bibitem[Abr80]{phy2}
Douglas~B. Abraham.
\newblock Solvable model with a roughening transition for a planar {I}sing
  ferromagnet.
\newblock {\em Physical Review Letters}, 44(18):1165, 1980.

\bibitem[BBC20]{bbc20}
Guillaume Barraquand, Alexei Borodin, and Ivan Corwin.
\newblock Half-space {M}acdonald processes.
\newblock In {\em Forum of Mathematics, Pi}, volume~8. Cambridge University
  Press, 2020.

\bibitem[BBCS18]{bbcs}
Jinho Baik, Guillaume Barraquand, Ivan Corwin, and Toufic Suidan.
\newblock {P}faffian {S}chur processes and last passage percolation in a
  half-quadrant.
\newblock {\em The Annals of Probability}, 46(6):3015--3089, 2018.

\bibitem[BBNV18]{bete}
Dan Betea, J{\'e}r{\'e}mie Bouttier, Peter Nejjar, and Mirjana Vuleti{\'c}.
\newblock The free boundary {S}chur process and applications {I}.
\newblock In {\em Annales Henri Poincar{\'e}}, volume~19, pages 3663--3742.
  Springer, 2018.

\bibitem[BC23]{bc22}
Guillaume Barraquand and Ivan Corwin.
\newblock {Stationary measures for the log-gamma polymer and KPZ equation in
  half-space}.
\newblock {\em The Annals of Probability}, 51(5):1830--1869, 2023.

\bibitem[BCD21]{bcd2}
Guillaume Barraquand, Ivan Corwin, and Evgeni Dimitrov.
\newblock Fluctuations of the log-gamma polymer free energy with general
  parameters and slopes.
\newblock {\em Probability Theory and Related Fields}, 181(1):113--195, 2021.

\bibitem[BCD23]{bcd}
Guillaume Barraquand, Ivan Corwin, and Evgeni Dimitrov.
\newblock Spatial tightness at the edge of {G}ibbsian line ensembles.
\newblock {\em Communications in Mathematical Physics}, pages 1--78, 2023.

\bibitem[BCD24]{half1}
Guillaume Barraquand, Ivan Corwin, and Sayan Das.
\newblock {KPZ} exponents for the half-space log-gamma polymer.
\newblock {\em Probability Theory and Related Fields}, pages 1--131, 2024.

\bibitem[BCY24]{bcy}
Guillaume Barraquand, Ivan Corwin, and Zongrui Yang.
\newblock Stationary measures for integrable polymers on a strip.
\newblock {\em Inventiones mathematicae}, 237(3):1567--1641, 2024.

\bibitem[BFO20]{ale1}
Dan Betea, Patrik~L. Ferrari, and Alessandra Occelli.
\newblock Stationary half-space last passage percolation.
\newblock {\em Communications in Mathematical Physics}, 377(1):421--467, 2020.

\bibitem[BHL83]{phy3}
E.~Br{\'e}zin, B.I. Halperin, and S.~Leibler.
\newblock Critical wetting in three dimensions.
\newblock {\em Physical Review Letters}, 50(18):1387, 1983.

\bibitem[BK21]{bk}
Wlodek Bryc and Alexey Kuznetsov.
\newblock {Markov limits of steady states of the KPZ equation on an interval}.
\newblock {\em arXiv preprint arXiv:2109.04462}, 2021.

\bibitem[BKLD20]{bkld2}
Guillaume Barraquand, Alexandre Krajenbrink, and Pierre Le~Doussal.
\newblock Half-space stationary {K}ardar-{P}arisi-{Z}hang equation.
\newblock {\em Journal of Statistical Physics}, 181(4):1149--1203, 2020.

\bibitem[BKLD22]{bkld}
Guillaume Barraquand, Alexandre Krajenbrink, and Pierre Le~Doussal.
\newblock Half-space stationary {K}ardar-{P}arisi-{Z}hang equation beyond the
  {B}rownian case.
\newblock {\em Journal of Physics A: Mathematical and Theoretical}, 2022.

\bibitem[BLD22]{bd22}
Guillaume Barraquand and Pierre Le~Doussal.
\newblock Steady state of the {KPZ} equation on an interval and {L}iouville
  quantum mechanics.
\newblock {\em Europhysics Letters}, 137(6), 2022.

\bibitem[BOZ21]{boz}
Elia Bisi, Neil O’Connell, and Nikos Zygouras.
\newblock The geometric {B}urge correspondence and the partition function of
  polymer replicas.
\newblock {\em Selecta Mathematica}, 27:1--39, 2021.

\bibitem[BR01a]{br1}
Jinho Baik and Eric~M. Rains.
\newblock Algebraic aspects of increasing subsequences.
\newblock {\em Duke Mathematical Journal}, 109(1):1--65, 2001.

\bibitem[BR01b]{br01}
Jinho Baik and Eric~M. Rains.
\newblock The asymptotics of monotone subsequences of involutions.
\newblock {\em Duke Mathematical Journal}, 109(2):205--281, 2001.

\bibitem[BR01c]{br3}
Jinho Baik and Eric~M. Rains.
\newblock Symmetrized random permutations.
\newblock {\em Random {M}atrix {M}odels and their {A}pplications}, 40:1--19,
  2001.

\bibitem[BW22]{bw}
Guillaume Barraquand and Shouda Wang.
\newblock An identity in distribution between full-space and half-space
  log-gamma polymers.
\newblock {\em International Mathematics Research Notices}, rnac132, 2022.

\bibitem[BZ19]{bz19}
Elia Bisi and Nikos Zygouras.
\newblock Point-to-line polymers and orthogonal {W}hittaker functions.
\newblock {\em Transactions of the American Mathematical Society},
  371(12):8339--8379, 2019.

\bibitem[BZ22]{bz22}
Elia Bisi and Nikos Zygouras.
\newblock Transition between characters of classical groups, decomposition of
  {G}elfand-{T}setlin patterns and last passage percolation.
\newblock {\em Advances in Mathematics}, 404:108453, 2022.

\bibitem[CH14]{CH14}
Ivan Corwin and Alan Hammond.
\newblock {B}rownian {G}ibbs property for {A}iry line ensembles.
\newblock {\em Inventiones mathematicae}, 195(2):441--508, 2014.

\bibitem[CK21]{ck}
Ivan Corwin and Alisa Knizel.
\newblock {Stationary measure for the open KPZ equation}.
\newblock {\em arXiv preprint arXiv:2103.12253}, 2021.

\bibitem[CN16]{cn16}
Francis Comets and Vu-Lan Nguyen.
\newblock Localization in log-gamma polymers with boundaries.
\newblock {\em Probability Theory and Related Fields}, 166:429--461, 2016.

\bibitem[Cor22]{cor22}
Ivan Corwin.
\newblock {Some recent progress on the stationary measure for the open KPZ
  equation}.
\newblock {\em Toeplitz Operators and Random Matrices: In Memory of Harold
  Widom}, pages 321--360, 2022.

\bibitem[COSZ14]{cosz14}
Ivan Corwin, Neil O’Connell, Timo Sepp{\"a}l{\"a}inen, and Nikolaos Zygouras.
\newblock Tropical combinatorics and {W}hittaker functions.
\newblock {\em Duke Mathematical Journal}, 163(3):513--563, 2014.

\bibitem[DIM77]{DurrBM}
Richard Durrett, Donald Iglehart, and Douglas Miller.
\newblock Weak convergence to {B}rownian meander and {B}rownian excursion.
\newblock {\em Ann. Probab.}, 5(1):117--129, 1977.

\bibitem[DZ24]{dz23}
Sayan Das and Weitao Zhu.
\newblock The half-space log-gamma polymer in the bound phase.
\newblock {\em Communications in Mathematical Physics}, 405(8):184, 2024.

\bibitem[Fel71]{feller}
William Feller.
\newblock {\em An {I}ntroduction to {P}robability {T}heory and its
  {A}pplications}, volume~2.
\newblock John Wiley \& Sons, 1971.

\bibitem[Gin23]{victor}
Victor Ginsburg.
\newblock Pinning, diffusive fluctuations, and {G}aussian limits for half-space
  directed polymer models.
\newblock {\em arXiv preprint arXiv:2312.11439}, 2023.

\bibitem[GRASY15]{grasy15}
Nicos Georgiou, Firas Rassoul-Agha, Timo Sepp{\"a}l{\"a}inen, and Atilla
  Yilmaz.
\newblock {Ratios of partition functions for the log-gamma polymer}.
\newblock {\em The Annals of Probability}, 43(5):2282 -- 2331, 2015.

\bibitem[Igl74]{igl}
Donald Iglehart.
\newblock Functional central limit theorems for random walks conditioned to
  stay positive.
\newblock {\em The Annals of Probability}, 2(4):608--619, 1974.

\bibitem[IMS22]{ims22}
Takashi Imamura, Matteo Mucciconi, and Tomohiro Sasamoto.
\newblock Solvable models in the {KPZ} class: approach through periodic and
  free boundary {S}chur measures.
\newblock {\em arXiv preprint arXiv:2204.08420}, 2022.

\bibitem[IT18]{it}
Yasufumi Ito and Kazumasa~A. Takeuchi.
\newblock {When fast and slow interfaces grow together: connection to the
  half-space problem of the Kardar-Parisi-Zhang class}.
\newblock {\em Physical Review E}, 97(4):040103, 2018.

\bibitem[JRA20]{jra20}
Christopher Janjigian and Firas Rassoul-Agha.
\newblock Uniqueness and ergodicity of stationary directed polymers on
  $\mathbb{Z}^2$.
\newblock {\em Journal of Statistical Physics}, 179(3):672--689, 2020.

\bibitem[Kar85]{kar2}
Mehran Kardar.
\newblock Depinning by quenched randomness.
\newblock {\em Physical review letters}, 55(21):2235, 1985.

\bibitem[K{\"o}n05]{konig}
Wolfang K{\"o}nig.
\newblock Orthogonal polynomial ensembles in probability theory.
\newblock {\em Probability Surveys}, 2:385--447, 2005.

\bibitem[KPZ86]{kpz86}
Mehran Kardar, Giorgio Parisi, and Yi-Cheng Zhang.
\newblock Dynamic scaling of growing interfaces.
\newblock {\em Physical Review Letters}, 56(9):889, 1986.

\bibitem[NZ17]{nz17}
Vu-Lan Nguyen and Nikos Zygouras.
\newblock Variants of geometric {RSK}, geometric {PNG}, and the multipoint
  distribution of the log-gamma polymer.
\newblock {\em International Mathematics Research Notices},
  2017(15):4732--4795, 2017.

\bibitem[OSZ14]{osz14}
Neil O’Connell, Timo Sepp{\"a}l{\"a}inen, and Nikos Zygouras.
\newblock Geometric {RSK} correspondence, {W}hittaker functions and symmetrized
  random polymers.
\newblock {\em Inventiones mathematicae}, 197(2):361--416, 2014.

\bibitem[PSW82]{phy1}
Rahul Pandit, M.~Schick, and Michael Wortis.
\newblock Systematics of multilayer adsorption phenomena on attractive
  substrates.
\newblock {\em Physical Review B}, 26(9):5112, 1982.

\bibitem[Rai00]{rai00}
Eric~M. Rains.
\newblock Correlation functions for symmetrized increasing subsequences.
\newblock {\em arXiv preprint arXiv:math/0006097}, 2000.

\bibitem[Sep12]{timo}
Timo Sepp{\"a}l{\"a}inen.
\newblock Scaling for a one-dimensional directed polymer with boundary
  conditions.
\newblock {\em The Annals of Probability}, 40(1):19--73, 2012.

\bibitem[Ser23]{serio}
Christian Serio.
\newblock Tightness of discrete {G}ibbsian line ensembles.
\newblock {\em Stochastic Processes and their Applications}, 159:225--285,
  2023.

\bibitem[SI04]{sis}
Tomohiro Sasamoto and Takashi Imamura.
\newblock Fluctuations of the one-dimensional polynuclear growth model in
  half-space.
\newblock {\em Journal of Statistical Physics}, 115(3):749--803, 2004.

\bibitem[Spi60]{spit}
Frank Spitzer.
\newblock A {T}auberian theorem and its probability interpretation.
\newblock {\em Transactions of the American Mathematical Society},
  94(1):150--169, 1960.

\end{thebibliography}

\end{document}